\newtheorem{thm}{Theorem}[section]
\newtheorem{cor}[thm]{Corollary}
\newtheorem{lem}[thm]{Lemma}
\newtheorem{defi}[thm]{Definition}
\newtheorem{prop}[thm]{Proposition}
\theoremstyle{definition}
\newtheorem{rem}[thm]{Remark}
\newcommand{\R}{\mathbb{R}}
\DeclareMathOperator{\Ker}{Ker}
\DeclareMathOperator{\Ima}{Im}
\newcommand{\Z}{\mathbb{Z}}
\newcommand{\Q}{\mathbb{Q}}
\DeclareMathOperator{\Th}{Th}
\DeclareMathOperator{\ind}{Ind}
\newcommand{\C}{\mathbb{C}}
\DeclareMathOperator{\Ext}{Ext}
\DeclareMathOperator{\Hom}{Hom}
\DeclareMathOperator{\indre}{int}
\DeclareMathOperator{\Sq}{Sq}
\newcommand{\A}{\mathcal{A}}
\title{Cobordism obstructions to independent vector fields}
\author{Marcel B\"{o}kstedt, Johan Dupont, and Anne Marie Svane}
\date{}
\begin{document}

\maketitle

\begin{abstract}{We define an invariant for the existence of $r$ pointwise linearly independent sections in the tangent bundle of a closed manifold. For low values of $r$, explicit computations of the homotopy groups of certain Thom spectra combined with classical obstruction theory identifies this invariant as the top obstruction to the existence of the desired sections. In particular, this shows that the top obstruction is an invariant of the underlying manifold in these cases, which is not true in general. The invariant is related to cobordism theory and this gives rise to an identification of the invariant in terms of well-known invariants. As a corollary to the computations, we can also compute low-dimensional homotopy groups of the Thom spectra considered in \cite{GMTW}.}
\end{abstract}


\section{Introduction}\label{obstruction}
Given a smooth compact connected oriented $d$-dimensional manifold $M$, it is a classical problem to determine the maximal number of pointwise linearly independent tangent vector fields on $M$. The solution is only known for a small number of vector fields or certain nice classes of manifolds, see e.g.\ \cite{thomas} for an overview. Some of the best known results were given by Atiyah and Dupont in \cite{AD} and \cite{dupont}, in which they find the complete conditions for the existence of up to three independent vector fields. In this paper, we are going to try a refinement of their approach.

We will assume that $M$ allows $r$ vector fields $s= \{s_1,\dots,s_r\}$ with only finitely many singularities. This is in general a non-trivial assumption. One way of ensuring this is by assuming that $M$ is $(r-2)$-connected and
\begin{align*}
 w_{d-r+1}(TM)=0 & \textrm{ if $d-r$ odd,}\\
\delta^*w_{d-r}(TM)=0& \textrm{ if $d-r$ even.}
\end{align*}
Here $w_i(TM)$ is the $i$th Stiefel--Whitney class of the tangent bundle and  $\delta^*$ is the Bockstein  map $\delta^*: H^{d-r}(M;\Z/2) \to H^{d-r+1}(M;\Z)$. 
The obstruction to the existence  of $r$ vector fields with only finitely many singularities is described in general in \cite{steenrod}.  

Choose small disjoint disks $D_i$ around each singularity. By a Gram--Schmidt argument, we may assume that the vector fields are orthonormal on the boundary $\partial D_i$. Since the tangent bundle is trivial over each $D_i$, the vector fields define maps
\begin{equation*}
s_i : \partial D_i \to V_{d,r}
\end{equation*}
where $V_{d,r}$ is the Stiefel manifold consisting of $r$-tuples of orthonormal vectors in $\R^d$. 
\begin{defi}
The index of $s$ is defined to be
\begin{equation*}
\ind(s) = \sum_i [s_i] \in \pi_{d-1}(V_{d,r}).
\end{equation*}
\end{defi}
This is the top obstruction to the existence of $r$ independent vector fields in the sense of \cite{steenrod}. That is, if $\ind(s)=0$, then $r$ independent vector fields do exist. However, $\ind(s)$ is not an invariant. It depends on the chosen $s$. Hence it may be non-zero even though $M$ does allow $r$ independent vector fields. The goal of this paper is to find conditions under which the index is an invariant of $M$. In this case, the desired vector fields exist if and only if $\ind(s)$ vanishes. 

 For $r=1$ it was shown by Hopf in \cite{hopf} that $\ind(s)=\chi(M)$ is the Euler characteristic. 
Atiyah and Dupont showed that the index is always an invariant for $r\leq 3$. For $r=2,3$, they identified the index as follows:
\begin{table}[h]
\begin{equation*}
\begin{tabular}{cc}
\hline
 {$d \bmod 4$}&{$\ind(s)$}\\
\hline
 {$0$}&{$\chi\oplus \frac{1}{2}(\sigma + \chi) \in \Z \oplus \Z/a_r $} \\

 {$1$ }&{$\chi_\R \in \Z/2 $}\\
 
 {$2$ }&{$\chi \in \Z$}\\
 
 {$3$ }&{$0$}\\
\hline
\end{tabular}
\end{equation*}
\caption{$\ind(s) $ for $r=3$.}\label{indentifi}
\end{table}

Here $a_2=2$ and $a_3=4$. Moreover, $\chi_\R(M) \in \Z/2$ is the real Kervaire semi-characteristic given by
\begin{equation}\label{chi2}
\sum_k \dim_\R H^{2k}(M;\R) \mod 2,
\end{equation}
and $\sigma(M)$ is the signature of the quadratic form on $H^{\frac{d}{2}}(M;\R)$ defined by Poincar\'e duality.

The idea of Atiyah and Dupont in \cite{AD} and \cite{dupont} is to define a homomorphism 
\begin{equation*}
\tilde{\theta}^t_r: \pi_{d-1}(V_{d,r}) \to KR^t(tH_r) 
\end{equation*}
such that $\tilde{\theta}^t_r(\ind(s))$ is an invariant of $M$. Hence injectivity of $\tilde{\theta}_r^t$ would imply that also $\ind(s)$ were an invariant. They prove that $\tilde{\theta}^t_r$ is indeed injective for $r\leq 3$.

More precisely, they define a characteristic class 
\begin{equation*}
\tilde{\alpha}_{d,r}^t(E,s) \in KR^t(iE_{\mid X- Y}\times tH_r)
\end{equation*}
 for a vector bundle $E \to X$ with $r$ sections $s=\{s_1,\dots ,s_r\}$ given on a subcomplex $Y \subseteq X$. Here $KR$-theory is $K$-theory of spaces with involution, and $iE$ denotes $E$ with the involution given in each fiber by $x \mapsto -x$. Moreover, $tH_r$ denotes the sum of $t$ copies of the Hopf bundle over the projective space $\R P^{r-1}$ where $t$ is any number such that $4$ divides $ d+t$. 

For $X=M$ an oriented manifold with boundary $Y = \partial M$, the image $\tilde{\theta}^t_r(\ind(s))$ is the index $\ind(\tilde{\alpha}_{d,r}^t(TM,s))$ of this characteristic class. This depends only on the sections restricted to the boundary. In particular, it is the desired global invariant if the boundary is empty. The Atiyah--Singer index theorem applies to give the identification of this invariant displayed in Table \ref{indentifi}.

Following Atiyah and Dupont closely,  we set up similar invariants in Section~\ref{constr}. We replace $KR$-theory by another generalized cohomology theory defined by certain spectra $MT(d,r)$. These spectra, which go back to an old
unpublished note by the second author, are relative versions of the spectra $MT(d)$ studied in \cite{GMTW} and used by these authors in their well-known work on mapping class groups. 

Again we define characteristic classes, global invariants, and a map $\theta ^r$. At the end of the section, we show that $\tilde{\theta}^t_r$ factors as $\tilde{\theta}^t_r = \Psi \circ \theta^r$ for some map $\Psi$. Thus our invariants are refinements of the Atiyah--Dupont invariants, and we shall see in Theorem \ref{beta4} that they do carry strictly more information. The downside is that they are much harder to compute.

The computation of the homotopy groups $\pi_q(MT(d,r))$ for low values of $r$ and $q$ is the topic of Section \ref{cal}. These homotopy groups have an interesting periodicity property which is also useful for computations. We prove this in Section \ref{periodicity}. The results allow us to derive the injectivity results about $\theta^r $ in Theorem \ref{injtr}. 

For closed manifolds, our invariants are actually Reinhart cobordism invariants, see \cite{reinhart}. The relation to cobordism theory is studied further in Section~\ref{calcabs}. The computations of Section \ref{cal} allow us to perform low-dimensional computations of the homotopy groups of $MT(d)$. As a corollary, we obtain an identification of the global invariant for $r=4$.

We summarize our main results in the following:
\begin{thm}\label{results}
Assume $r<\frac{d}{2}$. If $M$ is oriented of even dimension $d$ and $r=4,5$ or $6$, then the index is an invariant of $M$. In this case, 
\begin{equation*}
\ind(s)=\chi(M)\in \Z 
\end{equation*}
for $d \equiv 2 \mod 4$, and for $d \equiv 0 \mod 4$,
\begin{equation*}
\ind(s)=\chi(M)\oplus \frac{1}{2}(\chi(M) +\sigma(M)) \in \Z\oplus \Z/8. 
\end{equation*}
If $M$ has a spin structure, the index is an invariant for $r\leq 6$ and, if $d$ is even, also for $r=7$.
\end{thm}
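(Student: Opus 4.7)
The plan is to deduce Theorem~\ref{results} from two ingredients established in the subsequent sections: injectivity of the refined obstruction $\theta^r$ and the cobordism-theoretic description of its image. Both will be treated as black boxes.

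By the construction in Section~\ref{constr}, the class $\theta^r(\ind(s)) \in \pi_{d-1}(MT(d,r))$ is a Reinhart cobordism invariant of $(M, s|_{\partial M})$, so for closed $M$ it depends only on $M$. The hypotheses of the theorem fall precisely in the ranges in which Theorem~\ref{injtr} asserts injectivity of $\theta^r$: $r\le 6$ in the oriented case, $r\le 6$ in the spin case, and $r=7$ in the spin case with $d$ even. Combining these, $\ind(s) \in \pi_{d-1}(V_{d,r})$ itself depends only on $M$, proving the invariance part of the theorem.

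For the explicit formulas, I would use naturality under the forgetful map $V_{d,r} \to V_{d,3}$, which sends $\ind(s)$ to the Atiyah--Dupont $r=3$ invariant. By Table~\ref{indentifi} this invariant is $\chi$ for $d\equiv 2\pmod 4$ and $\chi\oplus\tfrac12(\chi+\sigma) \in \Z\oplus\Z/4$ for $d\equiv 0\pmod 4$. In the stable range $r<d/2$, standard Stiefel-manifold computations give $\pi_{d-1}(V_{d,r}) \cong \Z$ and $\Z\oplus\Z/8$ respectively, with the map to $\pi_{d-1}(V_{d,3})$ the identity (resp.\ the identity on the free summand and the reduction $\Z/8 \to \Z/4$ on the torsion summand). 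This identifies the free summand of $\ind(s)$ as $\chi$ and determines its torsion summand up to the ambiguity of a lift of $\tfrac12(\chi+\sigma)$ from $\Z/4$ to $\Z/8$. To pin down the lift as $\tfrac12(\chi+\sigma)\bmod 8$, I would appeal to the cobordism description of Section~\ref{calcabs} together with the low-dimensional calculation of $\pi_*(MT(d))$ carried out there: since $\tfrac12(\chi+\sigma) \bmod 8$ is itself an oriented cobordism invariant of $4k$-manifolds, it must coincide with the generator of the relevant $\Z/8$-summand detected by that computation.

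The substance of the argument lies in the two black-box inputs. Theorem~\ref{injtr} depends on the detailed calculations of $\pi_q(MT(d,r))$ in Section~\ref{cal} combined with the periodicity of Section~\ref{periodicity}, and the mod~$8$ identification requires the cobordism interpretation from Section~\ref{calcabs} to match the abstract generator of the homotopy group with the classical invariant $\tfrac12(\chi+\sigma)$. I expect this last matching to be the main conceptual obstacle, since it is the place where the pure homotopy-theoretic computation has to be reconciled with a geometric invariant; the hypothesis $r<d/2$ is exactly what keeps us in the stable range where the Stiefel fibrations and the Thom-spectrum computations behave as needed.
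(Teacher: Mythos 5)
Your treatment of the invariance statement is essentially the paper's: $\theta^r(\ind(s))=\beta^r(M)\in\pi_d(MT(d,r))$ (note the degree is $d$, not $d-1$) depends only on $M$ by Theorem \ref{beta=ind}, and injectivity of $\theta^r$ then makes $\ind(s)$ itself an invariant; only note that Theorem \ref{injtr} covers the oriented case alone, while the spin cases require Theorem \ref{theta-inj} and Corollary \ref{spininj}.

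The identification of the index, however, rests on a false premise. For $r=4$ the stable computations (quoted in the proof of Theorem \ref{injtr}) give $\pi_{d-1}(V_{d,4})\cong\Z\oplus\Z/12$ for $d\equiv 2\bmod 4$ and $\Z\oplus\Z/24\oplus\Z/4$ for $d\equiv 0\bmod 4$, not $\Z$ and $\Z\oplus\Z/8$. The formulas in Theorem \ref{results} are not a computation of $\pi_{d-1}(V_{d,r})$; they assert that $\ind(s)$, a priori an element of this larger group, is completely determined by $\chi$, resp.\ by $\bigl(\chi,\tfrac12(\chi+\sigma)\bigr)$, because $\theta^r$ is injective and $\theta^r(\ind(s))=\beta^r(M)$ lies in $\Ima\beta^4$, which is $\Z$, resp.\ $\Z\oplus\Z/8$. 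For the same reason your reduction to $r=3$ cannot close the gap: the kernel of the forgetful map $\pi_{d-1}(V_{d,4})\to\pi_{d-1}(V_{d,3})$ is the image of $\pi_{d-1}(S^{d-4})\cong\Z/24$, so knowing the Atiyah--Dupont $r=3$ invariant leaves far more ambiguity than a single lift from $\Z/4$ to $\Z/8$.

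The concluding matching step is also unsound as stated: $\tfrac12(\chi+\sigma)$ is \emph{not} an oriented cobordism invariant ($\chi$ is not; $S^4$ bounds but $\chi(S^4)=2$), only a homomorphism on the Reinhart group $\pi_d(MT(d))$, and there are many homomorphisms $\pi_d(MT(d))\to\Z/8$, so ``it must coincide with the generator'' does not follow. What the paper actually proves (Theorem \ref{beta4}) is: the Adams spectral sequence computations of Sections \ref{cal} and \ref{calcabs} show $\Ima\beta^4\cong\Z\oplus\Z/8$ for $d\equiv 0\bmod 4$ (and $\Z$ for $d\equiv 2$); the Mayer--Frank theorem shows that $\chi\oplus\tfrac12(\chi+\sigma)$ vanishes on every class admitting four vector fields, so by Proposition \ref{betared} it factors through $\Ima\beta^4$; and surjectivity of $\chi\oplus\tfrac12(\chi+\sigma)$ forces the induced map $\Ima\beta^4\to\Z\oplus\Z/8$ to be an isomorphism. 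The computation of $\Ima\beta^4$ and the appeal to Mayer--Frank are the essential ingredients your sketch does not supply, and they cannot be replaced by the $r=3$ data of Table \ref{indentifi}.
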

This follows from Theorem \ref{theta-inj}, \ref{injtr}, and  \ref{beta4} and Corollary \ref{spininj} below.

\section{Construction of the invariants}\label{constr} 
We begin this section by introducing the spectra $MT(d,r)$ in Section \ref{def1}. The characteristic class is constructed in Section \ref{intinv}, and in Section \ref{global}, we define the global invariant and give a geometric interpretation in the special case of closed manifolds. Then in Section \ref{relind}, we construct the map $\theta^r$ and show how it relates the index to the global invariants. We also derive some first injectivity results. Finally in Section \ref{relKR}, we relate the constructions to the ones of Atiyah and Dupont.
%
%

%

\subsection{A suitable spectrum}\label{def1}
Let $G(d,n)$ denote the Grassmannian consisting of $d$-dimensional subspaces of $\R^{d+n}$. Let $U_{d,n} \to G(d,n)$ be the vector bundle with fiber over a plane in $G(d,n)$ consisting of all points in that plane and with $n$-dimensional orthogonal complement $U_{d,n}^\perp$. Let $BO(d) = \varinjlim_{n} G(d,n)$ be the classifying space for $O(d)$ and $U_d = \varinjlim_n U_{d,n}$ the universal bundle.

%

Define $MTO(d)$ to be the spectrum with $n$th space
\begin{equation*}
MTO(d)_{n} = \Th(U_{d,n}^\perp )
\end{equation*}
where $\Th(\cdot )$ is the Thom space. The spectrum map $\Sigma MTO(d)_{n} \to MTO(d)_{n+1}$ is induced by the inclusion $G(d,n) \to G(d,n+1)$ taking a $d$-plane $P \subseteq \R^{n+d}$ to $0 \oplus P\subseteq \R^{1+n+d}$ as follows: The restriction of $U_{d,n+1}^\perp $ to $G(d,n)$ is $\R \oplus U_{d,n}^\perp$, defining an inclusion 
\begin{equation}\label{despe}
\Sigma \Th (U_{d,n}^\perp) = \Th (\R \oplus U_{d,n}^\perp ) \to \Th (U_{d,n+1}^\perp ).
\end{equation}

Under the inclusion $G(d-r,n) \to G(d,n)$ mapping a $(d-r)$-plane $P$ to $P \oplus \R^r$ in $\R^{n+d-r} \oplus \R^r$, the pullback of $U_{d,n}^\perp$ is $U_{d-r,n}^\perp $. This yields a map 
\begin{equation}\label{cof}
MTO(d-r)_n \to MTO(d)_n
\end{equation}
commuting with \eqref{despe} and thus defining a map of spectra. 
\begin{defi}
Denote by $MTO(d,r)$ the cofiber of \eqref{cof}, i.e.\ the spectrum with $n$th space
\begin{equation*}
MTO(d,r)_{n} = \Th(U_{d,n}^\perp ) / \Th(U_{d-r,n}^\perp ). 
\end{equation*}
\end{defi}

\begin{prop}
$MTO(d,r)$ is $(d-r)$-connected of finite type. 
\end{prop}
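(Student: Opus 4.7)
The plan is to prove both assertions by analyzing the $n$-th space $MTO(d,r)_n = \Th(U_{d,n}^\perp)/\Th(U_{d-r,n}^\perp)$ at the CW level. The key input is that the inclusion $j_n : G(d-r, n) \hookrightarrow G(d, n)$, sending $P \mapsto P \oplus \R^r$, is a $(d-r)$-equivalence for $n$ sufficiently large, which is all that is needed since we only care about the homotopy type of the spectrum in the colimit. This follows from the fact that the homotopy fiber of $BO(d-r) \to BO(d)$ is the Stiefel manifold $V_{d,r} = O(d)/O(d-r)$, classically known to be $(d-r-1)$-connected, together with the standard comparison between the finite Grassmannian $G(d,n)$ and $BO(d)$ (which agree through a range of dimensions growing with $n$).

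Next, I would choose a CW structure on $G(d,n)$ in which $G(d-r,n)$ appears as a subcomplex with relative cells only in dimensions $\geq d - r + 1$. Such a structure exists by CW approximation applied to the $(d-r)$-equivalence, or concretely via the Schubert cell decomposition. Since $U_{d,n}^\perp$ has constant rank $n$, the Thom space $\Th(U_{d,n}^\perp)$ inherits a compatible CW structure whose cells correspond to those of $G(d,n)$ shifted up in dimension by $n$, with $\Th(U_{d-r,n}^\perp)$ as a subcomplex. Therefore $MTO(d,r)_n$ has, aside from its basepoint, cells only in dimensions $\geq n + d - r + 1$, and is thus $(n + d - r)$-connected. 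Passing to the colimit via $\pi_q(MTO(d,r)) = \mathrm{colim}_n \pi_{q+n}(MTO(d,r)_n)$ then yields $\pi_q(MTO(d,r)) = 0$ for all $q \leq d - r$.

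For finite type, each Grassmannian $G(d,n)$ is a compact manifold with a finite Schubert cell decomposition, hence each $MTO(d,r)_n$ is a finite CW complex, and the stabilization maps are cellular. Consequently $MTO(d,r)$ has only finitely many cells in each dimension, so its integral homology groups, and therefore also its homotopy groups, are finitely generated in each degree.

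The main technical step is the identification of the stable homotopy fiber of $G(d-r,n) \to G(d,n)$ with $V_{d,r}$ and reading off its $(d-r-1)$-connectivity; the remainder is a routine cell count together with the standard CW structure on Thom spaces of vector bundles, and the finite type assertion is essentially immediate from the Schubert decomposition.
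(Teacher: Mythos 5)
Your proof is correct, and the finite type part is essentially identical to the paper's (finitely many Schubert cells in each compact $G(d,n)$, stabilization maps cellular). For the connectivity, however, you take a genuinely different route to the key fact that the relative cells of $(G(d,n),G(d-r,n))$ sit in dimensions $\geq d-r+1$: you derive it from the fibration $V_{d,r}\to BO(d-r)\to BO(d)$, the classical $(d-r-1)$-connectivity of the Stiefel manifold, the comparison of $G(d,n)$ with $BO(d)$ for large $n$, and then CW approximation of the resulting $(d-r)$-connected pair. The paper instead reads this off in one line from the Schubert cell decomposition of \cite{milnors}: $G(d-r,n)$ and $G(d,n)$ literally share the same $(d-r)$-skeleton, so no approximation is needed and the bundle $U_{d,n}^\perp$ is already defined on the complex one is counting cells of. Your route imports more homotopy theory but makes the geometric reason for the bound transparent (it is exactly the connectivity of $V_{d,r}$, the same input the paper uses later in the proof of Theorem \ref{altspek}); its only cost is the minor technical step of transporting the bundle and Thom space construction across the CW approximation, which your parenthetical appeal to the concrete Schubert decomposition --- i.e.\ the paper's argument --- sidesteps entirely. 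Both arguments are sound.
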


\begin{proof}
With the cell structures given in \cite{milnors}, $G(d,n)$ and $G(d,n+1)$ share the same $n$-skeleton, and since they are compact, all skeleta are finite. This shows the finite type. Also, $G(d-r,n)$ and $G(d,n)$ share the same $(d-r)$-skeleton for $n$ large, so the quotient has no cells in dimensions less than or equal to $d-r$.
\end{proof}

\begin{rem}
We may replace the Grassmann manifolds in the above construction with oriented Grassmannians. This yields an oriented version of the spectrum, which we denote by $MTSO(d,r)$. 

Another possibility is to look at a spin version of the spectrum. The simply connected double cover $Spin(d) \to SO(d)$ induces a fibration of classifying spaces $p : BSpin(d) \to BSO(d)$. We get a filtration of $BSpin(d)$ by the subspaces $ p^{-1}(G(d,n))$. The universal bundle pulls back to a bundle $p^*U_{d,n} \to p^{-1}(G(d,n))$. The construction also works in this case, and the resulting spectra will be denoted $MTSpin(d,r)$. 

Most constructions in the following work for all three spectra. In this case, we simply write $MT(d,r)$ for the spectrum and $G(d,n)$ for the corresponding filtration of the classifying space $B(d)$. 
\end{rem}

We now give a homotopy equivalent, and for some purposes more convenient, construction of the spectrum.
For a $d$-dimensional vector bundle $E \to X$ with inner product, there is a  fiber bundle 
\begin{equation*}
V_{d,r} \to  V_r(E) \to X.
\end{equation*}
The fiber over $x \in X$ is the set of ordered $r$-tuples of orthonormal vetors in the vector space $E_x$.

Similarly, there is a bundle $W_r(E)\to X$ with fiber the cone on $V_{d,r}$. A point in the fiber over $x \in X$ may be interpreted as an $r$-tuple of orthogonal vectors $v_1,\dots ,v_r$ in $E_x$ of equal length $0\leq |v_1|=\dotsm =|v_r|\leq 1$. 

There is an inclusion $\eta^r :G(d-r,n) \to V_r(U_{d,n})$ mapping the
$(d-r)$-plane $P$ to $P \oplus \R^r\subseteq \R^{n+d-r}\oplus \R^r$ with the $r$
standard basis vectors in $0\oplus \R^r$ as the $r$-frame. 
This can be extended to a section $\eta :G(d,n) \to
W_r(U_{d,n})$ such that the following diagram commutes
\begin{equation}\label{bundter}
\vcenter{\xymatrix{
{V_r(U_{d,n})} \ar[r]^{}&{W_r(U_{d,n})} \\
{{G}(d-r,n)}\ar[u]^{\eta^r }\ar[r]^{}&{G(d,n)} \ar[u]^{\eta }.}}
\end{equation} 


The projections $p_{W_r}: W_r(U_{d,n}^\perp) \to G(d,n)$ and  $p_{V_r}: W_r(U_{d,n}^\perp) \to G(d,n)$ induce a commutative diagram of bundle maps over the diagram \eqref{bundter}
\begin{equation*}
\xymatrix{{p_{V_r}^*U_{d,n}^\perp}\ar[r]&{p_{W_r}^*U_{d,n}^\perp}\\
{U_{d-r,n}^\perp}\ar[r]\ar[u]^{\eta^r}&{U_{d,n}^\perp.}\ar[u]^{\eta }
}
\end{equation*}

The Thom spaces $\Th(p_{W_r}^*U_{d,n}^\perp)$ and $\Th(p_{V_r}^*U_{d,n}^\perp)$ give rise to the spectra $MT(d)_{W_r}$ and $MT(d)_{V_r}$, respectively. 
Let $MTV(d,r)$ denote the cofiber of the inclusion $MT(d)_{V_r} \to MT(d)_{W_r}$.

\begin{thm} \label{altspek}
In the map of cofibration sequences defined by $\eta $,
\begin{equation}\label{cometaMT}
\vcenter{\xymatrix{{MT(d-r)}\ar[r]\ar[d]^{\eta^r}&{MT(d)}\ar[r]\ar[d]^{\eta}&{MT(d,r)}\ar[d]^{\bar{\eta}}\\
{MT(d-r)_{V_r}}\ar[r]&{MT(d)_{W_r}}\ar[r]&{MTV(d,r),}
}}
\end{equation}
all vertical maps are homotopy equivalences. 
\end{thm}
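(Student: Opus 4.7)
The plan is to analyze each of the three vertical maps separately. The middle map $\eta$ will turn out to be a levelwise homotopy equivalence, the left map $\eta^r$ will only be an equivalence after stabilization, and the right map $\bar{\eta}$ will follow formally from the other two.

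I would first handle the middle map. At spectrum level $n$, the map $\eta:G(d,n)\to W_r(U_{d,n})$ is by construction a section of the fiber bundle $W_r(U_{d,n})\to G(d,n)$ whose fiber is the cone $CV_{d,r}$ and therefore contractible. Any section of a bundle with contractible fibers is a homotopy equivalence, and because the pullback of $p_{W_r}^{*} U_{d,n}^\perp$ along $\eta$ is canonically $U_{d,n}^\perp$, the induced map on Thom spaces is a homotopy equivalence too. Thus $\eta:MT(d)\to MT(d)_{W_r}$ is a levelwise, and hence stable, equivalence.

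For the left map I would exploit an alternative fibration structure on $V_r(U_{d,n})$: sending $(P',(v_1,\dots,v_r))$ to the $(d-r)$-plane $\spa(v_1,\dots,v_r)^\perp\cap P'$ defines a fiber bundle $V_r(U_{d,n})\to G(d-r,n+r)$ whose fiber is the Stiefel manifold $V_{n+r,r}$. Since $V_{n+r,r}$ is $(n-1)$-connected, this projection induces isomorphisms on $\pi_i$ for $i<n$. Postcomposing $\eta^r$ with this projection recovers the stabilization inclusion $\iota:G(d-r,n)\hookrightarrow G(d-r,n+r)$; by Milnor's Schubert cell structure the complementary cells have dimension $\geq n+1$, so $\iota$ likewise induces isomorphisms on $\pi_i$ for $i<n$. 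Combining these, $\eta^r$ itself induces isomorphisms on $\pi_i$ for $i<n$ at spectrum level $n$. Applying the Thom construction with the rank-$n$ bundle $U_{d-r,n}^\perp$ then produces a map whose connectivity grows linearly in $n$, so in the colimit of spectra $\eta^r:MT(d-r)\to MT(d-r)_{V_r}$ is a weak equivalence. The argument transfers essentially verbatim to the oriented and spin versions.

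Finally, since the outer vertical maps in \eqref{cometaMT} are now weak equivalences and both rows are cofiber sequences of spectra, a three-lemma for cofiber sequences forces $\bar{\eta}$ to be a weak equivalence as well. The main obstacle, in my view, is the left map: one must identify $V_r(U_{d,n})$ as a bundle over the shifted Grassmannian $G(d-r,n+r)$, check that $\eta^r$ corresponds to Grassmannian stabilization under this identification, and then combine the two independent connectivity estimates correctly so that the Thom construction yields connectivity growing with $n$.
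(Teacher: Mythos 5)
Your proposal is correct, and two of the three steps coincide with the paper's argument: the middle map is treated as a section of a bundle with contractible (cone) fibers, and $\bar{\eta}$ is deduced from the outer maps via the long exact sequences of the two cofibration sequences. The genuine difference is in the key step, the left vertical map $\eta^r$. The paper fibers $V_r(U_{d,n})$ \emph{over the Stiefel manifold} $V_{n+d,r}$ (forgetting the plane and remembering only the frame), so that the fiber inclusion is exactly $\eta^r\colon G(d-r,n)\to V_r(U_{d,n})$; since $V_{n+d,r}$ is $(n+d-r-1)$-connected, the pair $(V_r(U_{d,n}),G(d-r,n))$ is $(n+d-r-1)$-connected in one line, and the Thom construction adds $n$. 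You instead fiber $V_r(U_{d,n})$ \emph{over the shifted Grassmannian} $G(d-r,n+r)$ with Stiefel fiber $V_{n+r,r}$, identify the composite with the stabilization inclusion $G(d-r,n)\hookrightarrow G(d-r,n+r)$, and conclude by two-out-of-three. Both routes are valid and both give connectivity growing linearly in $n$, which is all that is needed after the Thom construction; the paper's choice of fibration is more economical (no auxiliary identification of the composite, no appeal to the Schubert cell structure), whereas yours has the mild advantage of reusing exactly the two connectivity facts already present elsewhere in the paper (high connectivity of Stiefel manifolds and the stability of Grassmannian skeleta). Your connectivity bookkeeping (iso on $\pi_i$ for $i<n$ from the $(n-1)$-connected fiber, iso on $\pi_i$ for $i<n$ from the shared $n$-skeleton, then $+n$ from the rank-$n$ Thom construction) is sound.
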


\begin{proof}
The section $\eta : G(d,n) \to W_r(U_{d,n})$ is a homotopy inverse of $p_{W_r}$, so it clearly induces a homotopy equivalence $MT(d) \to MT(d)_{W_r}$.

Consider the fibration
\begin{equation*}
G(d-r,n) \xrightarrow{\eta^r } V_r(U_{d,n}) \to V_{n+d,r}.
\end{equation*}
A point in $V_r(U_{d,n})$ is a plane $P \subseteq \R^{n+d}$ with $r$
orthonormal vectors. The last map just forgets $P$. Since $V_{n+d,r}$ is $(n+d-r-1)$-connected, the pair
$(V_r(U_{d,n}),G(d-r,n))$
is also $(n+d-r-1)$-connected. 

It follows that the pair $(\Th(p_{V_r}^*U_{d,n}^\perp ), \Th(U_{d-r,n}^\perp) )$ is $(2n+d-r-1)$-connected. 
Letting $n$ tend to infinity, we get isomorphisms of homotopy groups
\begin{equation*}
\eta^r_*: \pi_*(MT(d-r)) \to \pi_*(MT(d)_{V_r}).
\end{equation*}
Thus $\eta^r$ is a homotopy equivalence of spectra.

%

From the long exact sequences of homotopy groups for cofibrations of spectra applied to \eqref{cometaMT}, we see that also $\bar{\eta}$ induces an isomorphism on homotopy groups and thus is a homotopy equivalence.
\end{proof}
%
%
%
Given two spectra $X$ and $Y$, let $[X,Y]$ denote the abelian group of homotopy classes of spectrum maps $f:X \to Y$.
\begin{cor} \label{isor}
If $X$ is any spectrum, there is an isomorphism
\begin{equation*}
\bar{\eta}_* : [X,MT(d,r)] \to [X,MTV(d,r)].
\end{equation*} 
\end{cor}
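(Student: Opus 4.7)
The plan is to deduce this directly from Theorem \ref{altspek}. That theorem already establishes that the vertical map $\bar{\eta} : MT(d,r) \to MTV(d,r)$ is a homotopy equivalence of spectra, so essentially all the work has been done and the corollary is a formal consequence.

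More precisely, I would argue as follows. By Theorem \ref{altspek}, $\bar{\eta}$ is a homotopy equivalence, meaning there exists a spectrum map $\bar{\eta}^{-1} : MTV(d,r) \to MT(d,r)$ such that $\bar{\eta} \circ \bar{\eta}^{-1}$ and $\bar{\eta}^{-1} \circ \bar{\eta}$ are homotopic to the respective identity maps. Post-composition with $\bar{\eta}$ defines the map $\bar{\eta}_*$, and post-composition with $\bar{\eta}^{-1}$ gives a candidate inverse $(\bar{\eta}^{-1})_* : [X,MTV(d,r)] \to [X,MT(d,r)]$. Since homotopies of spectrum maps compose with any map $f : X \to MT(d,r)$ to yield homotopies of the composites, we obtain $(\bar{\eta}^{-1})_* \circ \bar{\eta}_* = \mathrm{id}$ and $\bar{\eta}_* \circ (\bar{\eta}^{-1})_* = \mathrm{id}$ at the level of homotopy classes.

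There is essentially no obstacle here: the content is in Theorem \ref{altspek}, and this corollary merely records the standard fact that a homotopy equivalence of spectra induces a bijection on the functor $[X,-]$ for every spectrum $X$. Thus the proof is a one-line application of the theorem.
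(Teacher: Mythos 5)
Your argument is correct and is exactly the intended one: the paper offers no separate proof of this corollary, treating it as the formal consequence of Theorem \ref{altspek} that a homotopy equivalence of spectra induces a bijection on $[X,-]$ for every $X$. Nothing is missing.
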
 

For $r=1$, ${MTV}(d,1)$ has a simple description, namely,
\begin{equation*}
 \Th (p_{W_1}^*  U_{d,n}^\perp)/\Th (p_{V_1}^* U_{d,n}^\perp )
 \cong \Th (U_{d,n}^\perp\oplus U_{d,n})
 = \Th({G}(d,n) \times \R^{n+d}).
\end{equation*}
From this we derive the following properties of $MT(d,1)$:
\begin{prop}\label{proj}
There is an isomorphism
\begin{equation*}
\pi_{q}(MTV(d,1)) \cong \pi_{q}^s(S^{d}) \oplus \pi^s_{q-d}(B(d)).
\end{equation*}
The map induced by
\begin{equation*}
c: \Th ({G}(d,n) \times \R^{n+d}) \to \Th (pt\times \R^{n+d}) 
\end{equation*}
is the projection onto the first direct summand. In particular, $c$ induces an isomorphism 
\begin{equation*}
c_* : \pi_{q}(MTO(d,1)) \to \pi_{q}^s(S^{d})
\end{equation*}
for $q\leq d$. 

For $MTSO(d,1)$, the last statement holds for $q\leq d+1$, and for $MTSpin(d,1)$, it is true for $q\leq d+3$.
\end{prop}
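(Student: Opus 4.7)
My plan is to identify $MTV(d,1)$ stably with a wedge of two suspension spectra and then read off both the splitting and the behaviour of $c$ from that identification.

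The starting point is the computation already recorded in the paragraph preceding the proposition: $MTV(d,1)_n \cong \Th(G(d,n)\times \R^{n+d}) \cong G(d,n)_+ \wedge S^{n+d}$. Since the inclusion of the basepoint $S^0 \hookrightarrow B(d)_+$ is split by the retraction $B(d)_+ \to S^0$ that collapses $B(d)$ to a point, the standard stable splitting $\Sigma^\infty Y_+ \simeq \mathbb{S} \vee \Sigma^\infty Y$ for a connected based space $Y$ applies and, after accounting for the $d$-fold suspension, yields an equivalence of spectra
$$ MTV(d,1) \simeq \Sigma^d \mathbb{S} \vee \Sigma^d \Sigma^\infty B(d).$$
Taking $\pi_q$ of each wedge summand gives $\pi_q^s(S^d) \oplus \pi_{q-d}^s(B(d))$, which is the claimed direct sum decomposition. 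The map $c$ is, at the $n$th spectrum level, exactly the collapse $G(d,n)_+ \wedge S^{n+d} \to S^0 \wedge S^{n+d}$ induced by sending $G(d,n)$ to a point, and under the splitting this is precisely the projection onto the first wedge summand. Hence $c_*$ is the projection onto $\pi_q^s(S^d)$.

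For the final assertion I only need to know when $\pi_{q-d}^s(B(d))$ vanishes. A connectivity estimate suffices: if $B(d)$ is $(k-1)$-connected, then $\Sigma^n B(d)$ is $(n+k-1)$-connected, so $\pi_j^s(B(d)) = 0$ whenever $j < k$. Now $BO(d)$ is $0$-connected, giving $k=1$ and range $q \leq d$; $BSO(d)$ has $\pi_1 = 0$ since $SO(d)$ is connected, so it is $1$-connected, giving $k=2$ and range $q \leq d+1$; and for $d\geq 3$, $Spin(d)$ is simply connected with $\pi_2(Spin(d)) = \pi_2(SO(d)) = 0$, so $BSpin(d)$ is $3$-connected, giving $k=4$ and range $q \leq d+3$. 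Corollary \ref{isor} then transports the isomorphism from $MTV(d,1)$ back to $MT(d,1)$.

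I do not anticipate any significant obstacle: the whole argument reduces to a stable splitting together with elementary connectivity estimates. The one point that deserves a sentence of justification is the stable splitting $\Sigma^\infty B(d)_+ \simeq \mathbb{S} \vee \Sigma^\infty B(d)$, which is standard and follows immediately from the existence of the collapse retraction to the basepoint.
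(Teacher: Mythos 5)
Your argument is essentially the paper's: both split $\Th(G(d,n)\times\R^{n+d}) \simeq S^{n+d}\vee\Sigma^{n+d}G(d,n)$, identify $c$ with the collapse onto the sphere wedge summand, and finish with the connectivity of $BO(d)$, $BSO(d)$, $BSpin(d)$. The proof is correct; the only cosmetic difference is that you phrase the splitting at the level of spectra rather than spacewise.
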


\begin{proof}
The first two claims follow  because $\Th (G(d,n) \times \R^{n+d})$ is homotopy e\-qui\-va\-lent to $S^{d+n}\vee \Sigma^{n+d}G(d,n) $, and 
under this equivalence, $c$ corresponds to the map that collapses $\Sigma^{n+d}G(d,n)$. 

Since $BO(d)$ is connected, $BSO(d)$ is simply connected, and $BSpin(d)$ is 3-connected, $c_*$ is an isomorphism in the dimensions claimed.
\end{proof}
 

\subsection{The characteristic class}\label{intinv}
We are now ready to introduce the characteristic classes promised in the introduction. The constructions work for both $MTO$, $MTSO$, and $MTSpin$.
In the latter two cases, we implicitly assume that all bundles involved have an orientation or a spin structure, respectively.

Suppose we are given a $d$-dimensional vector bundle $E \to X$ over a compact
$q$-dimensional CW complex $X$ and $r$ independent sections over a subcomplex $Y \subseteq X$. 
Choose a classifying map $\xi: X \to G(d,n)$ and an isomorphism $E \cong \xi^*U_{d,n}$. This defines an inner product on $E$. After applying the Gram--Schmidt process, the sections define a map $Y\to V_r(E)$. 
Extend this to a map
\begin{equation}\label{sectionV}
s: (X,Y)\to (W_r(E),V_r(E)).
\end{equation}
The bundle map $E \to U_{d,n}$ induces a map
\begin{equation}\label{classV}
(W_r(E),V_r(E)) \to (W_r(U_{d,n}),V_r(U_{d,n})).
\end{equation}
Let $N\cong \xi^*U_{d,n}^\perp$ be an $n$-dimensional complement of $E$ and let $p_{ W_r(E)} : W_r(E) \to X$ and $p_{ V_r(E)} : V_r(E) \to X$ be the projections.  
 Then there are bundle maps covering \eqref{sectionV} and \eqref{classV}
\begin{equation}\label{definv}
(N,N_{\mid Y}) \to (p^*_{W_r(E)}N,p^*_{V_r(E)}N) \to (p_{W_r}^*U_{d,n}^\perp, p_{V_r}^*U_{d,n}^\perp),
\end{equation}
inducing a map of Thom spaces
\begin{equation}\label{charac} 
(\Th (N),\Th (N_{\mid Y})) \to (MT(d)_{W_r,n},MT(d)_{V_r,n}).
\end{equation}

\begin{defi}
The map \eqref{charac} defines a characteristic class
\begin{equation*}
\alpha^r(E,s) \in  [\Sigma^{\infty -n} \Th (N)/\Th (N_{\mid Y}),MTV(d,r)] \cong MT_{d,r}^n(\Th (N),\Th (N_{\mid Y})).
\end{equation*}
Here $MT_{d,r}^*(-)$ is the generalized cohomology theory with coefficients in $MT(d,r)$ and the isomorphism is given by Corollary \ref{isor}.
\end{defi}

It is easy to check that
$\alpha^r(E,s)$ is independent of the choices made and only depends on $s_{\mid Y}$ up to homotopy of maps $Y\to V_r(E)$.

The given section $s: Y \to V_r(E)$ defines an isomorphism $E_{\mid Y} \cong E' \oplus \R^{r}$ for some $(d-r)$-dimensional bundle $E'$. Choose a classifying map $\xi_{\mid Y} : Y \to G(d-r,n)$ for $E'$. This extends to a classifying map $\xi : X \to G(d,n)$ for $E$. To see this, choose any classifying map $\xi'$ for $E$. Then $\xi'_{\mid Y}$ is homotopic to $\xi_{\mid Y} $, and the homotopy extension property yields the desired map. 

For such a classifying map, 
\begin{equation}\label{seccom}
\vcenter{\xymatrix{{V_r(E)}\ar[r]^-{\bar{\xi}}&{V_r(U_{d,n})}\\
{Y}\ar[r]^-{\xi}\ar[u]^-{s}&{G(d-r,n)}\ar[u]^-{\eta^r}
}}
\end{equation}
commutes. This yields an equivalent definition of the characteristic class.
\begin{prop}
For a classifying map $\xi:X \to G(d,n)$ such that \eqref{seccom} commutes up to homotopy, $\alpha^r(E,s)$ is represented in $MT_{d,r}^n(\Th(N),\Th(N_{\mid Y}))$ by the map 
\begin{equation*}
(\Th(N),\Th(N_{\mid Y}))\to (\Th(U^\perp_{d,n}),\Th(U^\perp_{d-r,n}))
\end{equation*}
induced by $\xi : (X,Y) \to (G(d,n),G(d-r,n))$.
\end{prop}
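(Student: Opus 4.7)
The plan is to use the isomorphism $\bar{\eta}_*: [\Sigma^{\infty-n}\Th(N)/\Th(N_{\mid Y}),\,MT(d,r)] \xrightarrow{\cong} [\Sigma^{\infty-n}\Th(N)/\Th(N_{\mid Y}),\,MTV(d,r)]$ of Corollary~\ref{isor}. Applied to the class determined by the $\xi$-induced Thom-space map, $\bar{\eta}_*$ produces a representative in $[-,MTV(d,r)]$, and the task is to show that this representative is homotopic to the map \eqref{charac} that defines $\alpha^r(E,s)$.

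By the construction in Theorem~\ref{altspek}, $\bar{\eta}$ is represented at level $n$ by the Thom-space map of the bundle maps covering the sections $(\eta,\eta^r)$. Post-composing the Thom-space map induced by $\xi: (X,Y)\to (G(d,n), G(d-r,n))$ with this representative of $\bar{\eta}$ therefore yields the Thom-space map associated to the classifying data $(\eta\circ\xi,\,\eta^r\circ\xi_{\mid Y})$ for the pair of bundles $(p^*_{W_r}U^\perp_{d,n},\,p^*_{V_r}U^\perp_{d,n})$ pulled back to $(X,Y)$. The map \eqref{charac} is by definition the Thom-space map of the same pair of bundles, classified instead by $(\bar{\xi}\circ s,\,\bar{\xi}\circ s_{\mid Y})$, where $\bar{\xi}: W_r(E) \to W_r(U_{d,n})$ is induced by the chosen isomorphism $E\cong\xi^*U_{d,n}$. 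The proposition therefore reduces to showing that these two pairs of classifying maps are homotopic as maps of pairs $(X,Y) \to (W_r(U_{d,n}), V_r(U_{d,n}))$.

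On $Y$ the existence of such a homotopy is precisely the hypothesis that \eqref{seccom} commutes up to homotopy. On $X$, both $\eta\circ\xi$ and $\bar{\xi}\circ s$ are sections of the pullback of $W_r(U_{d,n}) \to G(d,n)$ along $\xi$, and the fibers of this bundle are cones on $V_{d,r}$, hence contractible; any two such sections are fiberwise homotopic. To combine these into a single homotopy of pairs, I would first use the homotopy extension property for the cofibration $Y\hookrightarrow X$ to prolong the given homotopy on $Y$ (valued in $V_r(U_{d,n})$) to a homotopy on $X$ starting from $\eta\circ\xi$, and then concatenate with a fiberwise cone homotopy inside $W_r(U_{d,n})$ performed rel $Y$.

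The main obstacle is this last assembly: the homotopy on $Y$ is constrained to lie in $V_r(U_{d,n})$, whose fibers are not generally contractible, whereas the fiberwise flexibility on $X$ comes from the cone structure of $W_r$. Once the HEP step is carried out, however, the remaining homotopy on $X$ rel $Y$ can be realized as a straight-line contraction in the cone on each fiber, which is automatically stationary on $Y$ and therefore respects the pair structure. Functoriality of the Thom-space construction under such bundle-classifying homotopies then delivers the required homotopy between \eqref{charac} and $\bar{\eta}_*$ of the $\xi$-induced map, and applying $\bar{\eta}_*^{-1}$ gives the proposition.
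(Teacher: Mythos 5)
The paper gives no proof of this proposition at all --- it is presented as an ``equivalent definition,'' immediate from the construction of $\alpha^r$ and the identification of Corollary \ref{isor} --- and your argument is the correct and natural unpacking: reduce via $\bar{\eta}_*$ to comparing the two pair maps $(X,Y)\to (W_r(U_{d,n}),V_r(U_{d,n}))$, use the homotopy commutativity of \eqref{seccom} on $Y$ and the contractibility of the cone fibers on $X$, and assemble with the homotopy extension property. The one point worth tightening in your assembly step is to keep everything fiberwise over $G(d,n)$: both $\bar{\xi}\circ s$ and $\eta\circ\xi$ are lifts of $\xi$, and it is the contractibility of the fiber $CV_{d,r}$ of $W_r(U_{d,n})\to G(d,n)$ (not of the total space $W_r(U_{d,n})$, which is only homotopy equivalent to $G(d,n)$) that kills the obstruction to a homotopy rel $Y$, so the extension in the HEP step should be carried out through lifts of $\xi$ using that $W_r(U_{d,n})\to G(d,n)$ is a fibration.
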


The following desirable properties of $\alpha^r$ are immediate from the definition:

\begin{prop}
If $s$ extends to $r$ independent sections on all of $X$, then $\alpha^r(E,s)$ is zero.
The characteristic class is natural with respect to maps $f: (X,Y) \to (X',Y')$ in the sense that $f^*(\alpha^r(E,s)) = \alpha^r(f^*E,f^*s)$ for any bundle $E \to X'$.
\end{prop}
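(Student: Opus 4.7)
The plan is to handle the two claims separately, both by tracing through the construction of $\alpha^r$ and checking how the defining diagrams behave under (a)~a global extension of $s$ and (b)~a pullback along $f$.

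For the first claim, suppose $s$ extends to $r$ linearly independent sections on all of~$X$. After applying Gram--Schmidt globally, these extended sections define a map $\tilde{s}: X \to V_r(E)$ restricting to the given $Y\to V_r(E)$. In the construction leading to \eqref{sectionV}, we are free to choose this $\tilde{s}$ as our extension. Then the map of pairs $(X,Y)\to(W_r(E),V_r(E))$ factors through $(V_r(E),V_r(E))$, so the composite Thom-space map in \eqref{charac} lands in the pair $(MT(d)_{V_r,n},MT(d)_{V_r,n})$. Passing to the cofiber $MTV(d,r)_n = MT(d)_{W_r,n}/MT(d)_{V_r,n}$ kills this map, and via the isomorphism of Corollary~\ref{isor} we conclude $\alpha^r(E,s)=0$.

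For naturality, start with a classifying map $\xi' : X' \to G(d,n)$ for $E$ together with the isomorphism $E \cong (\xi')^* U_{d,n}$; then $\xi := \xi' \circ f$ classifies $f^*E$, and the normal complement $N:=f^*N'$ is a legitimate $n$-dimensional complement of $f^*E$. The bundles $V_r(f^*E)$ and $W_r(f^*E)$ are canonically the pullbacks of $V_r(E)$ and $W_r(E)$, so we may take the extension in \eqref{sectionV} for $f^*s$ to be the pullback of the extension chosen for $s$. Each step in \eqref{definv} then splits into a square: the left square is pullback of bundles along $f$, while the right square is the common map into the universal objects $(p_{W_r}^* U_{d,n}^\perp,p_{V_r}^* U_{d,n}^\perp)$. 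Applying Thom spaces gives a commutative diagram whose upper row represents $\alpha^r(E,s)$ and whose lower row, obtained by precomposing with the map of Thom pairs induced by $f$, represents $\alpha^r(f^*E,f^*s)$. By definition of the pullback $f^*$ in the generalized cohomology theory $MT_{d,r}^*$, this is precisely the identity $f^*(\alpha^r(E,s))=\alpha^r(f^*E,f^*s)$.

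Neither step is truly an obstacle; the only subtlety worth mentioning is verifying that the extension used in \eqref{sectionV} can be chosen compatibly in each situation (the global $V_r$-valued lift in the first part, the pullback of a fixed lift in the second). Once those choices are made, both claims are immediate diagram chases, and I would present them as such rather than spelling out each commutativity.
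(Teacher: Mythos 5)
The paper supplies no explicit proof here — it simply asserts the two properties are ``immediate from the definition.'' Your argument is a correct and careful unpacking of exactly that: for vanishing, choose the extension in \eqref{sectionV} to be the global frame so the map of pairs factors through $(V_r(E),V_r(E))$ and hence becomes null on the cofiber $MTV(d,r)$; for naturality, pull back the classifying map, the complement, and the lift along $f$ and observe the defining diagram \eqref{definv} splits into commuting squares. This matches the paper's (implicit) approach; nothing to add.
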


\subsection{The global invariant}\label{global}
We now specialize to the case where $E=TM$ is the tangent bundle of a smooth manifold $M$ and $Y = \partial M$ is the boundary.
The fundamental class  $[M,\partial M]$ in $\pi_{n+d}^s(\Th(N)/\Th(N_{\mid \partial M}))$ is defined as follows. Embed $\partial M$ in $\R^{n+d-1}$, extend this to an embedding of a collar $\partial M \times [0,1]$ in $\R^{n+d -1} \times [0,1]$, and extend further to an embedding of $M $ in $\R^{n+d}$. Identify $N$ with a tubular neighbourhood $N_\varepsilon \subseteq \R^{n+d}$ of $M$. Then the fundamental class is represented by the Pontryagin--Thom map $S^{n+d} \to \Th(N)/\Th(N_{\mid \partial M})$ that collapses everything not in the interior of $N_\epsilon$ to a point.

\begin{defi}
If $M$ is a compact manifold with boundary $\partial M$ and a given section $s: \partial M \to V_r(TM)_{\mid \partial M}$, then the evaluation of $\alpha^r(TM,s)$ on the fundamental class defines an invariant depending only on the homotopy class of $s$
\begin{equation*}
\beta^r(M,s) = \langle \alpha^r(TM,s), [M, \partial M] \rangle \in MT_{d,r}^{-d}(S^0) \cong \pi_d(MT(d,r)).
\end{equation*}
This class may be represented by the composition of the Pontryagin--Thom map with \eqref{charac}.
\end{defi}

In this paper, we are mainly concerned with the case where $M$ is a closed manifold, i.e.\ $\partial M$ is empty. Since there is no dependence on $s$, $\beta^r(M,s)$ is an invariant of $M$ that vanishes if $M$ allows $r$ independent sections.
\begin{defi}
Let $M$ be a closed manifold.
\begin{itemize}
\item[(i)] Let $\beta^r(M) = \langle \alpha^r(TM),[M]\rangle \in \pi_d(MT(d,r))$ denote the global invariant for $M$.
\item[(ii)]Let $\beta(M) \in \pi_d(MT(d))$ be the class represented by the Pontryagin--Thom collapse followed by the classifying map
\begin{equation*}
S^{n+d} \to \Th(N) \to MT(d)_n.
\end{equation*}
\end{itemize}
\end{defi}
The cofibration sequence $MT(d-r) \to MT(d)\xrightarrow{j_r} MT(d,r)$ induces an exact sequence
\begin{equation}\label{jrdef}
\pi_d(MT(d-r)) \to \pi_d(MT(d))\xrightarrow{j_{r*}} \pi_d(MT(d,r)).
\end{equation}
Note that $\beta^r(M) = j_{r*}(\beta(M))$, so $\beta(M)$ is in some sense the universal invariant. 

Two closed $d$-dimensional manifolds $M$ and $N$ are said to be Reinhart cobordant if there is a compact manifold $W$ with boundary the disjoint union of $M$ and $N$  allowing a nowhere vanishing tangent vector field which is inward normal at $M$ and outward normal at $N$. The group of equivalence classes was determined in \cite{reinhart}.
It is well-known, see e.g.\ \cite{ab}, that $\pi_d(MT(d))$ is the Reinhart cobordism group and $\beta(M)$ represents the cobordism class of $M$.

The following geometric interpretation of the global invariant is shown in~\cite{ab}, Corollary 4.6:
\begin{prop}\label{betared} 
$\beta^r $ is a Reinhart cobordism invariant and $\beta^r(M)=0$ if and only if $M$ is Reinhart cobordant to a manifold allowing $r$ independent vector fields.
\end{prop}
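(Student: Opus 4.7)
The plan is to combine three inputs: the identification of $\pi_d(MT(d))$ with the Reinhart cobordism group via $\beta$ stated just before the Proposition; the factorization $\beta^r=j_{r*}\circ\beta$ from \eqref{jrdef}; and a Pontryagin--Thom reading of the map $\pi_d(MT(d-r))\to\pi_d(MT(d))$ combined with exactness of the cofibration sequence.

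Reinhart invariance is then immediate: if $M_0$ and $M_1$ are Reinhart cobordant then $\beta(M_0)=\beta(M_1)$ in $\pi_d(MT(d))$, and applying the homomorphism $j_{r*}$ yields $\beta^r(M_0)=\beta^r(M_1)$. For the ``if'' direction, assume $M$ is Reinhart cobordant to a closed $N$ admitting $r$ independent vector fields $s$. By invariance $\beta^r(M)=\beta^r(N)$; and the first property listed in the last proposition of Section \ref{intinv} asserts that $\alpha^r(TN,s)=0$ whenever $s$ extends globally, so $\beta^r(N)=\langle\alpha^r(TN,s),[N]\rangle=0$.

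For the converse, suppose $\beta^r(M)=0$. By exactness of \eqref{jrdef}, $\beta(M)$ is the image of some class $x\in\pi_d(MT(d-r))$. I would represent $x$ by a map $f\colon S^{d+n}\to\Th(U_{d-r,n}^\perp)$ made transverse to the zero section, producing a closed $d$-manifold $N\subset S^{d+n}$ whose normal bundle is pulled back from $U_{d-r,n}^\perp$. Because the inclusion $G(d-r,n)\hookrightarrow G(d,n)$ sends $P\mapsto P\oplus\R^r$, the stable tangent bundle of $N$ is classified by a map factoring through $G(d-r,n)$, so it splits canonically as $TN\cong E\oplus\R^r$, giving $r$ canonical independent vector fields on $N$. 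Composing $f$ with the cofibration map $MT(d-r)\to MT(d)$ corresponds under Pontryagin--Thom to the same manifold $N$ with its Reinhart framing but with the $r$-frame forgotten, so the image of $x$ in $\pi_d(MT(d))$ is $\beta(N)$. Hence $\beta(M)=\beta(N)$, meaning $M$ is Reinhart cobordant to $N$, as desired.

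The main obstacle is making the Pontryagin--Thom dictionary genuinely rigorous: in particular, arranging transversality for representatives and homotopies of spectrum maps, and verifying that the cofibration map $MT(d-r)\to MT(d)$ really does correspond geometrically to forgetting the $r$-frame while retaining the single Reinhart vector field. Tracking the interplay of the ``$+r$'' from $P\mapsto P\oplus\R^r$ with the extra stabilising direction inherent in the Reinhart interpretation of $\pi_d(MT(d))$ is the crux of the argument, and presumably occupies most of Corollary 4.6 of \cite{ab}.
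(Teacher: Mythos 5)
The paper does not prove this proposition internally; it quotes it from \cite{ab}, Corollary 4.6, so there is no in-paper argument to compare against. Your reduction via the exact sequence \eqref{jrdef} is the right skeleton: the invariance claim and the ``if'' direction (Reinhart invariance of $\beta$ plus vanishing of $\alpha^r(TN,s)$ when $s$ extends globally) are correct as written.

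The gap is in the ``only if'' direction, at the sentence ``the stable tangent bundle of $N$ is classified by a map factoring through $G(d-r,n)$, so it splits canonically as $TN\cong E\oplus\R^r$.'' What the Pontryagin--Thom construction actually hands you is an isomorphism of the \emph{normal} bundle $\nu_N\cong g^*U^{\perp}_{d-r,n}$, hence only a \emph{stable} isomorphism $TN\oplus\R^{n+d-r}\cong g^*U_{d-r,n}\oplus\R^{r}\oplus\R^{n+d-r}$. For $d$-plane bundles over a $d$-dimensional base, stable isomorphism does not imply isomorphism: the map $[N,BO(d)]\to[N,BO]$ is surjective but need not be injective in the top dimension, and the failure is measured by a degree-$d$ obstruction of exactly the index/Euler-class type this paper is concerned with. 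So from a lift of $\beta(M)$ to $\pi_d(MT(d-r))$ you obtain a manifold $N$ carrying $r$ independent sections of a stabilization $TN\oplus\R^{k}$, not of $TN$ itself; extracting honest vector fields, possibly after replacing $N$ by a Reinhart-cobordant manifold (the freedom one must exploit, and the reason the statement is phrased up to Reinhart cobordism), is precisely the nontrivial content of \cite{ab}, Corollary 4.6. Your closing paragraph correctly senses that something is being swept under the rug, but locates it in transversality bookkeeping rather than in this destabilization step, which is where the argument as written breaks.
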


\subsection{The local situation}\label{relind}
Look at the bundle $(D^q \times \R^d, S^{q-1} \times \R^d)$. Given a section $s:S^{q-1} \to V_{d,r}$, the above construction of the characteristic class $ \alpha^r(D^q \times \R^d,s)$ depends only on the homotopy class of $s$, so we can make the following definition:
\begin{defi}
Let $\theta^r$ be the map
\begin{align*}
\theta^r : \pi_{q-1}(V_{d,r}) \to MT_{d,r}^0( D^q, S^{q-1}) \cong \pi_q(MT(d,r))
\end{align*}
given by
\begin{equation*}
{\theta^r}([s]) = \alpha^r(D^q \times \R^d,s).  
\end{equation*}
\end{defi}

\begin{prop} \label{faktor}
 The map $\theta^r$ factors as the composition
\begin{equation*}
\pi_{q-1}(V_{d,r}) \to \pi^s_{q}(\Sigma V_{d,r}) \xrightarrow{f_{\theta *}} \pi_{q}({MTV(d,r)})
\end{equation*}
where the first map is the map into the direct limit. This is an isomorphism for $q \leq 2(d-r)$. The second map is induced by the map of Thom spaces over the inclusion of  a fiber 
\begin{equation*}
{f_{\theta}} : (W_{r}(\R^d\times pt),V_r(\R^d \times pt)) \to (W_r(U_{d,n}),V_r(U_{d,n})). 
\end{equation*}
In particular, $\theta ^r$ is a homomorphism.
\end{prop}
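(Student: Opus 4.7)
The strategy is to exploit the triviality of the bundle $D^q\times\R^d\to D^q$ to reduce the construction of $\alpha^r$ to a computation inside a single fiber of $W_r(U_{d,n})\to G(d,n)$. First I would take the classifying map $\xi:D^q\to G(d,n)$ to be constant at the plane $\R^d\oplus 0$ and the normal complement $N=D^q\times\R^n$ to be trivial. Then
\begin{equation*}
\Th(N)/\Th(N_{\mid S^{q-1}})\cong S^{n+q},
\end{equation*}
and the bundle map \eqref{classV} factors through the inclusion over the basepoint, which is exactly the map $f_\theta$ in the statement.

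Next I would choose the extension of $s$ required in the construction to be the radial cone extension $\bar s:D^q\to W_r(\R^d)$ defined by $\bar s(tx)=t\cdot s(x)$ for $t\in[0,1]$ and $x\in S^{q-1}$. Because the pullback of $U_{d,n}^\perp$ to the fiber $W_r(\R^d)$ is the trivial $\R^n$-bundle, the Thom space representative \eqref{charac} of $\alpha^r(D^q\times\R^d,s)$ factors as
\begin{equation*}
S^{n+q}\longrightarrow \Sigma^n\bigl(W_r(\R^d)/V_r(\R^d)\bigr)\xrightarrow{\Sigma^n f_\theta} MTV(d,r)_n,
\end{equation*}
where the first arrow is $\Sigma^n$ of the map of quotients induced by $\bar s$. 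Since $W_r(\R^d)$ is the unreduced cone on $V_{d,r}=V_r(\R^d)$, the middle space is canonically homotopy equivalent to $\Sigma V_{d,r}$, and the map induced by $\bar s$ on quotients is identified with the suspension $\Sigma s$ of $s:S^{q-1}\to V_{d,r}$. Stabilizing in $n$ yields the claimed factorization $\theta^r([s])=f_{\theta *}([\Sigma s])$, where $[\Sigma s]$ is the image of $[s]$ under the stabilization map into $\pi_q^s(\Sigma V_{d,r})$.

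The stable-range claim follows from Freudenthal's theorem: an easy induction on the fibration $V_{d-1,r-1}\to V_{d,r}\to S^{d-1}$ shows $V_{d,r}$ is $(d-r-1)$-connected, so the stabilization map is an isomorphism in the claimed range. That $\theta^r$ is a homomorphism is then immediate from the factorization, since both stabilization on homotopy groups and $f_{\theta *}$ respect the group operations.

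The main technical subtlety I anticipate is the careful basepoint bookkeeping needed to identify $W_r(\R^d)/V_r(\R^d)$ with the reduced suspension $\Sigma V_{d,r}$ and to verify that the radial cone extension descends precisely to $\Sigma s$ on quotients. Aside from this, the argument relies only on the construction of $\alpha^r$ from Section~\ref{intinv} together with standard stable homotopy theory.
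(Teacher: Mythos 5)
Your proposal is correct and follows essentially the same route as the paper: take a constant classifying map with trivial normal bundle, extend $s$ radially to the cone, observe that the Thom collapse then factors through the inclusion of a fiber $f_\theta$ applied to $\Sigma^n$ of the quotient $W_r(\R^d)/V_r(\R^d)\simeq \Sigma V_{d,r}$, identify the induced map with $\Sigma s$, and invoke Freudenthal for the stable range. The only (harmless) difference is that you spell out the connectivity of $V_{d,r}$ via the fibration $V_{d-1,r-1}\to V_{d,r}\to S^{d-1}$, which the paper takes as known.
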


\begin{proof}
Let $s : S^{q-1} \to V_{d,r}$ be given and extend it to $s:D^q \to V_{d,r}$ by letting $s(x) = |x|s\big( \frac{x}{|x|}\big)$. The map in \eqref{definv} defining $\alpha^r(D^q\times \R^d,s)$ is given on base spaces by
\begin{equation*}
(D^q , S^{q-1})  \xrightarrow{id \times s} (D^q \times  W_r(\R^d) ,D^q \times  V_r(\R^d))  \xrightarrow{\xi} (W_r(U_{d,n}) , V_r(U_{d,n})).
\end{equation*}
The classifying map $\xi$ is constant, so this is the same as the composition
\begin{equation*}
(D^q , S^{q-1})  \xrightarrow{s} ( W_r(\R^d) , V_r(\R^d))  \xrightarrow{f_\theta} (W_r(U_{d,n}) , V_r(U_{d,n})).
\end{equation*}
On quotients, the first map is the suspension  $\Sigma s : \Sigma S^{q-1} \to \Sigma V_{d,r}$, while the second map is the inclusion of a fiber. Since $V_{d,r}$ is $(d-r-1)$-connected, the first map is an isomorphism for $q\leq 2(d-r)$ by the Freudenthal suspension theorem.
\end{proof}

\begin{prop} \label{ren}
Under the isomorphism of Proposition \ref{proj}, 
\begin{equation*}
\theta^1 : \pi_{q-1}(S^{d-1}) \to \pi_q(MT(d,1)) \cong \pi_{q}^s(S^{d})\oplus \pi_{q}^s(\Sigma^d B(d))
\end{equation*}
is the map into the first summand.
\end{prop}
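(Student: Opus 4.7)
The plan is to unwind the factorization from Proposition~\ref{faktor} and compute $f_{\theta*}$ explicitly in terms of the splitting $MTV(d,1)_n \simeq S^{n+d}\vee \Sigma^{n+d} G(d,n)$ used in the proof of Proposition~\ref{proj}.

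First, by Proposition~\ref{faktor} applied with $r=1$ and the identification $V_{d,1}=S^{d-1}$, the map $\theta^1$ factors as
\[
\pi_{q-1}(S^{d-1}) \longrightarrow \pi^s_q(S^d) \xrightarrow{f_{\theta*}} \pi_q(MTV(d,1))\cong \pi_q(MT(d,1)),
\]
with the first arrow the stable suspension. Corollary~\ref{isor} lets me switch freely between $MT(d,1)$ and $MTV(d,1)$, so it suffices to identify $f_{\theta*}$ with the inclusion of the first wedge summand $\pi^s_q(S^d)$.

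Next I would identify $f_\theta$ on quotient Thom spaces. The pair $(W_1(\R^d),V_1(\R^d))=(D^d,S^{d-1})$ has quotient $S^d$, and pairing with the trivial $n$-dimensional pullback complement bundle the quotient of Thom spaces is $S^{n+d}$. Thus on quotients $f_\theta$ becomes a map $S^{n+d}\to MTV(d,1)_n$. Under the identification $MTV(d,1)_n\cong \Th(G(d,n)\times \R^{n+d})=\Sigma^{n+d} G(d,n)_+$ recorded immediately before Proposition~\ref{proj}, this map is precisely $\Sigma^{n+d}$ applied to the inclusion $\{x_0\}_+ \hookrightarrow G(d,n)_+$ of the chosen fiber point.

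Under the stable splitting $G(d,n)_+\simeq S^0\vee G(d,n)$ obtained by using $x_0$ as basepoint — which is the same splitting used implicitly to write the decomposition of Proposition~\ref{proj}, and for which $c$ is projection onto the first factor — the inclusion $\{x_0\}_+\hookrightarrow G(d,n)_+$ is exactly the inclusion of the $S^0$ wedge summand. Suspending $n+d$ times and passing to the limit gives $\pi_q(MT(d,1))\cong \pi^s_q(S^d)\oplus \pi^s_q(\Sigma^d B(d))$, in which $f_{\theta*}$ is manifestly the inclusion of the first summand. Composing with the suspension homomorphism identifies $\theta^1$ as claimed. The only obstacle is purely bookkeeping: one must make sure the fiber point $x_0$ defining $f_\theta$ is the very point that is used to split off the $S^0$ summand (equivalently, the point collapsed by $c$ in Proposition~\ref{proj}); this is a convention but is essentially the only place a slip could occur.
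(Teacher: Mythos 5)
Your proof is correct and follows essentially the same route as the paper: factor $\theta^1$ through $f_{\theta*}$ via Proposition~\ref{faktor} and identify $f_\theta$ on quotients with the inclusion of the fiber $S^{n+d}$ into $\Th(G(d,n)\times\R^{n+d})$, compatible with the splitting of Proposition~\ref{proj}. The paper phrases this slightly more tersely by observing that $c\circ f_\theta$ is the identity on $S^{n+d}$, whereas you identify $f_\theta$ directly as the wedge inclusion; the content is the same.
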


\begin{proof}
Since $\theta^1$ factors as in Proposition \ref{faktor} and the composition 
\begin{equation*}
\Sigma^{n+1}(S^{d-1}) \xrightarrow{f_\theta } {MTV}(d,1)_n \cong \Th(G(d,n) \times \R^{d+n}) \to S^{d+n} 
\end{equation*}
is the identity, this follows from Proposition \ref{proj}.
\end{proof}

\begin{prop}\label{egenskaberr} 
For all $0 \leq k \leq r \leq d$, there is a cofibration sequence 
\begin{equation*}
MT(d-r+k,k) \to MT(d,r) \to MT(d,r-k).
\end{equation*}
The following diagram of long exact sequences commutes for $q < 2(d-r)$:
\begin{equation*}
\xymatrix@C=17pt{{}\ar[r] &{\pi_{q}(MT(d-r+k,k))}\ar[r]&{\pi_{q}(MT(d,r))}\ar[r]&{\pi_{q}(MT(d,r-k))}\ar[r]&{}\\
{}\ar[r]&{\pi_{q-1}(V_{d-r+k,k})}\ar[u]^{\theta^k} \ar[r]&{\pi_{q-1}(V_{d,r})}\ar[u]^{\theta^r} \ar[r]&{\pi_{q-1}(V_{d,r-k})} \ar[u]^{\theta^{r-k}} \ar[r]&{.}
}
\end{equation*}
The lower row is the long exact sequence for the fibration
\begin{equation*}
V_{d-r+k,k}\to V_{d,r} \to V_{d,r-k}
\end{equation*}
where the first map adds the last $r-k$ standard basis vectors to a $k$-frame as the last vectors in the frame and the second map forgets the first $k$ vectors of an $r$-frame.
\end{prop}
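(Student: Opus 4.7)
The proof splits naturally into two parts.

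\textbf{Part 1: the cofibration sequence.} The chain of Grassmannian inclusions
\begin{equation*}
G(d-r,n) \hookrightarrow G(d-r+k,n) \hookrightarrow G(d,n),
\end{equation*}
given by $P \mapsto P \oplus \R^{r-k}$ followed by $Q \mapsto Q \oplus \R^k$ and compatible with the corresponding pullbacks of universal complements, yields a composition of spectrum maps $MT(d-r) \to MT(d-r+k) \to MT(d)$. By the very definition of the spectra involved, $MT(d-r+k,k)$ is the cofiber of the first arrow, $MT(d,r-k)$ is the cofiber of the second, and $MT(d,r)$ is the cofiber of the composition. The octahedral axiom in the stable homotopy category then delivers the desired cofiber sequence $MT(d-r+k,k) \to MT(d,r) \to MT(d,r-k)$.

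\textbf{Part 2: the commuting diagram.} I would use Theorem \ref{altspek} to pass to the $MTV$-models and Proposition \ref{faktor} to factor each $\theta$ as stable suspension followed by the fiber-inclusion map $\Sigma V_{d,*} \to MTV(d,*)_n$. The fibration $V_{d-r+k,k} \to V_{d,r} \to V_{d,r-k}$ arises as the fiber over any basepoint of $G(d,n)$ in the tower of parameterized fibrations $V_r(U_{d,n}) \to V_{r-k}(U_{d,n}) \to G(d,n)$. After applying the $W$-cone construction, pulling back $U_{d,n}^\perp$, taking Thom spaces, and forming the appropriate quotients, one recovers on the fiber over this basepoint exactly the suspension $\Sigma V_{d-r+k,k} \to \Sigma V_{d,r} \to \Sigma V_{d,r-k}$ of the Stiefel fibration, matching the cofibration of Part 1. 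Commutativity of the two squares whose horizontal arrows are induced by the fibration maps then follows from the functoriality of the ``suspend then include as a fiber'' construction defining $\theta$.

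For the squares whose horizontal arrows are connecting homomorphisms, I would appeal to the Freudenthal estimate of Proposition \ref{faktor}: in the stable range $q < 2(d-r)$ the suspension maps $\pi_{q-1}(V_{d,*}) \to \pi^s_q(\Sigma V_{d,*})$ are isomorphisms, so the connecting homomorphism of the long exact sequence of the Stiefel-manifold fibration agrees with the stable connecting homomorphism of the cofiber sequence of its suspended fibers, and the commutativity follows by naturality.

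\textbf{Main obstacle.} The principal technical difficulty is the fiber-level identification of $MT(d-r+k,k)$: this spectrum lives natively over $G(d-r+k,n)$, while $MTV(d,r)$ lives over $G(d,n)$. Bridging the two bases requires an intermediate parameterized construction, essentially a $V_k$-bundle over $V_{r-k}(U_{d,n})$ whose vertical complement has rank $d-r+k$, which realizes $MT(d-r+k,k)$ compatibly with the octahedral cofibration of Part 1. Setting this up is not conceptually deep but demands careful bookkeeping of bundles and base-changes before the fiberwise matching can be read off.
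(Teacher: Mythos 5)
Your proposal is correct and follows essentially the same route as the paper: the cofibration is read off from the definitions (the octahedral axiom is just a formal packaging of the evident quotient of Thom-space quotients), and the commutativity is obtained exactly as in the paper by passing to the $MTV$-models via Theorem \ref{altspek}, factoring each $\theta$ through $f_\theta$ as in Proposition \ref{faktor}, and using the stable range to identify the fibration sequence of Stiefel manifolds with the cofibration sequence of their suspensions. The ``main obstacle'' you flag is precisely what the paper disposes of with the homotopy-commutative diagram \eqref{etacom}, which exhibits $MTV(d-r+k,k)$ compatibly inside $MTV(d,r)$ via the section $\eta$.
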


\begin{proof}
The cofibration is obvious from the definitions.

By Proposition \ref{faktor}, the groups in the lower row may be replaced  by the stable homotopy groups when $q \leq 2(d-r)$. The map $\bar{\eta}$ from Theorem \ref{altspek} defines a map of pairs
\begin{equation*}
(MT(d,r),MT(d-r+k,k)) \to ({MTV}(d,r),{MTV}(d-r+k,k)).
\end{equation*}
To see this, it is enough to check that the diagram
\begin{equation}\label{etacom}
\vcenter{\xymatrix{{(G(d-r+k,n),G(d-r,n))} \ar[r]^-{\eta } \ar[d] &{(W_k(U_{d-r+k,n}),V_k(U_{d-r+k,n}))} \ar[d]\\
{(G(d,n),G(d-r,n))}\ar[r]^-{\eta }&{(W_r(U_{d,n}),V_r(U_{d,n}))}
}}
\end{equation}
commutes up to homotopy. This is straightforward. 
Thus the upper row may be replaced by the long exact sequence for the pair $({MTV}(d,r),{MTV}(d-r+k,k))$.

The following map of long exact sequences is induced by $f_\theta$:
\begin{equation*}
\xymatrix@C=12pt{{\pi_{q}({MTV}(d-r+k,k))}\ar[r]&{\pi_{q}({MTV}(d,r))}\ar[r]&{\pi_{q}({MTV}(d,r)/{MTV}(d-r+k,k))}\\
{\pi_{q}^s(\Sigma V_{d-r+k,k})}\ar[u]^{f_{\theta *}} \ar[r]&{\pi_{q}^s(\Sigma V_{d,r})}\ar[u]^{f_{\theta *}} \ar[r]&{\pi_{q}^s(\Sigma V_{d,r}/\Sigma V_{d-r+k,k}).} \ar[u] 
}
\end{equation*}

By the above, it is enough to see that the last vertical map is actually $\theta^{r-k}$ under the isomorphism for $q < 2(d-r)$,
\begin{equation*}
\pi_{q}^s(\Sigma V_{d,r}/\Sigma V_{d-r+k,k}) \to \pi_{q}^s(\Sigma V_{d,r-k}). 
\end{equation*}
Again this follows by checking the definitions.
%
\end{proof}

We now return to the vector field problem.
Consider a compact connected oriented $d$-dimensional manifold $M$ with tangent bundle $TM$ and a given section $s:M\to W_r(M)$ with only finitely many singularities $x_1,\dots ,x_m$ in the interior of $M$. Choose small disjoint disks $D_j$ around each $x_j$.
Then the index was defined in the introduction by
\begin{equation*}
\ind(s) = \sum_{j=1}^m [s_{\mid \partial D_j}] \in \pi_{d-1}(V_{d,r}).
\end{equation*}
The map $\theta^r $ relates the index to the global invariant:
\begin{thm}\label{beta=ind}
The following formula holds in $\pi_d(MT(d,r))$:
\begin{equation*}
\beta^r(M,s) = \sum_{j=1}^m \theta^r([s_{\mid \partial D_j}]) = \theta^r(\ind(s)).
\end{equation*}
In particular when $M$ is closed, the right hand side is independent of $s$.
\end{thm}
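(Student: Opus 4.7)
The plan is to extend the data of $s$ past $\partial M$ to the enlarged boundary $\partial M' := \partial M \sqcup \bigsqcup_j \partial D_j$, where $M' = M \setminus \bigsqcup_j \mathrm{int}(D_j)$, and then split the global invariant into local contributions, one per singularity. On $\partial M'$ (in fact on all of $M'$) the section $s$ is a genuine $r$-frame, so $s_{\mid \partial M'}$ is a legitimate input for the characteristic class. Naturality of $\alpha^r$ under the quotient map $p: \Th(N)/\Th(N_{\mid \partial M}) \to \Th(N)/\Th(N_{\mid \partial M'})$ gives $\alpha^r(TM, s_{\mid \partial M}) = p^*\alpha^r(TM, s_{\mid \partial M'})$, while $p_*[M,\partial M] = [M,\partial M']$, so that
\[
\beta^r(M, s) = \langle \alpha^r(TM, s_{\mid \partial M'}), [M, \partial M'] \rangle.
\]

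Next I would exploit the pushout decomposition $M = M' \cup_{\sqcup_j \partial D_j} \bigsqcup_j D_j$ to split the target. After quotienting by $\Th(N_{\mid \partial M'})$ the pushout becomes a wedge of Thom spaces,
\[
\Th(N)/\Th(N_{\mid \partial M'}) \simeq \Th(N_{\mid M'})/\Th(N_{\mid \partial M'}) \vee \bigvee_j \Th(N_{\mid D_j})/\Th(N_{\mid \partial D_j}),
\]
and, by Mayer--Vietoris-type additivity of the Pontryagin--Thom collapse, $[M, \partial M']$ corresponds under this equivalence to $[M', \partial M'] + \sum_j [D_j, \partial D_j]$. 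The restriction of $\alpha^r(TM, s_{\mid \partial M'})$ to the two kinds of summand is $\alpha^r(TM_{\mid M'}, s_{\mid \partial M'})$ and $\alpha^r(TM_{\mid D_j}, s_{\mid \partial D_j})$ respectively.

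The first restriction vanishes because $s_{\mid \partial M'}$ extends to an $r$-frame on all of $M'$, by the vanishing criterion for $\alpha^r$ recorded immediately after its definition. For the second, I fix an identification $D_j \cong D^d$ trivialising $TM_{\mid D_j} \cong D^d \times \R^d$; the restricted characteristic class becomes $\alpha^r(D^d \times \R^d, s_{\mid \partial D_j})$, whose pairing with $[D^d, S^{d-1}]$ is exactly $\theta^r([s_{\mid \partial D_j}])$ by the definition of $\theta^r$. Summing the contributions gives $\beta^r(M, s) = \sum_j \theta^r([s_{\mid \partial D_j}])$, and the second equality $\sum_j \theta^r([s_{\mid \partial D_j}]) = \theta^r(\ind(s))$ is the additivity statement of Proposition~\ref{faktor}.

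The main technical obstacle I expect is the wedge decomposition of the Thom space together with the additive splitting of the Pontryagin--Thom collapse: one has to choose compatible embeddings of $M$, $M'$, and the $D_j$ in $\R^{n+d}$ and then verify stably that $S^{n+d} \to \Th(N)/\Th(N_{\mid \partial M'})$ genuinely decomposes as the sum of the piecewise collapses. This is a standard but genuinely geometric Mayer--Vietoris input for the Pontryagin--Thom construction. The remaining ingredients (naturality of $\alpha^r$, the vanishing criterion on $M'$, the reduction on each $D_j$, and additivity of $\theta^r$) are immediate from material already set up in Section~\ref{intinv}.
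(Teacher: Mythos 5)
Your argument is correct and follows essentially the same route as the paper: the paper collapses $\Th(N_{\mid Y})$ for $Y=M-\bigcup_j\indre(D_j)$ in one step (the map defining $\alpha^r$ factors through $\Th(N)/\Th(N_{\mid Y})=\bigvee_j S^{n+d}_j$ because the frame is defined on all of $Y$), identifies the composite with the Pontryagin--Thom collapse as the pinch map, and recognizes each wedge summand as $f_\theta\circ\Sigma^{n+1}s_{\mid\partial D_j}$, i.e.\ $\theta^r([s_{\mid\partial D_j}])$. Your two-stage version (first collapse $\Th(N_{\mid\partial M'})$, then kill the $M'$ summand by the vanishing property of $\alpha^r$) is a cosmetic reorganization of the same decomposition.
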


\begin{proof}
 We may assume that the sections are orthonormal on $Y = M - \bigcup_j \indre (D_j)$. Then the map \eqref{definv} defining $\alpha^r(TM,s_{\mid \partial M})$ factors as
\begin{equation*}
\Th (N)/\Th(N_{\mid \partial M}) \to \Th (N)/\Th (N_{\mid Y}) = \bigvee_j S^{n+d}_j \to MT(d,r)_{n}. 
\end{equation*}
On each $S^{n+d}_j$, the last map is $f_\theta \circ \Sigma^{n+1}s_{\mid \partial D_j}$. The Pontryagin--Thom map
\begin{equation*}
S^{n+d} \to \Th(N)/ \Th(N_{\mid \partial M}) \to \Th (N)/\Th (N_{\mid Y}) = \bigvee_j S^{n+d}_j
\end{equation*}
is just the pinching map, so the composition is the sum of the $\theta^r([s_{\mid \partial D_j}])$. 
\end{proof}

\begin{rem}
For $r=1$ and $M$ closed, this is a classical result. It is shown in \cite{becker} that the degree of the composition
\begin{equation*}
S^{n+d} \to \Th(N) \to \Th(M \times \R^{n+d}) \to S^{n+d}
\end{equation*}
equals the Euler characteristic $\chi(M$), which is the index when $r=1$. This is also true if $M$ is not orientable. In the construction of $\beta^1$, we have just factored the last map through $\Th(G(d,n)\times \R^{n+d})$.
\end{rem}

For a closed manifold $M$, Theorem \ref{beta=ind} shows that $\theta^r(\ind(s))$ depends only on $M$. In particular, injectivity of $\theta^r$ implies that also $\ind(s)$ is an invariant. The following injectivity results are immediate from the definitions: 

%

\begin{thm}\label{theta-inj}
$\theta^1 $ is injective for all $q < 2(d-1)$. Assume $q < 2(d-r) - 1$.
Then 
\begin{equation*}
\theta^r : \pi_{q-1}(V_{d,r}) \to \pi_q(MTO(d,r))
\end{equation*}
is an isomorphism for  $q \leq d-r+1$. For $MTSO(d,r)$, it is an isomorphism for $q \leq d-r+2$, and for $MTSpin(d,r)$, it is an isomorphism for $q \leq d-r+4$ and injective for $q \leq d-r+6$.
\end{thm}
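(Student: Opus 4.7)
The plan is to induct on $r$, using Proposition~\ref{faktor} to factor $\theta^r$ through stable homotopy, Propositions~\ref{ren} and~\ref{proj} as the base case $r=1$, and the ladder of long exact sequences from Proposition~\ref{egenskaberr} with $k=1$ for the inductive step. First I would handle $\theta^1$ directly: Proposition~\ref{faktor} presents $\theta^1$ as the Freudenthal suspension $\pi_{q-1}(S^{d-1}) \to \pi_q^s(S^d)$ followed by $f_{\theta*}$, and Proposition~\ref{ren} identifies $f_{\theta*}$ as the inclusion of the first summand in the splitting $\pi_q(MT(d,1)) \cong \pi_q^s(S^d) \oplus \pi_q^s(\Sigma^d B(d))$. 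Since a summand inclusion is tautologically injective, $\theta^1$ is injective whenever Freudenthal is, i.e.\ for $q < 2(d-1)$.

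\textbf{Base case and iso statement.} The same factorization shows that $\theta^1$ is moreover surjective precisely when the second summand $\pi_q^s(\Sigma^d B(d))$ vanishes; using the $0$-, $1$-, or $3$-connectivity of $B(d)$ for $O$, $SO$, $Spin$, this occurs for $q \le d$, $q \le d+1$, or $q \le d+3$ respectively, matching the $r=1$ case. For $r \ge 2$, Proposition~\ref{egenskaberr} with $k=1$ supplies the commutative ladder
\begin{equation*}
\xymatrix@C=4pt{\pi_{q+1}(MT(d,r-1)) \ar[r] & \pi_q(MT(d-r+1,1)) \ar[r] & \pi_q(MT(d,r)) \ar[r] & \pi_q(MT(d,r-1)) \ar[r] & \pi_{q-1}(MT(d-r+1,1)) \\
\pi_q(V_{d,r-1}) \ar[u]^{\theta^{r-1}} \ar[r] & \pi_{q-1}(S^{d-r}) \ar[u]^{\theta^1} \ar[r] & \pi_{q-1}(V_{d,r}) \ar[u]^{\theta^r} \ar[r] & \pi_{q-1}(V_{d,r-1}) \ar[u]^{\theta^{r-1}} \ar[r] & \pi_{q-2}(S^{d-r}) \ar[u]^{\theta^1}}
\end{equation*}
Since $B(d-r+1)$ has the same connectivity as $B(d)$, the constant $c \in \{1,2,4\}$ transfers to the $\theta^1$ column on $S^{d-r}$ (iso for $q \le d-r+c$), while the induction hypothesis makes $\theta^{r-1}$ iso one index further. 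The five lemma then delivers $\theta^r$ iso for $q \le d-r+c$, the bottleneck being the $\theta^1$ column at level $q$.

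\textbf{Spin injectivity and main obstacle.} For the stronger injectivity range $q \le d-r+6$ in the Spin case, I would invoke the weakened (four-lemma) variant of the five lemma in the same ladder: injectivity of $\theta^r$ follows from injectivity of the two flanking $\theta$ columns at level $q$ together with surjectivity of $\theta^{r-1}$ at the leftmost slot, sending $\pi_q(V_{d,r-1}) \to \pi_{q+1}(MT(d,r-1))$. The induction hypothesis supplies injectivity of $\theta^{r-1}$ up to $q \le d-r+7$, and injectivity of $\theta^1$ holds in the much larger Freudenthal range, so the flanking conditions are safe. The main obstacle is producing the required surjectivity of $\theta^{r-1}$ two indices beyond its iso range. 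This is precisely the slack afforded by the $3$-connectivity of $BSpin$: via Proposition~\ref{faktor} one analyses $f_{\theta*}: \pi_q^s(\Sigma V_{d,r-1}) \to \pi_q(MTV(d,r-1))$ using the Atiyah--Hirzebruch spectral sequence with $E^2 = H_*(BSpin; \pi^s_*(\Sigma V_{d,r-1}))$, and the vanishing of $H_p(BSpin)$ for $1 \le p \le 3$ combined with the connectivity of $\Sigma V_{d,r-1}$ pushes the first potential obstruction up by two degrees. Verifying that the relevant differentials vanish and that this surjectivity window propagates through each inductive step is the delicate bookkeeping needed to close the argument.
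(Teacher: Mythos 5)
Your treatment of $\theta^1$ and of the isomorphism ranges for general $r$ matches the paper's argument: the same factorization through Freudenthal suspension, the same identification of the cokernel of $\theta^1$ with $\pi^s_q(\Sigma^d B(d))$ from the connectivity of $B(d)$, and the same five-lemma induction on $r$ via the $k=1$ ladder from Proposition~\ref{egenskaberr}. That part is sound.

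However, your argument for the extra injectivity range $q \le d-r+6$ in the Spin case has a genuine gap and misses a much simpler observation. You correctly identify that a four-lemma argument would need surjectivity of $\theta^{r-1}: \pi_q(V_{d,r-1}) \to \pi_{q+1}(MTSpin(d,r-1))$ two degrees beyond its inductively known isomorphism range, and you propose to recover it with an Atiyah--Hirzebruch spectral sequence calculation. This is never carried out, and as you yourself note, whether the relevant differentials vanish and whether the surjectivity window propagates through the induction is precisely the open question. But none of this machinery is needed: when $q = d-r+5$ or $q = d-r+6$, the group $\pi_{q-1}(S^{d-r})$ is already in the stable range (the hypothesis $q < 2(d-r)-1$ guarantees this) and equals $\pi_4^s$ or $\pi_5^s$, both of which are zero. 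Thus the term $\pi_{q-1}(S^{d-r})$ in the bottom row of the ladder vanishes outright, so the bottom map $\pi_{q-1}(V_{d,r}) \to \pi_{q-1}(V_{d,r-1})$ is injective, and $\theta^r$ is the composite of this with the (inductively injective) $\theta^{r-1}$; no four-lemma, no surjectivity claim, and no spectral sequence are required. Your proposal does not prove the Spin injectivity range as it stands, and the route you chose is also considerably harder than what the situation calls for.
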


\begin{proof}
For $r=1$, injectivity follows from Proposition \ref{ren}, and the isomorphism statements follow from Proposition \ref{ren} and Proposition \ref{proj}. 

The case of a general $r$ follows from the diagram in Proposition \ref{egenskaberr} with $k=1$,
\begin{equation*}
\xymatrix@C=14pt{{}\ar[r]&{\pi_{q}(MT(d-r+1,1))}\ar[r]&{\pi_{q}(MT(d,r))}\ar[r]&{\pi_{q}(MT(d,r-1))} \ar[r]&{}\\
{}\ar[r]&{\pi_{q-1}(S^{d-r})}\ar[u]^{\theta^1} \ar[r]&{\pi_{q-1}(V_{d,r})}\ar[u]^{\theta^r} \ar[r]&{\pi_{q-1}(V_{d,r-1})} \ar[u]^{\theta^{r-1}} \ar[r] &{,}
}
\end{equation*}
using the result for $r=1$, induction on $r$, and the 5-lemma. 

 Since $\pi_{q-1}(S^{d-r}) = 0$ when $q=d-r+5$ or $q=d-r+6$, the above diagram becomes
\begin{equation}\label{spindia}
\vcenter{\xymatrix{{}&{\pi_{q}(MTSpin(d,r))}\ar[r]&{\pi_{q}(MTSpin(d,r-1))}\\
{0} \ar[r]&{\pi_{q-1}(V_{d,r})}\ar[u]^{\theta^r} \ar[r]&{\pi_{q-1}(V_{d,r-1})} \ar[u]^{\theta^{r-1}}
}}
\end{equation}
in the spin case. If $\theta^{r-1}$ is injective, then so is $\theta ^{r}$.
\end{proof}

\begin{rem}
A $d$-dimensional bundle $E\to X$ restricted to a $q$-cell is isomorphic to $D^q\times \R^d$. Given $s:\partial D\to V_{d,r}$, $[s]\in \pi_{q-1}(V_{d,r})$ is the obstruction to extending $s$ over the whole cell. Therefore, the maps
\begin{equation*}
\theta^r : \pi_{q-1}(V_{d,r})\to \pi_{q}(MT(d,r))
\end{equation*}
for $q<d$ may be useful when dealing with lower obstructions. A similar idea was succesfully applied in \cite{dupont} to determine the lower obstructions when $r\leq 3$. However, we have not investigated this further. 
\end{rem}

\subsection{Relation to the Atiyah-Dupont invariants}\label{relKR}

Let $E \to X$ be an oriented vector bundle with a section $s:Y \to V_r(E)$ given on $Y \subseteq X$. In \cite{AD}, Atiyah and Dupont defined a characteristic class
\begin{equation*}
\tilde{\alpha}^t_{d,r}(E,s) \in KR^t(iE_{\mid (X-Y)} \times tH_r).
\end{equation*}
This class is natural with respect to bundle maps. In particular, there is a universal class 
\begin{equation*}
\tilde{\alpha}^t_{d,r} \in \varprojlim_n KR^t(i  U_{d,n \mid (G(d,n)-G(d-r,n))}  \times tH_r)
\end{equation*}
such that the characteristic class for any other bundle is the pullback of this class by the classifying map. 

Let $N$ be an $n$-dimensional complement of $E$. We want to define a map 
\begin{equation}\label{KR-red}
\Psi : MTSO_{d,r}^{n-q}(\Th(N),\Th(N_{\mid Y})) \to KR^{t-q}(iE_{\mid (X-Y)}\times tH_r).
\end{equation}
For this, let
\begin{equation*}
f:\Sigma^q \Th (N)/\Th (N_{\mid Y}) \to MT(d,r)_n
\end{equation*}
be any map.  Let $i_{d,n} : U_{d,n} \to U_d$ be the inclusion. Then 
\begin{equation*}
i_{d,n}^*(\tilde{\alpha}^t_{d,r}) \in KR^t(iU_{d,n \mid (G(d,n)-G(d-r,n))} \times tH_r).
\end{equation*}

There is a Thom isomorphism
\begin{align*}
KR^{t}(iE_{\mid (X-Y)} \times tH_r) &\cong KR^t ((iE \oplus iN \oplus N)_{\mid (X-Y)}\times tH_r)\\
 &= KR^{t+n+d}(N_{\mid (X-Y)} \times tH_r),
\end{align*}
so we have a diagram where $\phi_1, \phi_2$ are Thom isomorphisms
\begin{equation*}
\xymatrix{ {KR^{t-q}(iE_{\mid (X-Y)} \times tH_r)} \ar[r]^-{\phi_2}& {KR^{t+n+d-q}(N_{\mid (X-Y)}\times tH_r)}\\
{KR^{t}(iU_{d,n \mid (G(d,n)-G(d-r,n))}\times tH_r)} \ar[r]^-{\phi_1}& {KR^{t+n+d}(U^\perp_{d,n \mid (G(d,n)-G(d-r,n))}\times tH_r).}\ar[u]^{f^*}
}
\end{equation*}
Now define 
\begin{equation*}
\Psi([f]) =  \phi_2^{-1}\circ f^* \circ \phi_1(i_{d,n}^*(\tilde{\alpha}^t_{d,r}))\in KR^{t-q}(iE_{\mid (X-Y)}\times tH_r). 
\end{equation*}
Obviously, if $f$ comes from  a classifying map $\xi : (X,Y) \to (G(d,n),G(d-r,n))$ for $E$, then by naturality of the Thom isomorphism, we get:
\begin{prop}
\begin{equation*}
\Psi(\alpha^r(E,s)) = \xi^*(i_{d,n}^*(\tilde{\alpha}^t_{d,r})) = \tilde{\alpha}^t_{d,r}(E,s).
\end{equation*}
\end{prop}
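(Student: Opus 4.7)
The strategy is to reduce the identity to a combination of (i) the alternate description of $\alpha^r(E,s)$ coming from the preceding proposition, (ii) naturality of the Thom isomorphism in $KR$-theory, and (iii) the defining universal property of the Atiyah--Dupont class $\tilde{\alpha}^t_{d,r}$. Nothing really needs to be computed; the content is that three kinds of naturality fit together.

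First I would apply the proposition immediately before this one: since $\xi$ satisfies the compatibility \eqref{seccom}, the class $\alpha^r(E,s)\in MT_{d,r}^n(\Th(N),\Th(N_{\mid Y}))$ is represented by the Thom map
\begin{equation*}
f=\Th(\xi):(\Th(N),\Th(N_{\mid Y}))\to (\Th(U^\perp_{d,n}),\Th(U^\perp_{d-r,n}))
\end{equation*}
induced by $\xi:(X,Y)\to (G(d,n),G(d-r,n))$, with suspension index $q=0$. Thus evaluating $\Psi$ amounts to applying $\phi_2^{-1}\circ \Th(\xi)^*\circ \phi_1$ to $i_{d,n}^*(\tilde{\alpha}^t_{d,r})$.

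Next I would invoke naturality of the $KR$-theoretic Thom isomorphism with respect to the bundle map $N\to U^\perp_{d,n}$ covering $\xi$ (equivalently, the bundle map $iE\to iU_{d,n}$; note that the involutions match since they are fibrewise negation on both sides, and the factor $tH_r$ just rides along). This gives the commutative square
\begin{equation*}
\xymatrix{
KR^{t}(iU_{d,n\mid G(d,n)-G(d-r,n)}\times tH_r)\ar[r]^-{\phi_1}\ar[d]_{\xi^*} & KR^{t+n+d}(U^\perp_{d,n\mid G(d,n)-G(d-r,n)}\times tH_r)\ar[d]^{\Th(\xi)^*}\\
KR^{t}(iE_{\mid X-Y}\times tH_r)\ar[r]^-{\phi_2} & KR^{t+n+d}(N_{\mid X-Y}\times tH_r)
}
\end{equation*}
so that $\phi_2^{-1}\circ \Th(\xi)^*\circ \phi_1=\xi^*$. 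Substituting into the definition of $\Psi$ yields
\begin{equation*}
\Psi(\alpha^r(E,s))=\xi^*\bigl(i_{d,n}^*(\tilde{\alpha}^t_{d,r})\bigr),
\end{equation*}
which is the first equality.

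Finally, the composition $i_{d,n}\circ \xi:X\to BO(d)$ is a classifying map for $E$ compatible with the given section $s$ on $Y$ (again by \eqref{seccom} applied in the limit), so the universal property of the Atiyah--Dupont class, namely that $\tilde{\alpha}^t_{d,r}(E,s)$ is by definition the pullback of the universal class under any such classifying map, gives the second equality $\xi^*(i_{d,n}^*(\tilde{\alpha}^t_{d,r}))=\tilde{\alpha}^t_{d,r}(E,s)$. The only genuinely delicate point is the first step: one must check that the $KR$-Thom isomorphism is natural in the presence of both the involution on the fibres and the external factor $tH_r$, so that the square above really commutes on the nose; this is standard once one recalls that the Thom class itself is natural under bundle pullback, but it is worth spelling out because it is where all of the decorations conspire.
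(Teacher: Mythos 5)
Your argument is correct and matches the paper's: the paper's entire proof is the remark that when $f$ comes from a classifying map $\xi$, the identity follows from naturality of the Thom isomorphism, which is precisely the commutative square you write down, combined with the defining pullback property of $\tilde{\alpha}^t_{d,r}$. You have simply spelled out the details that the paper leaves implicit.
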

Thus $\alpha^r(E,s)$ is in some sense a refinement of the characteristic class defined by Atiyah and Dupont.
The special case $(X,Y)=(D^q,S^{q-1})$ yields: 
\begin{thm}\label{KRfak}
The map $\tilde{\theta}^t_r$ constructed in \cite{AD} factors as the composition
\begin{equation*}
\pi_{q-1}(V_{d,r}) \xrightarrow{\theta^r}  \pi_q({MTSO(d,r)}) \xrightarrow{\Psi } {KR^{t-q}(tH_r)}.
\end{equation*}
\end{thm}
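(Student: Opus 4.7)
The plan is to specialize the proposition immediately preceding the theorem to the local situation $(X, Y) = (D^q, S^{q-1})$ with the trivial bundle $E = D^q \times \R^d$ and the given map $s : S^{q-1} \to V_{d,r}$ playing the role of the section on the boundary. By construction one has $\theta^r([s]) = \alpha^r(D^q \times \R^d, s)$ in $\pi_q(MTSO(d,r))$, so the task reduces to identifying $\Psi(\alpha^r(D^q \times \R^d, s))$ with $\tilde{\theta}^t_r([s])$ in $KR^{t-q}(tH_r)$.

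For the left-hand side I would simply apply the preceding proposition: a constant map $\xi : D^q \to G(d,n)$ is a classifying map for the trivial bundle, so
\[
\Psi\bigl(\alpha^r(D^q \times \R^d, s)\bigr) \;=\; \tilde{\alpha}^t_{d,r}(D^q \times \R^d, s),
\]
viewed as an element of $KR^{t-q}\bigl(i(D^q \times \R^d)_{\mid \mathring{D}^q} \times tH_r\bigr)$. Since $\mathring{D}^q$ is contractible, this target identifies canonically with $KR^{t-q}(tH_r)$ (the $i\R^d$ factor is absorbed into the Thom isomorphism $\phi_2$ already built into the definition of $\Psi$). For the right-hand side I would unwind the Atiyah--Dupont definition of $\tilde{\theta}^t_r$ given in \cite{AD}: their map sends a class $[s] \in \pi_{q-1}(V_{d,r})$ to the value of the universal characteristic class $\tilde{\alpha}^t_{d,r}$ applied to the trivial bundle over the disk with boundary data $s$, transported to $KR^{t-q}(tH_r)$ via the same Thom/Bott isomorphisms. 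That is exactly the class just computed, so combining the two identifications proves $\tilde{\theta}^t_r = \Psi \circ \theta^r$.

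The only real work is bookkeeping: one must verify that the pair of Thom isomorphisms $\phi_1, \phi_2$ appearing in the definition of $\Psi$ match the isomorphisms implicit in the Atiyah--Dupont construction of $\tilde{\theta}^t_r$, and that both constructions use the same orientation conventions on $iE$ and $tH_r$. This is the main potential obstacle, but it is a routine naturality check based on the fact that all the Thom isomorphisms involved are natural with respect to the bundle map collapsing $\mathring{D}^q$ to a point. Once this compatibility is in place, the factorization is immediate from the preceding proposition applied to $(D^q, S^{q-1})$.
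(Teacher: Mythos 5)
Your proposal is correct and is essentially the paper's own argument: the theorem is stated as an immediate consequence of the preceding proposition applied to the special case $(X,Y)=(D^q,S^{q-1})$ with the trivial bundle and a constant classifying map, exactly as you do. The compatibility of Thom isomorphisms you flag is indeed the only bookkeeping involved, and the paper treats it as implicit in the naturality already established.
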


In particular, our $\theta^r$ may throw away less information about the index than $\tilde{\theta}^t_r$ does.
The injectivity results for $\tilde{\theta}^t_r$ given in \cite{AD}, Proposition~5.6, imply:
\begin{cor}\label{KRinj}
$\theta^r : \pi_{q-1}(V_{d,r}) \to \pi_q(MTSO(d,r))$ is injective for $q\leq d-r+3$ and $d\geq r+3$. Moreover, $\theta^5 : \pi_{d-1}(V_{d,5}) \to \pi_{d}(MTSO(d,5))$ is injective when $8 \mid d$.
\end{cor}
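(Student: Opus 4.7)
The plan is to deduce the corollary directly from the factorization established in Theorem~\ref{KRfak} together with the known injectivity results for the Atiyah--Dupont map. Since Theorem~\ref{KRfak} gives
\begin{equation*}
\tilde{\theta}^t_r = \Psi \circ \theta^r,
\end{equation*}
any element in the kernel of $\theta^r$ also lies in the kernel of $\tilde{\theta}^t_r$. Therefore, whenever $\tilde{\theta}^t_r$ is known to be injective (for a suitable choice of $t$ with $4 \mid d+t$), the same conclusion follows for $\theta^r$.

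The first step is to recall the content of Proposition~5.6 in \cite{AD}: for $d \geq r+3$ and in the dimension range covered by the corollary, $\tilde{\theta}^t_r$ is injective on $\pi_{q-1}(V_{d,r})$ in the range $q \leq d-r+3$, and in the special case $r=5$ with $8 \mid d$ on $\pi_{d-1}(V_{d,5})$. These are precisely the ranges asserted in the corollary. One chooses $t$ so that $4 \mid d+t$ and applies the factorization above.

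The second step is a one-line formal argument: injectivity of a composition $g \circ f$ forces injectivity of $f$. Applied to $\tilde{\theta}^t_r = \Psi \circ \theta^r$, this gives injectivity of $\theta^r$ on the relevant homotopy groups of Stiefel manifolds in exactly the stated ranges.

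There is essentially no obstacle here, since all the real work --- the Atiyah--Singer index theorem arguments and the explicit $KR$-theoretic computations underlying Proposition~5.6 of \cite{AD} --- has already been carried out there. The only point to be slightly careful about is that the target of $\tilde{\theta}^t_r$ as we use it is $KR^{t-q}(tH_r)$ with $t$ chosen so that $4\mid d+t$, matching the setup in \cite{AD}; once this matching is made, the corollary is immediate.
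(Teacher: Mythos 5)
Your proof is correct and is exactly the paper's argument: the corollary is stated in the paper as a direct consequence of Theorem~\ref{KRfak} (the factorization $\tilde{\theta}^t_r = \Psi \circ \theta^r$) combined with Proposition~5.6 of \cite{AD}, using the elementary fact that injectivity of a composition forces injectivity of the first map.
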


\section{Low-dimensional calculations}\label{cal}
Having set up the invariants in the previous section, we now look for conditions under which the map
\begin{equation*}
\theta^r : \pi_{q-1}(V_{d,r}) \to \pi_{q} (MT(d,r))
\end{equation*}  
is injective. The groups $\pi_{q-1}(V_{d,r})$  have been calculated in \cite{paechter} for $r\leq 6$. The purpose of this section is to compute the homotopy groups $\pi_{q} (MT(d,r))$ and use this to show injectivity of $\theta^r$ for certain values of $q$, $d$, and $r$.

The computations are carried out by means of the Adams spectral sequence. Therefore we recall this and its basic properties we need in Section~\ref{as}. The input for the spectral sequence is the cohomology of $MT(d,r)$, so we determine this in Section~\ref{cohomology af MT}. In Section \ref{periodicity}, we prove a periodicity property for the spectra $MT(d,r)$ that will come in handy. The actual computations are performed in the oriented case in Section \ref{calcSO} and the unoriented case in Section \ref{calcO}. In the spin case, the cohomology considerations are a bit more complicated. We consider this case in Section~\ref{spin}.


\subsection{The Adams spectral sequence}\label{as}
If $X$ is a spectrum of finite type, $H^*(X;\Z/p) \cong \varprojlim_n \widetilde{H}^{*+n}(X_n;\Z/p)$ is a module over the mod $p$ Steenrod algebra $\mathcal{A}_p$ in the obvious way.
Based on this module structure, the Adams spectral sequence computes the $p$-primary part of $\pi_*(X)$, i.e.\ $\pi_*(X)$ divided out by torsion elements of order prime to $p$: 
\begin{thm}
For a connective spectrum of finite type, there is a natural spectral sequence $\{ E_k^{s,t}, d_k\} $ with differentials $d_k: E_k^{s,t} \to E_k^{s+r,t+r-1}$ such that:
\begin{itemize}
\item[(i)] $E_2^{s,t} = \Ext^{s,t}_{\mathcal{A}_p}(H^*(X;\Z/p), \Z/p)$.
\item[(ii)] There is a filtration 
\begin{align*}
\dotsm \subseteq F^{s+1,t+1} \subseteq F^{s,t}\subseteq \dotsm \subseteq F^{0,t-s}=\pi_{t-s}(X)
\end{align*}
 such that $E^{s,t}_\infty = F^{s,t}/F^{s+1,t+1}$.
\item[(iii)] $\bigcap_k F^{s+k,t+k}$ is the subgroup of $\pi_{t-s}(X)$ consisting of elements of finite order prime to $p$. 
\end{itemize}
\end{thm}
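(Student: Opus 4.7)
The plan is to construct the spectral sequence from an \emph{Adams resolution} of $X$: a tower $X = X_0 \leftarrow X_1 \leftarrow X_2 \leftarrow \cdots$ together with cofiber sequences $X_{s+1} \to X_s \xrightarrow{g_s} K_s$ in which each $K_s$ is a (generally infinite) wedge of suspensions of $H\Z/p$ and the map $g_s^* : H^*(K_s;\Z/p) \to H^*(X_s;\Z/p)$ is surjective. Such a tower is built inductively: since $H^*(X;\Z/p)$ is a connective $\A_p$-module of finite type, I choose a set of $\A_p$-generators degree by degree, realise them as a map $X_0 \to K_0$, and iterate on the fiber. The connectivity and finite-type hypotheses on $X$ ensure that each $X_s$ remains connective of finite type, so that all cohomology groups involved are finite in each degree.

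Applying $\pi_*$ to the tower gives an exact couple, whose derived spectral sequence has
\[ E_1^{s,t} = \pi_{t-s}(K_s) \cong \Hom^{s,t}_{\A_p}(H^*(K_s;\Z/p), \Z/p), \]
since the homotopy of a generalised Eilenberg--MacLane spectrum is $\A_p$-$\Hom$ out of its cohomology. Dualising the long sequence $\cdots \to H^*(K_1;\Z/p) \to H^*(K_0;\Z/p) \to H^*(X;\Z/p) \to 0$ yields a free $\A_p$-resolution of $H^*(X;\Z/p)$, and the $d_1$-differential becomes the $\Hom$-dual of the boundary in this resolution. Its cohomology is $\Ext^{s,t}_{\A_p}(H^*(X;\Z/p),\Z/p)$, giving (i). For (ii), I set $F^{s,t} := \Ima(\pi_{t-s}(X_s) \to \pi_{t-s}(X))$; a routine diagram chase in the exact couple identifies the associated graded with $E_\infty$, and the decreasing filtration is evident from the tower.

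The main obstacle will be part (iii). The plan is to show that the $p$-local connectivity of $X_s$ tends to infinity with $s$: inductively, each cofiber sequence $X_{s+1} \to X_s \to K_s$ kills off the lowest-degree mod $p$ cohomology of $X_s$, so $H^*(X_s;\Z/p)$ vanishes below an unbounded threshold $N_s$. Since each $X_s$ is connective of finite type, a Hurewicz-type argument (applied after $p$-completion) shows that the image of $\pi_n(X_s)_{(p)} \to \pi_n(X)_{(p)}$ vanishes once $s$ is sufficiently large relative to $n$. Therefore $\bigcap_k F^{s+k,t+k}$ consists of those elements of $\pi_{t-s}(X)$ whose image in $\pi_{t-s}(X)_{(p)}$ is zero, which in a finitely generated abelian group is precisely the subgroup of elements of finite order prime to $p$. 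The delicate technical step is tracking the growth of $N_s$ and justifying the connectivity estimate on $X_s$; for this I would invoke Adams' original convergence proof rather than reproduce its details.
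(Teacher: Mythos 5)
You sketch the standard construction of the Adams spectral sequence via Adams resolutions, which is precisely what the reference cited in the paper (Hatcher's spectral sequence notes) does; the paper itself does not supply a proof but merely states the theorem and cites that source. Your treatment of (i) and (ii) is correct: building the tower by realizing $\mathcal{A}_p$-module generators, forming the exact couple, identifying $E_1$ with $\Hom_{\mathcal{A}_p}$ out of the dualized free resolution, and defining $F^{s,t}$ as the image of $\pi_{t-s}(X_s)\to\pi_{t-s}(X)$ are all standard and sound.

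The mechanism you propose for (iii) is, however, not correct. You assert that each stage ``kills off the lowest-degree mod $p$ cohomology of $X_s$'' so that $H^*(X_s;\Z/p)$ becomes arbitrarily highly connected. In fact the cofiber sequence $X_{s+1}\to X_s\to K_s$ together with the surjectivity of $g_s^*$ gives a short exact sequence
\begin{equation*}
0\to H^{n}(X_{s+1};\Z/p)\to H^{n+1}(K_s;\Z/p)\to H^{n+1}(X_s;\Z/p)\to 0,
\end{equation*}
and the degree shift exactly compensates for the degree gained in the kernel: the bottom nonzero degree of $H^*(X_{s+1};\Z/p)$ need not exceed that of $H^*(X_s;\Z/p)$. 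Already for $X = S^0$ one has $H^0(X_s;\Z/2)\cong\Z/2$ for every $s$, and $\pi_0(X_s)\cong p^s\Z$, whose image in $\pi_0(X)_{(p)}=\Z_{(p)}$ is $p^s\Z_{(p)}\neq 0$; so the claimed vanishing of $\pi_n(X_s)_{(p)}\to\pi_n(X)_{(p)}$ for large $s$ fails even in the simplest case. What does hold, and what the actual convergence argument exploits, is that the tower $\{X/X_s\}$ $p$-completes $X$ (so $\operatorname{holim}_s X/X_s\simeq X^{\wedge}_p$) and that the relevant $\varprojlim^1$ term vanishes by the finite-type hypothesis; combined with the elementary fact that for a finitely generated abelian group $A$ the kernel of $A\to A\otimes\Z_p$ is the prime-to-$p$ torsion, this gives exactly statement (iii). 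Deferring to Adams is reasonable here, but the connectivity heuristic you offer as the ``main idea'' of his argument is not the idea that makes it work.
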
 
See e.g.\ \cite{hatcher2} for the construction.

$\Ext^{s,t}_{\mathcal{A}_p}(H^*(X;\Z/p), \Z/p)$ is defined as follows. Take a resolution of $H^*(X;\Z/p)$, i.e.\ an exact sequence
\begin{equation*}
0 \leftarrow H^*(X;\Z/p) \leftarrow F_0 \leftarrow F_1 \leftarrow F_2 \leftarrow \cdots 
\end{equation*}
where each $F_s$ is a free $\mathcal{A}_p$-module. Then apply the functor $\Hom^t_{\mathcal{A}_p}(-,\Z/p)$, i.e.\ homomorphisms to the $\mathcal{A}_p$-module $\Z/p$ that lower degree by $t$. The homology of this dual complex at $F_s$ is $\Ext^{s,t}_{\mathcal{A}_p}(H^*(X;\Z/p), \Z/p)$.

Naturality means that a map $f: X \to Y$ induces a filtration preserving map $f^* : H^*(Y;\Z/p) \to H^*(X;\Z/p)$. The extension to a map of resolutions defines a map of spectral sequences. 

There is a pairing
\begin{equation*}
\Ext^{s,t}_{\mathcal{A}_p}(\Z/p,\Z/p) \otimes \Ext^{s',t'}_{\mathcal{A}_p}(H^*(X;\Z/p), \Z/p) \to \Ext^{s+s',t+t'}_{\mathcal{A}_p}(H^*(X;\Z/p), \Z/p)
\end{equation*}
defined algebraically using resolutions. This induces a product
\begin{equation*}
\bar{E}^{s,t}_k \otimes E^{s',t'}_k \to E_k^{s+s',t+t'} 
\end{equation*}
where $\bar{E}$ denotes the spectral sequence for the sphere spectrum. The product converges to the composition product
\begin{equation*} 
\pi_*^s(S^0) \otimes \pi_*(X) \to \pi_*(X).
\end{equation*}
That is, the composition product respects the filtrations given by the spectral sequences and the induced product on $E_\infty$ agrees with the one coming from the product on the spectral sequences. 
The differentials behave nicely on products:
\begin{equation*}
d_k(xy) = xd_k(y) + (-1)^{t-s}d_k(x)y.
\end{equation*}
See \cite{mccleary} for a more detailed explanation of this product.
 
We are mainly interested in the elements $h_0 \in \Ext^{1,1}_{\mathcal{A}_p}(\Z/p,\Z/p)$  corresponding to a degree $p$ map and $h_1\in \Ext^{1,2}_{\mathcal{A}_p}(\Z/2,\Z/2)$ corresponding to the Hopf map. If $x\in E^{s,t}_2$ represents some map in $\pi_{t-s}(X)$, $h_0x$ represents $p$ times this map, provided that it survives to $E_\infty$. Similarly, $h_1x$ corresponds to the map composed with the Hopf map. This way, the product will allow us to determine extensions.

\subsection{Cohomology of the spectrum}\label{cohomology af MT}

Since the input for the Adams spectral sequence is the cohomology structure of $MT(d,r)$, we first compute this.
We start out by recalling the cohomology of Grassmannians. A good reference for this is \cite{milnors}.

For a $d$-dimensional vector bundle $p:E \to X$, there is a Thom isomorphism 
\begin{equation*}
\phi: H^*(X;\Z/2) \to \widetilde{H}^{*+d}(\Th(E);\Z/2)
\end{equation*}
with Thom class $u \in H^{d}(\Th(E);\Z/2)$.

\begin{defi}\label{defsw}
Let $w_i(E)\in H^i(X;\Z/2)$ be the class $\phi^{-1}(\Sq^i(u))$. This is called the $i$th Stiefel--Whitney class of $E$.
\end{defi}
%
%

Let $w_i$ denote the $i$th Stiefel--Whitney class for the universal bundle $U_d \to BO(d)$, and
let $R[x_1,\dots ,x_k]$ denote the polynomial algebra over the ring $R$ on generators $x_i$ of degree~$i$. 

\begin{thm} \label{swcoho}
\begin{equation*}
\begin{split}
H^*(BO(d);\Z/2) &\cong \Z/2[w_1,\dots,w_d]\\
H^*(BSO(d);\Z/2) &\cong \Z/2[w_2,\dots,w_d].
\end{split}
\end{equation*}
The restriction $H^*(B(d);\Z/2)\to H^*(G(d,n);\Z/2)$ is surjective in both the oriented and unoriented case with kernel the ideal generated by the classes
\begin{equation*}
 \bar{w}_{n+1},\bar{w}_{n+2}, \dots 
\end{equation*}
characterized by $\bar{w}_0=1$ and
\begin{equation*}
\sum_{j} w_j \bar{w}_{i-j} = 1.
\end{equation*}
\end{thm}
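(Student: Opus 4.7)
The theorem has two parts: the polynomial structure on $H^*(B(d);\Z/2)$ and the description of the kernel of restriction to $G(d,n)$. Both are classical, essentially due to Borel and Thom; the plan is to outline a proof and flag the non-routine step.

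For the classifying space, I would compute $H^*(BO(d);\Z/2)$ by Borel's method: the diagonal inclusion $(\Z/2)^d \hookrightarrow O(d)$ induces a map $B(\Z/2)^d \to BO(d)$ whose cohomology pullback is injective with image the ring of symmetric polynomials in the generators $x_1,\ldots,x_d \in H^1(B(\Z/2)^d;\Z/2)$. Splitting the universal bundle when pulled back along this map identifies the $i$-th elementary symmetric polynomial with $w_i$, giving $H^*(BO(d);\Z/2) \cong \Z/2[w_1,\ldots,w_d]$. For $BSO(d)$ the same argument with the index-two subgroup of even-sign matrices shows that $w_1$ vanishes and the remaining classes remain algebraically independent. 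Alternatively one can proceed by induction on $d$ using the Gysin sequence of the sphere bundle $S^{d-1} \to BO(d-1) \to BO(d)$ with Euler class $w_d$.

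For the Grassmannian, I would prove surjectivity and identify the kernel separately. Since $U_{d,n} \oplus U_{d,n}^\perp$ is the trivial bundle of rank $n+d$, the Whitney product formula gives $w(U_{d,n}) \cdot w(U_{d,n}^\perp) = 1$, so the pullback of $\bar w_i$ along the classifying map of $U_{d,n}$ is exactly $w_i(U_{d,n}^\perp)$. Because $U_{d,n}^\perp$ has rank $n$, these classes vanish for $i > n$, which gives the containment of the claimed ideal in the kernel. For surjectivity I would appeal to the Schubert cell decomposition of $G(d,n)$ from \cite{milnors}: it provides a basis of $H^*(G(d,n);\Z/2)$ indexed by Young diagrams fitting in a $d \times n$ rectangle, and the Schubert classes are known to be polynomials in $w_1,\ldots,w_d$, so $w_1,\ldots,w_d$ generate the cohomology ring.

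The crux is that the ideal generated by $\bar w_{n+1},\bar w_{n+2},\ldots$ is the \emph{full} kernel, not just a subideal. I would handle this by a Poincaré series comparison: the Schubert decomposition shows
\begin{equation*}
\sum_k \dim_{\Z/2} H^k(G(d,n);\Z/2) \, q^k = \binom{n+d}{d}_{\!q},
\end{equation*}
the Gaussian binomial coefficient, and a standard combinatorial identity shows that the graded quotient $\Z/2[w_1,\ldots,w_d]/(\bar w_{n+1},\bar w_{n+2},\ldots)$ realises the same Poincaré polynomial. Combined with surjectivity this forces the kernel to coincide with the ideal, completing the proof. The oriented case is identical after discarding $w_1$. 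I expect the Poincaré series comparison to be the only non-formal step; everything else reduces to Borel's computation and the Whitney formula.
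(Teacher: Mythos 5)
The paper does not prove this theorem; it is stated as a recollection from \cite{milnors}, so there is no internal argument to compare against. For the unoriented case your outline is the standard one and is sound: Borel's computation of $H^*(BO(d);\Z/2)$, the Whitney formula showing that $\bar{w}_i$ restricts to $w_i(U_{d,n}^\perp)=0$ for $i>n$, surjectivity via Schubert calculus, and the Poincar\'e-series match between $H^*(G(d,n);\Z/2)$ and $\Z/2[w_1,\dots,w_d]/(\bar{w}_{n+1},\bar{w}_{n+2},\dots)$. One point worth making explicit in that last step is that the ideal is generated by the finitely many classes $\bar{w}_{n+1},\dots,\bar{w}_{n+d}$ (the higher $\bar{w}_i$ lie in the ideal these generate, by the defining recursion) and that these form a regular sequence; that is what produces the Gaussian binomial $\prod_{i=1}^{d}(1-q^{n+i})/(1-q^i)$.

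The gap is your final sentence: the oriented case is \emph{not} ``identical after discarding $w_1$''. In the oriented case $G(d,n)$ denotes the oriented Grassmannian $\widetilde{G}(d,n)$, a double cover of the unoriented one. Its mod $2$ Betti numbers are not given by the same Gaussian binomial, and the restriction $H^*(BSO(d);\Z/2)\to H^*(\widetilde{G}(d,n);\Z/2)$ can fail to be surjective in high degrees: for instance $\widetilde{G}(2,2)\cong S^2\times S^2$ has $H^2\cong \Z/2\oplus\Z/2$, while the image of $H^2(BSO(2);\Z/2)=\Z/2\cdot w_2$ is at most one-dimensional. So the Poincar\'e-series comparison, which you correctly identify as the crux, breaks down exactly where it is needed. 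What does hold, and what the paper actually uses (it always passes to the limit $n\to\infty$, or works in degrees small compared to $n$), is the statement in the range of degrees where $\widetilde{G}(d,n)\to BSO(d)$ is a cohomology isomorphism and the ideal $(\bar{w}_{n+1},\dots)$ is zero for degree reasons; there the claim follows from the skeleton comparison rather than from a global dimension count. To repair your proof you would need either to restrict the oriented assertion to that range or to give a separate argument (e.g.\ via the Gysin sequence of the double cover $\widetilde{G}(d,n)\to G(d,n)$, keeping track of where multiplication by $w_1$ fails to be injective on the finite quotient ring).
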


%

\begin{thm} Let $F$ denote one of the fields $\Q$ and $\Z/p$ for $p$ an odd prime. Then
\begin{equation*}
H^*(BO(d);F) \cong F[p_1,\dots ,p_{[\frac{d}{2}]}].
\end{equation*}
Here $p_i \in H^{4i}(BO(d);F)$ are the Pontryagin classes for the universal bundle. For $x \in \R$, $[x]$ denotes the integer part of $x$.

When $d$ is odd, $BSO(d) \to BO(d)$ induces an isomorphism on cohomology with coefficients in $F$. For $d$ even,
\begin{equation*}
H^*(BSO(d);F) \cong F[p_1,\dots ,p_{\frac{d}{2}},e_d] /\langle e_d^2 - p_{\frac{d}{2}} \rangle
\end{equation*}
where the $p_i$ are the Pontryagin classes for the oriented universal bundle $U_d$ and $e_d$ is the Euler class in $H^d(BSO(d);F)$ for $U_d$, satisfying the relation $e_d^2 = p_{\frac{d}{2}}$.
\end{thm}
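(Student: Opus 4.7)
The plan is to follow the classical computation carried out, for instance, in \cite{milnors} or Borel's thesis. First I would compute $H^*(BSO(d);F)$ via the splitting principle. Let $k = [d/2]$ and $T^k \hookrightarrow SO(d)$ be a maximal torus; the induced map
\begin{equation*}
H^*(BSO(d);F) \to H^*(BT^k;F) = F[t_1,\ldots,t_k], \quad |t_i|=2,
\end{equation*}
is injective with image the Weyl-invariant subring. Since $\mathrm{char}(F) \neq 2$, the cohomology $H^*(SO(d);F)$ is an exterior algebra on odd-degree generators (the only torsion prime of $SO(d)$ is $2$), so Borel's theorem ensures that $H^*(BSO(d);F)$ is a polynomial algebra. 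The Weyl group of $SO(2k+1)$ acts on $F[t_1,\ldots,t_k]$ by signed permutations, with invariants the symmetric polynomials in the $t_i^2$; these are by definition the Pontryagin classes $p_1,\ldots,p_k$. For $SO(2k)$, the Weyl group is the index-$2$ subgroup of signed permutations with an even number of sign changes, so additionally $t_1\cdots t_k$ is invariant, and this product is (up to sign) the Euler class $e_d$.

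To pass to $BO(d)$: when $d$ is odd, the map $O(d) \to SO(d)\times \{\pm 1\}$ defined by $A \mapsto (\det(A)\cdot A,\, \det(A))$ is an isomorphism of Lie groups, so $BO(d) \simeq BSO(d) \times B(\Z/2)$; since $\mathrm{char}(F)\neq 2$ we have $\widetilde{H}^*(B(\Z/2);F) = 0$, and the projection realizes the isomorphism. When $d$ is even, $BSO(d) \to BO(d)$ is a principal $\Z/2$-bundle, and the transfer (valid since $2$ is a unit in $F$) gives $H^*(BO(d);F) \cong H^*(BSO(d);F)^{\Z/2}$. The nontrivial deck transformation is induced by conjugation by any orientation-reversing orthogonal matrix; under the maximal-torus identification it acts by negating a single $t_i$, which fixes every Pontryagin class (being symmetric in the $t_i^2$) and negates the Euler class. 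The invariants are therefore generated by $p_1,\ldots,p_{k-1}$ together with $e_d^2$ and $p_k$; modulo the relation below, this is the polynomial ring $F[p_1,\ldots,p_{[d/2]}]$.

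Finally, the relation $e_d^2 = p_{d/2}$ in $H^*(BSO(2k);F)$ is immediate from the splitting principle: $e_d^2 = (t_1\cdots t_k)^2 = t_1^2\cdots t_k^2$ is the top elementary symmetric polynomial in the $t_i^2$, which is exactly $p_{d/2}$. The main obstacle is justifying the polynomial structure of $H^*(BSO(d);F)$ at odd primes $p$ with $p \leq k$, where $p$ divides $|W|$ and the naive averaging argument for the invariant theory fails. This is circumvented by invoking Borel's theorem in its sharper form, which requires only that $H^*(SO(d);F)$ be an exterior algebra on odd-degree generators, a condition that holds in all characteristics different from $2$.
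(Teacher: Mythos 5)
The paper states this theorem without proof; it is classical background cited via \cite{milnors}. Your reconstruction through maximal tori, Weyl invariants, and Borel's theorem is correct, and all the individual steps check out: the isomorphism $O(d)\cong SO(d)\times\Z/2$ for $d$ odd (since $A\mapsto(\det(A)A,\det(A))$ is an isomorphism exactly when $d$ is odd), the transfer/invariants argument $H^*(BO(d);F)\cong H^*(BSO(d);F)^{\Z/2}$ for $d$ even, the identification of the nontrivial deck transformation with conjugation by an orientation-reversing matrix acting by $t_1\mapsto -t_1$, and the computation of the invariant rings for the Weyl groups of types $B_k$ and $D_k$ (which works over $F$ because one first quotients by the sign changes, using $2\in F^\times$, and the symmetric-function part of the argument is characteristic-free). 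You also correctly flag and dispose of the subtle point at small odd primes by invoking Borel's theorem in the form that requires only $H^*(SO(d);F)$ to be an exterior algebra on odd generators.

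Worth noting: Milnor--Stasheff, which is the likely intended reference, takes a different route. They use the Gysin sequence of the sphere bundle $S^{d-1}\to BSO(d-1)\to BSO(d)$ and induct on $d$, introducing the Euler class via the transgression and establishing $e_d^2=p_{d/2}$ from the Gysin exactness rather than from the splitting principle. Both approaches are standard. The Gysin/induction argument is more elementary in that it avoids the structure theory of compact Lie groups and Borel's theorem, whereas your torus argument is more uniform (it treats all $d$ simultaneously once the Weyl invariants are computed) and makes the relation $e_d^2=p_{d/2}$ a transparent algebraic identity $(t_1\cdots t_k)^2=e_k(t_1^2,\dots,t_k^2)$.
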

 
We are now ready to describe the cohomology of the spectra $MT(d,r)$. Unless otherwise specified, everything works for both the oriented and unoriented spectrum. 

We first consider cohomology with $\Z/2$ coefficients understood.
The group $H^k(MT(d))$ is given by $\varprojlim_n \widetilde{H}^{k+n}(\Th(U_{d,n}^\perp))$.
Each bundle $U^\perp_{d,n}$ has a Thom class $\bar{u}_n\in H^n(\Th (U^\perp_{d,n}))$,
 defining a stable Thom class $\bar{u}$ in $H^0(MT(d))$. There is a Thom isomorphism given by cup product with $\bar{u}$
\begin{equation*}
\phi : H^*(B(d)) \to H^*(MT(d)).
\end{equation*}
Hence $H^*(MT(d))$ is the free module over $H^*(B(d))$ generated by $\bar{u}$. 

\begin{thm}\label{fiw}
The map induced by the quotient
\begin{equation*}
H^*(MT(d,r)) \to H^*(MT(d))
\end{equation*}
is injective with image the $H^*(B(d))$-submodule generated by $\phi(w_{d-r+1}),\dots ,\phi(w_d)$. 
\end{thm}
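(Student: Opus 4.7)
The plan is to use the long exact sequence in $\Z/2$ cohomology associated with the cofiber sequence $MT(d-r) \xrightarrow{f} MT(d) \to MT(d,r)$ and reduce the statement to the computation of a single Stiefel--Whitney-style map on Grassmannian cohomology. If I can show that $f^* : H^*(MT(d)) \to H^*(MT(d-r))$ is surjective with kernel exactly the submodule generated by $\phi(w_{d-r+1}),\dots,\phi(w_d)$, then the connecting map in the long exact sequence vanishes, giving injectivity of $H^*(MT(d,r))\to H^*(MT(d))$, and exactness identifies the image.

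First I would recall that $f$ is induced by the inclusion $g : B(d-r) \to B(d)$ classifying $U_{d-r}\oplus \R^r$ as a $d$-plane bundle (because the inclusion $G(d-r,n)\hookrightarrow G(d,n)$ sending $P$ to $P\oplus \R^r$ restricts $U_{d,n}$ to $U_{d-r,n}\oplus \R^r$). Since the Thom classes are pulled back compatibly, naturality of the Thom isomorphism gives a commutative square
\begin{equation*}
\xymatrix{H^*(B(d)) \ar[r]^{\phi}\ar[d]_{g^*} & H^*(MT(d))\ar[d]^{f^*}\\ H^*(B(d-r)) \ar[r]^{\phi} & H^*(MT(d-r)).}
\end{equation*}
Thus both the surjectivity and the kernel of $f^*$ are controlled by $g^*$.

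Next I would compute $g^*$. By Whitney sum, $g^*(w_i)=w_i(U_{d-r}\oplus \R^r)=w_i(U_{d-r})$, so $g^*(w_i)=w_i$ for $i\leq d-r$ and $g^*(w_i)=0$ for $i>d-r$. Using the presentations of $H^*(B(d);\Z/2)$ and $H^*(B(d-r);\Z/2)$ from Theorem \ref{swcoho}, this makes $g^*$ surjective with kernel the ideal $I=(w_{d-r+1},\dots,w_d)\subseteq H^*(B(d);\Z/2)$. The oriented case is identical, simply omitting $w_1$ from the generators.

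Transporting back through $\phi$, $f^*$ is surjective and $\ker f^* = \phi(I)$, which is precisely the $H^*(B(d))$-submodule of $H^*(MT(d))$ generated by $\phi(w_{d-r+1}),\dots,\phi(w_d)$ (using that $\phi$ is an $H^*(B(d))$-module map via $p^*$). Surjectivity of $f^*$ forces the connecting homomorphism to be zero, so $H^*(MT(d,r))\to H^*(MT(d))$ is injective, and exactness identifies its image with $\ker f^*=\phi(I)$, as claimed. The only place calling for real care is the bookkeeping for the Thom isomorphism as a module map, but once the square above is set up this is routine; there is no genuine obstacle.
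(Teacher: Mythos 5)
Your proposal is correct and follows essentially the same route as the paper: the paper also reduces to the long exact sequence of the pair $(B(d),B(d-r))$, identifies the restriction $H^*(B(d))\to H^*(B(d-r))$ as the evident surjection of polynomial algebras with kernel the ideal $(w_{d-r+1},\dots,w_d)$, and transports this through the Thom isomorphisms (level by level on the spaces $\Th(U_{d,n}^\perp)$, passing to the limit in $n$, rather than directly at spectrum level, but this is only a cosmetic difference). The bookkeeping you flag at the end is exactly what the paper's commutative ladder of Thom isomorphisms handles, and it is indeed routine.
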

\begin{proof}
There is a long exact sequence
\begin{equation*}
\dotsm \to H^*(BO(d),BO(d-r)) \to H^*(BO(d)) \to H^*(BO(d-r)) \to \dotsm.
\end{equation*}
By Theorem \ref{swcoho}
and naturality of the Stiefel--Whitney classes, the map  
\begin{equation*}
H^*(BO(d)) \to H^*(BO(d-r))
\end{equation*}
is the surjection 
\begin{equation*}
\Z /2[w_1,\dots ,w_d] \to \Z /2[w_1,\dots,w_{d-r}].
\end{equation*}
Thus $H^*(BO(d),BO(d-~r))$ is the exactly the kernel, i.e.\ the ideal generated by $w_{d-r+1},\dots,w_d$.


For any $n$, the Thom isomorphism yields a commutative diagram 
\begin{equation*}
\xymatrix@C=14pt{{}\ar[r]&{H^k(\Th(U_{d,n}^\perp ) ,\Th(U^\perp_{d-r,n}))}\ar[r]&{\widetilde{H}^k(\Th(U^\perp_{d,n}))}\ar[r]&{\widetilde{H}^k(\Th(U_{d-r,n}^\perp))}\ar[r]&{}\\
{}\ar[r]&{H^k(G(d,n),G(d-r,n))}\ar[u]^\cong \ar[r]&{H^k(G(d,n))}\ar[u]^\cong \ar[r]&{H^k(G(d-r,n))} \ar[u]^\cong
\ar[r]&{.}}
\end{equation*} 
Letting $n$ tend to infinity, the claim follows. The oriented case is similar.
\end{proof} 

The multiplication 
\begin{equation*}
H^*(B(d)) \otimes H^*(MT(d)) \to H^*(MT(d))
\end{equation*}
is induced by the diagonal $MT(d) \to B(d)_+\wedge MT(d)$ where $_+$ is disjoint union with a point. This takes $MT(d-r) $ to $ B(d)_+\wedge MT(d-r)$. Hence there is a diagonal $MT(d,r) \to B(d)_+\wedge MT(d,r)$ inducing the $H^*(B(d))$-module structure on $H^*(MT(d,r))$. The diagram 
\begin{equation*}
\xymatrix{{MT(d,r)}\ar[r]&{B(d)_+\wedge MT(d,r)}\\
{\Sigma^{\infty}\Sigma V_{d,r}}\ar[u]^{f_\theta}\ar[r]&{S^0 \wedge \Sigma^{\infty}\Sigma V_{d,r}}\ar[u]^{i\wedge f_\theta}
}
\end{equation*}
shows that $f_\theta^* : H^*(MT(d,r))\to H^*(\Sigma^\infty \Sigma V_{d,r})$ is a homomorphism of $H^*(B(d))$-modules
when $H^*(\Sigma^\infty \Sigma V_{d,r})$ is given the trivial module structure.

\begin{thm}  \label{cohom-cofiber}
Assume $k < 2(d-r)$. Then
\begin{align*}
\widetilde{H}^k (V_{d,r})=\begin{cases}
\Z /2 & \textrm{if $d-r \leq k \leq d-1$},\\
0 & \textrm{if $k < d-r $},\\
0 & \textrm{if $ d \leq k $.}
\end{cases} 
\end{align*}
The map 
\begin{equation*}
H^*(MT(d,r)) \xrightarrow{f_{\theta}^*} {H}^*(\Sigma^\infty \Sigma V_{d,r})
\end{equation*}
is the $H^*(B(d))$-module homomorphism that takes $\phi(w_k) \in H^k(MT(d,r))$ to the generator in ${H}^{k}( \Sigma V_{d,r})$. 
\end{thm}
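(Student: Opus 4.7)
The statement has two parts: computing $\widetilde{H}^k(V_{d,r};\Z/2)$ in the stable range, and identifying the map $f_\theta^*$ on module generators.

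For the first part, I would induct on $r$ using the fiber bundle $S^{d-r}\to V_{d,r}\to V_{d,r-1}$ (forgetting one vector of the frame), with base case $V_{d,1}=S^{d-1}$. In the Serre spectral sequence $E_2^{p,q}=H^p(V_{d,r-1};\Z/2)\otimes H^q(S^{d-r};\Z/2)$, the only potentially nonzero differential within the stable range is the transgression $d_{d-r+1}$, which is detected by the mod-$2$ Euler class of the orthogonal complement of the tautological $(r-1)$-frame bundle over $V_{d,r-1}$. Since that complement, added to the trivialized tautological $(r-1)$-frame, gives a trivial bundle of rank $d$, all of its Stiefel--Whitney classes vanish in positive degree, so $d_{d-r+1}=0$ and the sequence collapses. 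This yields the claimed description (equivalently, $H^*(V_{d,r};\Z/2)=\Lambda(x_{d-r},\ldots,x_{d-1})$).

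For the second part, the text preceding the theorem reduces the problem to evaluating $f_\theta^*(\phi(w_k))$ for $k\in\{d-r+1,\ldots,d\}$, since $f_\theta^*$ is an $H^*(B(d))$-linear map whose target has trivial module structure. I would induct on $r$ using Proposition~\ref{egenskaberr} with $k=1$, which produces the cofibration $MT(d-r+1,1)\xrightarrow{i}MT(d,r)\xrightarrow{q}MT(d,r-1)$ together with its $f_\theta$-compatibility with the fibration $S^{d-r}\to V_{d,r}\to V_{d,r-1}$, as established inside the proof of Proposition~\ref{egenskaberr}. The resulting two naturality squares commute in cohomology. For $k\in\{d-r+2,\ldots,d\}$, the class $\phi(w_k)\in H^k(MT(d,r))$ is $q^*$ of the corresponding class in $H^k(MT(d,r-1))$, whose $f_\theta^*$-image is the generator $\sigma_k$ by the inductive hypothesis; combined with the fact that the pullback $H^{k-1}(V_{d,r-1})\to H^{k-1}(V_{d,r})$ sends the exterior-algebra generator to its counterpart (a by-product of Step~1), this gives $f_\theta^*(\phi(w_k))=\sigma_k$. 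For the remaining case $k=d-r+1$, the class $\phi(w_{d-r+1})$ restricts under $i^*$ to the corresponding class in $H^{d-r+1}(MT(d-r+1,1))$, whose $f_\theta^*$-image is the generator of $H^{d-r+1}(S^{d-r+1})$ by Propositions~\ref{proj} and~\ref{ren}; commutativity with the bottom-cell inclusion $\Sigma S^{d-r}\to\Sigma V_{d,r}$ (an isomorphism in cohomological degree $d-r+1$) then forces $f_\theta^*(\phi(w_{d-r+1}))=\sigma_{d-r+1}$.

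The main technical point is verifying the two naturality squares. Both fit into the framework of Proposition~\ref{egenskaberr}, whose proof sets up precisely the $f_\theta$-induced map of long exact sequences between the pairs built from $(MTV(d,r),MTV(d-r+1,1))$ and the corresponding pairs of suspensions of Stiefel manifolds; converting that argument from the $\pi_*$-level to cohomology is routine once one traces through the homotopy equivalence $MT(d,r)\simeq MTV(d,r)$ of Theorem~\ref{altspek}.
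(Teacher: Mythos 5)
Your proposal is correct, but it departs from the paper's argument in both halves. For the cohomology of $V_{d,r}$, the paper cites James's $2(d-r)$-connected map $\R P^{d-1}/\R P^{d-r-1}\to V_{d,r}$ and reads off the answer from the stunted projective space; you instead run the Serre spectral sequence for $S^{d-r}\to V_{d,r}\to V_{d,r-1}$ and kill the transgression by observing that the mod-$2$ Euler class of the orthogonal complement of the tautological frame bundle vanishes. Your route is more self-contained (no reference to James), while the paper's is shorter and also makes the $\R P$-connection explicit. (Be aware that the parenthetical ``$H^*(V_{d,r};\Z/2)=\Lambda(x_{d-r},\ldots,x_{d-1})$'' is not quite right as a ring statement—the squares need not vanish—but additively in degrees $<2(d-r)$ it is correct, which is all that matters.) For the identification of $f_\theta^*(\phi(w_k))$, both you and the paper induct on $r$ via Proposition~\ref{egenskaberr}, but you take $k=1$ (peeling off $MT(d-r+1,1)\to MT(d,r)\to MT(d,r-1)$, handling the bottom generator $\phi(w_{d-r+1})$ separately by restriction to the sub and the remaining generators via pullback from the quotient), whereas the paper takes $k=r-1$ (peeling off $MT(d-1,r-1)\to MT(d,r)\to MT(d,1)$, so the top generator $\phi(w_d)$ comes from $MT(d,1)$ and the rest are seen in the quotient $MT(d-1,r-1)$). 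The two inductions are mirror images of each other and buy the same thing; yours has the mild advantage that the two naturality squares you need are literally the ones assembled in the proof of Proposition~\ref{egenskaberr} with $k=1$, so the bookkeeping you outline in the last paragraph is indeed routine.
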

\begin{proof}
 There is a $2(d-r)$-connected map $\R P^{d-1}/\R P^{d-r-1}\to V_{d,r}$, see \cite{james}. The first claim follows from this.

It remains to determine $f_\theta^*(\phi(w_k))$. For $r=1$, $f_\theta $ is the inclusion
\begin{equation*}
\Sigma^{\infty+d}S^{0} \to \Sigma^{\infty+d}B(d)_+.
\end{equation*}
This clearly induces an isomorphism on $H^d$, and the claim follows. For general $r$, the claim follows from the diagram
\begin{equation*}
\xymatrix{{}\ar[r]&{H^*(MT(d,1))}\ar[r]\ar[d]^{f_\theta^*}&{H^*(MT(d,r))}\ar[r]\ar[d]^{f_\theta^*}&{H^*(MT(d-1,r-1))}\ar[r]\ar[d]^{f_\theta^*}&{}\\
{}\ar[r]&{H^*(\Sigma V_{d,1})}\ar[r]&{H^*(\Sigma V_{d,r})}\ar[r]&{H^*(\Sigma V_{d-1,r-1})}\ar[r]&{}}
\end{equation*} 
by induction.
\end{proof}

Let $C_\theta $ denote the cofiber of the inclusion of spectra
\begin{equation} \label{cofiber}
f_\theta : \Sigma ^{\infty+1} V_{d,r} \xrightarrow{} MT(d,r).
\end{equation} 

\begin{cor}\label{cofib2}
The cofibration \eqref{cofiber} induces an injective map
\begin{equation*}
H^k(C_{\theta }) \to H^k(MT(d,r)) 
\end{equation*}
for $k\leq 2(d-r)+1$. The image is the $H^*(B(d))$-submodule
\begin{equation*}
H^{>0}(B(d))\cdot H^*(MT(d,r)) 
\end{equation*}
in dimensions $k\leq 2(d-r)$.
\end{cor}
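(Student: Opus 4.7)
The plan is to exploit the long exact cohomology sequence associated with the cofibration $f_\theta : \Sigma^{\infty+1}V_{d,r} \to MT(d,r)$, together with the computations in Theorems \ref{fiw} and \ref{cohom-cofiber}. Writing $H^k(\Sigma^{\infty+1}V_{d,r}) = \tilde H^{k-1}(V_{d,r})$, the sequence reads
\begin{equation*}
\cdots \to \tilde H^{k-2}(V_{d,r}) \to H^k(C_\theta) \to H^k(MT(d,r)) \xrightarrow{f_\theta^*} \tilde H^{k-1}(V_{d,r}) \to \cdots,
\end{equation*}
so by exactness the injectivity of $H^k(C_\theta) \to H^k(MT(d,r))$ is equivalent to surjectivity of $f_\theta^*$ in degree $k-1$, while the image of $H^k(C_\theta)$ is exactly $\ker f_\theta^*$ in degree $k$.

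For the injectivity in the range $k \leq 2(d-r)+1$, I would note that $k-2 < 2(d-r)$, so Theorem \ref{cohom-cofiber} applies to $\tilde H^{k-2}(V_{d,r})$: it is either zero or $\Z/2$, and in the latter case its generator is precisely $f_\theta^*(\phi(w_{k-1}))$. Hence $f_\theta^*$ is surjective in degree $k-1$ and the injectivity follows.

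For the identification of the image in the range $k \leq 2(d-r)$, I would compute $\ker f_\theta^*$ directly. Theorem \ref{fiw} presents $H^*(MT(d,r))$ as the $H^*(B(d))$-module generated by $\phi(w_{d-r+1}), \ldots, \phi(w_d)$. Since $f_\theta^*$ is an $H^*(B(d))$-module homomorphism and $H^{>0}(B(d))$ acts as zero on $H^*(\Sigma V_{d,r})$, the submodule $H^{>0}(B(d)) \cdot H^*(MT(d,r))$ is automatically contained in the kernel. Modulo this submodule, $H^k(MT(d,r))$ is spanned in degree $k$ by the single generator $\phi(w_k)$ when $d-r+1 \leq k \leq d$ and is zero otherwise; in either case Theorem \ref{cohom-cofiber} shows the induced map into $\tilde H^{k-1}(V_{d,r})$ is injective, so $\ker f_\theta^* = H^{>0}(B(d)) \cdot H^*(MT(d,r))$ in this range.

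I do not expect any deep obstacle; the only care needed is the bookkeeping of the two bounds. The injectivity reaches $k \leq 2(d-r)+1$ because it only consults Theorem \ref{cohom-cofiber} in degree $k-2$, whereas the image description requires $k \leq 2(d-r)$ because it uses Theorem \ref{cohom-cofiber} in degree $k-1$.
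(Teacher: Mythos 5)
Your proposal is correct and takes essentially the same route as the paper: the paper's proof is precisely the long exact cohomology sequence of the cofibration \eqref{cofiber} combined with Theorem \ref{cohom-cofiber} (together with the $H^*(B(d))$-module description from Theorem \ref{fiw}). Your bookkeeping of the two ranges --- injectivity reaching $k\leq 2(d-r)+1$ because it only consults $\widetilde{H}^{k-2}(V_{d,r})$, while the image description needs $\widetilde{H}^{k-1}(V_{d,r})$ and so stops at $k\leq 2(d-r)$ --- is exactly right.
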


\begin{proof}
This follows from the long exact sequence in cohomology for the cofibration~\eqref{cofiber} and Theorem \ref{cohom-cofiber}.
\end{proof}

For the remainder of this section, let $F$ be either $\Q$ or $\Z/p$ for $p$ an odd prime. For $MTSO(d)$, there is again a Thom isomorphism $\phi$ on cohomology with coefficients in $F$. This allows us to compute the cohomology groups.
\begin{thm}\label{Fcoeff}
In dimensions $*\leq 2(d-r)+1$, $H^*(MTSO(d,r);F)$ is isomorphic to the free $H^*(BSO(d);F)$-module on generators 
\begin{align*}
\phi(\delta (e_{d-r}))&\in H^{d-r+1}(MTSO(d,r);F)\\
 \phi(e_{d}') &\in H^{d}(MTSO(d,r);F). 
\end{align*}
Here $\delta : H^{d-r}(BSO(d-r);F)\to H^{d-r+1}(BSO(d),BSO(d-r);F)$ is the coboundary map.
Moreover, $ e_{d-r}\in H^{d-r}(BSO(d-r);F)$ is the Euler class and $e_d' $ maps to the Euler class $e_d \in H^d(BSO(d);F)$. These are zero exactly when $d-r$ and $d$, respectively, are odd.
\end{thm}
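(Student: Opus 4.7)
Following the pattern of Theorem~\ref{fiw} for $\Z/2$-coefficients, I first establish an $H^*(BSO(d);F)$-module isomorphism
\begin{equation*}
H^*(MTSO(d,r);F) \cong H^*(BSO(d), BSO(d-r); F)
\end{equation*}
by applying the Thom isomorphism with $F$-coefficients to both $MTSO(d)$ and $MTSO(d-r)$ (the Thom classes are compatible under the map $MTSO(d-r)\to MTSO(d)$), and comparing the long exact sequence of the cofibration $MTSO(d-r) \to MTSO(d) \to MTSO(d,r)$ to that of the pair via the five-lemma. Under this isomorphism the putative generators $\phi(\delta(e_{d-r}))$ and $\phi(e_d')$ correspond to $\delta(e_{d-r})$ and $e_d'$ in relative cohomology, so it suffices to compute the latter in the range $*\leq 2(d-r)+1$.

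The main ingredient is the restriction map $i^*: H^*(BSO(d);F) \to H^*(BSO(d-r);F)$ induced by $i: BSO(d-r) \to BSO(d)$. By naturality one has $i^*(p_j) = p_j$, and since $i$ classifies $U_{d-r} \oplus \underline{\R^r}$, when $d$ is even $i^*(e_d) = e(U_{d-r}) \cdot e(\underline{\R^r}) = 0$ because the trivial rank-$r$ summand has zero Euler class. Using the polynomial descriptions of $H^*(B(\cdot);F)$, I then verify that in the range $*\leq 2(d-r)+1$ the image of $i^*$ is the subring $R$ generated by Pontryagin classes, the cokernel is $e_{d-r}\cdot R$ when $d-r$ is even and zero when $d-r$ is odd, and the kernel is the principal ideal $(e_d)$ when $d$ is even and zero when $d$ is odd. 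Any other potential kernel generators, such as $p_j$ with $\lfloor(d-r)/2\rfloor < j \leq \lfloor d/2\rfloor$, live in degrees strictly above $2(d-r)+1$ and hence drop out of the range.

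Substituting into the long exact sequence of the pair yields short exact sequences
\begin{equation*}
0 \to \mathrm{coker}(i^*|_{k-1}) \xrightarrow{\delta} H^k(BSO(d),BSO(d-r);F) \xrightarrow{\pi} \ker(i^*|_k) \to 0.
\end{equation*}
The cokernel piece is generated by $\delta(e_{d-r})$, and choosing any lift $e_d'$ of $e_d$ under $\pi$ generates the kernel piece. Because $e_d$ is a non-zero-divisor in the polynomial ring $H^*(BSO(d);F)$, the assignment $e_d\cdot q \mapsto e_d'\cdot q$ is well defined and $H^*(BSO(d);F)$-linear, so it splits the sequence. The direct sum of the two cyclic submodules generated by $\delta(e_{d-r})$ and $e_d'$ therefore exhausts the module in the relevant range.

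The hardest step will be verifying that this module is actually \emph{free} on those two generators within the specified range. The obvious candidate relation is
\begin{equation*}
e_d \cdot \delta(e_{d-r}) = \delta\bigl(i^*(e_d) \cdot e_{d-r}\bigr) = \delta(0) = 0,
\end{equation*}
arising from the module property of the connecting map, but this relation lies in degree $2d-r+1$, which exceeds $2(d-r)+1$ for $r \geq 1$. A parallel degree count rules out the other potential relations coming from higher Pontryagin classes in $\ker(i^*)$. Thus within the theorem's range no nontrivial relation appears, yielding freeness. Finally, the vanishing statements for odd $d-r$ or odd $d$ follow from $e_{d-r} = 0$ or $e_d = 0$ in $F$-coefficients, since the Euler class of an odd-dimensional real bundle is $2$-torsion and hence trivial over $\Q$ or $\Z/p$ with $p$ odd.
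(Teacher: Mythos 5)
Your proposal is correct and follows essentially the same route as the paper: identify $H^*(MTSO(d,r);F)$ with $H^*(BSO(d),BSO(d-r);F)$ via the Thom isomorphism and extract the answer from the long exact sequence of the pair, using that $i^*$ sends $p_j\mapsto p_j$, kills $e_d$, and misses $e_{d-r}$. The paper states this in two lines; your added details (the explicit splitting via $e_d'$, and the degree count showing that the only candidate relations such as $e_d\cdot\delta(e_{d-r})=0$ lie above degree $2(d-r)+1$) are accurate and fill in exactly what the paper leaves implicit.
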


\begin{proof}
The claim follows from the long exact sequence
\begin{equation*}
\to H^*(BSO(d),BSO(d-r);F) \to H^*(BSO(d);F) \to H^*(BSO(d-r);F) \to 
\end{equation*}
since the Pontryagin classes $p_i \in H^{4i}(BSO(d);F)$ map to the corresponding classes in $H^{4i}(BSO(d-r);F)$, $e_d$ maps to the Euler class for $U_{d-r} \oplus \R^r$, which is zero, and $e_{d-r}$ is not in the image of $H^*(BSO(d);F)$. 
%
\end{proof}

\begin{thm}\label{cohom-cofiber2}
Assume that  $k<2(d-r)$.
\begin{align*}
\widetilde{H}^k (V_{d,r}; F)=\begin{cases}
F & \text{if $k = d-r$ and $d-r$ is even},\\
F & \text{if $k = d-1$ and $d$ is even},\\
0 & \text{otherwise}.
\end{cases}
\end{align*}
The map $f_{\theta}^* :H^*(MTSO(d,r);F) \to  H^*(\Sigma V_{d,r};F)$ is the $H^*(BSO(d);F)$-module homomorphism taking $\phi(\delta(e_{d-r}))$ to a generator of $ H^{d-r+1} (\Sigma V_{d,r};F)$ 
and $\phi(e_d')$ to a generator of $ H^{d} (\Sigma V_{d,r};F) $. 
\end{thm}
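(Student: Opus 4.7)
The argument parallels that of Theorem~\ref{cohom-cofiber}, handling the two assertions separately.

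For the calculation of $\widetilde H^k(V_{d,r}; F)$ in the range $k<2(d-r)$, I would invoke the $2(d-r)$-connected map $\R P^{d-1}/\R P^{d-r-1} \to V_{d,r}$ from \cite{james} already used in the proof of Theorem~\ref{cohom-cofiber}; it induces isomorphisms on $F$-cohomology in this range, so it suffices to compute $\widetilde H^*(\R P^{d-1}/\R P^{d-r-1};F)$. Since $2$ is invertible in $F$, $\widetilde H^*(\R P^n;F)$ is $F$ concentrated in degree $n$ when $n$ is odd and vanishes otherwise. A direct chase of the long exact sequence of the pair $(\R P^{d-1},\R P^{d-r-1})$ shows that in the range the only possibly non-zero reduced groups occur at $k=d-r$ (coming via the coboundary from $H^{d-r-1}(\R P^{d-r-1};F)$, hence non-zero exactly when $d-r$ is even) and at $k=d-1$ (from $H^{d-1}(\R P^{d-1};F)$, non-zero exactly when $d$ is even), which gives the stated formula.

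For the identification of $f_\theta^*$, I would proceed by induction on $r$. In the base case $r=1$, $\Sigma V_{d,1} = S^d$, and Propositions~\ref{proj} and~\ref{ren} identify $MTV(d,1)$ stably with $S^d \vee \Sigma^d BSO(d)$ in such a way that $f_\theta$ is the inclusion of the first wedge summand. Since $BSO(d)$ is connected, $\widetilde H^0(BSO(d);F)=0$, so $H^d(MTV(d,1);F) \cong H^d(S^d;F)$ and $f_\theta^*$ is an isomorphism in this degree; the unique non-zero generator, either $\phi(e_d')$ (when $d$ even) or $\phi(\delta(e_{d-1}))$ (when $d$ odd), is therefore sent to the generator of $H^d(\Sigma V_{d,1};F)$. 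For the inductive step, Proposition~\ref{egenskaberr} with $k=r-1$ supplies the cofibration $MTSO(d-1,r-1) \to MTSO(d,r) \to MTSO(d,1)$ together with its fibration counterpart $V_{d-1,r-1} \to V_{d,r} \to V_{d,1}$, producing in the relevant stable range a commutative diagram of long exact sequences
\begin{equation*}
\xymatrix@C=6pt{
H^*(MTSO(d,1); F) \ar[r] \ar[d]^{f_\theta^*} & H^*(MTSO(d,r); F) \ar[r] \ar[d]^{f_\theta^*} & H^*(MTSO(d-1,r-1); F) \ar[d]^{f_\theta^*} \\
H^*(\Sigma V_{d,1}; F) \ar[r] & H^*(\Sigma V_{d,r}; F) \ar[r] & H^*(\Sigma V_{d-1,r-1}; F).
}
\end{equation*}
Under the Thom isomorphism $\phi$, the class $\phi(e_d') \in H^d(MTSO(d,r);F)$ pulls back from the corresponding generator of $H^d(MTSO(d,1);F)$, and $\phi(\delta(e_{d-r})) \in H^{d-r+1}(MTSO(d,r);F)$ restricts to the corresponding generator of $H^{d-r+1}(MTSO(d-1,r-1);F)$. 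Combining the base case, the inductive hypothesis, and a five-lemma diagram chase yields the claim.

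The main obstacle is verifying that the horizontal maps of the inductive diagram act on the distinguished generators of Theorem~\ref{Fcoeff} exactly as claimed. Via the Thom isomorphism, this reduces to naturality of the Euler classes $e_i$ and of the coboundary $\delta$ under the chain of inclusions $BSO(d-r) \subseteq BSO(d-1) \subseteq BSO(d)$, a routine but careful verification in the spirit of the commutativity of~\eqref{etacom}.
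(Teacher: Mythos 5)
Your proposal is correct and follows essentially the same route as the paper, whose own proof of this statement simply says it ``goes more or less as in the $\Z/2$ case'': the $2(d-r)$-connected map $\R P^{d-1}/\R P^{d-r-1}\to V_{d,r}$ for the cohomology computation, the base case $r=1$ via Propositions~\ref{proj} and~\ref{ren}, and induction on $r$ through the cofibration $MTSO(d-1,r-1)\to MTSO(d,r)\to MTSO(d,1)$ compared with the corresponding sequence for the $V$'s. You have merely made explicit the details the paper leaves to the reader.
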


\begin{proof}
The proof goes more or less as in the $\Z/2$ case.
%
%
%
\end{proof}

\subsection{A periodicity map}\label{periodicity}
Let $a_r$ be the number given by the table 
\begin{equation}\label{RH}
\begin{tabular}{|c|c|c|c|c|c|c|c|c|}
\hline 
{$r$}&{1}&{2}&{3}&{4}&{5}&{6}&{7}&{8}\\
\hline
{$a_r$}&{1}&{2}&{4}&{4}&{8}&{8}&{8}&{8}\\
\hline
\end{tabular}
\end{equation}
for $r \leq 8$, and in general by $a_{r+8} = 16 a_r$. 
The number $a_{r+1}$ is the least integer such that $\R^{a_{r+1}}$ is a module over the Clifford algebra $Cl_{r}$. See \cite{lawson} for details. 
In \cite{adams1}, Adams related this number to the vector field problem as follows:
\begin{thm}\label{vfsp}
$S^{d-1}$ allows $r$ independent vector fields if and only if $d$ is divisible by $a_{r+1}$. 
\end{thm}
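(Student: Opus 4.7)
The plan is to establish the two implications separately; the easy direction is the classical Hurwitz--Radon construction, while the hard direction is the content of Adams' theorem in \cite{adams1}.

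For the implication ``$a_{r+1}\mid d$ implies $S^{d-1}$ has $r$ independent fields'', I would use the Clifford module structure directly. If $a_{r+1}$ divides $d$, then $\R^d$ admits a $Cl_r$-module structure, yielding skew-symmetric orthogonal operators $e_1,\dots,e_r$ on $\R^d$ with $e_i^2=-I$ and $e_ie_j+e_je_i=0$ for $i\ne j$. For any $x\in S^{d-1}$, the vectors $v_i(x)=e_i(x)$ are tangent to the sphere at $x$ (since $\langle e_i x,x\rangle=0$ by skew-symmetry) and are orthonormal by the anticommutation relations together with $e_i$ being orthogonal. This gives the desired $r$ independent tangent vector fields explicitly.

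For the converse, I would follow Adams' strategy. First reduce the problem: $r$ pointwise independent tangent vector fields on $S^{d-1}$ are equivalent to a section of the Stiefel bundle $V(TS^{d-1},r)\to S^{d-1}$, and by James' reformulation this is equivalent to the stunted projective space $X_{d,r}=\R P^{d-1}/\R P^{d-r-1}$ being stably coreducible, i.e.\ the collapse map $X_{d,r}\to \R P^{d-1}/\R P^{d-2}=S^{d-1}$ admitting a stable section. Then, having computed $\widetilde{KO}^*(X_{d,r})$, I would apply the Adams operations $\psi^k$: a stable coreduction would produce a class in $\widetilde{KO}^*(X_{d,r})$ whose image generates $\widetilde{KO}^*(S^{d-1})$, but the known eigenvalue structure of $\psi^k$ on the line-bundle classes over $\R P^{d-1}$ forces divisibility conditions on $d$ that collapse exactly to $a_{r+1}\mid d$.

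The principal obstacle is this second step: the precise determination of the cyclic groups $\widetilde{KO}^*(X_{d,r})$, whose orders depend sensitively on $d$ and $r$ through the Radon--Hurwitz numbers, and the arithmetic needed to extract the divisibility bound from the $\psi^k$-action, constitute the technical heart of Adams' paper. The sufficiency direction is essentially algebra, while the full depth of the theorem lies entirely in this K-theoretic nonexistence computation.
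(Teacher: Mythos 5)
Your proposal matches the paper's treatment: the paper states this as Adams' theorem, citing \cite{adams1} for the hard nonexistence direction, and only sketches the sufficiency via the Clifford-module construction (a $Cl_r$-module structure on $\R^d$ when $a_{r+1}\mid d$, with the generators acting orthogonally to produce the frame $e_1x,\dots,e_rx$), which is exactly your first implication in an equivalent formulation. Your outline of the converse is an accurate summary of Adams' coreducibility-plus-$KO$-theory argument, but, like the paper, it defers the substance to \cite{adams1}.
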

 If $a_{r+1} \mid d$, the vector fields may be constructed as follows: Let $V$ be a $d$-dimensional representation for $Cl_{r}$.
  This module restricts to a bilinear multiplication $\R^{r+1} \times V \to V$ such that if $e_0,\dots ,e_{r}$ is the standard basis for $ \R^{r+1}$, $e_0 $ acts as the identity and for a suitable inner product $\langle \cdot,\cdot \rangle$ on $V $, $\langle e_ix,e_jx \rangle =\delta_{ij}\langle x,x \rangle$ for all $x\in V$, see \cite{lawson}. 
In particular for $x \in S^{d-1}$, this means that $x=e_0x,\dots,e_rx$ are orthonormal, i.e.\ $e_1x,\dots,e_rx$ define the desired vector fields. 

We shall use this idea to construct a periodicity map.
Let $\R^{d+k} = \R^d \oplus \R^k$ and assume $a_r \mid k $. As above, there exists an orthogonal bilinear multiplication $\R^{r}\times \R^k \to \R^k$. This defines a map
\begin{equation} \label{Vmap}
f_0 : (W_{d,r}, V_{d,r}) \times  (D^k,S^{k-1}) \to (W_{d+k,r},V_{d+k,r})
\end{equation}
by
\begin{equation*}
(v_0,\dots ,v_{r-1},x) \mapsto (   \sqrt{1-|x|^2}v_0 + e_0x,\dots,  \sqrt{1-|x|^2}v_{r-1} + e_{r-1}x).
\end{equation*}

\begin{lem}\label{Vper}
The map $f_0: \Sigma^{k}\Sigma V_{d,r} \to \Sigma V_{d+k,r}$ in \eqref{Vmap} is a $(2(d-r)+k+1)$-equivalence. 
\end{lem}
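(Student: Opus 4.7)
I would prove Lemma~\ref{Vper} by induction on $r$. The base case $r=1$ is direct: since $V_{d,1}=S^{d-1}$ and $W_{d,1}=D^d$, the map $f_0$ reduces to the map of pairs $(D^d\times D^k,\,\partial(D^d\times D^k))\to(D^{d+k},\,S^{d+k-1})$ given by $(v,x)\mapsto\sqrt{1-|x|^2}\,v+x$, which on the pair quotients is a self-homeomorphism of $S^{d+k}$, hence a homotopy equivalence and certainly a $(2(d-1)+k+1)$-equivalence.

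For the inductive step I would use the Stiefel fibration $S^{d-r}\to V_{d,r}\to V_{d,r-1}$ obtained by forgetting the first vector of the frame, and the analogous fibration for $d+k$. A direct inspection of the defining formula for $f_0^{(r)}$ shows that forgetting the first vector of $f_0^{(r)}(v_0,\dots,v_{r-1},x)$ uses only $(v_1,\dots,v_{r-1},x)$ and the Clifford generators $e_1,\dots,e_{r-1}$. Since $a_{r-1}\mid a_r\mid k$, these $r-1$ generators still define a valid Clifford action on $\R^k$ (after a harmless orthogonal change of basis making $e_1$ play the role of the identity), so the map induced on the base is, up to homotopy, an instance of $f_0^{(r-1)}$; on each sphere fibre the map reduces to the inclusion-like $v_0\mapsto\sqrt{1-|x_0|^2}\,v_0+x_0$, which is an instance of the $r=1$ case and thus an equivalence in every range. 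In the stable range determined by the $(d-r)$-connectivity of $V_{d,r-1}$, the Blakers--Massey theorem allows me to replace the fibrations by their associated cofiber sequences through dimension $2(d-r)+k+1$. Applying $\Sigma^{k+1}$ to the source and $\Sigma$ to the target and invoking the 5-lemma on the cofiber ladder, the inductive hypothesis (a $(2(d-r)+k+3)$-equivalence at the $V_{d,r-1}$ level) together with the base case (an equivalence at the sphere-fibre level) yields the desired $(2(d-r)+k+1)$-equivalence in the middle.

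The main obstacle is bookkeeping: because $f_0$ is defined as a map of pairs involving the auxiliary disk $D^k$, one has to carry $D^k$ consistently through each fibration, pair quotient, and cofiber sequence, and then verify that, modulo a reparametrisation of the Clifford generators, the induced map on bases genuinely models $f_0^{(r-1)}$ and the induced map on fibres genuinely models $f_0^{(1)}$. Once this verification is performed the 5-lemma closes the induction; the connectivity gap of two between the inductive hypothesis and what is needed at step $r$ comfortably absorbs the Blakers--Massey error, so no further estimates are required.
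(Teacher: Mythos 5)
Your proposal takes a genuinely different route from the paper. The paper does not use the Stiefel fibration or Blakers--Massey at all: it uses two explicit maps, $g:S^{d-r}\to V_{d,r}$ (the bottom cell) and $h:V_{d,r}\to V_{d,r-1}$ (forget the \emph{last} vector), shows that the relevant $f_0$-squares with these maps commute (up to an explicit homotopy in the $g$ case), and deduces that $f_0$ induces isomorphisms in $\Z/2$-, $\Q$-, and $\Z/p$-cohomology through dimension $2(d-r)+k+1$, whence the conclusion via the universal coefficient theorem and Whitehead. That route is purely cohomological and sidesteps the fibration-to-cofibration bookkeeping you flag as your main obstacle.

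Your inductive step, however, contains a concrete gap. You forget the \emph{first} vector of the frame, so the induced map on the base sends $(v_1,\dots,v_{r-1},x)$ to $(\sqrt{1-|x|^2}\,v_1+e_1x,\dots,\sqrt{1-|x|^2}\,v_{r-1}+e_{r-1}x)$, in which none of $e_1,\dots,e_{r-1}$ acts as the identity. Your fix --- ``a harmless orthogonal change of basis making $e_1$ play the role of the identity'' --- is not correct as stated: $e_1$ satisfies $e_1^2=-1$, so $e_1$ is not conjugate to the identity by any change of basis of $\R^k$. To repair this one must replace $e_i$ by $e_1^{-1}e_i$ (which does give a $Cl_{r-2}$-module structure), but this left-multiplication by $e_1^{-1}$ changes the formula for the base map nontrivially, and one must track the resulting reparametrisation of $x$ and the sign changes before claiming the base map ``is'' $f_0^{(r-1)}$; none of that is carried out. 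Had you forgotten the \emph{last} vector instead, the induced map on the base would be literally $f_0^{(r-1)}$ with the restricted $Cl_{r-1}$-action, and no change of generators would be needed --- this is what the paper implicitly does when it uses $h$. If you fix that choice, the Blakers--Massey/cofiber-ladder strategy you outline is plausible (you do have the stated two dimensions of slack), but as written the inductive step does not close.
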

 
\begin{proof}
The proof will proceed by induction on $r$. For $r=1$, the map
\begin{equation*}
(D^{d},S^{d-1}) \times (D^k, S^{k-1}) \to (D^{d+k},S^{d+k-1})
\end{equation*}
is given by
\begin{equation*}
(x,y) \mapsto (\sqrt{1-|x|^2}y+x),
\end{equation*}
which is a homeomorphism $S^d \wedge S^{k} \to S^{d+k}$.

More generally, look at the map $g: S^{d-r} \to V_{d,r}$ mapping $v_0 \in S^{d-r}$ to the frame $(v_0, u_1,\dots ,u_{r-1})$ where $u_1,\dots ,u_{r-1}$ are the first $r-1$ standard basis vectors in $\R^d$ and $v_0 \in \R^{d-r+1}$ is included in $\R^{d} = \R^{r-1} \oplus  \R^{d-r+1} $. The following diagram commutes up to homotopy
\begin{equation*}
\xymatrix{{\Sigma^{k}\Sigma S^{d-r}}\ar[r]^{\Sigma^{k}\Sigma g} \ar[d]^{f_0}&{\Sigma^{k}\Sigma V_{d,r}}\ar[d]^{f_0}\\
{\Sigma S^{d+k-r}}\ar[r]^{\Sigma g}& {\Sigma V_{d+k,r}.}
}
\end{equation*} 
A homotopy is given by applying to the $(i+1)$th vector the homotopy
\begin{equation}\label{HomV}
F(x,v_0,t)= \sqrt{(1-t^2)|x|^2+ (1-|x|^2)|v_0|}u_i + te_i x.
\end{equation}
Both horizontal maps in the diagram induce isomorphisms on cohomology with $\Z/2$ coefficients in dimension $d-r+k+1$ and so does the $f_0$ to the left, hence so must the $f_0$ to the right. Similarly, $f_0$ induces an isomorphism on cohomology with $\Q$ and $\Z/p$ coefficients in this dimension.

Consider the map $h:V_{d,r} \to V_{d,r-1}$ that forgets the last vector. This is an isomorphism on cohomology 
in dimension $d-r+1,\allowbreak \dots ,2(d-r)$. The diagram
\begin{equation*}
\xymatrix{{\Sigma^{k+1} V_{d,r}}\ar[r]^-{h} \ar[d]^{f_0}&{\Sigma^{k+1} V_{d,r-1}}\ar[d]^{f_0}\\
{\Sigma V_{d+k,r}}\ar[r]^-{h}& {\Sigma V_{d+k,r-1}}
}
\end{equation*} 
commutes. Thus the left vertical map induces an isomorphism on cohomology in dimensions $d+k-r+2,\allowbreak \dots ,2(d-r)+k + 1$, since the other three maps do so by induction. Similarly with $\Q$ and $\Z/p$ coefficients.
%

By the universal coefficient theorem for homology, $f_0$ also induces an isomorphism on homology with integer coefficients. So by the Whitehead theorem, the map is a $(2(d-r)+k+1)$-equivalence.
\end{proof}

Since $f_0$ is independent of the coordinates in $\R^d$, it extends to a map 
\begin{equation*}
f:  ( W_{r}(U_{d,n}), V_{r}(U_{d,n}))\times (D^k,S^{k-1}) \to (W_{r}(U_{d+k,n}),V_{r}(U_{d+k,n}))
\end{equation*}
given by 
\begin{equation*}
f(V,v_1,\dots,v_r,x) = (V \oplus \R^{k}, \sqrt{1-|x|^2}v_0 + e_0x,\dots ,\sqrt{1-|x|^2}v_{r-1} + e_{r-1}x).
\end{equation*}
This defines a map $f: MTO(d,r)\wedge S^k \to MTO(d+k,r)$. For $MTSO$ and $MTSpin$, $f$ is constructed similarly. 

\begin{thm}\label{perfrembr}
For $MTO$ and $MTSO$, the map $f$ induces an isomorphism of $H^*(B(d+k);\Z/2)$-modules
\begin{equation*}
H^{*+k}(MT(d+k,r);\Z/2) \to H^*(MT(d,r);\Z/2) 
\end{equation*}
in dimensions $*\leq 2(d-r+1)$. It takes the generators $\phi(w_{d+k-r+1}),\dots ,\phi(w_{d+k})$ to $\phi(w_{d-r+1}),\dots,\phi(w_{d})$. In the  $MTSO(d,r)$ case, $f$ is a $(2(d-r+1)+k)$-equivalence. 
\end{thm}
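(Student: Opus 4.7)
The strategy is to reduce to Lemma~\ref{Vper} via a commutative square that records how $f$ extends $f_0$ along the fiber. By construction, $f$ acts as $V \mapsto V \oplus \R^k$ on the Grassmannian base and as $f_0$ on the cone-on-$V_{d,r}$ fiber of $MT(d,r)_n$, yielding
\[
\xymatrix{
\Sigma^{\infty+k+1} V_{d,r} \ar[r]^{f_0} \ar[d]_{f_\theta \wedge 1} & \Sigma^{\infty+1} V_{d+k,r} \ar[d]^{f_\theta} \\
MT(d,r) \wedge S^k \ar[r]^{f} & MT(d+k,r),
}
\]
whose top map is a $(2(d-r)+k+1)$-equivalence. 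By Theorems~\ref{fiw} and~\ref{cohom-cofiber}, the module generators $\phi(w_{d+k-r+j})$ of $H^*(MT(d+k,r);\Z/2)$ over $H^*(B(d+k);\Z/2)$ are detected by $f_\theta^*$ as the generators of $H^*(\Sigma V_{d+k,r};\Z/2)$. Chasing the square, $f^*\phi(w_{d+k-r+j})$ and $\phi(w_{d-r+j})$ must therefore agree modulo the kernel of $(f_\theta \wedge 1)^*$, which by Corollary~\ref{cofib2} is the decomposable submodule $H^{>0}(B(d);\Z/2)\cdot H^*(MT(d,r);\Z/2)$.

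The main task is to rule out this decomposable correction. I would argue by induction on $r$ using the cofibration sequence $MT(d-r+1,1) \to MT(d,r) \to MT(d,r-1)$ from Proposition~\ref{egenskaberr} (with $k=1$) and the analogous one for $d+k$. The base case $r=1$ is immediate: $H^d(MT(d,1);\Z/2)=\Z/2\langle\phi(w_d)\rangle$ contains no nonzero decomposable element, so $f^*\phi(w_{d+k})=\phi(w_d)$ exactly. For the inductive step, I would check that $f$ is natural up to homotopy with respect to this cofibration: the Clifford multiplication $\R^r \times \R^k \to \R^k$ restricts to $\R \times \R^k$ on the sub-frame direction and projects to $\R^{r-1}\times \R^k$ on the quotient, while the homotopy \eqref{HomV} from the proof of Lemma~\ref{Vper} supplies the required commutativity. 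Applying the $5$-lemma in cohomology then yields the generator identification for $r$ from the cases $r-1$ and $r=1$. Once $f^*$ sends generators to generators on the nose, the general $\Z/2$-cohomology isomorphism in dimensions $* \leq 2(d-r+1)$ follows from $f^*$ being a module map over $g^*\colon H^*(B(d+k);\Z/2)\to H^*(B(d);\Z/2)$, noting that in this range no $w_j$ with $j>d$ can contribute to elements of interest, so $g^*$ acts identically on the relevant graded pieces.

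For the $MTSO$ equivalence statement, the same scheme applies with $\Q$ and $\Z/p$ coefficients using Theorems~\ref{Fcoeff} and~\ref{cohom-cofiber2} in place of Theorems~\ref{fiw} and~\ref{cohom-cofiber}. The module generator count drops to at most two (depending on the parities of $d-r$ and $d$), so the decomposable analysis becomes essentially trivial. Combining the $\Z/2$, rational, and mod $p$ isomorphisms via the universal coefficient theorem and then invoking Whitehead's theorem for connective finite-type spectra yields that $f$ is a $(2(d-r+1)+k)$-equivalence. The hard part will be the naturality check in the inductive step: while it is intuitively clear from the vectorwise nature of the Clifford action, pinning it down rigorously at the level of Thom spectra requires careful bookkeeping with the Stiefel fibration $V_{d-r+1,1}\to V_{d,r}\to V_{d,r-1}$ that underlies the cofibration of Proposition~\ref{egenskaberr}.
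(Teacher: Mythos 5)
Your approach matches the paper's proof essentially step for step: the same commutative square reducing to Lemma~\ref{Vper} for the fiber, the same observation that the right-hand square in~\eqref{ftheta} makes $f^*$ an $H^*(B(d+k);\Z/2)$-module map, and the same induction on $r$ via the cofibration $MT(d-r+1,1)\to MT(d,r)\to MT(d,r-1)$ to pin down the generators, with the $\Q$ and $\Z/p$ analogues combined via universal coefficients and Whitehead for the $MTSO$ equivalence. The only cosmetic difference is that you make explicit what the paper compresses into ``it obviously takes $w_{d+k-r+1}$ to $w_{d-r+1}$'': namely that the left square a priori only determines $f^*$ on generators modulo the decomposable submodule $H^{>0}(B(d))\cdot H^*(MT(d,r))$, and that the induction, together with degree considerations in the base case, is what kills this ambiguity.
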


In the following, $\phi(w_i) \in H^i(MT(d,r);\Z/2)$ will just be denoted by $w_i$ for simplicity.

\begin{proof}
Consider the diagram 
\begin{equation}\label{ftheta}
\vcenter{\xymatrix{{\Sigma^{\infty +k+1}V_{d,r}} \ar[r]^-{f_\theta }  \ar[d]^{f_0}&{\Sigma^{k}MT(d,r)} \ar[r]^-{} \ar[d]^{f}&{\Sigma^{k} MT(d,r) \wedge B(d)} \ar[d]^{f \wedge i}\\
{\Sigma^{\infty +1} V_{d+k,r}}\ar[r]^-{f_\theta } &{MT(d+k,r)}\ar[r]^-{} &{MT(d+k,r) \wedge B(d+k).}
}}
\end{equation}
 By Theorem \ref{fiw}, $H^*(MT(d+k,r);\Z /2)$ is isomorphic to the free $H^*(B(d+k); \Z /2)$-module on generators $w_{d+k-r+1},\dots ,w_{d+k}$ in dimensions $* < 2(d+k-r+1)$ and $H^*(MT(d,r); \Z /2)$ is the free $H^*(B(d);\Z/2)$-module generated by $w_{d-r+1},\dots , w_{d}$ up to dimension $2(d-r+1)$. 
Moreover, $i^*:H^*(B(d+k);\Z/2) \to H^*(B(d);\Z/2)$ is an isomorphism up to dimension $d$, and $f^*$ respects the module structure by the right hand side of the diagram. Thus we just need to check that $f^*$ maps $w_{d+k-r+1},\dots ,w_{d+k}$ to $w_{d-r+1},\dots , w_{d}$. 

It obviously takes $w_{d+k-r+1}$ to $w_{d-r+1}$ by the left hand side of the diagram. In particular, this proves the claim for $r=1$. For general $r$, it follows by induction and the diagram
\begin{equation*}
\xymatrix{{H^*(\Sigma^k MT(d,r-1);\Z/2)}\ar[r]&{H^*(\Sigma^k MT(d,r);\Z/2)}\\
{H^*(MT(d+k,r-1);\Z/2)}\ar[r]\ar[u]^{f^*}&{H^*(MT(d+k,r);\Z/2).}\ar[u]^{f^*}}
\end{equation*}

In the oriented case, a similar argument shows that $f^*$ is a $(2(d-r+1)+k)$-equivalence on homology with $\Q$ and $\Z /p$ coefficients for $p>2$. So by the universal coefficient theorem, the same holds with integer coefficients. This implies that $f$ is a $(2(d-r+1)+k)$-equivalence.
\end{proof}
Theorem \ref{perfrembr} and the diagram \eqref{ftheta} implies:
\begin{cor}\label{peri}
If $a_r \mid k$, there is a commutative diagram
\begin{equation*}
\xymatrix{{\pi_q(\Sigma V_{d,r})}\ar[r]^-{\theta^r}\ar[d]^{f_{0*}}& {\pi_{q}(MTSO(d,r))}\ar[d]^{f_*} \\
{\pi_{q+k}(\Sigma V_{{d+k},r})}\ar[r]^-{\theta^r} & {\pi_{q+k}(MTSO(d+k,r))}.}
\end{equation*}
For all $q < 2(d-r+1)$, the vertical maps are isomorphisms.
\end{cor}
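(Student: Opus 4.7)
The plan is to deduce this corollary directly from Theorem~\ref{perfrembr} and Lemma~\ref{Vper}, together with the identification $\theta^r = f_{\theta*}$ provided by Proposition~\ref{faktor}.

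For the commutativity of the square, I would apply $\pi_{q+k}$ to the left-hand square of the diagram \eqref{ftheta} appearing in the proof of Theorem~\ref{perfrembr}. That square commutes up to homotopy because both routes implement the same procedure: place a frame in $V_{d,r}$ inside a single fiber of $U_{d,n}$, apply Clifford multiplication to extend it to a frame in $V_{d+k,r}$, and view the result inside the enlarged fiber of $U_{d+k,n}$. Using the suspension isomorphism $\pi_{q+k}(\Sigma^k MT(d,r)) \cong \pi_q(MT(d,r))$ together with the identification $\theta^r = f_{\theta*}$ then converts the resulting diagram of stable homotopy groups into exactly the square in the statement.

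For the isomorphism assertions in the range $q < 2(d-r+1)$, the right-hand vertical map is handled by Theorem~\ref{perfrembr}: in the oriented case, $f$ is a $(2(d-r+1)+k)$-equivalence, so $f_*$ is an isomorphism on $\pi_{q+k}$ in the claimed range. For the left-hand vertical map, Lemma~\ref{Vper} asserts that $f_0$ is a $(2(d-r)+k+1)$-equivalence of spaces, which stabilises to an isomorphism on $\pi^s_{q+k}$ in the corresponding range; under the Freudenthal identification from Proposition~\ref{faktor}, this matches $\pi_q(\Sigma V_{d,r})$ in the stated range.

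The main content of the corollary already resides in Theorem~\ref{perfrembr} and Lemma~\ref{Vper}, so no substantial obstacle arises; the proof is essentially bookkeeping of the isomorphism ranges and checking that the diagram \eqref{ftheta} delivers the required compatibility with $f_\theta$.
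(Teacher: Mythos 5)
Your proposal is correct and follows essentially the same route as the paper, which deduces the corollary immediately from the commutativity of diagram \eqref{ftheta} together with the equivalence ranges of Theorem~\ref{perfrembr} (for $f$) and Lemma~\ref{Vper} (for $f_0$). Your write-up just makes explicit the bookkeeping that the paper leaves implicit.
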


\subsection{Calculations in the oriented case}\label{calcSO}
We are now ready to calculate $\pi_*(MTSO(d,r))$ in low dimensions. The input for the Adams spectral sequence was $\Ext_{\A_p}^{s,t}(H^*(MTSO(d,r);\Z/p),\Z/p)$.
We first describe the action  of the mod $2$ Steenrod algebra $\mathcal{A}_2$ on $H^*(MTSO(d,r);\Z/2)$. Throughout this section, all cohomology groups have coefficients in $\Z/2$ unless explicitly indicated.

By Exercise 8-A in \cite{milnors}, the $\A_2$-action on $w_i \in H^i(BO(d))$ is given by the formula
\begin{equation}\label{binom}
Sq^k(w_i) = \sum_{j=0}^k \binom{i-k+j-1}{j} w_{i+j}w_{k-j}.
\end{equation}
Together with the Cartan formula, this yields formulas for the $\A_2$-action on all of $H^*(BSO(d))$. 
Consider the Thom isomorphism with Thom class $\bar{u}_d$
\begin{equation*}
\phi : H^i(BSO(d)) \to {H}^{i}(MT(d)).
\end{equation*}
It follows from Definition \ref{defsw} of the Stiefel--Whitney classes that
\begin{equation*}
Sq(\phi(x)) =\Sq(x \bar{u}_{d}) =\Sq(x) \Sq(\bar{u}_{d})= \phi (Sq(x) w(U_{d}^\perp) )
\end{equation*}
where $Sq=\sum \Sq^i$ denotes the total square, $w = \sum_i w_i$ is the total Stiefel--Whitney class, and
\begin{equation*}
w(U_{d}^\perp)=\varprojlim_n w(U_{d,n}^\perp ) \in \varprojlim_n \widetilde{H}^{*+n}(\Th(U_{d,n}^\perp)) \cong H^*(MT(d)).
\end{equation*}
This $w(U_d^\perp)$ must satisfy
\begin{equation*}
 w(U_{d})  w(U_{d}^\perp)=1,
\end{equation*}
so $w_i(U_{d}^\perp)$ can be computed from $w(U_{d})$ inductively.

In this way, we obtain formulas for the Steenrod action on $H^*(MTSO(d))$, and  these formulas hold in $H^*(MTSO(d,r))$ as well by naturality.

The  table below shows how $\mathcal{A}_2$ acts on $H^*(MTSO(d,r))$ in low dimensions. The middle column shows a $\Z/2$-basis in each dimension and the right column shows how $\mathcal{A}_2$ acts on these basis elements.
\begin{equation*}
\begin{tabular}{|c|c|r@\ c@\ l|}
\hline 
{$t$}&{$H^t(MTSO(d,r))$}&{}&&\\
\hline
{$d-r+1$}&{$w_{d-r+1}$}&&&\\
\hline
{$d-r+2$}&{$w_{d-r+2}$}&$Sq^1(w_{d-r+1})$&$ = $&$(d-r)w_{d-r+2}$\\
\hline
{$d-r+3$}&{$w_{d-r+3}$}&$Sq^2(w_{d-r+1})$&$ =$&$ \binom{d-r}{2}w_{d-r+3}$\\
{}&{$w_2w_{d-r+1}$}&$Sq^1(w_{d-r+2}) $&$=$&$ (d-r+1)w_{d-r+2}$\\
\hline
{$d-r+4$}&{$w_3w_{d-r+1}$}&$Sq^{1,2}(w_{d-r+1})$&$ = $&$(d-r)\binom{d-r}{2}w_{d-r+4}$\\
{}&{$w_2w_{d-r+2}$}&$Sq^{2,1}(w_{d-r+1})$&$ =$&$ (d-r+1)\binom{d-r+1}{2}w_{d-r+4}$\\
{}&{$w_{d-r+4}$}&$Sq^2(w_{d-r+2}) $&$=$&$ \binom{d-r+1}{2}w_{d-r+4}$\\
{}&{}&$Sq^1(w_{d-r+3}) $&$= $&$(d-r)w_{d-r+4}$\\
{}&{}&$Sq^1(w_2w_{d-r+1})$&$ =$&$ w_3w_{d-r+1} + (d-r)w_2w_{d-r+2}$\\
\hline
\end{tabular} 
\end{equation*}

The formulas depend on $d$ and $r$, so there is a number of special cases to consider.  
 To compute $\Ext_{\A_2}^{s,t}(H^*(MTSO(d,r)),\Z/2)$, we must construct a free resolution of $\A_2$-modules
\begin{equation*}
H^*(MTSO(d,r)) \xleftarrow{} F_0 \xleftarrow{} F_1 \xleftarrow{} F_2 \dotsm
\end{equation*}
Figure \ref{resolution} shows how to construct such a resolution in the special case $d \equiv 3 \bmod 4$ and $r=4$. First a basis for $H^{d-3}(MTSO(d,4))$ is chosen. Then all squares have been computed and, if necessary, a minimal number of classes in $H^{d-2}(MTSO(d,4))$ have been added to get a basis for $H^{d-2}(MTSO(d,4))$. Again all squares are computed and classes have been added to get a basis for $H^{d-1}(MTSO(d,4))$ and so on. These added basis elements form a generating set for $H^*(MTSO(d,4))$ as an $\mathcal{A}_2$-module. Let $F_0$ be the free module with a copy of $\mathcal{A}_2$ for each of these generators and $F_0 \to H^*(MTSO(d,4))$ the obvious surjection. Now the process is repeated with $H^*(MTSO(d,4))$ replaced by $\Ker(F_0 \to H^*(MTSO(d,4)))$ and $F_0$ replaced by $F_1$. Since the chosen resolution is minimal in the sense of \cite{hatcher2}, Lemma 2.8, 
\begin{equation*}
\Ext^{s,t}_{\mathcal{A}_2}(H^*(MTSO(d,4)),\Z/2) = \Hom^t(F_s,\Z/2).
\end{equation*}

In Figure \ref{resolution}, the grading of the modules is shown vertically and an element in $F_s$ is displayed next to its image in $F_{s-1}$.
When $s \geq 3$, $F_s$ is zero in dimensions less than or equal to $d+s$.
\begin{figure}
\begin{equation*}
\begin{tabular}{|c|r@\ c@\ l|c|c|c|}
\hline
$t$ & \multicolumn{3}{|c|}{ $H^t(MTSO(d,r))$} & $F_0$ & $F_1$ & $F_2$ \\
\hline
{$d-3$} &  &&{$w_{d-3}$}& {$x_1$} &  &  \\
\hline
$d-2$ &  $Sq^1 (w_{d-3})$&$ =$&$ w_{d-2}$ & $Sq^1(x_1)$ &  & \\
\hline
$d-1$ &  $Sq^2 (w_{d-3}) $&$=$&$ w_{d-1}$ & $Sq^2(x_1)$ &  &   \\
 &  && $w_2w_{d-3}$& $x_{2}$ & & \\
\hline
$d$ &  $Sq^{1,2} (w_{d-3})$&$ =$&$ w_d$ & $Sq^{1,2}(x_1)$ &  &  \\
 &  $Sq^{2,1} (w_{d-3})$&$ =$&$ 0$ & $Sq^{2,1} (x_1)$ & $\alpha_1$ & \\
 &  $Sq^1(w_2w_{d-3})$&$ =$&$ w_3 w_{d-3} + w_2 w_{d-2}$ & $Sq^1(x_{2})$ &  &  \\
 &   &&$w_3 w_{d-3}$& $x_{3}$ & & \\
\hline
$d+1$ &  $Sq^4 (w_{d-3})$&$ =$&$ w_3 w_{d-2} + w_2 w_{d-1}$& $Sq^4 (x_1)$ &  &  \\
 &  $Sq^{1,2,1}(w_{d-3})$&$ =$&$ 0$ & $Sq^{1,2,1}(x_1)$ & $Sq^{1} (\alpha_1)$ &  \\
 &  $Sq^{2}(w_2w_{d-3}) $&$ = $ &$w_2^2 w_{d-3} + w_3 w_{d-2}$& $Sq^{2}(x_{2})$ &  & \\
 &  &&$\ +w_2w_{d-1}$ &  &  & \\
 &  $Sq^1(w_3 w_{d-3})$&$=$&$ w_3 w_{d-2}$  & $Sq^1(x_3)$& & \\
 &    && $w_4 w_{d-3}$ & $x_{4}$ & &   \\
 \hline
$d+2$ & && &  & $Sq^{2} (\alpha_1)$ & $\beta_1$           \\
\hline
$d+3$ &  &&&  &  & $Sq^{1} (\beta_1)$           \\
\hline
\end{tabular}
\end{equation*}
\caption{Resolution of $H^*(MTSO(d,4))$ for $d \equiv 3\bmod 4$.}\label{resolution}
\end{figure}
The $E_2$-term of the Adams spectral sequence is shown in Figure \ref{r42}. Each dot represents a $\Z/2$ summand. A vertical line represents multiplication by $h_0$, while a sloped line indicates multiplication by $h_1$, see Section \ref{as}. This multiplication can be read off from the resolution. 

The resolutions in the remaining special cases have been omitted here since they would take up too much space. We shall give only the $E_2$-terms and explain how the differentials are determined and the extension problems are solved.


To do so, we need a couple of helpful facts. The free part of $\pi_*(MTSO(d,r))$ is known, since
\begin{equation}\label{Q}
H^*(X;\Q) \cong \pi_*^s(X) \otimes \Q
\end{equation}
for all spaces $X$, see e.g.\ \cite{hatcher2}, Chapter 1. 
The homotopy groups $\pi_q(V_{d,r})$ have been computed in \cite{paechter}.

Recall that $\pi_{q}(MTSO(d,1)) \cong \pi_q^s(S^d) \oplus \pi^s_q(\Sigma^d BSO(d))$. 
\begin{thm}\label{homotopyr1}
$\pi_{q}(MTSO(d,1)) $  is given by the following table for $q \leq 2d$:
\begin{equation*}
\begin{tabular}{ccccccc}
\hline
 {$q$}&{$d$}&{$d+1$}&{$d+2$}&{$d+3$}&{$d+4$}&{$d+5$} \\
\hline
 {$\pi_q^s(S^d)$}&{$\Z$}&{$\Z/2$}&{$\Z /2$}&{$\Z /24$} &{$0$}&{$0$} \\

 {$\pi_q^s(\Sigma^d BSO(d))$}&{$0$}&{$0$}&{$\Z /2$}&{$0$} &{$\Z \oplus \Z /2$} & {$\Z/2$}\\
\hline
\end{tabular}
\end{equation*}
\end{thm}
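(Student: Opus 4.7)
The plan is to invoke Proposition \ref{proj} together with Corollary \ref{isor} to obtain the splitting
\[
\pi_q(MTSO(d,1)) \;\cong\; \pi_q^s(S^d)\,\oplus\,\pi^s_{q-d}(BSO(d)),
\]
so that the two rows of the table can be treated independently. For the first row, $\pi_q^s(S^d)\cong\pi_{q-d}^s$ is the $(q-d)$th stable stem, and the values $\Z,\Z/2,\Z/2,\Z/24,0,0$ for $q-d=0,\ldots,5$ are the classical calculations of Serre and Toda.

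For the second row, I would first observe that the hypothesis $q\leq 2d$ forces $d\geq 5$, which is well inside the stable range in which $BSO(d)\to BSO$ is sufficiently connected to give $\pi^s_{q-d}(BSO(d))\cong\pi^s_{q-d}(BSO)$ for $q-d\leq 5$. I would then run the Atiyah--Hirzebruch spectral sequence
\[
E^2_{p,k}=\widetilde H_p(BSO;\pi_k^s)\;\Longrightarrow\;\pi_{p+k}^s(BSO).
\]
The input is the low-dimensional integral homology of $BSO$, which can be read off from $H^*(BSO;\Z/2)=\Z/2[w_2,w_3,w_4,\ldots]$, the values $\pi_*(BSO)=0,0,\Z/2,0,\Z,0$, and universal coefficients, together with the classical stems listed above.

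The main technical obstacle will be the $d_2$-differentials and one extension problem. The crucial identification is that on $E^2_{p,0}=\widetilde H_p(BSO;\Z)$, the differential $d_2$ factors as mod $2$ reduction followed by the dual of $\Sq^2$ on $\Z/2$-cohomology. Since $\Sq^2 w_2=w_2^2$ in $H^*(BSO;\Z/2)$ and since $p_1\in H^4(BSO;\Z)=\Z$ reduces mod $2$ to a class pairing nontrivially with $w_2^2$, the differential $d_2\colon E^2_{4,0}\to E^2_{2,1}$ is surjective. This kills the would-be $\Z/2$ contribution to $\pi_3^s(BSO)$, leaving $\pi_3^s(BSO)=0$ and producing a $2\Z\cong\Z$ summand that contributes to total degree $4$. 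Parallel dualized-Steenrod arguments handle the other $d_2$'s needed in total degrees $4$ and $5$.

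Finally, the extension giving $\Z\oplus\Z/2$ in total degree $4$ is resolved by the rational computation $\pi^s_*(BSO)\otimes\Q\cong\widetilde H_*(BSO;\Q)$, which provides a $\Z$ summand that must split off. The remaining $\Z/2$'s in degrees $4$ and $5$ come from $E^2_{3,1}$ and $E^2_{3,2}$ and are easily checked to survive all differentials (neither receives nor supports a nontrivial one in the range), yielding the tabulated values $0,0,\Z/2,0,\Z\oplus\Z/2,\Z/2$ for $\pi^s_{q-d}(\Sigma^d BSO(d))$.
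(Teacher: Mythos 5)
Your route through the Atiyah--Hirzebruch spectral sequence for $\pi^s_*(BSO)$ is genuinely different from the paper's. The paper computes $\pi^s_{q-d}(BSO(d))$ directly from the Adams spectral sequence of $\Sigma^{\infty+d}BSO(d)$: the $E_2$-page is read off from the $\mathcal{A}_2$-module $\widetilde H^*(BSO(d);\Z/2)$ (plus the rational and odd-primary input from Theorem \ref{Fcoeff}), and all differentials vanish at once by $h_0$-linearity, since the only potential targets in Figure~\ref{r1} sit in an $h_0$-free tower while the potential sources $z$ satisfy $h_0z=0$, so $h_0d_k(z)=d_k(h_0z)=0$ forces $d_k(z)=0$. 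Working over $BSO(d)$ itself avoids any stabilization step and plugs straight into the Adams-filtration comparisons used throughout Section~\ref{cal}; your AHSS approach is more elementary but pushes the work into identifying $d_2$ via Steenrod operations and then settling extensions one by one.

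There is one concrete gap in your stable-range reduction. You assert that $q\leq 2d$ (hence $d\geq 5$) places you "well inside" the range where $\pi^s_{q-d}(BSO(d))\cong\pi^s_{q-d}(BSO)$ for $q-d\leq 5$. But the fiber $SO/SO(d)$ of $BSO(d)\to BSO$ is only $(d-1)$-connected, so the map is a $d$-equivalence, and hence $\pi^s_i(BSO(d))\to\pi^s_i(BSO)$ is an isomorphism only for $i<d$ and merely surjective at $i=d$. Your replacement of $BSO(d)$ by $BSO$ therefore requires $d\geq 6$ for the $q=d+5$ column, while the theorem's range $q\leq 2d$ includes $d=5$, $q=10$; that boundary case needs a separate argument (e.g.\ running the spectral sequence over $BSO(5)$ itself, as the paper's method does automatically). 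Beyond this, the $d_2$ analysis and the degree-$4$ and degree-$5$ extensions are sketched rather than verified---in particular $E^2_{4,0}=H_4(BSO;\Z)\cong\Z\oplus\Z/2$ has a torsion summand whose image under $d_2$ you do not track, and "$2\Z\cong\Z$" elides whether that torsion class survives---so a complete proof along these lines would need those computations written out.
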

\begin{figure}
\begin{equation*}
\setlength{\unitlength}{1cm}
\begin{picture}(13,5)
\put(0.5,0.5){\vector(0,1){4}}
\put(0.5,0.5){\vector(1,0){5.5}}
\put(1.5,1){\circle*{0.1}}
\put(3.5,1){\circle*{0.1}}
\put(3.5,2){\circle*{0.1}}
\put(3.5,3){\circle*{0.1}}
\put(3.5,4.5){\makebox(0,0){$\vdots$}}
\put(3.5,4){\circle*{0.1}}
\put(4.5,2){\circle*{0.1}}
\put(4.5,4){$BSO(d)$}
\put(3.5,2){\line(0,1){2}}
\put(0,1){\makebox(0,0){$0$}}
\put(0,2){\makebox(0,0){$1$}}
\put(0,3){\makebox(0,0){$2$}}
\put(0,4.5){\makebox(0,0){$s$}}
\put(1.5,0){\makebox(0,0){$d+2$}}
\put(2.5,0){\makebox(0,0){$d+3$}}
\put(3.5,0){\makebox(0,0){$d+4$}}
\put(4.5,0){\makebox(0,0){$d+5$}}
\put(6,0){\makebox(0,0){$t-s$}}

\put(7.5,0.5){\vector(0,1){4}}
\put(7.5,0.5){\vector(1,0){5.5}}
\put(9.5,1){\circle*{0.1}}
\put(11.5,1){\circle*{0.1}}
\put(11.5,4){$C_\theta$}
\put(13,0){\makebox(0,0){$t-s$}}
\put(7,1){\makebox(0,0){$0$}}
\put(7,2){\makebox(0,0){$1$}}
\put(7,3){\makebox(0,0){$2$}}
\put(7,4.5){\makebox(0,0){$s$}}
\put(9.5,0){\makebox(0,0){$d-r+3$}}
\put(11.5,0){\makebox(0,0){$d-r+4$}}
\end{picture}
\end{equation*}
\caption{The Adams spectral sequence for $BSO(d)$ and $C_\theta$.} \label{r1}
 \end{figure}
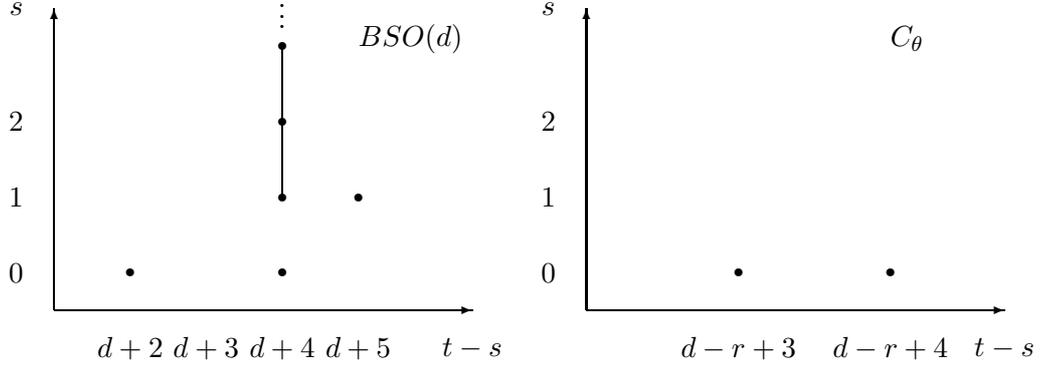
 
\begin{proof}
The first line is well-known.  Since $MTSO(d,1)\cong S^{\infty + d}\vee \Sigma^{\infty+d} BSO(d)$, the Adams spectral sequence splits as the sum of the sequence for $S^{\infty+d}$ and the sequence for $\Sigma^{\infty+d} BSO(d)$.  The $E_2$-term of the $BSO(d)$ part is shown in Figure~\ref{r1}. It follows from the formula $d_k(h_0x) = h_0d_k(x)$ that there can be no differentials in this part of the sequence. This yields the second line. 
\end{proof}

\begin{cor} \label{inj2}
The map
\begin{equation*}
\theta^2 : \pi_{d+1}(V_{d,2}) \to \pi_{d+2}(MTSO(d,2))
\end{equation*}
is injective for $d\geq 5$.
\end{cor}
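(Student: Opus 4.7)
The strategy is to deduce the injectivity of $\theta^2$ inductively from that of $\theta^1$ via the ladder diagram of Proposition~\ref{egenskaberr} with $r = 2$ and $k = 1$:
\begin{equation*}
\xymatrix@C=6pt{
\pi_{d+3}(MTSO(d,1)) \ar[r]^-{\partial} & \pi_{d+2}(MTSO(d-1,1)) \ar[r]^-{\iota_*} & \pi_{d+2}(MTSO(d,2)) \ar[r]^-{j_*} & \pi_{d+2}(MTSO(d,1)) \\
\pi_{d+2}(S^{d-1}) \ar[u]^{\theta^1} \ar[r]^-{\partial_V} & \pi_{d+1}(S^{d-2}) \ar[u]^{\theta^1} \ar[r]^-{g_*} & \pi_{d+1}(V_{d,2}) \ar[u]^{\theta^2} \ar[r]^-{p_*} & \pi_{d+1}(S^{d-1}) \ar[u]^{\theta^1}
}
\end{equation*}
whose upper row is the cofibration LES of $MTSO(d-1,1) \to MTSO(d,2) \to MTSO(d,1)$ and whose lower row is the LES of the Stiefel fibration $S^{d-2} \to V_{d,2} \to S^{d-1}$. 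By Theorem~\ref{theta-inj} the rightmost $\theta^1$ is injective exactly when $d + 2 < 2(d-1)$, i.e.\ for $d \ge 5$.

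A four-lemma chase then takes over. Given $x \in \ker \theta^2$, commutativity of the right square together with injectivity of the rightmost $\theta^1$ force $p_*(x) = 0$, so $x = g_*(y)$ for some $y \in \pi_{d+1}(S^{d-2})$. The middle square and exactness of the top row give $\theta^1(y) = \partial(z)$ for some $z \in \pi_{d+3}(MTSO(d,1))$. If I can produce $w \in \pi_{d+2}(S^{d-1})$ with $z = \theta^1(w)$, the left square yields $\theta^1(y - \partial_V(w)) = 0$, so $y = \partial_V(w) + k$ with $k \in \ker \theta^1$, and hence $x = g_*(y) = g_*(k)$. This vanishes provided $\ker \theta^1 \subseteq \ker g_* = \Ima(\partial_V)$.

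Both conditions---surjectivity of the leftmost $\theta^1$ and the containment $\ker \theta^1 \subseteq \Ima(\partial_V)$ for the middle-left column---are handled via Proposition~\ref{proj} and Theorem~\ref{homotopyr1}. The splitting $\pi_q(MTSO(d',1)) \cong \pi_q^s(S^{d'}) \oplus \pi_q^s(\Sigma^{d'} BSO(d'))$ combined with Proposition~\ref{ren} identifies both left-column $\theta^1$'s with stabilization into the sphere summand, and Theorem~\ref{homotopyr1} kills the $BSO$-summand in these degrees, so that both $\pi_{d+3}(MTSO(d,1))$ and $\pi_{d+2}(MTSO(d-1,1))$ reduce to $\pi_3^s = \Z/24$. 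For $d \ge 5$, Freudenthal gives surjectivity of the leftmost stabilization, handling the lifting of $z$. The containment $\ker \theta^1 \subseteq \Ima(\partial_V)$ is automatic for $d \ge 7$ (stabilization is then an isomorphism) and for $d = 5$ (where $\pi_6(S^3) = \Z/12 \hookrightarrow \Z/24$ is injective); the remaining case $d = 6$ requires explicit identification of $\ker(\pi_7(S^4) \to \pi_8(S^5))$ and comparison with $\Ima(\partial_V)$ via the description of $\partial_V$ in terms of the tangent bundle of $S^5$. This small-dimensional verification at $d = 6$ is the main technical obstacle.
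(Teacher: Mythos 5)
Your ladder is exactly the one the paper uses (Proposition~\ref{egenskaberr} with $k=1$), and your four\--lemma chase correctly isolates the three conditions needed: the rightmost $\theta^1$ injective, the leftmost $\theta^1$ surjective, and $\ker\theta^1\subseteq\Ima(\partial_V)$ in the second column. The paper's own proof is the terse version of the same argument: it asserts that the first two vertical maps are isomorphisms by Theorem~\ref{homotopyr1}, that the fourth is injective by Theorem~\ref{theta-inj}, and invokes the 5-lemma. So in approach you agree with the paper; you are simply more careful about which hypotheses of the four lemma are actually available.

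The case $d=6$ that you flag as ``the main technical obstacle'' is, however, not a postponable verification --- it is a genuine failure of this route (and of the unstable statement). For $d=6$ the fibration $S^4\to V_{6,2}\to S^5$ has a section, since $\chi(S^5)=0$ gives a unit vector field on $S^5$; hence $\partial_V=0$ and $\Ima(\partial_V)=0$. On the other hand $\ker\bigl(\theta^1:\pi_7(S^4)\to\pi_8(MTSO(5,1))\cong\pi_3^s\bigr)$ is the infinite cyclic subgroup generated by the Whitehead product $[\iota_4,\iota_4]$, so the containment you need fails. In fact the unstable claim cannot hold at $d=6$: the section splits $\pi_7(V_{6,2})\cong\pi_7(S^4)\oplus\pi_7(S^5)$, which contains a free summand, while $H^8(MTSO(6,2);\Q)=0$ by Theorem~\ref{Fcoeff}, so $\pi_8(MTSO(6,2))$ is finite. (The paper's one-line proof breaks at the same point, since $\pi_7(S^4)\to\pi_8(MTSO(5,1))$ is not an isomorphism.) What is true, and what is actually used downstream in the proof of Theorem~\ref{MTd2}, is injectivity of the stable map $f_{\theta*}:\pi_{d+2}^s(\Sigma V_{d,2})\to\pi_{d+2}(MTSO(d,2))$; if you replace the bottom row of your ladder by the stable homotopy groups of the Stiefel manifolds (using Proposition~\ref{faktor}), all three of your conditions hold for every $d\ge5$ and the chase goes through. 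The correct repair is therefore to prove the stable statement, not to resolve the unstable $d=6$ case.
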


\begin{proof}
Consider the diagram from Proposition \ref{egenskaberr}
\begin{equation*}\label{homotopy2}
\vcenter{\xymatrix@C=10pt{{\pi_{d+3}(MT(d,1))}\ar[r] &{\pi_{d+2}(MT(d-1,1))}\ar[r]&{\pi_{d+2}(MT(d,2))}\ar[r]&{\pi_{d+2}(MT(d,1))}\\
{\pi_{d+2}(V_{d,1})}\ar[r] \ar[u]^{\theta^1}&{\pi_{d+1}(V_{d-1,1})}\ar[u]^{\theta^1} \ar[r]&{\pi_{d+1}(V_{d,2})}\ar[u]^{\theta^2} \ar[r]&{\pi_{d+1}(V_{d,1}).} \ar[u]^{\theta^1} 
}}
\end{equation*}
The first two vertical maps are isomorphisms by Theorem \ref{homotopyr1} and the fourth is injective by Theorem \ref{theta-inj}, so the third is injective by the 5-lemma.
\end{proof}
 
\begin{lem} \label{homotopy-cofiber}
For $r>1$ and $q \leq 2(d-r)-1$ and $C_\theta $ the cofiber of \eqref{cofiber}, the homotopy groups $\pi_q(C_\theta)$ are given by:
\begin{equation*}
\begin{tabular}{cccc}
\hline
 {$q$}&{$ q\leq d-r+2$}&{$d-r+3$}&{$d-r+4$} \\
\hline
 {$\pi_q(C_\theta)$}&{$0$}&{$\Z /2$}&{$\Z /2$} \\
\hline
\end{tabular}
\end{equation*}
\end{lem}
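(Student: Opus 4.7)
My plan is to read off $H^\ast(C_\theta;\Z/2)$ from Corollary \ref{cofib2}, feed it into the Adams spectral sequence, and rule out odd-primary and rational contributions by a short connectivity argument. Concretely, Corollary \ref{cofib2} identifies $H^\ast(C_\theta;\Z/2)$ in the relevant range with the submodule $H^{>0}(BSO(d);\Z/2)\cdot H^\ast(MT(d,r);\Z/2)$. Since $w_1=0$ in the oriented case, the lowest positive generator of $H^\ast(BSO(d);\Z/2)$ is $w_2$, and the $H^\ast(BSO(d))$-basis $\{w_{d-r+1},\dots,w_d\}$ of $H^\ast(MT(d,r))$ given by Theorem \ref{fiw} forces $H^i(C_\theta;\Z/2)=0$ for $i\leq d-r+2$, $H^{d-r+3}(C_\theta;\Z/2)=\Z/2\langle w_2w_{d-r+1}\rangle$, and $H^{d-r+4}(C_\theta;\Z/2)=\Z/2\langle w_3w_{d-r+1},\,w_2w_{d-r+2}\rangle$.

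Next I would extract the $\A_2$-module structure via formula \eqref{binom} and Cartan, giving
\[
Sq^1(w_2w_{d-r+1})=w_3w_{d-r+1}+(d-r)w_2w_{d-r+2},
\]
a nonzero element of $H^{d-r+4}(C_\theta;\Z/2)$ in either parity of $d-r$. Thus a minimal $\A_2$-free resolution of $H^\ast(C_\theta;\Z/2)$ uses one generator in degree $d-r+3$ and exactly one additional generator in degree $d-r+4$ (a complement to the image of $Sq^1$); no relations occur through degree $d-r+5$ once one continues the resolution one further step and checks that the images of $Sq^2x_1$ and $Sq^1x_2$ in $H^{d-r+5}(C_\theta)$ are linearly independent. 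The resulting $E_2$-term has $E_2^{0,d-r+3}=E_2^{0,d-r+4}=\Z/2$ with all other entries in the strip $t-s\leq d-r+4$ vanishing.

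For odd $p$ and for $\Q$, Theorem \ref{Fcoeff} shows $H^\ast(MTSO(d,r);F)$ is generated over $H^\ast(BSO(d);F)$ by classes of degree $\geq d-r+1$, while $H^{>0}(BSO(d);F)$ is concentrated in degrees $\geq 4$ (starting with $p_1$). Hence the submodule $H^{>0}(BSO(d);F)\cdot H^\ast(MT(d,r);F)$ vanishes below degree $d-r+5$, so by Corollary \ref{cofib2} and the Hurewicz theorem for the connective spectrum $C_\theta$ we get $\pi_q(C_\theta)_{(p)}=0$ for odd $p$ and $\pi_q(C_\theta)\otimes\Q=0$ for $q\leq d-r+4$. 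Combined with the $2$-primary Adams computation, this forces $\pi_q(C_\theta)=0$ for $q\leq d-r+2$ and $\pi_{d-r+3}(C_\theta)=\pi_{d-r+4}(C_\theta)=\Z/2$; no $d_k$ can act, and there is no room for an extension because the rational contribution vanishes.

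The main obstacle is the bookkeeping for the minimal resolution: one must verify that $E_2^{s,t}=0$ for $s\geq 1$ in the narrow strip $t-s\leq d-r+4$, which requires computing the next resolution step and keeping track of how the parities of $d$ and $d-r$ affect the coefficients $(d-r)$ and $\binom{d-r}{2}$ appearing in the Steenrod squares. Once those cases are unified (both parities giving a single surviving class in $H^{d-r+4}$), the sparseness of $E_2$ near the edge makes the conclusion automatic.
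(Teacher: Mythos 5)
Your proposal is correct and follows essentially the same route as the paper: identify $H^*(C_\theta;\Z/2)$ from Corollary~\ref{cofib2}, observe the odd-primary and rational cohomology vanishes through degree $d-r+4$ via Theorem~\ref{cohom-cofiber2}/\ref{Fcoeff}, and run the mod~2 Adams spectral sequence, whose $E_2$-page in this range is exactly the chart the paper displays in Figure~\ref{r1}. The only difference is that you spell out the minimal resolution argument that justifies the sparseness of $E_2$ (which forces all differentials to vanish), whereas the paper treats this as evident from the figure.
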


\begin{proof}
The cohomology structure with $\Z/p$ coefficients is given in Corollary \ref{cofib2} for $p=2$, and otherwise it is the kernel of the map in Theorem \ref{cohom-cofiber2}. Only in the $p=2$ case, anything interesting happens in dimensions $q \leq d-r+4$. The relevant part of the spectral sequence is shown in Figure \ref{r1}. Obviously, there can be no differentials, and the claims follow.
\end{proof} 

\begin{cor}
For $d-r\geq 2$,
\begin{equation*}
\theta^r : \pi_{d-r+2}(V_{d,r}) \to \pi_{d-r+3}(MTSO(d,r))
\end{equation*}
is injective with cokernel $\Z/2$. In particular, it is not an isomorphism.
\end{cor}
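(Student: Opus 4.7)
The plan is to combine the long exact sequence in homotopy for the cofibration
\[
\Sigma^{\infty+1}V_{d,r}\xrightarrow{f_\theta}MT(d,r)\to C_\theta
\]
from \eqref{cofiber} with the computation of $\pi_*(C_\theta)$ in Lemma \ref{homotopy-cofiber}. By Proposition \ref{faktor} the map $\theta^r$ factors as the stabilization isomorphism $\pi_{d-r+2}(V_{d,r})\xrightarrow{\sim}\pi_{d-r+3}^s(\Sigma V_{d,r})$ followed by $f_{\theta*}$ in the range $d-r\ge 3$, so it suffices to analyse $f_{\theta*}$. The relevant piece of the long exact sequence is
\[
\pi_{d-r+4}(C_\theta)\xrightarrow{\delta_1}\pi_{d-r+3}^s(\Sigma V_{d,r})\xrightarrow{f_{\theta*}}\pi_{d-r+3}(MT(d,r))\xrightarrow{p_*}\pi_{d-r+3}(C_\theta)\xrightarrow{\delta_2}\pi_{d-r+2}^s(\Sigma V_{d,r}),
\]
and Lemma \ref{homotopy-cofiber} pins down $\pi_{d-r+3}(C_\theta)=\pi_{d-r+4}(C_\theta)=\Z/2$ once $d-r$ is large enough to lie in the stable range of that lemma; smaller values of $d-r$ would require extending the lemma by slightly more careful Adams spectral sequence bookkeeping. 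Consequently both the kernel and cokernel of $f_{\theta*}$ are subquotients of $\Z/2$.

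For injectivity of $\theta^r$, I would invoke Corollary \ref{KRinj} directly in the range $d\ge r+3$; this forces $\delta_1=0$. The boundary case $d-r=2$ lies outside the range of that corollary, so I would handle it by a five-lemma argument on the ladder from Proposition \ref{egenskaberr} with $k=1$, inducting on $r$ from the base case established in Corollary \ref{inj2}, exactly as in the proof of Theorem \ref{theta-inj}. Granted injectivity, the cokernel of $\theta^r$ equals the subgroup $\ker\delta_2\subseteq\Z/2$, so everything reduces to checking that $\delta_2=0$.

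To establish $\delta_2=0$ my plan is to pass to the Adams spectral sequence. Corollary \ref{cofib2} shows that $H^*(C_\theta)\hookrightarrow H^*(MT(d,r))$ is injective in the relevant range, so the Puppe boundary map $\partial\colon C_\theta\to\Sigma^{\infty+2}V_{d,r}$ is zero on $\Z/2$-cohomology through dimension $2(d-r)+1$, which makes the induced map on $E_2$-pages of the mod-$2$ Adams spectral sequence vanish in these total degrees. Since the generator of $\pi_{d-r+3}(C_\theta)$ is detected in Adams filtration $0$, being dual to the decomposable class $w_2w_{d-r+1}\in H^{d-r+3}(C_\theta)$, its image under $\delta_2$ can at most lie in positive Adams filtration of $\pi_{d-r+1}^s(V_{d,r})$. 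The main obstacle is to rule out such a hidden extension: I would control this using James' $2(d-r)$-equivalence $\R P^{d-1}/\R P^{d-r-1}\to V_{d,r}$, which reduces the computation of $\pi_{d-r+1}^s(V_{d,r})$ to the bottom cells of a stunted projective space, where the Adams filtrations of the relevant stable homotopy classes are explicit and low enough to forbid a non-zero lift of the filtration-$0$ class under $\delta_2$.
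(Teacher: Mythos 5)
Your setup coincides with the paper's: the long exact sequence \eqref{cofiber2} for the cofibration $f_\theta$, Lemma \ref{homotopy-cofiber} for $\pi_*(C_\theta)$, Corollary \ref{KRinj} for injectivity, and the correct reduction of the cokernel claim to the vanishing of the boundary map $\delta_2\colon \pi_{d-r+3}(C_\theta)\to\pi_{d-r+2}^s(\Sigma V_{d,r})$. The gap is in your proof that $\delta_2=0$. The Adams filtration argument (correct as far as it goes: $\partial^*=0$ on mod $2$ cohomology in the relevant range by Corollary \ref{cofib2}, hence $\delta_2$ raises Adams filtration) only shows that the image of the filtration-$0$ generator of $\pi_{d-r+3}(C_\theta)\cong\Z/2$ lies in filtration $\geq 1$ of $\pi_{d-r+2}^s(\Sigma V_{d,r})\cong\pi_{d-r+1}^s(V_{d,r})$. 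That does not force it to be zero, because this group generally contains nonzero elements of positive filtration: via James' equivalence the bottom two cells of $V_{d,r}$ are, stably, either $S^{d-r}\vee S^{d-r+1}$ ($d-r$ even) or a suspended mod $2$ Moore spectrum ($d-r$ odd), and in both cases $\eta$ on the bottom cell contributes a class of Adams filtration exactly $1$ in the $(d-r+1)$-stem. So the stunted projective space model, far from forbidding a nonzero image, exhibits precisely the kind of target your filtration bound cannot exclude; the final sentence of your argument is an assertion, not a proof.

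The repair is one line and is what the paper does: by exactness of \eqref{cofiber2}, $\Ima\delta_2=\Ker\bigl(f_{\theta*}\colon\pi_{d-r+2}^s(\Sigma V_{d,r})\to\pi_{d-r+2}(MTSO(d,r))\bigr)$, and under the stabilization isomorphism of Proposition \ref{faktor} (valid for $q=d-r+2\leq 2(d-r)$, i.e.\ $d-r\geq 2$) this is the kernel of $\theta^r$ in degree $q=d-r+2$, which vanishes by the very Corollary \ref{KRinj} you already invoke one degree higher. Hence $\delta_2=0$ and the cokernel is all of $\pi_{d-r+3}(C_\theta)\cong\Z/2$, with no spectral sequence input needed. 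Two smaller bookkeeping points: Lemma \ref{homotopy-cofiber} is stated for $q\leq 2(d-r)-1$, so quoting it for $\pi_{d-r+4}(C_\theta)$ formally requires $d-r\geq 5$, not just $d-r\geq 2$; and your proposed patch for the boundary case via Corollary \ref{inj2} is in the wrong degree (that corollary concerns $q=d-r+4$ for $r=2$, not $q=d-r+3$), so the low values of $d-r$ would still need a separate argument.
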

This is contrary to the map $\tilde{\theta}^t_r$ defined by Atiyah and Dupont, which was an isomorphism in this dimension.
\begin{proof}
It follows from the long exact sequence
\begin{equation}\label{cofiber2}
\dotsm \to \pi_{q}^s(\Sigma V_{d,r}) \xrightarrow{f_{\theta*}} \pi_{q} (MT(d,r)) \to \pi_{q}(C_{\theta }) \to \pi_{q-1}^s(\Sigma V_{d,r}) \to \dotsm
\end{equation}
combined with Lemma \ref{homotopy-cofiber} and Corollary \ref{KRinj} that the cokernel is $\Z/2$.
\end{proof}

\begin{thm} \label{MTd2}
For $q < 2(d-1)$, $\pi_q(MTSO(d,2))$ is given by the table:
\begin{equation*}
\begin{tabular}{ccccc}
\hline
 {$q$}&{$d-1$}&{$d$}&{$d+1$}&$d+2$ \\
\hline
 {$d$ even}&{$\Z$}&{$\Z \oplus \Z/2$}&{$\Z /2 \oplus \Z /2 \oplus \Z /2$}&{$\Z/48 \oplus \Z/2$} \\

{$d$ odd}&{$\Z /2$}&{$\Z/2$} &{$\Z /4 \oplus \Z /2$}&{$\Z /4 \oplus \Z /2$}\\
\hline
\end{tabular}
\end{equation*}
\end{thm}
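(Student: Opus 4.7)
The plan is to run the mod-2 Adams spectral sequence on $MTSO(d,2)$ in the range $t-s \le d+2$, cross-check with the cofibration sequence
\[
MTSO(d-1,1) \to MTSO(d,2) \to MTSO(d,2-1) = MTSO(d,1)
\]
of Proposition \ref{egenskaberr} (with $r=2$, $k=1$), and handle odd primes separately using Theorem \ref{Fcoeff}. The parity of $d$ will produce the two cases in the statement, since the binomial coefficients $d-1$, $d$ entering $\A_2$-action formulas flip mod~$2$.

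First I would compute $H^{\le d+2}(MTSO(d,2);\Z/2)$ as an $\A_2$-module. By Theorem \ref{fiw} it is free over $H^*(BSO(d);\Z/2)$ on $\phi(w_{d-1})$ and $\phi(w_d)$, and the action is given by $Sq(\phi(x))=\phi(Sq(x)\, w(U_d^\perp))$ combined with formula \eqref{binom}. In particular $Sq^1\phi(w_{d-1}) = (d-1)\phi(w_d)$, $Sq^1\phi(w_d) = d\phi(w_{d+1})$, together with the $Sq^2$-formulas listed in the table of Section \ref{calcSO}. From this data I would build a minimal $\A_2$-free resolution through total degree $d+2$, as in Figure \ref{resolution}, separately for $d$ even and $d$ odd, and read off the $E_2$-page.

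Next I argue that $E_\infty=E_2$ in this range: no possible differential has both source and target nonzero in such a narrow window (any $d_k$ would shift into columns where the resolution is zero). The free ranks are pinned down by \eqref{Q} together with Theorem \ref{Fcoeff}, which shows $H^{\le d+2}(MTSO(d,2);\Q)$ contributes one $\Q$ in degree $d-1$ when $d$ is even (from $\phi(\delta e_{d-2})$) and one $\Q$ in degree $d$ when $d$ is even (from $\phi(e_d')$), and nothing when $d$ is odd — this matches the $\Z$'s appearing only in the even-$d$ row. For odd primes, Theorem \ref{Fcoeff} reduces the input to an extremely small $\A_p$-module, whose only contribution in the range is the $\Z/3$ inside the $\Z/24 \subseteq \pi_{d+2}^s(S^d)$ coming through $MTSO(d,1)$.

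Finally I resolve the extensions. The $h_0$-towers on $E_\infty$ read off the $2$-power extensions (e.g.\ a height-$4$ tower in $t-s=d+2$ for $d$ even forces the $\Z/16$ at the prime $2$, which combined with $\Z/3$ yields $\Z/48$), while comparison along the cofibration and Theorem \ref{homotopyr1} pins down which summands come from $MTSO(d-1,1)$ versus $MTSO(d,1)$. Injectivity of $\theta^2$ from Corollary \ref{inj2}, together with Paechter's values for $\pi_{d+1}(V_{d,2})$, provides the consistency check that fixes the remaining ambiguities in $\pi_{d+1}$. The hardest step will be the extension in $\pi_{d+2}$ for $d$ even: showing that the $\Z/24$ pulled back from $\pi_{d+2}(MTSO(d-1,1))$ is extended by a $\Z/2$ from $\pi_{d+2}(MTSO(d,1))$ to give a genuine $\Z/48$ rather than splitting off. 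I expect to settle this either by a direct Toda-bracket/$h_0$-multiplication argument on the $E_\infty$-page, or by exhibiting a specific closed manifold whose Reinhart invariant $\beta^2$ has order $48$ via Proposition \ref{betared}.
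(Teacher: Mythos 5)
Your overall strategy (mod-2 Adams spectral sequence, odd primes via Theorem \ref{Fcoeff}, comparison with $r=1$ for the extension in $\pi_{d+1}$) matches the paper's, but there is one genuine gap: the claim that $E_2=E_\infty$ because ``no possible differential has both source and target nonzero in such a narrow window'' is false. In the $d$ even case the chart has a class in $E_2^{0,d+2}$ (filtration $0$, column $t-s=d+2$) and a nonzero class in $E_2^{2,d+3}$ (filtration $2$, column $t-s=d+1$), so a $d_2$ between them is a priori possible; it is not ruled out rationally either, since both columns are torsion in those filtrations. This is exactly the differential the paper has to work to exclude. It does so not with the cofibration $MTSO(d-1,1)\to MTSO(d,2)\to MTSO(d,1)$ that you propose as your cross-check, but with the cofibration $\Sigma^{\infty+1}V_{d,2}\xrightarrow{f_\theta} MTSO(d,2)\to C_\theta$ of \eqref{cofiber}: Lemma \ref{homotopy-cofiber} gives $\pi_q(C_\theta)$, Paechter gives $\pi_q^s(\Sigma V_{d,2})$, and then Corollary \ref{inj2} (injectivity of $\theta^2$ on $\pi_{d+1}(V_{d,2})$), Theorem \ref{theta-inj}, and Corollary \ref{KRinj} force lower bounds on the orders of $\pi_{d+1}(MTSO(d,2))$ and $\pi_{d+2}(MTSO(d,2))$ that are incompatible with any nonzero differential into columns $d+1$ and $d+2$. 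You will need some argument of this kind; positional reasoning alone does not close the case.

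Two smaller points. First, for $d$ even and $p=3$ you assert the only contribution is the $\Z/3$ inside $\pi_{d+2}^s(S^d)$; to justify this one must actually show that all odd-primary Steenrod powers of $\phi(\delta(e_{d-2}))$ vanish (the paper does this by mapping to $MTSO(d-1,1)$ and using that the powers of the sphere's fundamental class vanish), otherwise the $E_2$-term at $p=3$ is not obviously as small as you need. Second, your worry about a ``hidden'' extension producing $\Z/48$ is misplaced: the height-four $h_0$-tower in column $d+2$ already sits on the $E_2$-page and gives the $\Z/16$ directly once the column is known to survive, so no Toda bracket or explicit manifold is needed there; the genuine extension issue is instead in $\pi_{d+1}$ for $d$ even, where one must rule out a hidden multiplication by $2$, which you correctly propose to do by naturality along $MTSO(d-1,1)\to MTSO(d,2)$.
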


\begin{figure}[p]
\setlength{\unitlength}{1cm}
\begin{picture}(13,5)
\put(0.5,0.5){\vector(0,1){4.5}}
\put(0.5,0.5){\vector(1,0){5.5}}
\put(1.5,1){\circle*{0.1}}
\put(3.5,1){\circle*{0.1}}
\put(4.5,1){\circle*{0.1}}
\put(4.5,2){\circle*{0.1}}
\put(4.5,3){\circle*{0.1}}
\put(2.5,2){\circle*{0.1}}
\put(3.5,2){\circle*{0.1}}
\put(3.5,3){\circle*{0.1}}
\put(6,0){\makebox(0,0){$t-s$}}
\put(4.5,4.5){$d$ odd}
\put(3.5,2){\line(0,1){1}}
\put(1.5,1){\line(1,1){1}}
\put(2.5,2){\line(1,1){1}}
\put(4.5,1){\line(0,1){1}}
\put(3.5,2){\line(1,1){1}}
\put(0,1){\makebox(0,0){$0$}}
\put(0,2){\makebox(0,0){$1$}}
\put(0,3){\makebox(0,0){$2$}}
\put(0,4.5){\makebox(0,0){$s$}}
\put(1.5,0){\makebox(0,0){$d-1$}}
\put(2.5,0){\makebox(0,0){$d$}}
\put(3.5,0){\makebox(0,0){$d+1$}}
\put(4.5,0){\makebox(0,0){$d+2$}}

\put(7.5,0.5){\vector(0,1){4.5}}
\put(7.5,0.5){\vector(1,0){5.5}}
\put(8.5,1){\circle*{0.1}}
\put(8.5,2){\circle*{0.1}}
\put(8.5,3){\circle*{0.1}}
\put(8.5,4){\circle*{0.1}}
\put(8.5,4.5){\makebox(0,0){$\vdots$}}
\put(9.5,1){\circle*{0.1}}
\put(9.5,2){\circle*{0.1}}
\put(9.5,3){\circle*{0.1}}
\put(9.5,4){\circle*{0.1}}
\put(9.5,4.5){\makebox(0,0){$\vdots$}}
\put(9.7,2){\circle*{0.1}}
\put(10.5,1){\circle*{0.1}}
\put(11.5,1){\circle*{0.1}}
\put(11.5,2){\circle*{0.1}}
\put(11.5,3){\circle*{0.1}}
\put(11.5,4){\circle*{0.1}}
\put(11.7,3){\circle*{0.1}}
\put(10.5,2){\circle*{0.1}}
\put(10.5,3){\circle*{0.1}}
\put(13,0){\makebox(0,0){$t-s$}}
\put(7,1){\makebox(0,0){$0$}}
\put(7,2){\makebox(0,0){$1$}}
\put(7,3){\makebox(0,0){$2$}}
\put(7,4.5){\makebox(0,0){$s$}}
\put(8.5,0){\makebox(0,0){$d-1$}}
\put(9.5,0){\makebox(0,0){$d$}}
\put(10.5,0){\makebox(0,0){$d+1$}}
\put(11.5,0){\makebox(0,0){$d+2$}}
\put(11.5,4.5){$d$ even }
\put(8.5,1){\line(0,1){3}}
\put(9.5,1){\line(0,1){3}}
\put(8.5,1){\line(6,5){1.2}}
\put(9.7,2){\line(4,5){0.8}}
\put(10.5,3){\line(1,1){1}}
\put(11.5,1){\line(0,1){3}}
\put(9.5,1){\line(1,1){1}}
\put(10.5,2){\line(6,5){1.2}}
\end{picture}
\caption{The Adams spectral sequence, $r=2$}\label{figur}
\end{figure}

\begin{proof}
Again we consider the Adams spectral sequence for the 2-primary part. The $E_2$-term is shown in Figure \ref{figur}. We immediately see that the $(d-1)$th and the $d$th column must survive to $E_\infty$ in both cases. In the even case, this is true by \eqref{Q} and Theorem \ref{Fcoeff}.

For $d$ even, consider the exact sequence for the pair $(\Sigma^\infty \Sigma V_{d,2},MTSO(d,2))$. The homotopy groups of $V_{d,2}$ are known from \cite{paechter}, and the homotopy groups of the cofiber are calculated in Lemma \ref{homotopy-cofiber}. Inserting this in \eqref{cofiber2} yields:
\begin{equation*}
\begin{tabular}{c@\ c@\ c@\ c@\ c@\ c}
$\dotsm$&$\Z/24 \oplus \Z/2$ &$\to$ &$\pi_{d+2}(MTSO(d,2))$& $\to$ &$\Z/2$  \\
$\to$ &$\Z/2 \oplus \Z/2$ &$\to$ &$\pi_{d+1}(MTSO(d,2))$& $\to$& $\Z/2$\\ 
\end{tabular}
\end{equation*}
It follows from Theorem \ref{theta-inj} that the last map is surjective. By Corollary \ref{inj2}, the first map is injective and by Corollary \ref{KRinj}, the third map is zero. Thus the third and fourth column in the spectral sequence survive to $E_\infty$, otherwise the groups would be too small for the exact sequence. The odd case is similar.

Most of the necessary information about extensions is given by the vertical lines. In the even case, there may still be an extension problem in dimension $d+1$. Note, however, that under the map induced by $MTSO(d-1,1) \to MTSO(d,2)$ on spectral sequences, the generator of $E^{d+1,0}_\infty$ is hit by something representing a map of order two. But the map is induced by a filtration preserving map of homotopy groups. Thus, the generator cannot represent a map of order greater than two. 

When $d $ is odd and $p>2$, $H^*(MTSO(d,2);\Z/p)=0$ in the relevant dimensions, so there is no $p$-torsion. 

When $d$ is even, Theorem \ref{Fcoeff} yields
\begin{equation*}
H^{*}(MTSO(d,2);\Z/p) \cong H^{*}(BSO(d);\Z/p) \cdot \phi(\delta(e_{d-2}))  \oplus H^{*}(BSO(d);\Z/p) \cdot \phi(e_d) 
\end{equation*} 
in low dimensions. Since $\phi(\delta(e_{d-2}))$ maps to zero under the map 
\begin{equation*}
H^{d-1}(MTSO(d,2);\Z/p) \to H^{d-1}(MTSO(d);\Z/p),
\end{equation*}
so must all Steenrod powers of $\phi(\delta(e_{d-2}))$. That is, they all lie in the kernel, which is
\begin{equation*}
 H^*(BSO(d);\Z/p) \cdot \phi(\delta(e_{d-2})).
\end{equation*}
On the other hand, the map
\begin{equation*} 
H^*(MTSO(d,2);\Z/p) \to H^*(MTSO(d-1,1);\Z/p)
\end{equation*}
is injective on $ H^*(BSO(d);\Z/p) \cdot \pi(\delta(e_{d-2}))$. Since $\phi(\delta(e_{d-2}))$ maps to the generator of the $H^{d-1}(S^{d-1};\Z/p)$ summand and all Steenrod powers of this are zero, all powers of $\phi(\delta(e_{d-2}))$ must be zero in $H^{*}(MTSO(d,2),\Z/p)$ as well. Using this, the Adams spectral sequence immediately shows that the only $p$-torsion is a $\Z/3$ summand in dimension $d+2$.
\end{proof}

\begin{thm} \label{MTd3}
For $q < 2(d-2)$, $\pi_q(MTSO(d,3))$ is given by the following table:
\begin{equation*}
\begin{tabular}{ccccc}
\hline
 {$q$}&{$d-2$}&{$d-1$}&{$d$}&{$d+1$} \\
\hline
 {$d \equiv 0 \bmod 4$ }&{$\Z /2$}&{$\Z /2 $}&{$\Z \oplus \Z /4 \oplus \Z /2$} &{$\Z/2 \oplus \Z/2 \oplus \Z/2$}\\

{$d \equiv 1 \bmod 4$ }&{$\Z$}&{$\Z /4$}&{$\Z /2 \oplus \Z /2$} &{ $\Z/24 \oplus \Z/2$}\\

{$d \equiv 2 \bmod 4$ }&{$\Z/2$}&{$0$}&{$\Z \oplus \Z /2 \oplus \Z/2$}&{$\Z/2 \oplus \Z/2$} \\

{$d \equiv 3 \bmod 4$ }&{$\Z$}&{$\Z/2 \oplus \Z/2$}&{$\Z /2 \oplus \Z/2 \oplus \Z/2$} &{$\Z/48 \oplus \Z/4$}\\

\hline
\end{tabular}
\end{equation*}
\end{thm}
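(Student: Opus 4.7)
The plan is to mirror the proof of Theorem~\ref{MTd2}, running the Adams spectral sequence at each prime and assembling the $p$-primary output. The $\mathcal{A}_2$-module structure of $H^*(MTSO(d,3);\Z/2)$ in the range $d-2 \leq t \leq d+2$ is already available from the general table in Section~\ref{calcSO} with $r=3$: the formulas depend on the binomial coefficients $d-3$, $\binom{d-3}{2}$, $d-2$, and $\binom{d-2}{2}$, whose vanishing is governed by $d \bmod 4$. These four residue classes are precisely what produce the four cases in the statement. For each residue I would construct a minimal free $\mathcal{A}_2$-resolution through dimension $d+2$, exactly as in Figure~\ref{resolution}, and read off the $E_2$-page of the Adams spectral sequence.

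Next I would pin down the free summands using Theorem~\ref{Fcoeff}: a $\Z$-summand appears in $\pi_{d-2}$ precisely when $d-3$ is even (so $d$ odd) and in $\pi_d$ precisely when $d$ is even, corresponding to $\phi(\delta e_{d-3})$ and $\phi(e_d')$ respectively. To rule out nontrivial differentials in the remaining torsion, I would exploit the long exact sequence~\eqref{cofiber2}, feeding in Paechter's~\cite{paechter} tabulation of $\pi_*(V_{d,3})$ on one side and Lemma~\ref{homotopy-cofiber} for $\pi_*(C_\theta)$ on the other. In each of the four cases this constrains the order of $\pi_q(MTSO(d,3))$ tightly enough to force the Adams spectral sequence to collapse in the displayed range, just as in the proof of Theorem~\ref{MTd2}.

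Extensions will mostly be resolved by the $h_0$-towers, which can be read directly off the resolution. For the remaining ambiguities I would use the naturality square of Proposition~\ref{egenskaberr} for $k=1,2$: the filtration-preserving comparison maps $MTSO(d-1,2) \to MTSO(d,3)$ and $MTSO(d,3) \to MTSO(d,2)$, combined with Theorems~\ref{homotopyr1} and~\ref{MTd2}, bound the orders of generators exactly as in the $\Z/48$ argument used previously. I expect the main obstacle to be the extension in $\pi_{d+1}$ for $d \equiv 1,3 \bmod 4$: the Adams filtration places an $h_0$-tower of length $3$ (respectively $4$) together with an additional $\Z/2$ class at that bidegree, and one must verify that the $3$-primary contribution couples with the $2$-primary tower to yield the stated $\Z/24$ or $\Z/48$ rather than split off. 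This I would settle by multiplicative comparison against $\pi_{d+1}^s(S^{d-3})$, whose Hopf-invariant summand is $\Z/24$, transported through the factorization $\theta^1$ of Proposition~\ref{ren} and the diagram of Proposition~\ref{egenskaberr}.

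Finally, for odd primes $p$, Theorem~\ref{Fcoeff} presents $H^*(MTSO(d,3);\Z/p)$ as a free $H^*(BSO(d);\Z/p)$-module on $\phi(\delta e_{d-3})$ and $\phi(e_d')$. Repeating the argument from the last paragraph of the proof of Theorem~\ref{MTd2}, all odd Steenrod powers of $\phi(\delta e_{d-3})$ vanish by naturality, since $\phi(\delta e_{d-3})$ restricts to a generator of the sphere summand on which the odd-primary Steenrod action is trivial. Consequently the only odd-primary torsion arises from the sphere stable stem $\pi_{d+1}^s(S^{d-3})$, producing the $\Z/3$ factor visible in the $\Z/24$ and $\Z/48$ summands of the $d \equiv 1,3 \bmod 4$ rows, with no odd torsion for the other two residues in the displayed range.
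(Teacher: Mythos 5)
Your overall strategy (Adams spectral sequence at each prime, minimal resolutions governed by $d \bmod 4$, free part from Theorem~\ref{Fcoeff}, odd primes handled as in Theorem~\ref{MTd2}) matches the paper's, but the mechanism you propose for killing the differentials does not work in the critical column $q=d+1$. The order-counting argument via \eqref{cofiber2} can force collapse only if you have a \emph{lower} bound on $|\pi_{d+1}(MTSO(d,3))|$ matching the total order of the $E_2$-column, and that requires controlling the kernel of $f_{\theta*}\colon \pi_{d+1}^s(\Sigma V_{d,3})\to\pi_{d+1}(MTSO(d,3))$. In the proof of Theorem~\ref{MTd2} the analogous injectivity is supplied by Corollary~\ref{inj2}; for $r=3$ the needed input would be injectivity of $\theta^3$ on $\pi_{d}(V_{d,3})$, which Corollary~\ref{KRinj} only provides up to $q=d$, and the kernel of $f_{\theta*}$ in degree $d+1$ is the image of $\pi_{d+2}(C_\theta)$, which lies outside the range of Lemma~\ref{homotopy-cofiber}. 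Without this, the cofibration only bounds the group from above, and an upper bound can never rule out differentials (collapse is the maximal outcome). The paper instead kills the relevant $d_2$'s by naturality against $MTSO(d-1,2)$, $MTSO(d-2,1)$ and $MTSO(d,2)$, whose spectral sequences are already understood, combined with $h_0$- and $h_1$-linearity of the differential and a partial lower bound (elements of order at least $8$) extracted from the exact sequence of the pair $(MTSO(d-1,2),MTSO(d,3))$. You mention these comparison maps, but deploy them only for extensions; they are in fact the essential tool for the differentials.

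Two smaller points. First, your worry that the $3$-primary contribution might ``split off'' from the $h_0$-tower rather than assemble into $\Z/24$ or $\Z/48$ is vacuous: a finite abelian group is canonically the direct sum of its $p$-primary components, so $\Z/24\cong\Z/8\oplus\Z/3$ and there is no extension problem between distinct primes. The genuine extension problems are within the $2$-primary part (e.g.\ distinguishing $\Z/16\oplus\Z/4$ from $\Z/8\oplus\Z/8$ for $d\equiv 3\bmod 4$ in degree $d+1$), and for that case the paper in fact has to defer the resolution to the $r=4,5$ computations of Theorem~\ref{MTd4}, so your proposed comparison with $\pi_*^s(S^{d-3})$ would need to be substantiated. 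Second, the extension in degree $d+1$ for $d$ even is settled in the paper by the exact sequence of the cofibration $MTSO(d-2,1)\to MTSO(d,3)\to MTSO(d,2)$, which forces $\pi_{d+1}(MTSO(d,3))\to\Z/2\oplus\Z/2\oplus\Z/2$ to be onto; this map is in your toolbox, so that part of your plan is repairable.
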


\begin{figure}[p]
\begin{equation*}
\setlength{\unitlength}{1cm}
\begin{picture}(13,5)
\put(0.5,0.5){\vector(0,1){4.5}}
\put(0.5,0.5){\vector(1,0){5.5}}
\put(1.5,1){\circle*{0.1}}
\put(2.5,2){\circle*{0.1}}
\put(3.5,1){\circle*{0.1}}
\put(3.5,2){\circle*{0.1}}
\put(3.5,3){\circle*{0.1}}
\put(3.7,1){\circle*{0.1}}
\put(3.7,2){\circle*{0.1}}
\put(3.7,3){\circle*{0.1}}
\put(3.7,4){\circle*{0.1}}
\put(3.7,4.5){\makebox(0,0){$\vdots$}}
\put(4.5,1){\circle*{0.1}}
\put(4.5,2){\circle*{0.1}}
\put(4.5,3){\circle*{0.1}}
\put(3.9,4.7){$d \equiv 0 \bmod 4$}
\put(3.5,2){\line(0,1){1}}
\put(3.7,1){\line(0,1){3}}
\put(1.5,1){\line(1,1){1}}
\put(2.5,2){\line(1,1){1}}
\put(3.7,1){\line(4,5){0,8}}
\put(3.5,2){\line(1,1){1}}
\put(0,1){\makebox(0,0){$0$}}
\put(0,2){\makebox(0,0){$1$}}
\put(0,3){\makebox(0,0){$2$}}
\put(0,4.5){\makebox(0,0){$s$}}
\put(1.5,0){\makebox(0,0){$d-2$}}
\put(2.5,0){\makebox(0,0){$d-1$}}
\put(3.5,0){\makebox(0,0){$d$}}
\put(4.5,0){\makebox(0,0){$d+1$}}
\put(6,0){\makebox(0,0){$t-s$}}

\put(7.5,0.5){\vector(0,1){4.5}}
\put(7.5,0.5){\vector(1,0){5.5}}
\put(8.5,1){\circle*{0.1}}
\put(8.5,2){\circle*{0.1}}
\put(8.5,3){\circle*{0.1}}
\put(8.5,4){\circle*{0.1}}
\put(8.5,4.5){\makebox(0,0){$\vdots$}}
\put(9.5,1){\circle*{0.1}}
\put(9.5,2){\circle*{0.1}}
\put(10.5,1){\circle*{0.1}}
\put(10.5,2){\circle*{0.1}}
\put(11.7,1){\circle*{0.1}}
\put(11.7,2){\circle*{0.1}}
\put(11.7,3){\circle*{0.1}}
\put(11.5,3){\circle*{0.1}}
\put(10.9,4.7){$d \equiv 1 \bmod 4$}
\put(9.5,1){\line(0,1){1}}
\put(8.5,1){\line(0,1){3}}
\put(11.7,1){\line(0,1){2}}
\put(8.5,1){\line(1,1){1}}
\put(9.5,1){\line(1,1){2}}
\put(13,0){\makebox(0,0){$t-s$}}
\put(7,1){\makebox(0,0){$0$}}
\put(7,2){\makebox(0,0){$1$}}
\put(7,3){\makebox(0,0){$2$}}
\put(7,4.5){\makebox(0,0){$s$}}
\put(8.5,0){\makebox(0,0){$d-2$}}
\put(9.5,0){\makebox(0,0){$d-1$}}
\put(10.5,0){\makebox(0,0){$d$}}
\put(11.5,0){\makebox(0,0){$d+1$}}
\end{picture}
\end{equation*}

\begin{equation*}
\setlength{\unitlength}{1cm}
\begin{picture}(13,5)
\put(0.5,0.5){\vector(0,1){4.5}}
\put(0.5,0.5){\vector(1,0){5.5}}
\put(1.5,1){\circle*{0.1}}
\put(3.5,1){\circle*{0.1}}
\put(3.7,2){\circle*{0.1}}
\put(3.7,3){\circle*{0.1}}
\put(3.7,4){\circle*{0.1}}
\put(3.7,4.5){\makebox(0,0){$\vdots$}}
\put(3.5,2){\circle*{0.1}}
\put(4.5,1){\circle*{0.1}}
\put(4.5,3){\circle*{0.1}}
\put(3.9,4.7){$d \equiv 2 \bmod 4$}
\put(3.5,2){\line(1,1){1}}
\put(3.7,2){\line(0,1){2}}
\put(0,1){\makebox(0,0){$0$}}
\put(0,2){\makebox(0,0){$1$}}
\put(0,3){\makebox(0,0){$2$}}
\put(0,4.5){\makebox(0,0){$s$}}
\put(1.5,0){\makebox(0,0){$d-2$}}
\put(2.5,0){\makebox(0,0){$d-1$}}
\put(3.5,0){\makebox(0,0){$d$}}
\put(4.5,0){\makebox(0,0){$d+1$}}
\put(6,0){\makebox(0,0){$t-s$}}

\put(7.5,0.5){\vector(0,1){4.5}}
\put(7.5,0.5){\vector(1,0){5.5}}
\put(8.5,1){\circle*{0.1}}
\put(8.5,2){\circle*{0.1}}
\put(8.5,3){\circle*{0.1}}
\put(8.5,4){\circle*{0.1}}
\put(8.5,4.5){\makebox(0,0){$\vdots$}}
\put(9.5,1){\circle*{0.1}}
\put(9.5,2){\circle*{0.1}}
\put(10.5,1){\circle*{0.1}}
\put(10.5,3){\circle*{0.1}}
\put(10.5,2){\circle*{0.1}}
\put(11.5,1){\circle*{0.1}}
\put(11.5,2){\circle*{0.1}}
\put(11.5,3){\circle*{0.1}}
\put(11.5,4){\circle*{0.1}}
\put(11.7,2){\circle*{0.1}}
\put(11.7,3){\circle*{0.1}}
\put(10.9,4.7){$d \equiv 3 \bmod 4$}
\put(8.5,1){\line(0,1){3}}
\put(11.7,2){\line(0,1){1}}
\put(11.5,1){\line(0,1){3}}
\put(8.5,1){\line(1,1){3}}
\put(9.5,1){\line(1,1){1}}
\put(10.5,2){\line(6,5){1.2}}
\put(13,0){\makebox(0,0){$t-s$}}
\put(7,1){\makebox(0,0){$0$}}
\put(7,2){\makebox(0,0){$1$}}
\put(7,3){\makebox(0,0){$2$}}
\put(7,4.5){\makebox(0,0){$s$}}
\put(8.5,0){\makebox(0,0){$d-2$}}
\put(9.5,0){\makebox(0,0){$d-1$}}
\put(10.5,0){\makebox(0,0){$d$}}
\put(11.5,0){\makebox(0,0){$d+1$}}
\end{picture}
\end{equation*}
\caption{The Adams spectral sequence, $r=3$.}\label{r32}
\end{figure}
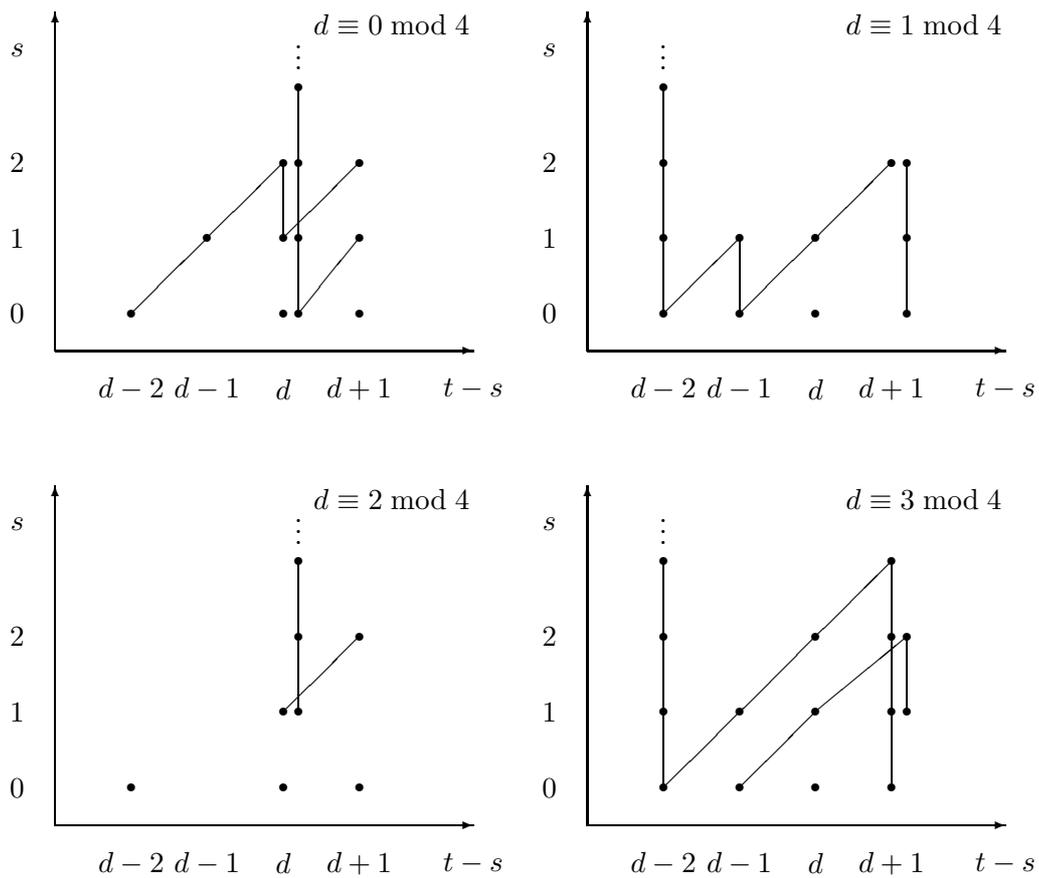

\begin{proof}
The spectral sequence for the $2$-primary part is shown in Figure~\ref{r32}.

For $d \equiv 1,2 \bmod 4$, we immediately see that the first three columns survive to $E_\infty$. For $d \equiv 1 \bmod 4$, we fill in the already computed groups in the exact sequence for the pair $(MTSO(d-1,2),MTSO(d,3))$. This yields
\begin{equation*}
\Z/2 \oplus \Z/2 \to \Z/16 \oplus \Z/2 \to \pi_{d+1}(MTSO(d,3)) \to \Z/2 ,  
\end{equation*} 
so $\pi_{d+1}(MTSO(d,3))$ must contain elements of order at least 8. Thus there can be no differentials hitting $h_0^2y$ where $y\in E_2^{d+1,0}$ is the generator. The other dot in $E^{d+1,2}_2$ is $h_1x$ for some non-zero $x \in E_2^{d,1}$. But under the map 
\begin{equation*}
g:MTSO(d,3)\to MTSO(d,2),
\end{equation*}
this $x$ is mapped to $g_*(x)$, which is non-zero in the spectral sequence for $MTSO(d,2)$, and $h_1g_*(x)$ survives to $E_\infty$. From the resolutions we see that $g_*(h_1x) = h_1g_*(x)$. Therefore, since $g_* d_2 = d_2g_*$, $h_1x$ is not hit by any differential. Also, $g_*(h_0^2y)=0$.  We deduce that $d_2: E_2^{d+2,0} \to E_2^{d+1,2}$ is zero.
 
For $d \equiv 2 \bmod 4$, the only unknown differential is $d_2: E_2^{d+2,0} \to E_2^{d+1,2}$. However, we may apply naturality of the Adams spectral sequence to the map $g : MTSO(d-2,1) \to MTSO(d,3)$. There is a $y \in E^{'d+2,0}_2$ in the spectral sequence $E'$ for $MTSO(d-2,1)$ that maps to the non-zero element $x \in E_2^{d+2,0}$. 
 We know $d_2(y)=0$ from the $r=1$ case, so 
\begin{equation*}
 d_2(x) =d_2(g_*(y)) = g_*(d_2(y))=0. 
\end{equation*}

In the case $d \equiv 0 \bmod 4$, there is a possible differential $d_2: E_2^{d+1,0} \to E_2^{d,2}$. However, we have a map $MTSO(d-1,2) \to MTSO(d,3)$. By comparing the spectral sequences, we see that this differential must be zero. The  differential $d_2: E_2^{d+2,0} \to E_2^{d+1,2}$ must be zero by a comparison with the spectral sequence for $MTSO(d-2,1)$. 

In the case $d \equiv 3 \bmod 4$, we see that the differential $d_2: E_2^{d+1,0} \to E_2^{d,2}$ must be zero by comparing with the spectral sequence for $MTSO(d-1,2)$. The extension problem in dimension $d$ is solved exactly as in the $r=2$ case.

The $p$-primary part for $p>2$ is determined precisely as in the $r=2$ case.

Finally there are extension problems in the even cases in dimension $d+1$. Insert the computed groups in the exact sequence for the cofibration 
\begin{equation*}
MTSO(d-2,1) \to MTSO(d,3) \to MTSO(d,2).
\end{equation*}
For $d \equiv 0 \mod 4$, this yields:
\begin{equation*}
\begin{tabular}{c@\ c@\ c@\ c@\ c@\ c}
$\dotsm$&$\Z/24$& $\to$ &$\pi_{d+1}(MTSO(d,3))$& $\to$ &$\Z/2 \oplus \Z/2 \oplus \Z/2$ \\
$\to$ & $\Z/2 \oplus \Z/2$  &$\to $  &$\Z \oplus \Z/4 \oplus \Z/2$ &$\to$ &$\Z \oplus \Z/2$\\
\end{tabular}
\end{equation*}
The third map must be zero, so $\pi_{d+1}(MTSO(d,3)) \to \Z/2 \oplus \Z/2 \oplus \Z/2$ is surjective, and thus an isomorphism.
The case $d \equiv 2 \bmod 4$ is similar.

The only remaining problem is when $d \equiv 3 \bmod 4$ and $q=d+1$. In this case, the result will follow from the cases $r=4,5$ in the next theorem.
\end{proof}

\begin{thm}\label{MTd4}
For  $q<2(d-3)$, $\pi_q(MTSO(d,4))$ is given by:
\begin{equation*}
\begin{tabular}{ccccc}
\hline
 {$q$}&{$d-3$}&{$d-2$}&{$d-1$}&{$d$} \\
\hline
 {$d \equiv 0 \bmod 4$}&{$\Z$}&{$\Z /2\oplus \Z /2$}&{$\Z /2 \oplus \Z /2 \oplus \Z /2 $}&{$\Z \oplus \Z/48 \oplus \Z/ 4$} \\

 {$d \equiv 1 \bmod 4$}&{$\Z /2$}&{$\Z /2$}&{$\Z /8 \oplus \Z /2$}& {$\Z /2 \oplus \Z /2$} \\

 {$d \equiv 2 \bmod 4$}&{$\Z$}&{$\Z/4$}&{$\Z/2$}&{$\Z \oplus \Z/24$} \\

 {$d \equiv 3 \bmod 4$}&{$\Z/2$}&{$0$}&{$\Z/2 \oplus \Z/2$}&{$\Z /2 \oplus \Z/2$}\\
\hline
\end{tabular}
\end{equation*}
For $d$ even and  $q<2(d-4)$, $\pi_q(MTSO(d,5))$  is given by:
\begin{equation*}
\begin{tabular}{cccccc}
\hline
 {$q$}&{$d-4$}&{$d-3$}&{$d-2$}&{$d-1$}&{$d$} \\
\hline
 {$d \equiv 0 \bmod 4$}&{$\Z/2$}&{$0$}&{$\Z /2 \oplus \Z /2 $}&{$ \Z/ 2 \oplus \Z/2$} &{$\Z \oplus \Z/8 \oplus \Z/2 \oplus \Z/2$}\\

 {$d \equiv 2 \bmod 4$}&{$\Z/2$}&{$\Z/2$}&{$\Z/2 \oplus \Z/8$}&{$\Z/2$} &{$\Z \oplus \Z/2$}\\
\hline
\end{tabular}
\end{equation*}
\end{thm}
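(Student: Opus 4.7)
The proof will follow exactly the Adams spectral sequence strategy of Theorems \ref{MTd2} and \ref{MTd3}. First I would extend the $\mathcal{A}_2$-action table displayed before Figure \ref{resolution} through dimension $d+3$, using formula \eqref{binom}, the Cartan formula, and the relation $w(U_d)\,w(U_d^\perp)=1$. Since the action coefficients depend on $d \bmod 4$, this produces four cases for $r=4$ and, after restricting to $d$ even, two cases for $r=5$. In each case I would build a minimal free $\mathcal{A}_2$-resolution of $H^*(MTSO(d,r);\Z/2)$ in low degrees, exactly as in Figure \ref{resolution}, so that the $E_2$-page of the Adams spectral sequence together with its $h_0$- and $h_1$-multiplications can be read off directly.

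To determine the differentials in the $2$-primary spectral sequence, I would combine four tools: the free part, pinned down by \eqref{Q} and Theorem \ref{Fcoeff}, which for $d$ even produces the $\Z$ summands coming from the Euler-class generators $\phi(\delta(e_{d-r}))$ and $\phi(e_d')$; lower bounds on the surviving classes from Corollary \ref{KRinj} via the factorisation $\tilde{\theta}^t_r = \Psi \circ \theta^r$ of Theorem \ref{KRfak}; naturality of the Adams spectral sequence applied to the cofibration sequences $MTSO(d-r+k,k) \to MTSO(d,r) \to MTSO(d,r-k)$ of Proposition \ref{egenskaberr}, fed with the already computed $r\le 3$ answers; and the periodicity isomorphism of Corollary \ref{peri} (since $a_4=4$ and $a_5=8$), which reduces each residue class to a single representative $d$. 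Most candidate differentials are then forced to vanish because their source or target is already detected in one of these comparison spectral sequences.

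The odd-primary part is handled as in Theorem \ref{MTd2}: Theorem \ref{Fcoeff} presents $H^*(MTSO(d,r);\Z/p)$ as a free $H^*(BSO(d);\Z/p)$-module on $\phi(\delta(e_{d-r}))$ and $\phi(e_d')$ when $d-r$ and $d$ are even, and the restriction to $H^*(MTSO(d-r);\Z/p)$ forces all positive-degree Steenrod powers of $\phi(\delta(e_{d-r}))$ to vanish, leaving at most a transparent $\Z/3$ summand that is absorbed into the $\Z/48$ or $\Z/24$ factors in the tables. Finally I would resolve the extension problems — and in passing the case $r=3$, $d \equiv 3 \bmod 4$, $q=d+1$ left open in Theorem \ref{MTd3} — by feeding the now-known groups into the long exact sequences of Proposition \ref{egenskaberr} and counting orders.

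The main obstacle is bookkeeping: six subtables, each requiring its own analysis, combined with several hidden extensions near dimension $d$. In particular the $\Z \oplus \Z/48$ summand in the $r=4$, $d \equiv 0 \bmod 4$ line requires one to rule out a hidden multiplication by $2$ between the torsion generator and the free generator; I would do this by exhibiting the torsion generator at the correct Adams filtration in the spectral sequence and comparing with the spectral sequence for $MTSO(d-3,1)$ via Proposition \ref{egenskaberr}, where Theorem \ref{homotopyr1} forbids the offending extension. Analogous comparisons against the $r\leq 3$ sequences and, for $r=5$, against the $r=4$ sequence just established handle all remaining extensions.
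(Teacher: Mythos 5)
Your proposal correctly identifies the paper's strategy (Adams spectral sequence, formula \eqref{binom}, the comparison cofibrations of Proposition \ref{egenskaberr}, the $\Q$-computation from Theorem \ref{Fcoeff}, Corollary \ref{KRinj}, and periodicity), and your treatment of the odd-primary part and of the $r=3$, $d\equiv 3\bmod 4$, $q=d+1$ loose end are both exactly as in the paper. However, there is one genuine logical misstep in your ordering. You propose to establish the $r=4$ table entirely from the ``already computed $r\le 3$ answers,'' and only afterwards do $r=5$ by comparing ``against the $r=4$ sequence just established.'' The paper's argument for the $r=4$, $d\equiv 0\bmod 4$, $q=d$ entry cannot be closed this way: one of the two generators of $E_2^{d+1,0}$ whose $d_2$-differential threatens the $\Z/16$ tower is detected only in the spectral sequence for $MTSO(d,5)$, not in any $r\le 3$ comparison or in $MTSO(d-3,1)$. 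To kill that differential the paper first analyzes $MTSO(d,5)$ directly, using the $\theta^5$ injectivity statement of Corollary \ref{KRinj} (which holds for $8\mid d$ only), deducing that $\pi_d(MTSO(d,5))$ contains $\Z/8$ torsion so the relevant differential vanishes there, then uses the periodicity equivalence of Corollary \ref{peri} (with $a_5=8$) to propagate from $8\mid d$ to all $d\equiv 0\bmod 4$, and finally a further comparison with Theorem \ref{injtr} to cover $d\equiv 4\bmod 8$. In other words $r=4$ and $r=5$ must be proved together, not in sequence. Your proposed resolution of the $\Z\oplus\Z/48$ extension via $MTSO(d-3,1)$ alone is thus not quite the argument the paper makes at that spot; that comparison is used only for the \emph{other} generator of $E_2^{d+1,0}$. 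Apart from this organizational point, your proposal is the same proof.
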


\begin{figure}[p]
\begin{equation*}
\setlength{\unitlength}{1cm}
\begin{picture}(13,5)
\put(0.5,0.5){\vector(0,1){4.5}}
\put(0.5,0.5){\vector(1,0){5.5}}
\put(1.5,1){\circle*{0.1}}
\put(1.5,2){\circle*{0.1}}
\put(1.5,3){\circle*{0.1}}
\put(1.5,4){\circle*{0.1}}
\put(1.5,4.5){\makebox(0,0){$\vdots$}}
\put(2.5,1){\circle*{0.1}}
\put(2.5,2){\circle*{0.1}}
\put(3.5,1){\circle*{0.1}}
\put(3.5,2){\circle*{0.1}}
\put(3.5,3){\circle*{0.1}}
\put(4.3,1){\circle*{0.1}}
\put(4.3,2){\circle*{0.1}}
\put(4.3,3){\circle*{0.1}}
\put(4.3,4){\circle*{0.1}}
\put(4.5,1){\circle*{0.1}}
\put(4.5,2){\circle*{0.1}}
\put(4.5,3){\circle*{0.1}}
\put(4.5,4){\circle*{0.1}}
\put(4.5,4.5){\makebox(0,0){$\vdots$}}
\put(4.7,2){\circle*{0.1}}
\put(4.7,3){\circle*{0.1}}
\put(5.5,1){\circle*{0.1}}
\put(5.5,2){\circle*{0.1}}
\put(5.5,3){\circle*{0.1}}
\put(5.5,4){\circle*{0.1}}
\put(5.5,4.5){\makebox(0,0){$\vdots$}}
\put(5.7,1){\circle*{0.1}}
\put(5.7,2){\circle*{0.1}}
\put(5.7,3){\circle*{0.1}}
\put(3.9,4.7){$d \equiv 0 \bmod 4$}
\put(4.3,1){\line(0,1){3}}
\put(4.7,2){\line(0,1){1}}
\put(4.5,1){\line(0,1){3}}
\put(1.5,1){\line(0,1){3}}
\put(5.5,2){\line(0,1){2}}
\put(1.5,1){\line(1,1){2}}
\put(3.5,3){\line(4,5){0.8}}
\put(2.5,1){\line(1,1){1}}
\put(3.5,2){\line(6,5){1.2}}
\put(4.5,1){\line(6,5){1.2}}
\put(4.7,2){\line(1,1){1}}
\put(0,1){\makebox(0,0){$0$}}
\put(0,2){\makebox(0,0){$1$}}
\put(0,3){\makebox(0,0){$2$}}
\put(0,4.5){\makebox(0,0){$s$}}
\put(1.5,0){\makebox(0,0){$d-3$}}
\put(2.5,0){\makebox(0,0){$d-2$}}
\put(3.5,0){\makebox(0,0){$d-1$}}
\put(4.5,0){\makebox(0,0){$d$}}
\put(6,0){\makebox(0,0){$t-s$}}

\put(7.5,0.5){\vector(0,1){4.5}}
\put(7.5,0.5){\vector(1,0){5.5}}
\put(8.5,1){\circle*{0.1}}
\put(9.5,2){\circle*{0.1}}
\put(10.5,3){\circle*{0.1}}
\put(10.5,2){\circle*{0.1}}
\put(10.5,1){\circle*{0.1}}
\put(11.5,2){\circle*{0.1}}
\put(11.5,1){\circle*{0.1}}
\put(10.7,1){\circle*{0.1}}
\put(10.9,4.7){$d \equiv 1 \bmod 4$}
\put(10.5,1){\line(0,1){2}}
\put(8.5,1){\line(1,1){2}}
\put(10.5,1){\line(1,1){1}}
\put(13,0){\makebox(0,0){$t-s$}}
\put(7,1){\makebox(0,0){$0$}}
\put(7,2){\makebox(0,0){$1$}}
\put(7,3){\makebox(0,0){$2$}}
\put(7,4.5){\makebox(0,0){$s$}}
\put(8.5,0){\makebox(0,0){$d-3$}}
\put(9.5,0){\makebox(0,0){$d-2$}}
\put(10.5,0){\makebox(0,0){$d-1$}}
\put(11.5,0){\makebox(0,0){$d$}}
\end{picture}
\end{equation*}

\begin{equation*}
\setlength{\unitlength}{1cm}
\begin{picture}(13,5)
\put(0.5,0.5){\vector(0,1){4.5}}
\put(0.5,0.5){\vector(1,0){5.5}}
\put(1.5,1){\circle*{0.1}}
\put(1.5,2){\circle*{0.1}}
\put(1.5,3){\circle*{0.1}}
\put(1.5,4){\circle*{0.1}}
\put(1.5,4.5){\makebox(0,0){$\vdots$}}
\put(2.5,1){\circle*{0.1}}
\put(2.5,2){\circle*{0.1}}
\put(3.5,1){\circle*{0.1}}
\put(4.7,1){\circle*{0.1}}
\put(4.7,2){\circle*{0.1}}
\put(4.7,3){\circle*{0.1}}
\put(4.5,2){\circle*{0.1}}
\put(4.5,3){\circle*{0.1}}
\put(4.5,4){\circle*{0.1}}
\put(4.5,4.5){\makebox(0,0){$\vdots$}}
\put(5.5,1){\circle*{0.1}}
\put(5.5,2){\circle*{0.1}}
\put(5.5,3){\circle*{0.1}}
\put(5.5,4){\circle*{0.1}}
\put(5.5,4.5){\makebox(0,0){$\vdots$}}
\put(5.7,1){\circle*{0.1}}
\put(3.9,4.7){$d \equiv 2 \bmod 4$}
\put(4.7,1){\line(0,1){2}}
\put(4.5,2){\line(0,1){2}}
\put(1.5,1){\line(0,1){3}}
\put(5.5,1){\line(0,1){3}}
\put(1.5,1){\line(1,1){1}}
\put(2.5,1){\line(0,1){1}}
\put(0,1){\makebox(0,0){$0$}}
\put(0,2){\makebox(0,0){$1$}}
\put(0,3){\makebox(0,0){$2$}}
\put(0,4.5){\makebox(0,0){$s$}}
\put(1.5,0){\makebox(0,0){$d-3$}}
\put(2.5,0){\makebox(0,0){$d-2$}}
\put(3.5,0){\makebox(0,0){$d-1$}}
\put(4.5,0){\makebox(0,0){$d$}}
\put(6,0){\makebox(0,0){$t-s$}}

\put(7.5,0.5){\vector(0,1){5}}
\put(7.5,0.5){\vector(1,0){6}}
\put(8.5,1){\circle*{0.1}}
\put(10.5,2){\circle*{0.1}}
\put(10.5,1){\circle*{0.1}}
\put(11.5,3){\circle*{0.1}}
\put(11.5,1){\circle*{0.1}}
\put(12.5,3){\circle*{0.1}}
\put(12.5,1){\circle*{0.1}}
\put(12.5,2){\circle*{0.1}}
\put(12.7,2){\circle*{0.1}}
\put(12.7,3){\circle*{0.1}}
\put(12.7,4){\circle*{0.1}}
\put(12.3,2){\circle*{0.1}}
\put(12.5,4){\circle*{0.1}}
\put(12.7,4.5){\makebox(0,0){$\vdots$}}
\put(10.9,4.7){$d \equiv 3 \bmod 4$}
\put(12.7,2){\line(0,1){2}}
\put(10.5,2){\line(1,1){2}}
\put(11.5,1){\line(4,5){0.8}}
\put(12.5,2){\line(0,1){2}}
\put(13,0){\makebox(0,0){$t-s$}}
\put(7,1){\makebox(0,0){$0$}}
\put(7,2){\makebox(0,0){$1$}}
\put(7,3){\makebox(0,0){$2$}}
\put(7,4.5){\makebox(0,0){$s$}}
\put(8.5,0){\makebox(0,0){$d-3$}}
\put(9.5,0){\makebox(0,0){$d-2$}}
\put(10.5,0){\makebox(0,0){$d-1$}}
\put(11.5,0){\makebox(0,0){$d$}}
\end{picture}
\end{equation*}
\caption{The Adams spectral sequence, $r=4$.} \label{r42}
 \end{figure}

 \begin{figure}[p]
\begin{equation*}
\setlength{\unitlength}{1cm}
\begin{picture}(13,5)
\put(0.5,0.5){\vector(0,1){4.5}}
\put(0.5,0.5){\vector(1,0){5.5}}
\put(1.5,1){\circle*{0.1}}
\put(3.5,1){\circle*{0.1}}
\put(3.5,2){\circle*{0.1}}
\put(4.5,1){\circle*{0.1}}
\put(4.5,3){\circle*{0.1}}
\put(5.5,1){\circle*{0.1}}
\put(5.3,2){\circle*{0.1}}
\put(5.7,1){\circle*{0.1}}
\put(5.7,4.5){\makebox(0,0){$\vdots$}}
\put(5.7,2){\circle*{0.1}}
\put(5.7,3){\circle*{0.1}}
\put(5.7,4){\circle*{0.1}}
\put(5.5,2){\circle*{0.1}}
\put(5.5,3){\circle*{0.1}}
\put(5.5,4){\circle*{0.1}}
\put(3.9,4.7){$d \equiv 0 \bmod 4$}
\put(5.7,1){\line(0,1){3}}
\put(5.5,2){\line(0,1){2}}
\put(3.5,2){\line(1,1){2}}
\put(4.5,1){\line(4,5){0.8}}
\put(0,1){\makebox(0,0){$0$}}
\put(0,2){\makebox(0,0){$1$}}
\put(0,3){\makebox(0,0){$2$}}
\put(0,4.5){\makebox(0,0){$s$}}
\put(1.5,0){\makebox(0,0){$d-4$}}
\put(2.5,0){\makebox(0,0){$d-3$}}
\put(3.5,0){\makebox(0,0){$d-2$}}
\put(4.5,0){\makebox(0,0){$d-1$}}
\put(6,0){\makebox(0,0){$t-s$}}

\put(7.5,0.5){\vector(0,1){5}}
\put(7.5,0.5){\vector(1,0){6}}
\put(8.5,1){\circle*{0.1}}
\put(9.5,2){\circle*{0.1}}
\put(10.5,1){\circle*{0.1}}
\put(10.5,2){\circle*{0.1}}
\put(10.5,3){\circle*{0.1}}
\put(10.7,1){\circle*{0.1}}
\put(11.5,1){\circle*{0.1}}
\put(12.5,1){\circle*{0.1}}
\put(12.5,2){\circle*{0.1}}
\put(12.5,3){\circle*{0.1}}
\put(12.5,4){\circle*{0.1}}
\put(12.5,4.5){\makebox(0,0){$\vdots$}}
\put(10.9,4.7){$d \equiv 2 \bmod 4$}
\put(12.5,2){\line(0,1){2}}
\put(10.5,1){\line(0,1){2}}
\put(8.5,1){\line(1,1){2}}
\put(12.5,2){\line(0,1){2}}
\put(13,0){\makebox(0,0){$t-s$}}
\put(7,1){\makebox(0,0){$0$}}
\put(7,2){\makebox(0,0){$1$}}
\put(7,3){\makebox(0,0){$2$}}
\put(7,4.5){\makebox(0,0){$s$}}
\put(8.5,0){\makebox(0,0){$d-4$}}
\put(9.5,0){\makebox(0,0){$d-3$}}
\put(10.5,0){\makebox(0,0){$d-2$}}
\put(11.5,0){\makebox(0,0){$d-1$}}
\end{picture}
\end{equation*}
\caption{The Adams spectral sequence, $r=5$.} \label{r5}
 \end{figure}
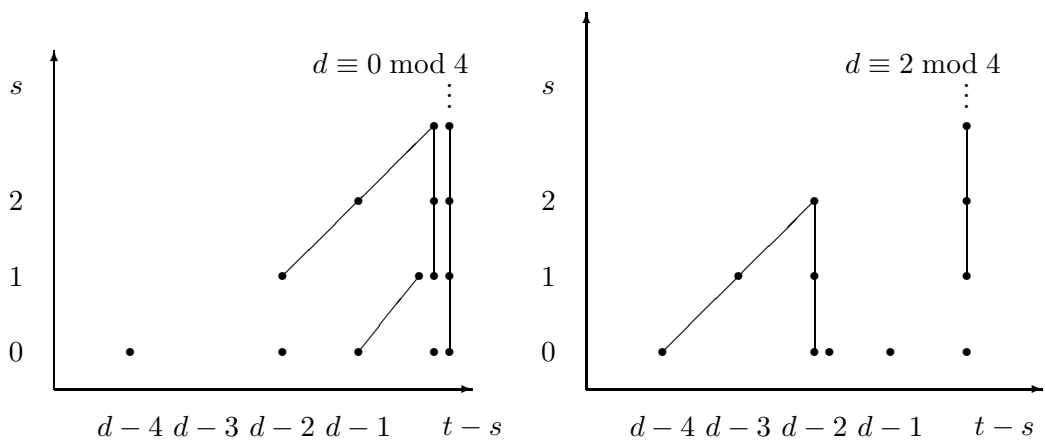

\begin{proof}
The calculation of the $p$-primary part for $p>2$ is again similar to the case $r=2$.
The spectral sequences for $p=2$ are shown in Figure \ref{r42} and \ref{r5}. For $r=4$, the first two columns in all four diagrams must survive to $E_\infty$.

For $d \equiv 1 \bmod 4$, the only differential is $d_2 : E_2^{d,0} \to E_2^{d-1,2}$, but this is zero by a comparison with the spectral sequence for $MTSO(d-1,3)$.

When $d \equiv 3 \bmod 4$, there is a differential $d_2: E_2^{d+1,0} \to E_2^{d,2}$. Comparing with the spectral sequence for $MTSO(d-3,1)$ shows that it must be zero. The extension problem in dimension $d$ is solved using the case $MTSO(d-1,3)$.

When $d \equiv 2 \bmod 4$, there may be a non-zero $d_2: E_2^{d+1,0} \to E_2^{d,2}$. But one of the generators of $E_2^{d+1,0}$ is in the image of the spectral sequence for  $MTSO(d-3,1)$, while the other generator is in the image of the spectral sequence for $MTSO(d-1,3)$ so by earlier computations this must be zero. 

Finally, for $d \equiv 0 \bmod 4$, the differential $d_2 : E_2^{d,0} \to E_2^{d-1,2}$ must be zero. One of the generators of $E_2^{d,0}$ is in the image of the spectral sequence for $MTSO(d-1,3)$, and the other one is in the image of the spectral sequence for $\Sigma^\infty \Sigma V_{d,4}$ under the map induced by $f_\theta$. Using our knowledge of the homotopy groups of $V_{d,4}$ and thus the spectral sequence, the differentials must be zero. The extension problem in dimension $d-1$ is solved as in the case $r=2$.

The $d$th column is a bit more complicated. One of the generators in $E_2^{d+1,1}$ is in the image of the spectral sequence for $MTSO(d-3,1)$. The other one is $h_1  x$ for some $x \in E_2^{d,0}$. Thus $d_2(h_1x) = h_1 d_2(x) + d_2(h_1)x = 0$.
In $E_2^{d+1,0}$, one of the generators is in the image of the spectral sequence for $MTSO(d-3,1)$, while the other one is in the image of the spectral sequence for $MTSO(d,5)$. Thus it is enough that the corresponding differential is zero for $MTSO(d,5)$. 

The results for $MTSO(d,5)$ for $d$ even again follow by comparing the spectral sequences with those for lower $r$. The only problem is when $d \equiv 0 \bmod 4$. By Corollary \ref{KRinj}, 
\begin{equation*}
\theta^5 : \pi_{d-1}(V_{d,5}) \to \pi_d(MTSO(d,5))
\end{equation*}
is injective when $d$ is divisible by 8. Since $\pi_{d-1}(V_{d,5})  \cong \Z \oplus \Z/8$, $\pi_d(MTSO(d,5))$ must contain torsion of order eight. Thus  $d_k:E_k^{d+1,0}\to E_k^{d,k}$ must be zero. This implies that the remaining differential for $MTSO(d,4)$ must be zero when $8\mid d$. By Corollary \ref{peri}, this holds for all $d \equiv 0 \bmod 4$. 

The next theorem shows that $\theta ^5$ is also injective when $d \equiv 4 \bmod 8$, proving the claim also for $\pi_d(MTSO(d,5))$ when $8 \nmid d$ by a similar argument. 
\end{proof}
%
The above computation allows us to improve Theorem \ref{theta-inj} further.

\begin{thm}\label{injtr}
For $2r \leq d+1$ 
\begin{equation*}
\theta^r : \pi_{d-1}(V_{d,r}) \to \pi_d(MTSO(d,r))
\end{equation*}
is injective for $d$ even and $r \leq 6$. For $d $ odd and $r=4$, it is not injective.
\end{thm}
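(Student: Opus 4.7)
The proof splits by the value of $r$. For $r \le 3$, Corollary~\ref{KRinj} already yields injectivity of $\theta^r$ at $q = d$, since the hypothesis $q \le d - r + 3$ holds precisely when $r \le 3$. The substantive content is therefore the cases $r = 4, 5, 6$ with $d$ even, and the counterexample at $r = 4$, $d$ odd.

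My plan for the positive direction is to argue by induction on $r$ using the commutative diagram of Proposition~\ref{egenskaberr} with $k = 1$ at $q = d$:
\begin{equation*}
\xymatrix{\pi_d(MT(d-r+1,1))\ar[r]&\pi_d(MT(d,r))\ar[r]&\pi_d(MT(d,r-1))\\
\pi_{d-1}(S^{d-r})\ar[u]^{\theta^1}\ar[r]&\pi_{d-1}(V_{d,r})\ar[u]^{\theta^r}\ar[r]&\pi_{d-1}(V_{d,r-1})\ar[u]^{\theta^{r-1}}}
\end{equation*}
The leftmost $\theta^1$ is injective by Proposition~\ref{ren}, and $\theta^{r-1}$ is injective by the induction hypothesis. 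Given $x \in \ker \theta^r$, commutativity forces $x$ to die in $\pi_{d-1}(V_{d,r-1})$ and hence to come from some $y \in \pi_{d-1}(S^{d-r})$; then $\theta^1(y)$ lies in the image of the connecting map $\partial : \pi_d(MT(d,r-1)) \to \pi_d(MT(d-r+1,1))$. Whether $y$ can be chosen to vanish (giving injectivity of $\theta^r$) is then a finite check against the explicit calculations of $\pi_d(MTSO(d,r))$ in Theorems~\ref{MTd2}, \ref{MTd3} and \ref{MTd4} and Paechter's tables for $\pi_{d-1}(V_{d,r})$. The base case $r = 4$ uses $\theta^3$ injective from Corollary~\ref{KRinj}. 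For $r = 5$ one combines this with the direct input from Corollary~\ref{KRinj}, which gives injectivity of $\theta^5$ when $8 \mid d$; the periodicity isomorphism of Corollary~\ref{peri} then extends this to all $d \equiv 0 \bmod 4$, while the diagram above handles $d \equiv 2 \bmod 4$. For $r = 6$, the same diagram together with the now-established injectivity of $\theta^5$ completes the induction.

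For the failure of injectivity when $d$ is odd and $r = 4$, I would compare exponents: Theorem~\ref{MTd4} shows that $\pi_d(MTSO(d,4))$ has exponent $2$ in both residue classes of $d$ mod $4$, while Paechter's calculation forces $\pi_{d-1}(V_{d,4})$ to contain elements of order strictly greater than $2$ (tracing the class of $\eta^2$ or the image of $\pi_3^s(S^0) = \Z/24$ through the fibration $S^{d-4} \to V_{d,4} \to V_{d,3}$). Hence $\theta^4$ cannot be injective. The main technical obstacle in the positive direction is the bookkeeping of the connecting map $\partial$ in each congruence class of $d$ mod $4$: one has to explicitly identify the image of $\theta^1$ inside the kernel, and this is only accessible through the concrete tables produced in the preceding sections rather than by any formal manipulation of the 4-lemma alone.
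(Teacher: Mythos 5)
Your argument has genuine gaps in both directions. For the key positive case $r=4$, the entire content is hidden in the deferred ``finite check,'' and the diagram you chose makes that check harder than you suggest. Completing your chase requires $\ker\bigl(j\colon \pi_d(MT(d-3,1))\to\pi_d(MT(d,4))\bigr)\cap\operatorname{Im}\theta^1=0$; but $\theta^1$ is \emph{onto} $\pi_d(MTSO(d-3,1))\cong\Z/24$ (Proposition~\ref{ren} plus Theorem~\ref{homotopyr1}), and for $d\equiv 2\bmod 4$ one can see that $\ker j\neq 0$ (the map $i\colon\pi_{d-1}(S^{d-4})\to\pi_{d-1}(V_{d,4})\cong\Z\oplus\Z/12$ cannot be injective, and $j\circ\theta^1=\theta^4\circ i$). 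So the condition you want to verify is actually false, and what is really needed is the finer statement $\ker j\subseteq\theta^1(\ker i)$ --- i.e.\ control of the connecting map $\pi_{d+1}(MTSO(d,3))\to\Z/24$, which the computed group orders do not determine. The paper sidesteps all of this by using the other cofibration, $\Sigma^{\infty+1}V_{d,4}\to MTSO(d,4)\to C_\theta$: Lemma~\ref{homotopy-cofiber} gives $\pi_d(C_\theta)\cong\Z/2$, the next map down is injective by Corollary~\ref{KRinj}, and then $\Z\oplus\Z/12\to\Z\oplus\Z/24$ with cokernel $\Z/2$ (resp.\ $\Z\oplus\Z/24\oplus\Z/4\to\Z\oplus\Z/48\oplus\Z/4$) is forced to be injective by an order/Hopfian count. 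Separately, your periodicity step for $r=5$ is misapplied: $a_5=8$, so Corollary~\ref{peri} only relates $d$ to $d+8$ and cannot pass from $8\mid d$ to all $d\equiv 0\bmod 4$. It is also unnecessary: once $\theta^4$ is injective, $\pi_{d-1}(S^{d-r})=0$ for $r=5,6$ collapses your diagram and injectivity propagates immediately, which is exactly the paper's route.

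The non-injectivity argument for $d$ odd, $r=4$ rests on a false premise. For $d$ odd one has $\pi_{d-1}(V_{d,4})\cong\Z/2\oplus\Z/2$, which has exponent $2$; the image of $\pi_{d-1}(S^{d-4})\cong\Z/24$ in $\pi_{d-1}(V_{d,4})$ is very far from injective, so tracing $\eta^2$ or the $3$-stem through the fibration produces nothing of order greater than $2$. The correct argument is again the $C_\theta$ sequence: source and target are both $\Z/2\oplus\Z/2$, but the cokernel of $\theta^4$ surjects onto $\pi_d(C_\theta)\cong\Z/2$, so the image has order $2$ and the kernel is nontrivial.
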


\begin{proof}
For $d \equiv 2 \bmod 4$, the exact sequence for the pair $(MTSO(d,4),\Sigma^\infty \Sigma V_{d,4})$ in dimension $d$ becomes
\begin{equation*}
\to \Z \oplus \Z/12  \xrightarrow{\theta^4} \Z \oplus \Z/24 \to \Z/2 \to 0.
\end{equation*} 
Since this is exact, $\theta ^4$ must be injective.

For $d \equiv 0 \bmod 4$, the exact sequence for the pair $(MTSO(d,4),\Sigma^\infty \Sigma V_{d,4})$ in dimension $d$ becomes
\begin{equation*}
\to \Z \oplus \Z/24 \oplus \Z/4  \xrightarrow{\theta^4} \Z \oplus \Z/48 \oplus \Z/4 \to \Z/2 \to 0.
\end{equation*} 
Again this is exact, so $\theta ^4$ must be injective.

For $r =5,6$, the claim follows as in the proof of Theorem \ref{theta-inj} by a diagram similar to \eqref{spindia}.

For $d$ odd, the long exact sequence for the pair $(MTSO(d,4),\Sigma^\infty \Sigma V_{d,4})$ in dimension $d$ yields
\begin{equation*}
\to \Z/2 \oplus \Z/2 \xrightarrow{\theta^4 } \Z/2 \oplus \Z/2 \to \Z/2  \to
\end{equation*}
where the last map is surjective. We see that $\theta ^4$ cannot be injective. 
\end{proof}

\begin{rem}\label{remark}
Consider the case $r=4$ and $d \equiv 3 \bmod 8$. Then $S^d$ is Reinhart cobordant to $\emptyset$, so by Proposition \ref{betared}
\begin{equation*}
\beta^4(S^d)=\theta^4(\ind(s))=0,
\end{equation*}
even though  $S^d$ does not allow four independent vector fields according to Theorem~\ref{vfsp}. Hence $\theta^4$ loses essential information in this case. 
\end{rem}

\subsection{The unoriented case}\label{calcO}
By similar calculations of the Steenrod action on $H^*(MTO(d,r);\Z/2)$, we obtain the $E_2$-terms of Adams spectral sequences shown in Figure \ref{r1unor} and \ref{Assunor}. This immediately yields the first three homotopy groups. For the fourth, the only problem is to determine the differential $d_2 : E_2^{0,d-r+4} \to E_2^{2,d-r+5}$. We have not found a way to do this. 

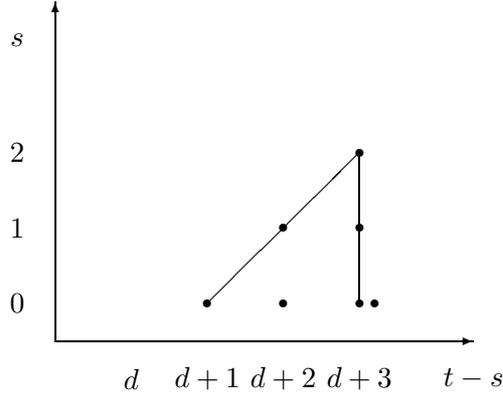
\begin{figure}
\begin{equation*}
\setlength{\unitlength}{1cm}
\begin{picture}(6,5)
\put(0.5,0.5){\vector(0,1){4.5}}
\put(0.5,0.5){\vector(1,0){5.5}}

\put(2.5,1){\circle*{0.1}}

\put(3.5,1){\circle*{0.1}}
\put(3.5,2){\circle*{0.1}}

\put(4.5,1){\circle*{0.1}}
\put(4.7,1){\circle*{0.1}}
\put(4.5,2){\circle*{0.1}}
\put(4.5,3){\circle*{0.1}}

\put(4.5,1){\line(0,1){2}}
\put(2.5,1){\line(1,1){2}}
\put(0,1){\makebox(0,0){$0$}}
\put(0,2){\makebox(0,0){$1$}}
\put(0,3){\makebox(0,0){$2$}}
\put(0,4.5){\makebox(0,0){$s$}}
\put(1.5,0){\makebox(0,0){$d$}}
\put(2.5,0){\makebox(0,0){$d+1$}}
\put(3.5,0){\makebox(0,0){$d+2$}}
\put(4.5,0){\makebox(0,0){$d+3$}}
\put(6,0){\makebox(0,0){$t-s$}}
\end{picture}
\end{equation*}
\caption{The Adams spectral sequence for $\Sigma^{d+\infty} BO$.} \label{r1unor}
\end{figure}

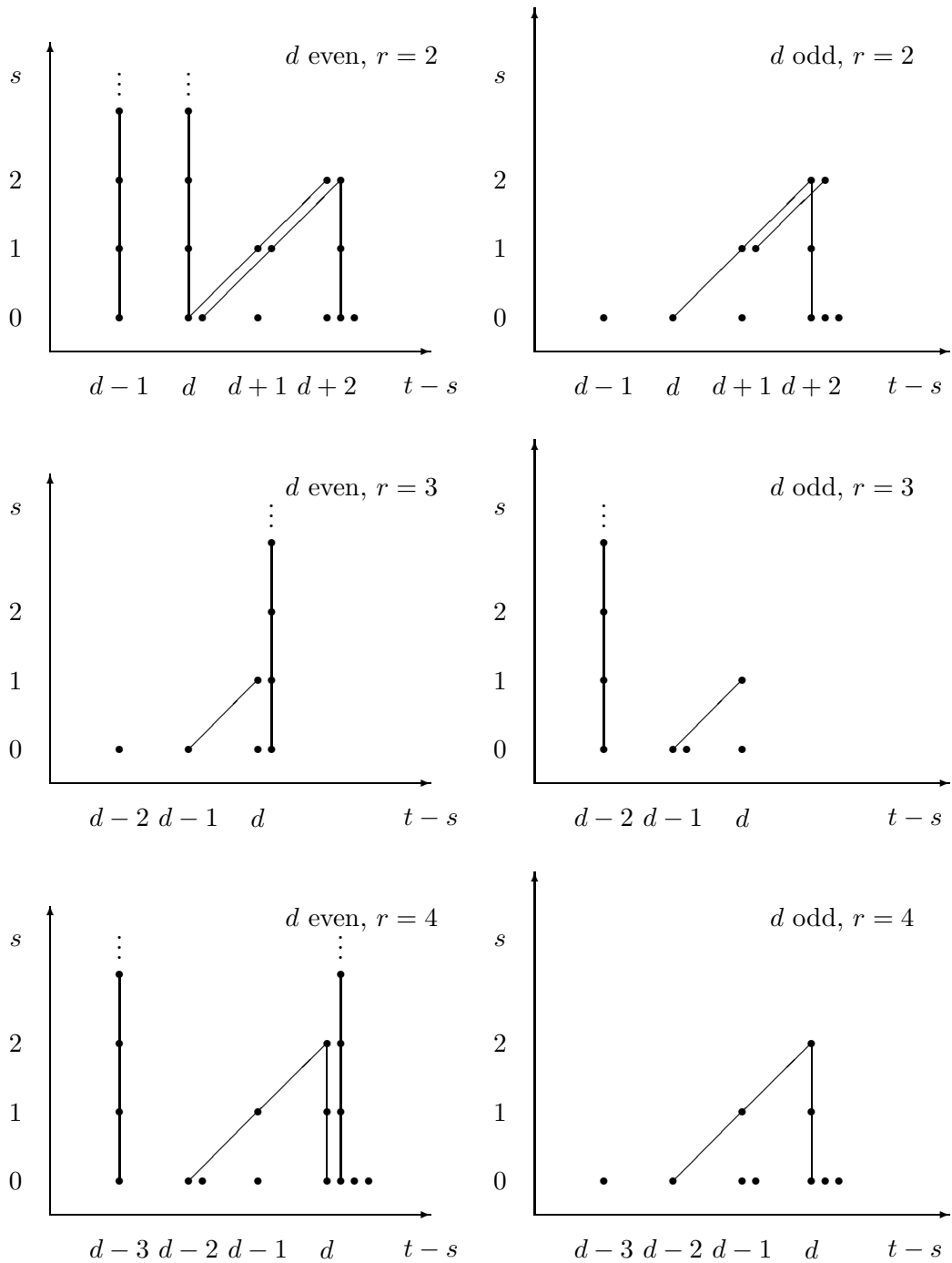
\begin{figure}
\begin{equation*}
\setlength{\unitlength}{1cm}
\begin{picture}(13,5)
\put(0.5,0.5){\vector(0,1){4.5}}
\put(0.5,0.5){\vector(1,0){5.5}}

\put(1.5,1){\circle*{0.1}}
\put(1.5,2){\circle*{0.1}}
\put(1.5,3){\circle*{0.1}}
\put(1.5,4){\circle*{0.1}}
\put(1.5,4.5){\makebox(0,0){$\vdots$}}

\put(2.5,1){\circle*{0.1}}
\put(2.5,2){\circle*{0.1}}
\put(2.5,3){\circle*{0.1}}
\put(2.5,4){\circle*{0.1}}
\put(2.5,4.5){\makebox(0,0){$\vdots$}}
\put(2.7,1){\circle*{0.1}}

\put(3.5,1){\circle*{0.1}}
\put(3.5,2){\circle*{0.1}}
\put(3.7,2){\circle*{0.1}}

\put(4.5,1){\circle*{0.1}}
\put(4.7,1){\circle*{0.1}}
\put(4.9,1){\circle*{0.1}}
\put(4.7,2){\circle*{0.1}}
\put(4.5,3){\circle*{0.1}}
\put(4.7,3){\circle*{0.1}}

\put(3.9,4.7){$d$ even, $r=2$}
\put(4.7,1){\line(0,1){2}}
\put(1.5,1){\line(0,1){3}}
\put(2.5,1){\line(0,1){3}}
\put(2.5,1){\line(1,1){2}}
\put(2.7,1){\line(1,1){2}}
\put(2.5,1){\line(0,1){1}}
\put(0,1){\makebox(0,0){$0$}}
\put(0,2){\makebox(0,0){$1$}}
\put(0,3){\makebox(0,0){$2$}}
\put(0,4.5){\makebox(0,0){$s$}}
\put(1.5,0){\makebox(0,0){$d-1$}}
\put(2.5,0){\makebox(0,0){$d$}}
\put(3.5,0){\makebox(0,0){$d+1$}}
\put(4.5,0){\makebox(0,0){$d+2$}}
\put(6,0){\makebox(0,0){$t-s$}}

\put(7.5,0.5){\vector(0,1){5}}
\put(7.5,0.5){\vector(1,0){6}}
\put(8.5,1){\circle*{0.1}}
\put(9.5,1){\circle*{0.1}}
\put(10.5,1){\circle*{0.1}}
\put(10.7,2){\circle*{0.1}}
\put(10.5,2){\circle*{0.1}}

\put(11.5,1){\circle*{0.1}}
\put(11.5,2){\circle*{0.1}}
\put(11.5,3){\circle*{0.1}}
\put(11.7,1){\circle*{0.1}}
\put(11.7,3){\circle*{0.1}}
\put(11.9,1){\circle*{0.1}}

\put(10.9,4.7){$d $ odd,  $r=2$}
\put(9.5,1){\line(1,1){2}}
\put(10.7,2){\line(1,1){1}}
\put(11.5,1){\line(0,1){2}}
\put(13,0){\makebox(0,0){$t-s$}}
\put(7,1){\makebox(0,0){$0$}}
\put(7,2){\makebox(0,0){$1$}}
\put(7,3){\makebox(0,0){$2$}}
\put(7,4.5){\makebox(0,0){$s$}}
\put(8.5,0){\makebox(0,0){$d-1$}}
\put(9.5,0){\makebox(0,0){$d$}}
\put(10.5,0){\makebox(0,0){$d+1$}}
\put(11.5,0){\makebox(0,0){$d+2$}}
\end{picture}
\end{equation*}

\begin{equation*}
\setlength{\unitlength}{1cm}
\begin{picture}(13,5)
\put(0.5,0.5){\vector(0,1){4.5}}
\put(0.5,0.5){\vector(1,0){5.5}}

\put(1.5,1){\circle*{0.1}}

\put(2.5,1){\circle*{0.1}}

\put(3.5,1){\circle*{0.1}}
\put(3.5,2){\circle*{0.1}}
\put(3.7,1){\circle*{0.1}}
\put(3.7,2){\circle*{0.1}}
\put(3.7,3){\circle*{0.1}}
\put(3.7,4){\circle*{0.1}}
\put(3.7,4.5){\makebox(0,0){$\vdots$}}

\put(3.9,4.7){$d$ even, $r=3$}

\put(3.7,1){\line(0,1){3}}
\put(2.5,1){\line(1,1){1}}
\put(0,1){\makebox(0,0){$0$}}
\put(0,2){\makebox(0,0){$1$}}
\put(0,3){\makebox(0,0){$2$}}
\put(0,4.5){\makebox(0,0){$s$}}
\put(1.5,0){\makebox(0,0){$d-2$}}
\put(2.5,0){\makebox(0,0){$d-1$}}
\put(3.5,0){\makebox(0,0){$d$}}

\put(6,0){\makebox(0,0){$t-s$}}

\put(7.5,0.5){\vector(0,1){5}}
\put(7.5,0.5){\vector(1,0){6}}
\put(8.5,1){\circle*{0.1}}
\put(8.5,2){\circle*{0.1}}
\put(8.5,3){\circle*{0.1}}
\put(8.5,4){\circle*{0.1}}
\put(8.5,4.5){\makebox(0,0){$\vdots$}}

\put(9.5,1){\circle*{0.1}}
\put(9.7,1){\circle*{0.1}}

\put(10.5,1){\circle*{0.1}}
\put(10.5,2){\circle*{0.1}}

\put(10.9,4.7){$d $ odd, $r=3$}
\put(9.5,1){\line(1,1){1}}
\put(8.5,1){\line(0,1){3}}
\put(13,0){\makebox(0,0){$t-s$}}
\put(7,1){\makebox(0,0){$0$}}
\put(7,2){\makebox(0,0){$1$}}
\put(7,3){\makebox(0,0){$2$}}
\put(7,4.5){\makebox(0,0){$s$}}
\put(8.5,0){\makebox(0,0){$d-2$}}
\put(9.5,0){\makebox(0,0){$d-1$}}
\put(10.5,0){\makebox(0,0){$d$}}

\end{picture}
\end{equation*}

\begin{equation*}
\setlength{\unitlength}{1cm}
\begin{picture}(13,5)
\put(0.5,0.5){\vector(0,1){4.5}}
\put(0.5,0.5){\vector(1,0){5.5}}

\put(1.5,1){\circle*{0.1}}
\put(1.5,2){\circle*{0.1}}
\put(1.5,3){\circle*{0.1}}
\put(1.5,4){\circle*{0.1}}
\put(1.5,4.5){\makebox(0,0){$\vdots$}}

\put(2.5,1){\circle*{0.1}}
\put(2.7,1){\circle*{0.1}}

\put(3.5,1){\circle*{0.1}}
\put(3.5,2){\circle*{0.1}}

\put(4.5,1){\circle*{0.1}}
\put(4.5,2){\circle*{0.1}}
\put(4.5,3){\circle*{0.1}}
\put(4.7,1){\circle*{0.1}}
\put(4.7,2){\circle*{0.1}}
\put(4.7,3){\circle*{0.1}}
\put(4.7,4){\circle*{0.1}}
\put(4.7,4.5){\makebox(0,0){$\vdots$}}
\put(4.9,1){\circle*{0.1}}
\put(5.1,1){\circle*{0.1}}

\put(3.9,4.7){$d$ even, $r =4$}

\put(1.5,1){\line(0,1){3}}
\put(2.5,1){\line(1,1){2}}
\put(4.5,1){\line(0,1){2}}
\put(4.7,1){\line(0,1){3}}
\put(0,1){\makebox(0,0){$0$}}
\put(0,2){\makebox(0,0){$1$}}
\put(0,3){\makebox(0,0){$2$}}
\put(0,4.5){\makebox(0,0){$s$}}
\put(1.5,0){\makebox(0,0){$d-3$}}
\put(2.5,0){\makebox(0,0){$d-2$}}
\put(3.5,0){\makebox(0,0){$d-1$}}
\put(4.5,0){\makebox(0,0){$d$}}

\put(6,0){\makebox(0,0){$t-s$}}

\put(7.5,0.5){\vector(0,1){5}}
\put(7.5,0.5){\vector(1,0){6}}
\put(8.5,1){\circle*{0.1}}

\put(9.5,1){\circle*{0.1}}

\put(10.5,1){\circle*{0.1}}
\put(10.5,2){\circle*{0.1}}
\put(10.7,1){\circle*{0.1}}

\put(11.5,1){\circle*{0.1}}
\put(11.5,2){\circle*{0.1}}
\put(11.5,3){\circle*{0.1}}
\put(11.7,1){\circle*{0.1}}
\put(11.9,1){\circle*{0.1}}

\put(10.9,4.7){$d $ odd, $r=4$}
\put(9.5,1){\line(1,1){2}}
\put(11.5,1){\line(0,1){2}}
\put(13,0){\makebox(0,0){$t-s$}}
\put(7,1){\makebox(0,0){$0$}}
\put(7,2){\makebox(0,0){$1$}}
\put(7,3){\makebox(0,0){$2$}}
\put(7,4.5){\makebox(0,0){$s$}}
\put(8.5,0){\makebox(0,0){$d-3$}}
\put(9.5,0){\makebox(0,0){$d-2$}}
\put(10.5,0){\makebox(0,0){$d-1$}}
\put(11.5,0){\makebox(0,0){$d$}}

\end{picture}
\end{equation*}
\caption{The Adams spectral sequence, the unoriented case.} \label{Assunor}
 \end{figure}

\begin{rem}
Note how the $E_2$-terms seem to depend only on $d$ mod $2$ even though the computations depend on $d$ mod $4$. Similarly, the spectral sequences for $\pi_d(MTSO(d,5))$ only depended on $d$ mod $ 4$ even though the computations depended on $d$ mod $8$. We do no know whether this is part of a general phenomenon.  The periodicity map only explains the 4- and 8-periodicity, respectively. 
\end{rem}

\subsection{The spin case}\label{spin}
The cohomology of $BSpin(d) $ is more complicated. The map $BSpin (d) \to BSO(d)$ induces a map
\begin{equation*}
H^*(BSO(d);\Z/2) \to H^*(BSpin(d);\Z/2).
\end{equation*}
Let $J_d$ denote the ideal in $H^*(BSO(d);\Z/2)$ generated by
\begin{equation}\label{Jgen}
w_2, Sq^1(w_2), Sq^2 Sq^1(w_2),\dots , Sq^{\frac{1}{2}a_d} Sq^{\frac{1}{4}a_d} \dotsm Sq^1(w_2).
\end{equation}
Here $a_d$ is the power of $2$ given in the table \eqref{RH}.
Quillen showed in \cite{quillen}:

\begin{thm}
The kernel of $H^*(BSO(d);\Z/2) \to H^*(BSpin(d);\Z/2)$ is exactly $J_d$ and 
\begin{equation*}
H^*(BSpin(d);\Z/2) \cong H^*(BSO(d);\Z/2)/J_d \oplus \Z/2[v_{a_d}]. 
\end{equation*}
Here $v_{a_d}$ is a class in dimension $a_d$.
\end{thm}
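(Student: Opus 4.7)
My approach is to compute $H^*(BSpin(d); \Z/2)$ via the mod $2$ Serre spectral sequence of the principal fibration
\begin{equation*}
K(\Z/2, 1) \to BSpin(d) \to BSO(d)
\end{equation*}
arising from the central extension $\Z/2 \to Spin(d) \to SO(d)$. The classifying map $BSO(d) \to K(\Z/2, 2)$ of this fibration represents $w_2$, so the generator $x \in H^1(K(\Z/2,1);\Z/2)$ of the fiber cohomology is transgressive with $\tau(x) = w_2$. The $E_2$-page is $H^*(BSO(d);\Z/2) \otimes \Z/2[x]$ with $|x|=1$.

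The next step is Kudo's transgression theorem, which asserts that transgression commutes with Steenrod operations. Since $x^{2^{i}} = Sq^{2^{i-1}}(x^{2^{i-1}})$, iterating gives
\begin{equation*}
\tau(x^{2^i}) = Sq^{2^{i-1}} Sq^{2^{i-2}} \cdots Sq^1(w_2) \qquad (i \geq 0),
\end{equation*}
which are precisely the classes listed in \eqref{Jgen}. Each of the first $\log_2(a_d)$ of these transgressions is non-trivial in the relevant subquotient of $H^*(BSO(d);\Z/2)$: at the corresponding page the base gets quotiented by a new generator of $J_d$ while the fiber polynomial generator doubles from $x^{2^i}$ to $x^{2^{i+1}}$. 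The Leibniz rule then propagates these differentials to kill every power of $x$ whose exponent is not divisible by $a_d$.

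The crucial input is that the final iterated square $Sq^{a_d/2} Sq^{a_d/4} \cdots Sq^1(w_2)$ vanishes in the quotient of $H^*(BSO(d); \Z/2)$ by the ideal generated by all the earlier targets. This means $x^{a_d}$ is a permanent cycle which we set $v_{a_d} := x^{a_d}$, and no further base relations arise. Consequently
\begin{equation*}
E_\infty \cong (H^*(BSO(d);\Z/2)/J_d) \otimes \Z/2[v_{a_d}].
\end{equation*}
Since we work over a field there are no extension problems, giving the stated decomposition, with $H^*(BSO(d);\Z/2)/J_d$ pulled back from the base and $v_{a_d}$ coming from the fiber.

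The main obstacle is the third step: proving that the first $\log_2(a_d)$ iterated Steenrod squares are non-zero modulo the earlier relations, while the $(\log_2(a_d)+1)$-st one vanishes. This is a delicate polynomial computation using the Wu formula \eqref{binom} and the presentation of $H^*(BSO(d);\Z/2)$ from Theorem \ref{swcoho}, and the fact that $a_d$ is precisely the Radon--Hurwitz number from \eqref{RH} is essentially equivalent to Adams' vector field theorem (Theorem \ref{vfsp}). This is where the real depth of Quillen's result lies.
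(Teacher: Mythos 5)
First, note that the paper does not actually prove this statement: it is Quillen's theorem, quoted from \cite{quillen}, so there is no internal argument to compare yours against. Your outline is the standard spectral-sequence route to Quillen's result, and its architecture is sound: the fibration $B\Z/2 \to BSpin(d)\to BSO(d)$ is classified by $w_2$, the fiber generator $x$ transgresses to $w_2$, and Kudo's theorem gives $\tau(x^{2^i}) = \Sq^{2^{i-1}}\dotsm \Sq^1(w_2)$, which are exactly the generators \eqref{Jgen} of $J_d$. (What this argument produces is the tensor product $\bigl(H^*(BSO(d);\Z/2)/J_d\bigr)\otimes \Z/2[v_{a_d}]$, which is how the $\oplus$ in the statement should be read.)

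As a proof, however, your proposal has a genuine gap, which you have located but not filled. First, ``non-trivial in the relevant subquotient'' is not the hypothesis you need: for the spectral sequence to collapse to $\bigl(H^*(BSO(d);\Z/2)/J_d\bigr)\otimes\Z/2[x^{a_d}]$, the transgression targets $\theta_0=w_2$, $\theta_{i}=\Sq^{2^{i-1}}(\theta_{i-1})$ must form a \emph{regular} sequence --- each $\theta_i$ a non-zero-divisor modulo its predecessors, not merely non-zero. Otherwise annihilators of $\theta_i$ contribute extra permanent cycles and $E_\infty$ is strictly larger than claimed; this regularity is precisely what the paper invokes later (``a regular sequence in the sense of \cite{quillen}'') in the proof of Corollary \ref{spinfree}. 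Second, the entire arithmetic content of the theorem --- that the sequence is regular through stage $\log_2 a_d$, that $\Sq^{a_d/2}\dotsm \Sq^1(w_2)$ then lies in the ideal generated by its predecessors, and that the cut-off is governed by the Radon--Hurwitz number of \eqref{RH} --- is deferred as ``a delicate polynomial computation.'' That computation \emph{is} the theorem, and it is not plausibly done by brute force with the Wu formula: Quillen obtains the termination point and the surviving polynomial generator from the real spin representation of $Spin(d)$, whose dimension is controlled by the same Clifford-algebra periodicity that defines $a_d$ and whose top Stiefel--Whitney class represents $v_{a_d}$. So what you have is a correct reduction of Quillen's theorem to its key lemma, not a proof of it.
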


\begin{thm}\label{spincoho}
$H^q(BSpin(d),BSpin(d-r);\Z/2)$  is isomorphic to the kernel of
\begin{equation*}
 H^q(BSpin(d);\Z/2) \to H^q(BSpin(d-r);\Z/2)
\end{equation*}
for $q < a_{d-r} $, and the map 
\begin{equation*}
p^*: H^q(BSO(d),BSO(d-r);\Z/2) \to H^q(BSpin(d),BSpin(d-r);\Z/2) 
\end{equation*}
is surjective with kernel $J_d \cap H^q(BSO(d),BSO(d-r);\Z/2)$. 
\end{thm}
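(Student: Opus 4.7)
My plan is to play the long exact sequences of the pairs $(BSO(d), BSO(d-r))$ and $(BSpin(d), BSpin(d-r))$ against each other via the natural map $p$, invoking Quillen's theorem to identify $H^*(BSpin)$ with $H^*(BSO)/J$ in the relevant range.

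For the first assertion, I would note that $H^q(BSpin(d),BSpin(d-r);\Z/2)$ being isomorphic to the kernel of restriction is equivalent to surjectivity of the preceding restriction $H^{q-1}(BSpin(d);\Z/2) \to H^{q-1}(BSpin(d-r);\Z/2)$ in the long exact sequence of the spin pair. In the range $q-1 < a_{d-r} \le a_d$, the $v_{a_d}$ summands from Quillen's theorem have not yet appeared on either side, so both groups coincide with $H^{q-1}(BSO;\Z/2)/J$ and the map in question is the quotient of the surjection $H^{q-1}(BSO(d);\Z/2) \to H^{q-1}(BSO(d-r);\Z/2)$ from Theorem \ref{swcoho}, hence surjective.

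For the second assertion, I would apply the first part (and the analogous, unconditional identification for $BSO$, which is valid in all degrees since the $BSO$-restriction is always surjective) to identify both relative cohomology groups with kernels of restrictions. Under these identifications, $p^*$ on the pair is simply the restriction of $p^*: H^q(BSO(d);\Z/2) \to H^q(BSpin(d);\Z/2)$ to the $BSO$-kernel, so the kernel description drops out immediately from Quillen's theorem, and surjectivity reduces to the following lifting problem: given $\bar y$ in the kernel of the $BSpin$-restriction, choose any preimage $x \in H^q(BSO(d);\Z/2)$; its restriction $x'$ then lies in $J_{d-r}$, and one must modify $x$ by an element of $J_d$ having the same restriction as $x$. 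The key auxiliary fact will be that restriction sends $J_d \cap H^q(BSO(d);\Z/2)$ onto $J_{d-r} \cap H^q(BSO(d-r);\Z/2)$, which follows since the defining generators \eqref{Jgen} are natural under restriction (both $w_2$ and the Steenrod squares being natural) and the coefficient map $H^*(BSO(d);\Z/2) \to H^*(BSO(d-r);\Z/2)$ is surjective.

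The main subtlety throughout is keeping track of the degree bound $q < a_{d-r}$, which is precisely what excludes Quillen's $v$-class from interfering on either space; once this is controlled, the remainder is a routine diagram chase with compatible long exact sequences.
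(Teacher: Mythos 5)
Your proposal is correct and follows essentially the same route as the paper: you identify both relative groups as kernels via the long exact sequences together with surjectivity of the $BSO$- and $BSpin$-restrictions below degree $a_{d-r}$ (Quillen), and then prove surjectivity of $p^*$ by lifting, using exactly the paper's key observation that the generators of $J$ in \eqref{Jgen} are natural under restriction while the coefficients can be lifted along the surjection $H^*(BSO(d);\Z/2)\to H^*(BSO(d-r);\Z/2)$. The paper phrases the surjectivity argument inside the polynomial ring (writing an element of $J'_{d-r}$ in terms of $g_1,\dots,g_m,w_{d-r+1},\dots,w_d$) whereas you phrase it as a lifting/modification problem, but the underlying computation is identical.
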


\begin{proof}
In the following, coefficients in $\Z/2$ are understood.
There is a map of long exact sequences
\begin{equation*}
\xymatrix{{H^*(BSO(d),BSO(d-r))}\ar[d]^{p^*} \ar[r]^-{j^*}&{H^*(BSO(d))}\ar[r]^-{i^*} \ar[d] &{H^*(BSO(d-r))} \ar[d]^{p^{\prime *}}\\
{H^*(BSpin(d),BSpin(d-r))}\ar[r]^-{j^{\prime*}}&{H^*(BSpin(d))}\ar[r]^-{i^{\prime*}}&{H^*(BSpin(d-r)).}
}
\end{equation*}

Since both $i^*$ and $p^{\prime *}$ are surjective in dimensions less than $a_{d-r}$, we see that $i^{\prime *}$ is also surjective, so $H^q(BSpin(d),BSpin(d-r))$ is just the kernel of $i^{\prime *}$ when $q \leq a_{d-r} $.

Thus we just need to describe 
\begin{equation}\label{kernelJ2}
\Ker(\Z/2[w_2,\dots ,w_d]/J_d \to \Z/2[w_2,\dots ,w_{d}]/J'_{d-r})
\end{equation}
where $J_{d-r}'$ is the ideal generated by $J_{d-r}$ and $w_{d-r+1},\dots ,w_d$. We must show that this is
\begin{equation}\label{kernelJ}
\Ker(\Z/2[w_2,\dots ,w_d] \to \Z/2[w_2,\dots ,w_{d-r}])/J
\end{equation}
where $J=J_{d}\cap \Ker(\Z/2[w_2,\dots ,w_d] \to \Z/2[w_2,\dots ,w_{d-r}])$.

Clearly, there is an injective map from \eqref{kernelJ} to \eqref{kernelJ2}. Now, let $P$ be some polynomial in the Stiefel--Whitney classes representing an element in \eqref{kernelJ2}. Then $P \in J_{d-r}'$. Let $g_1,\dots ,g_m$ denote the generators of $J_{d}$ in dimensions up to $a_{d-r}$ given by \eqref{Jgen} and $g_1',\dots ,g_m'$ the generators of $J_{d-r}$ given by \eqref{Jgen}. Under the map $H^*(BSO(d)) \to H^*(BSO(d-r))$, $g_i$ maps to $g_i'$ by naturality of the Steenrod squares. Thus, $g_i$ and $g_i'$ differ only by an element of the ideal generated by $w_{d-r+1},\dots ,w_d$. This means that $\{g_1,\dots ,g_m,w_{d-r+1},\dots ,w_d\}$ is also a set of generators for $J'_{d-r}$. So for suitable polynomials $\lambda_1,\dots ,\lambda_m$ and $\mu_1,\dots ,\mu_r$, 
\begin{equation*}
P = \lambda_1 g_1+\dots +\lambda_m g_m +\mu_1 w_{d-r+1}+\dots + \mu_r w_d.
\end{equation*}
But $\lambda_1g_1+\dots +\lambda_m g_m \in J_{d}$ so $P$ represents the same element as $\mu_1 w_{d-r+1}+\dots + \mu_r w_d$ in $\Z/2[w_2,\dots ,w_d]/J_d$. This lies in \eqref{kernelJ}, so the map is also surjective.
\end{proof}

\begin{cor}\label{spinfree}
Assume $2r < d$ and $9\leq d-r$.
In dimensions $*<2(d-r)$, $H^*(MTSpin(d,r);\Z/2)$ is isomorphic to the free $H^*(BSpin;\Z/2)$-module on generators
\begin{equation*}
p^*(w_{d-r+1}),\dots , p^*(w_d)
\end{equation*}
where $p^*$ is as in Theorem \ref{spincoho}.
\end{cor}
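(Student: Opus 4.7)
The plan is to transport the freeness in the oriented case (already implicit in Theorem \ref{fiw} and used in the proof of Theorem \ref{perfrembr}) to the spin case via Theorem \ref{spincoho}. First I would apply the mod-2 Thom isomorphism for $MTSpin(d)$ and $MTSpin(d-r)$ to the cofibration $MTSpin(d-r)\to MTSpin(d)\to MTSpin(d,r)$. Imitating the argument of Theorem \ref{fiw}, and using that the restriction $H^*(BSpin(d);\Z/2)\to H^*(BSpin(d-r);\Z/2)$ is surjective in the range $*<a_{d-r}$ (so that the connecting maps in the long exact sequence vanish and the extra polynomial summand $\Z/2[v_{a_{d-r}}]$ does not yet interfere), one obtains $H^*(MTSpin(d,r);\Z/2)\cong H^*(BSpin(d),BSpin(d-r);\Z/2)$ throughout our degree range, the hypothesis $d-r\geq 9$ ensuring $a_{d-r}\geq 16$.

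Then Theorem \ref{spincoho} identifies this relative cohomology with the image of $p^*: H^*(BSO(d),BSO(d-r);\Z/2)\to H^*(BSpin(d),BSpin(d-r);\Z/2)$, that is, with $I/(I\cap J_d)$, where $I=(w_{d-r+1},\ldots,w_d)\subset H^*(BSO(d);\Z/2)$ via the argument of Theorem \ref{fiw}. The proof of Theorem \ref{perfrembr} already shows that in degrees $<2(d-r)+2$ the ideal $I$ is free as $H^*(BSO(d);\Z/2)$-module on $w_{d-r+1},\ldots,w_d$: any Koszul-type syzygy $w_{d-r+i}w_{d-r+j}-w_{d-r+j}w_{d-r+i}=0$ appears only in degree $\geq 2(d-r+1)$. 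Base-changing along the surjection $H^*(BSO(d);\Z/2)\twoheadrightarrow H^*(BSO(d);\Z/2)/J_d\cong H^*(BSpin;\Z/2)$ (the last identification being the stabilization valid in the range $*<d$, automatic from $2r<d$), the resulting free module on $r$ generators surjects onto $I/(I\cap J_d)$, with the $i$th basis vector mapping to $p^*(w_{d-r+i})$.

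The main obstacle is the injectivity of this surjection, equivalently the identity $I\cap J_d=I\cdot J_d$ in degrees $<2(d-r)$, or the vanishing of $\mathrm{Tor}_1^{H^*(BSO;\Z/2)}(H^*(BSO;\Z/2)/I,\,H^*(BSpin;\Z/2))$ in that range. Iterating the Wu formula (much as in Section \ref{calcSO}) replaces the original generators of $J_d$ by the simpler representatives $w_2,w_3,w_5,w_9,\ldots,w_{2^k+1}$ modulo the ideal they generate; the hypothesis $d-r\geq 9$ then keeps these representatives, in the relevant degree range, indexed disjointly from the index set $\{d-r+1,\ldots,d\}$ of the generators of $I$. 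Combined with the freeness of $I$ over $H^*(BSO(d);\Z/2)$, this disjointness yields the desired Tor vanishing by a standard Koszul-type argument, completing the proof.
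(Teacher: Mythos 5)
Your first two paragraphs track the paper's own reduction faithfully: via Theorem \ref{spincoho} and the Thom isomorphism one identifies $H^*(MTSpin(d,r);\Z/2)$ with $I/(I\cap J_d)$, where $I=(w_{d-r+1},\dots,w_d)\subseteq H^*(BSO(d);\Z/2)$, and, given the freeness of $I$ in low degrees, the statement becomes exactly $I\cap J_d=I\cdot J_d$ in degrees $<2(d-r)$ (one inclusion being automatic). The gap is in your last step. The ``disjointness'' claim is false: the generators of $J_d$ in the relevant range are \emph{not} all expressible in the variables $w_j$ with $j\le d-r$. There is always exactly one generator $g_m=\Sq^{2^{m-1}}\dotsm \Sq^1(w_2)$ whose degree $2^m+1$ satisfies $d-r<2^m+1<2(d-r)$, and modulo the earlier generators it is not simply $w_{2^m+1}$; for example $\Sq^8\Sq^4\Sq^2\Sq^1(w_2)\equiv w_{17}+w_4w_{13}+w_6w_{11}+w_7w_{10}$ modulo $(w_2,w_3,w_5,w_9)$, exactly as recorded in \eqref{rela}. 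Taking $d=20$, $r=4$ (so $d-r=16$ and $17<2(d-r)=32$), this generator contains the monomial $w_{17}$ with $17\in\{d-r+1,\dots,d\}$ alongside monomials $w_4w_{13}+w_6w_{11}+w_7w_{10}$ lying entirely in $\Z/2[w_2,\dots,w_{16}]$. Hence $g_m$ lies neither in $I$ nor in $\Z/2[w_2,\dots,w_{d-r}]$, the tensor factorization underlying your ``standard Koszul-type argument'' is unavailable, and the asserted vanishing of $\mathrm{Tor}_1$ does not follow from what you have said.

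This one generator $g_m$ is precisely where the paper's proof does its real work. Given $\lambda_0g_0+\dotsm+\lambda_mg_m\in I\cap J_d$ of degree $\le 2(d-r)$, it maps the relation to $H^*(BSO(d-r);\Z/2)$, where $\sum_k\lambda_k'g_k'=0$; Quillen's theorem that the $g_k'$ form a regular sequence forces $\lambda_m'\in(g_0',\dots,g_{m-1}')$, and since $\deg\lambda_m<d-r$ (so that $\lambda_m$ is detected isomorphically, $I$ being zero in those degrees) one gets $\lambda_m\in(g_0,\dots,g_{m-1})$ and can absorb the $g_m$-term into the others. Only after that does the argument you propose apply, since the remaining generators $g_0,\dots,g_{m-1}$ have degree $\le d-r$ and genuinely involve no $w_j$ with $j>d-r$. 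So your outline is correct up to this point, but the regular-sequence step (or some substitute handling $g_m$) is an essential missing ingredient, not a routine verification.
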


\begin{proof}
By Theorem \ref{spincoho}, it is enough to show that
\begin{equation*}
J_d \cap H^q(BSO(d),BSO(d-r);\Z/2) \subseteq  J_d \cdot H^*(BSO(d),BSO(d-r);\Z/2)
\end{equation*}
for all $q\leq 2(d-r)$.
Let $g_k$ denote the generator of $J_d$ of degree $2^{k}+1$ given in \eqref{Jgen}. Suppose 
\begin{equation*}
\lambda_0 g_0  + \dotsm +\lambda_m g_m \in J_d\cap H^q(BSO(d),BSO(d-r);\Z/2)
\end{equation*}
where $m$ is unique with the property $d-r<2^m +1 < 2(d-r)$. Then the image in $H^q(BSO(d-r);\Z/2)$ must be zero. But this is also $\sum_k \lambda_k' g_k' \in J_{d-r}$ where the $g_k'$ are the generators of $J_{d-r}$ and $\lambda_k'$ is the image of $\lambda_k$. The $g_k'$ form a regular sequence in the sense of \cite{quillen}. Hence we deduce that $\lambda_m'$ belongs to the ideal generated by $g_{0}',\dots,g_{m-1}'$. Since $\deg(\lambda_m)<d-r$, also $\lambda_m$ lies in the ideal generated by $g_{0},\dots,g_{m-1}$. Rearranging the terms, we may assume $\lambda_m=0$. But none of the $g_k$ with $k<m$ contains terms involving $w_{d-r+1},\dots ,w_{d}$ for degree reasons. Thus we can choose all $\lambda_{0}, \dots,\lambda_{m-1}$  in $H^*(BSO(d),BSO(d-r);\Z/2)$. This proves the claim. 
\end{proof}

\begin{thm}
Let $F$ be either $\Q$ or $\Z/p$ for $p$ an odd prime. Then
\begin{equation*}
H^*(MTSO(d,r);F) \to  H^*(MTSpin(d,r);F)
\end{equation*}
is an isomorphism.
\end{thm}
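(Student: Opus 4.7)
The plan is to reduce this to the statement that $p\colon BSpin(d) \to BSO(d)$ is an $F$-cohomology equivalence, and then to push the resulting isomorphism through the Thom isomorphism.

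First I would observe that $p$ is the homotopy fibre of the map $w_2\colon BSO(d) \to K(\Z/2,2)$ classifying the second Stiefel--Whitney class, so it fits in a fibration
\begin{equation*}
K(\Z/2,1) \to BSpin(d) \to BSO(d).
\end{equation*}
Since $F$ has characteristic different from $2$, $\widetilde{H}^*(K(\Z/2,1);F)=0$, so the Serre spectral sequence with $F$-coefficients collapses onto the bottom row and $p^*\colon H^*(BSO(d);F)\to H^*(BSpin(d);F)$ is an isomorphism. The same reasoning applies to $BSpin(d-r)\to BSO(d-r)$, and comparing long exact sequences of pairs via the 5-lemma yields an isomorphism
\begin{equation*}
p^*\colon H^*(BSO(d),BSO(d-r);F) \xrightarrow{\cong} H^*(BSpin(d),BSpin(d-r);F).
\end{equation*}

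Next I would transfer this to the Thom spaces. Both $U_{d,n}^\perp\to G(d,n)$ and its pullback to $p^{-1}(G(d,n))$ are oriented, hence $F$-oriented with matching Thom classes, so the two Thom isomorphisms intertwine with $p^*$. Consequently $p^*$ also induces isomorphisms
\begin{equation*}
H^{*+n}(\Th(U_{d,n}^\perp),\Th(U_{d-r,n}^\perp);F) \xrightarrow{\cong} H^{*+n}(\Th(p^*U_{d,n}^\perp),\Th(p^*U_{d-r,n}^\perp);F).
\end{equation*}
Taking the inverse limit in $n$ then yields the claimed isomorphism $H^*(MTSO(d,r);F) \xrightarrow{\cong} H^*(MTSpin(d,r);F)$.

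The hard part is essentially nothing: once $2$ is invertible in $F$, the Serre spectral sequence for $p$ degenerates at once and everything else is formal. The only subtlety worth noting is that the $F$-orientation on $p^*U_{d,n}^\perp$ must be taken as the pullback of the chosen orientation on $U_{d,n}^\perp$, so that the two Thom classes correspond under $p^*$; this is automatic from the naturality of Thom classes under bundle maps.
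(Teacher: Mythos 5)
Your proof is correct and follows essentially the same route as the paper's: identify the fibration $B\Z/2 = K(\Z/2,1) \to BSpin(d) \to BSO(d)$, note that $\widetilde H^*(B\Z/2;F)=0$ away from the prime $2$ so the Serre spectral sequence collapses, apply the 5-lemma to pass to relative cohomology, and transfer through the Thom isomorphism. The extra remarks on the naturality of the Thom class are a harmless elaboration of what the paper leaves implicit.
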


\begin{proof}
Consider the fibration
\begin{equation*}
B\Z/2 \to BSpin(d) \to BSO(d).
\end{equation*}
Since $\widetilde{H}^*(B\Z/2,F) =0$, it follows from the Serre spectral sequence that 
\begin{equation*}
H^*(BSO(d);F) \to  H^*(BSpin(d);F) 
\end{equation*}
is an isomorphism. By the 5-lemma,
\begin{equation*}
H^*(BSO(d),BSO(d-r);F) \to  H^*(BSpin(d),BSpin(d-r);F)
\end{equation*}
is an isomorphism. A Thom isomorphism yields the result.
\end{proof}

\begin{cor}\label{spinper}
The periodicity map $\Sigma^{a_r} MTSpin(d,r)\to MTSpin(d+a_r,r)$ is a $(2(d-r)+a_r-1)$-equivalence for $2r<d$ and $9\leq d-r$.
\end{cor}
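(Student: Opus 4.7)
The strategy is to verify that $f$ induces an isomorphism on $H^*(-;F)$ in degrees $\leq 2(d-r)+a_r-1$ for every coefficient field $F$, and then appeal to universal coefficients and Whitehead's theorem for connective spectra of finite type.

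For $F=\Q$ or $F=\Z/p$ with $p$ odd, the preceding theorem supplies natural isomorphisms $H^*(MTSO(-,r);F)\cong H^*(MTSpin(-,r);F)$, while Theorem \ref{perfrembr} asserts that the oriented periodicity map is a $(2(d-r+1)+a_r)$-equivalence and hence induces an $F$-cohomology isomorphism throughout the range $*\leq 2(d-r+1)+a_r-1$. This range comfortably contains $\leq 2(d-r)+a_r-1$, so naturality transports the result to the spin side.

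The mod $2$ case is handled by rerunning the proof of Theorem \ref{perfrembr} with $B(d)$ replaced by $BSpin(d)$ and the generators $\phi(w_i)$ replaced by $p^*\phi(w_i)$, where $p^*$ is the map of Theorem \ref{spincoho}. By Corollary \ref{spinfree}, in degrees $*<2(d-r)$ the cohomology $H^*(MTSpin(d,r);\Z/2)$ is the free $H^*(BSpin(d);\Z/2)$-module on $p^*(w_{d-r+1}),\dots,p^*(w_d)$, and similarly in degrees $*<2(d-r)+2a_r$ for $MTSpin(d+a_r,r)$. The spin analogue of the diagram \eqref{ftheta}, obtained by pulling back along $BSpin\to BSO$, shows that $f^*$ is a module homomorphism over $H^*(BSpin(d+a_r);\Z/2)$ (acting on the source via restriction) sending $p^*(w_{d+a_r-r+i})$ to $p^*(w_{d-r+i})$ for $i=1,\dots,r$; this identification on generators is imported from the oriented statement together with the detection of $w_i$ by $f_\theta$ provided by Theorem \ref{cohom-cofiber}. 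A free module element in total degree $\leq 2(d-r)-1$ is of the form $x\cdot p^*(w_i)$ with $\deg(x)\leq d-r-2$; the bound $d-r-2<d$ makes the underlying $BSO$-restriction an isomorphism on $x$ as in Theorem \ref{perfrembr}, and the bound $d-r-2<a_d$, guaranteed by the hypothesis $d-r\geq 9$ together with the recursion $a_{d+8}=16\,a_d$, ensures via Theorem \ref{spincoho} that the ideals $J_d$ and $J_{d+a_r}$ coincide in the relevant degrees and that the Quillen polynomial generator $v_{a_d}$ has not yet appeared. Consequently $f^*$ is an isomorphism on mod $2$ cohomology in the claimed range, completing the input for Whitehead. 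The main obstacle is precisely this range bookkeeping on the $BSpin$ side, where the exotic polynomial generator $v_{a_d}$ and the top generator of $J_d$ must be argued to sit above the degrees of interest.
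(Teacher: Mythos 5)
Your proof is correct and takes essentially the same approach as the paper: transfer the $\Q$ and odd-primary cases from $MTSO$ via the preceding theorem, and for $p=2$ use Corollary~\ref{spinfree} together with the fact that the oriented periodicity map respects the $J_d$-structure in the relevant range. The paper compresses the $\Z/2$ step into the single observation that the oriented $f^*$ carries $J_{d+a_r}\cdot H^*(MTSO(d+a_r,r);\Z/2)$ into $J_d\cdot H^*(MTSO(d,r);\Z/2)$; your unpacking of this via the free $H^*(BSpin;\Z/2)$-module generators $p^*(w_i)$, the restriction isomorphism on $H^{\leq d-r-2}(BSpin(-);\Z/2)$, and the degree bookkeeping $d-r-2<a_d$ is the same argument spelled out.
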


\begin{proof}
It is enough to see that
\begin{equation*}
H^{q+a_r}(MTSpin(d+a_r,r);\Z/2)\to H^q(MTSpin(d,r);\Z/2)
\end{equation*}
is an isomorphism for $q<2(d-r)$.
This follows because the periodicity map for $MTSO(d,r)$ maps $J_{d+a_r}\cdot H^*(MTSO(d+a_r,r);\Z/2)$ to $J_{d}\cdot H^*(MTSO(d,r);\Z/2)$.
\end{proof}

We also need some information about the $\mathcal{A}_3$-action on $H^*(MTSO(d);\Z/3)$. Let $\mathcal{P}^1$ denote the first Steenrod power. It is shown in \cite{borel} that
\begin{equation*}
\mathcal{P}^1(c_{j-2}) = c_1^2c_{j-1} - 2c_2c_{j-2} - c_1c_{j-1} + jc_j
\end{equation*}
where $c_i \in H^{2i}(BU(d);\Z/3)$ is the $i$th Chern class. Since $p_j = g^*((-1)^jc_{2j})$ under the map $g:BSO(d) \to BU(d)$, we get
\begin{equation}\label{pont}
\mathcal{P}^1(p_{j-1}) = 2p_1p_{j-1} - 2jp_{j}.
\end{equation}
See \cite{milnors} for the relation between Chern and Pontryagin classes.

The above considerations in cohomology allow us to calculate the homotopy groups $\pi_{q}(MTSpin(d,r))$. We will only consider dimensions $q\leq d-r+9$ and assume $q<a_{d-r}$. Then the cohomology with $\Z/2$ coefficients is just $H^q(MTSO(d,r);\Z/2)$ with the relations 
\begin{equation}\label{rela}
\begin{split}
&w_2=w_3=w_5 = w_9 = 0\\
&w_{17} + w_4w_{13} + w_6w_{11} + w_7w_{10} =0 
\end{split}
\end{equation}
as the only relevant ones.

In the following table, a basis of $H^q(MTSpin(d,r);\Z/2)$ is shown for low values of $q$.  Of course, when $d-r+k>d$, then $w_{d-r+k}=0$ is understood. To avoid the relations \eqref{rela}, we will assume $d-r+1>9$, and when we deal with cohomology in dimensions  $q>d-r+7$, we also assume $q-7>10$.
\begin{equation*}
\begin{tabular}{|c|c|ccccc|}
\hline
{$q$}&{$H^q(V_{d,r})$}&\multicolumn{5}{|c|}{$H^q(C_\theta )$}\\
\hline 
{$d-r+1$}&{$w_{d-r+1}$}&{}&{}&{}&{}&{}\\
{$d-r+2$}&{$w_{d-r+2}$}&{}&{}&{}&{}&{}\\
{$d-r+3$}&{$w_{d-r+3}$}&{}&{}&{}&{}&{}\\
{$d-r+4$}&{$w_{d-r+4}$}&{}&{}&{}&{}&{}\\
{$d-r+5$}&{$w_{d-r+5}$}&{$w_4w_{d-r+1}$}&{}&{}&{}&{}\\
{$d-r+6$}&{$w_{d-r+6}$}&{$w_4w_{d-r+2}$}&{}&{}&{}&{}\\
{$d-r+7$}&{$w_{d-r+7}$}&{$w_4w_{d-r+3}$}&{$w_6w_{d-r+1}$}&{}&{}&{}\\
{$d-r+8$}&{$w_{d-r+8}$}&{$w_4w_{d-r+4}$}&{$w_6w_{d-r+2}$}&{$w_7w_{d-r+1}$}&{}&{}\\
{$d-r+9$}&{$w_{d-r+9}$}&{$w_4w_{d-r+5}$}&{$w_6w_{d-r+3}$}&{$w_7w_{d-r+2}$}&{$w_8w_{d-r+1}$}&{$w_4^2w_{d-r+1}$}\\
\hline
\end{tabular}
\end{equation*}
Projection onto the first column yields $H^*(V_{d,r};\Z/2)$, while the second column is the cohomology of the cofiber $C_\theta$ of the inclusion $\Sigma^\infty \Sigma V_{d,r} \to MTSpin(d,r)$. This follows because we know the composite map
\begin{equation*}
H^*(MTSO(d,r);\Z/2) \to H^*(MTSpin(d,r);\Z/2) \to H^*(\Sigma V_{d,r};\Z/2)
\end{equation*}
is surjective, so $H^*(C_\theta;\Z/2)$ is the kernel of the last map.

We may now determine the action of the Steenrod algebra from the action on $H^*(MTSO(d,r);\Z/2)$.
In dimensions up to $d-r+8$, the Steenrod action respects the two columns. However, $Sq^8(w_{d-r+1})$ contains mixed terms. Applying the Adams spectral sequence, we get:

\begin{thm}
For $q<2d$, $d>9$, and $q<a_{d-1}$, $\pi_{q}(MTSpin(d,1))$ is given by the table
\begin{equation*}
\begin{tabular}{ccccccccc}
\hline
 {q}&{d}&{d+1}&{d+2}&{d+3}&{d+4}&{d+5}&{d+6}&{d+7} \\
\hline
 {$\pi_q^s(S^d)$}&{$\Z$}&{$\Z$/2}&{$\Z$ /2}&{$\Z$ /24} &{0}&{0} &{$\Z/2$}&{$\Z/240$}\\

 {$\pi_q^s(\Sigma^d BSpin(d))$}&{0}&{0}&{0}&{0} &{$\Z $} & {0} &{0} &{0}\\
\hline
\end{tabular}
\end{equation*}
For the last two columns, we must assume $q>17$.
\end{thm}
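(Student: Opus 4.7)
The plan is to parallel the proof of Theorem \ref{homotopyr1} (the $MTSO(d,1)$ case) with $BSpin(d)$ replacing $BSO(d)$. The first step is to verify that the stable decomposition used there — which stems from $MTV(d,1)_n\cong \Th(G(d,n)\times \R^{n+d})\simeq S^{n+d}\vee \Sigma^{n+d}G(d,n)$ — carries over verbatim to the spin case once $G(d,n)$ is replaced by $p^{-1}(G(d,n))$. Passing to the colimit yields a stable splitting $MTSpin(d,1)\simeq S^{\infty+d}\vee \Sigma^{\infty +d}BSpin(d)$ and therefore
\begin{equation*}
\pi_q(MTSpin(d,1))\cong \pi_q^s(S^d)\oplus \pi_q^s(\Sigma^d BSpin(d)).
\end{equation*}
This reduces the task to computing the first stable stems of $S^0$ (the first row, which is classical) and the reduced stable homotopy groups of $BSpin(d)$ in degrees $0$ through $7$.

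For the odd-primary part, $H^*(BSpin(d);\Z/p)\cong H^*(BSO(d);\Z/p)\cong \Z/p[p_1,p_2,\dots]$, and the only generator of degree $\leq 7$ is the Pontryagin class $p_1\in H^4$. By \eqref{pont}, $\mathcal{P}^1(p_1)=2p_1^2$ sits in degree $8$, outside the range, so the mod-$p$ Adams spectral sequence for each odd $p$ collapses with $E_2$ concentrated in a single $h_0$-tower in dimension $4$. Together with the rational computation $\dim_\Q H^4(BSpin(d);\Q)=1$, this produces exactly the $\Z$ in dimension $d+4$ and rules out any odd-primary torsion elsewhere in the range.

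The bulk of the argument is the $2$-primary computation. Provided $9<a_{d-1}$, Quillen's description yields the relations $w_2=w_3=w_5=w_9=0$ in $H^*(BSpin(d);\Z/2)$, so the reduced cohomology in degrees $\leq 8$ has $\Z/2$-basis $w_4,w_6,w_7,w_8,w_4^2$. Using \eqref{binom} and the Cartan formula together with these spin relations I would compute the $\A_2$-action on these generators (for instance $\Sq^2 w_4=w_6$, $\Sq^3 w_4=w_7$, $\Sq^4 w_4=w_4^2$, $\Sq^1 w_7=w_8$), build a minimal free resolution through homological degree two, and read off the $E_2$-page of the Adams spectral sequence for $\Sigma^d BSpin(d)$. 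The extra hypothesis $q>17$ in the last two columns is precisely what ensures that the next Quillen relation, $w_{17}+w_4w_{13}+w_6w_{11}+w_7w_{10}=0$, does not interact with the range of interest.

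The main obstacle I anticipate is ruling out stray differentials and extensions, especially in the columns $q=d+6$ and $q=d+7$ where the $J$-homomorphism produces the $\Z/2$ and $\Z/240$ classes in the first summand. Here I would exploit naturality of the Adams spectral sequence along the map $MTSpin(d,1)\to MTSO(d,1)$ to transport information from Theorem \ref{homotopyr1}, combine it with the long exact sequence for the cofiber $C_\theta$ of $f_\theta:\Sigma^{\infty+1}V_{d,1}\to MTSpin(d,1)$ (whose $\Z/2$-cohomology is controlled by Corollary \ref{cofib2}), and use the injectivity of $\theta^1$ from Theorem \ref{theta-inj} to force the remaining differentials to vanish and extract the stated answer.
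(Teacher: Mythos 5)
Your proposal follows the paper's proof: the wedge splitting $MTSpin(d,1)\simeq S^{\infty+d}\vee\Sigma^{\infty+d}BSpin(d)$, the classical stable stems for the first summand, and the Adams spectral sequence over Quillen's computation of $H^*(BSpin(d);\Z/2)$ for the second; the paper literally says the $2$-primary part is ``exactly similar to the oriented case,'' and your heavier machinery for differentials is unnecessary since the splitting confines the $BSpin$ summand to a single $h_0$-tower in column $d+4$ in this range. Two small corrections, though. First, $\Sq^1w_7=w_1w_7+8w_8=0$, not $w_8$ (harmless here, as it only affects degree $d+8$). Second, and more importantly since it is the one point the paper's proof actually spells out: the odd-primary argument is not that $\mathcal{P}^1(p_1)$ ``sits in degree $8$, outside the range.'' By \eqref{pont}, $\mathcal{P}^1(p_1)=2p_1^2-p_2$, and the relevant fact is that this is \emph{nonzero}: a potential $3$-torsion class in $\pi_{d+7}$ would come from $\Ext^{1,d+8}_{\mathcal{A}_3}$, i.e.\ from a relation $\mathcal{P}^1(p_1)=0$ in the minimal resolution, and $t-s=d+7$ is squarely inside your range. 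The nonvanishing of $\mathcal{P}^1(p_1)$ is what kills that class; had it vanished, your conclusion of ``a single $h_0$-tower'' would fail.
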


\begin{proof}
This is exactly similar to the oriented case. To calculate the 3-torsion, note that 
\begin{equation*}
H^{d+4}(\Sigma^{d+\infty} BSpin(d);\Z/3) \cong H^{4}(BSO(d);\Z/3) \cong \Z/3,
\end{equation*}
generated by the first Pontryagin class $p_1$. But 
\begin{equation*}
\mathcal{P}^1(p_1) = 2p_1^2 -p_2 \neq 0
\end{equation*}
by \eqref{pont}. The Adams spectral sequence then shows that there is no 3-torsion in $\pi_{d+7}^s(\Sigma^d BSpin(d))$.
\end{proof}

\begin{thm}
$\pi_q(C_\theta)$ is given by the following table for $r>4$, $q < 2(d-r)-1$, and $d-r>10$: 
\begin{equation*}
\begin{tabular}{cccccc}
\hline
 {$q$}&{$q \leq d-r+4$}&{$d-r+5$}&{$d-r+6$}&{$d-r+7$}&{$d-r+8$} \\
\hline
 {$d-r$ even}&{0}&{$\Z$}&{$\Z/2$}&{$0$}&{$\Z/2$} \\
 {$d-r$ odd}&{0}&{$\Z/2$}&{$0$} &{$\Z/2$}&{0}\\
\hline
\end{tabular}
\end{equation*}
\end{thm}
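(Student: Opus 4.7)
The approach parallels the template of Sections~\ref{calcSO} and~\ref{spin}: compute $H^*(C_\theta;F)$ for $F=\Z/2$, $F=\Z/p$ ($p$ odd), and $F=\Q$, run the Adams spectral sequence prime by prime, and then pin down the remaining differentials and extensions by comparison with the adjacent, already-computed spectral sequences.

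First, I would identify the cohomology of $C_\theta$. At $p=2$ the answer is the table immediately preceding the theorem. For $F=\Q$ or $F=\Z/p$ with $p$ odd, I use $H^*(MTSpin(d,r);F)\cong H^*(MTSO(d,r);F)$ together with Theorems~\ref{Fcoeff} and~\ref{cohom-cofiber2} to realise $H^*(C_\theta;F)$ as the kernel of $f_\theta^*$, namely the free $H^*(BSO(d);F)$-submodule generated by $\phi(\delta(e_{d-r}))$ and $\phi(e_d')$ in dimensions strictly above those of the two generators. For $d-r$ even, $p_1\cdot\phi(\delta(e_{d-r}))\in H^{d-r+5}(C_\theta;\Q)$ and its integral refinement $\lambda\cdot\phi(\delta(e_{d-r}))$, where $\lambda=\tfrac12 p_1$ is the spin characteristic class, account for the free $\Z$ summand in $\pi_{d-r+5}(C_\theta)$; the mod-$2$ reduction of $\lambda\phi(\delta(e_{d-r}))$ is precisely the tabulated generator $w_4 w_{d-r+1}$.

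Second, I would set up the mod-$2$ Adams spectral sequence. The $\mathcal{A}_2$-action on the generators listed in the table for $C_\theta$ is inherited from $H^*(MTSpin(d,r);\Z/2)$, which is governed by~\eqref{binom}, the Cartan formula, and the Quillen relations $w_2=w_3=w_5=w_9=0$. Because every generator in the range $q\leq d-r+9$ carries a factor $w_4$, $w_6$, $w_7$, $w_8$, or $w_4^2$, the squares needed are few, and a minimal resolution built in the style of Figure~\ref{resolution} yields the $E_2$-page. Qualitatively different patterns emerge for $d-r$ even versus $d-r$ odd because of the binomial coefficients in~\eqref{binom}: in the even case, $w_4 w_{d-r+1}$ supports an infinite $h_0$-tower reflecting its integral origin, contributing the $\Z$ in $\pi_{d-r+5}(C_\theta)$, whereas in the odd case it supports no tower and contributes only $\Z/2$.

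Third, I would handle odd primes and the remaining differentials and extensions at $p=2$. For $p=3$, the relation $\mathcal{P}^1(p_1)=2p_1^2-p_2$ from~\eqref{pont} combined with a short $\Ext_{\mathcal{A}_3}$ computation shows that no $3$-torsion appears in the relevant range; for $p\geq 5$ the mod-$p$ cohomology in range is entirely of integral origin, contributing no $p$-torsion beyond the $\Z$ already detected rationally. At $p=2$, the $d_2$-differentials are determined by naturality in the cofibration sequence of Proposition~\ref{egenskaberr}, together with comparison with the already-understood spectral sequences for $MTSpin(d,r)$ itself, for lower-$r$ spin spectra, and for $\Sigma V_{d,r}$ (whose homotopy groups are known from~\cite{paechter}). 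The main obstacle will be organising the various subcases determined by $d-r\bmod 4$ and producing enough comparison maps to kill every potential differential in the two-dimensional window $d-r+5\leq q\leq d-r+8$; the $h_0$-extension around the rational class in $\pi_{d-r+5}$ for $d-r$ even, which could a priori produce a $\Z\oplus\Z/2$ instead of $\Z$, is the most delicate point, but it is controlled precisely by the identification above of the mod-$2$ reduction of the integral generator $\lambda\phi(\delta(e_{d-r}))$.
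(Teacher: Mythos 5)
Your proposal follows the same route as the paper: read off $H^*(C_\theta;\Z/2)$ from the table preceding the theorem, run the Adams spectral sequence, and rule out $3$-torsion in the $d-r$ even case via the comparison map through $MTSpin(d-r+1,1)$ and the computation $\mathcal{P}^1(p_1)=2p_1^2-p_2\neq 0$ from \eqref{pont}. The paper's proof is exactly this (stated very tersely), so your plan is correct and essentially identical in approach.
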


\begin{proof}
Again this follows from the spectral sequence. For $d-r$ even, there could be some 3-torsion since there is a $\phi(\delta(e_{d-r}p_1)) \in H^{d-r+5}(C_\theta ; \Z/3)$. But under the composite map $g: MTSpin(d-r+1,1) \to MTSpin(d,r) \to C_\theta $,
\begin{equation*}
 g^*(\mathcal{P}^1(\delta(e_{d-r}p_1))) =  \mathcal{P}^1(p_1) = 2p_1^2 - p_2 \neq 0
\end{equation*}
by the $r=1$ case. Using this, the Adams spectral sequence shows that there is no $3$-torsion.
\end{proof}

\begin{cor}\label{spininj}
When $d-r>10$ is odd and $q < 2(d-r) - 1$,
\begin{equation*}
\theta^r : \pi_{q-1}(V_{d,r}) \to \pi_q(MTSpin(d,r))
\end{equation*}
is an isomorphism for $q = d-r+6$, and it is injective for $q=d-r+7$. In particular, 
\begin{equation*}
\theta^7 : \pi_{d-1}(V_{d,7}) \to \pi_d(MTSpin(d,7))
\end{equation*}
is injective for  $d$ even.
\end{cor}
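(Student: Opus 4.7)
The plan is to read the corollary directly off the long exact sequence
\begin{equation*}
\pi_{q+1}(C_\theta) \xrightarrow{\partial} \pi_q^s(\Sigma V_{d,r}) \xrightarrow{f_{\theta*}} \pi_q(MTSpin(d,r)) \to \pi_q(C_\theta) \xrightarrow{\partial} \pi_{q-1}^s(\Sigma V_{d,r})
\end{equation*}
coming from the cofibration $\Sigma^{\infty+1}V_{d,r}\xrightarrow{f_\theta}MTSpin(d,r)\to C_\theta$ of \eqref{cofiber}, combined with the table of $\pi_*(C_\theta)$ just established. By Proposition \ref{faktor}, $\theta^r$ is identified with $f_{\theta*}$ via the Freudenthal isomorphism $\pi_{q-1}(V_{d,r})\cong\pi_q^s(\Sigma V_{d,r})$, which is valid throughout the range $q<2(d-r)-1$; thus the behaviour of $\theta^r$ in this range is the same as that of $f_{\theta*}$.

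For $d-r>10$ odd, the preceding theorem gives $\pi_{d-r+6}(C_\theta)=0$ and $\pi_{d-r+8}(C_\theta)=0$. At $q=d-r+6$ the vanishing of $\pi_q(C_\theta)$ forces $f_{\theta*}$, and hence $\theta^r$, to be surjective, while injectivity of $\theta^r$ at this dimension is already guaranteed by the spin case of Theorem \ref{theta-inj} (which covers $q\leq d-r+6$); combining the two gives the claimed isomorphism. At $q=d-r+7$ the vanishing $\pi_{d-r+8}(C_\theta)=0$ kills the boundary $\partial$ entering $\pi_{d-r+7}^s(\Sigma V_{d,r})$, so $f_{\theta*}$ and therefore $\theta^r$ is injective.

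The final \emph{in particular} clause is just the specialization $r=7$, $q=d=d-r+7$: for $d$ even (and implicitly $d$ large enough that $d-r>10$), $d-7$ is odd and the injectivity of $\theta^7:\pi_{d-1}(V_{d,7})\to\pi_d(MTSpin(d,7))$ is the $q=d-r+7$ case handled above.

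Essentially nothing in the above is hard once the $\pi_*(C_\theta)$ computation is available, so there is no real obstacle. The only point that requires attention is that injectivity at $q=d-r+6$ is \emph{not} automatic from the long exact sequence, because $\pi_{d-r+7}(C_\theta)=\Z/2$ could in principle inject into $\pi_{d-r+6}^s(\Sigma V_{d,r})$ via $\partial$; this potential obstruction is ruled out only by importing Theorem \ref{theta-inj}, and noting so is the one non-mechanical step of the argument.
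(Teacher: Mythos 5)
Your proposal is correct and is exactly the intended derivation: the paper states this as an immediate corollary of the $\pi_*(C_\theta)$ table, read off the long exact sequence of the cofibration \eqref{cofiber}, with injectivity at $q=d-r+6$ supplied by Theorem \ref{theta-inj} rather than by the exact sequence. Your closing remark correctly identifies the one non-mechanical point.
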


According to Theorem \ref{theta-inj}, $\pi_{q-1}(V_{d,r}) \to \pi_q(MTSpin(d,r))$ is an isomorphism for $q\leq d-r+4$, so we know the homotopy groups in these dimensions. Here are some more:

\begin{thm}
$\pi_{q}(MTSpin(d,r))$ is given  by
\begin{equation*}
\begin{tabular}{cccc}
\hline
{$r$}&{$d$}&{$q$}&{$\pi_{q}(MTSpin(d,r))$}\\
\hline
{$5,6$}&{$0 \bmod 4$}&{$d$}&{$\Z \oplus \Z/8 \oplus \Z/2$}\\
{$5$}&{$1 \bmod 4$}&{$d$}&{$\Z \oplus \Z/2 \oplus \Z/2$}\\
{$5,6$}&{$2 \bmod 4$}&{$d$}&{$\Z \oplus \Z/2$}\\
{$5$}&{$3 \bmod 4$}&{$d$}&{$\Z  \oplus \Z/2$}\\
{$6$}&{$1 \bmod 4$}&{$d$}&{$\Z/2 \oplus \Z/2$}\\
{$6$}&{$3 \bmod 4$}&{$d$}&{$\Z/2$}\\
\hline
\end{tabular}
\end{equation*}
for $q<2(d-r)-1$, $9\leq d-r$, and $q<a_{d-r}$.
\end{thm}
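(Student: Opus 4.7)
The plan is to run the mod~$2$ Adams spectral sequence for $MTSpin(d,r)$ in the same style as the oriented computations of Section~\ref{calcSO}, exploiting the fact that Corollary~\ref{spinfree} reduces the cohomology to an explicit free $H^*(BSpin(d);\Z/2)$-module. The key simplification over the oriented case is that in $H^*(BSpin(d);\Z/2)$ we have the relations $w_2=w_3=w_5=w_9=0$, so many of the $\A_2$-module generators appearing in the spectral sequence for $MTSO(d,r)$ disappear. Concretely, I would first tabulate a $\Z/2$-basis of $H^q(MTSpin(d,r);\Z/2)$ for $d-r+1\leq q\leq d$ in each of the congruence classes of $d$ modulo $4$ by starting from the basis in the oriented case and killing the contributions divisible by $w_2$, $w_3$, $w_5$. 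The Steenrod action is read off from the formula \eqref{binom} together with the Cartan formula and the same spin relations, noting that in our range the pieces from $v_{a_d}$ do not yet contribute because $q<a_{d-r}$.

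Having the $\A_2$-module structure, I would construct the initial terms of a minimal free resolution exactly as in Figure~\ref{resolution}, and read off $E_2^{s,t}=\Ext^{s,t}_{\A_2}(H^*(MTSpin(d,r);\Z/2),\Z/2)$ in the dimensions $t-s\leq d$. Each of the six claimed cases then reduces to a small Adams chart analogous to Figures~\ref{r42}, \ref{r5}. The free $\Z$-summands (when they appear) are forced by the rational computation: combining Theorem~\ref{Fcoeff} with the fact that $H^*(MTSO(d,r);F)\to H^*(MTSpin(d,r);F)$ is an isomorphism for $F=\Q$ (Theorem just before Corollary~\ref{spinper}) gives us the rank in each dimension.

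The next step is to rule out all potential differentials and resolve the extension problems. For differentials, I would use naturality with respect to three maps: (i) the projection $MTSpin(d,r)\to MTSO(d,r)$, where the targets are computed in Theorems~\ref{MTd3} and~\ref{MTd4}; (ii) the spin analogue of the comparison maps $MTSpin(d-r+k,k)\to MTSpin(d,r)$ coming from Proposition~\ref{egenskaberr}, letting us push generators down from lower values of $r$ where the spectral sequence has already been settled; and (iii) the periodicity map of Corollary~\ref{spinper}, which lets us transfer the calculation between $d$ and $d+a_r$ in the claimed range. The odd-primary part follows for free from the isomorphism $H^*(MTSO(d,r);\Z/p)\cong H^*(MTSpin(d,r);\Z/p)$, reducing to the odd-primary calculations already done in the oriented case (in particular the $\Z/3$ summand in the $d\equiv 0\bmod 4$ entries comes from exactly the Pontryagin-class argument used in Theorems~\ref{MTd2}--\ref{MTd4}).

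The main obstacle will be the extension problems (and possibly one or two higher differentials) in the cases $d\equiv 0\bmod 4$, $r=5,6$, where $E_\infty$ has three generators stacked via $h_0$-towers and must assemble into $\Z\oplus\Z/8\oplus\Z/2$ rather than a split form. To settle this, I would combine the vertical $h_0$-lines of the chart with two external pieces of information: the long exact sequence of the cofibration $\Sigma^{\infty+1}V_{d,r}\to MTSpin(d,r)\to C_\theta$, whose outer terms are known from Paechter's computation of $\pi_*(V_{d,r})$ and from the calculation of $\pi_*(C_\theta)$ performed in the spin case just above the theorem, and the injectivity statement for $\theta^r$ in Corollary~\ref{spininj} together with Theorem~\ref{injtr}, which forces the image of $\pi_{d-1}(V_{d,r})$ to sit inside $\pi_d(MTSpin(d,r))$ as a subgroup of the correct order. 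Once the order and the $h_0$-multiplications are pinned down, the extension is determined, and the remaining cases ($r=6$ with $d$ odd, and $d\equiv 1,3\bmod 4$ for $r=5$) follow by the same pattern together with the five-lemma applied to the diagram of Proposition~\ref{egenskaberr} with $k=1$.
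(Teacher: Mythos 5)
Your proposal follows essentially the route the paper intends (the paper in fact omits a written proof, having already assembled all the ingredients in Section~\ref{spin}: Corollary~\ref{spinfree}, the relations $w_2=w_3=w_5=w_9=0$, the tabulated bases splitting into the $V_{d,r}$ and $C_\theta$ columns, the computation of $\pi_*(C_\theta)$, and Corollary~\ref{spininj}), and your combination of the Adams chart, the rational computation, naturality, and the cofibration $\Sigma^{\infty+1}V_{d,r}\to MTSpin(d,r)\to C_\theta$ with Paechter's groups to pin down the $\Z/8$ extension is the correct mechanism. One small slip: there is no $\Z/3$ summand in the $d\equiv 0\bmod 4$ entries (the torsion is $\Z/8\oplus\Z/2$), so the odd-primary Pontryagin-class argument is needed to rule out $3$-torsion rather than to produce it.
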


\section{Vector field cobordism groups}\label{calcabs}
The spectra $MT(d)$ introduced in Section \ref{def1} were linked to cobordism theory in the paper \cite{GMTW}.
In \cite{ab}, the groups $\pi_{d+r}(MTSO(d))$ were identified as the vector field cobordism groups. A vector field cobordism between two manifolds $M$ and $N$ with $r$ independent sections given in $TM\oplus \R$ and $TN\oplus \R$, respectively, is a cobordism with $r$ independent tangent vector fields extending the ones given on the boundary, the $\R$ direction corresponding to the inward normal at $M$ and the outward normal at $N$. For $r<\frac{d}{2}$, this is an equivalence relation and $\pi_{d+r}(MTSO(d))$ is the set of equivalence classes.


%

In this interpretation,
\begin{equation}\label{glem}
\pi_{d+r}(MTSO(d)) \to \pi_{d+r}(MTSO(d+r+1)) \cong \Omega_{d+r}
\end{equation}
is just the map that forgets the vector fields. Here $\Omega_{d+r}$ is the usual oriented cobordism group.

The purpose of Section \ref{VFCG} is to apply the computations of Section \ref{cal} to determine the groups $\pi_{d+r}(MTSO(d))$. 
Since $\Omega_{d+r}$ is well-known, we want to describe $\pi_{d+r}(MTSO(d))$ in terms of the map~\eqref{glem}. 
The image of \eqref{glem} is the set of cobordism classes containing a manifold with $r$ independent tangent vector fields, while the kernel is the subgroup of classes containing bounding manifolds, see \cite{ab}. 
For the remainder of this chapter, we shall abbreviate $MTSO$  to $MT$.

%
%
The idea is to look at the long exact sequence for the cofibration 
\begin{equation*}
MT(d) \to MT(d+r) \to MT(d+r,r)
\end{equation*}
 in order to determine $\pi_{d+r-1}(MT(d))$:
\begin{equation} \label{flg}
\begin{tabular}{c@{\ }c@{\ }c@{\ }c@{\ }c@{\ }c}
&$\dotsm $&$\to $&$\pi_{d+r}(MT(d+r))$&$\xrightarrow{\beta^r} $&$\pi_{d+r}(MT(d+r,r))$ \\
$\to $&$\pi_{d+r-1}(MT(d)) $&$\to $&$\Omega_{d+r-1} $&$\to $&$\pi_{d+r-1}(MT(d+r,r)) $ \\
&$\dotsm $& &$\dotsm $ & &$\dotsm $ \\
$\to $&$\pi_{d+1}(MT(d)) $&$\to $&$\Omega_{d+1} $&$\to $&$\pi_{d+1}(MT(d+r,r)) $ \\
$\to $&$\pi_{d}(MT(d))$ &$\to$ &$\Omega_d $&$\to $&$0$
\end{tabular}
\end{equation}
Here $\beta^r$ is the map $\beta(M) \mapsto \beta^r(M)$ that was also denoted by $j_{r*}$ in \eqref{jrdef}.

The maps to the right of $\pi_{d+r-1}(MT(d))$ are easily described by induction on $r$ and the computations of Section \ref{cal}. Thus the main problem is to determine the image of $\beta^r$. For this, recall that we have the map
\begin{equation}\label{bpsi}
\pi_{d+r}(MT(d+r)) \xrightarrow{\beta^r} \pi_{d+r}(MT(d+r,r)) \xrightarrow{\Psi} KR^{t-d}(tH_r).
\end{equation}
 It follows from Theorem \ref{KRfak} and \cite{AD} that $\Psi$ is an isomorphism for $r=1,2$ and a surjection for $r=3$. Every element in $\pi_{d+r}(MT(d+r))$ is $\beta(M) $ for some compact oriented  $(d+r)$-manifold $M$, and $\Psi \circ \beta^r$ is the Atiyah--Dupont invariant of this $M$. 
 
From these computations, we shall obtain a description of the global invariant $\beta^4(M)$ in Section \ref{ident}.

\subsection{The computations}\label{VFCG}
For $r=1$, consider the map $MT(d) \to MT(d+1)$. There is a diagram with exact rows
\begin{equation}\label{diag} 
\vcenter{\xymatrix{{\pi_{d+1}(MT(d+1,1))}\ar[d]\ar[r]&{\pi_d(MT(d))}\ar[r]\ar[d]^{\beta^1}&{\pi_d(MT(d+1))}\ar[r] \ar[d]&{0}\\
{\pi_{d+1}(MT(d+1,1))}\ar[r]&{\pi_d(MT(d,1))}\ar[r]&{\pi_d(MT(d+1,2))}\ar[r]&{0.}
} }
\end{equation}
Here $\beta^1(M)=\chi(M)$ by Theorem \ref{beta=ind}. From this, $\pi_d(MT(d))$ can be computed.
This is done in e.g.\ \cite{GMTW} or \cite{ebert}. We include the statements and proofs here for completeness and later reference. 

\begin{prop}\label{seseven}
Let $d$ be even. There is a split short exact sequence 
\begin{equation}\label{pi1even}
0 \to \Z \to \pi_d(MT(d)) \to \Omega_d \to 0.
\end{equation}
A splitting $\pi_d(MT(d)) \to \Z$ is given by $\frac{1}{2}\chi $ when $d \equiv 2 \bmod 4$ and $ \frac{1}{2}
(\sigma+ \chi)$ when $d \equiv 0 \bmod 4$. Here $\sigma $ is the signature.
\end{prop}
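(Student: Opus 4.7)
The plan is to exploit the cofibration $MT(d)\to MT(d+1)\to MT(d+1,1)$, whose long exact sequence reads
\begin{equation*}
\pi_{d+1}(MT(d+1))\to \pi_{d+1}(MT(d+1,1))\xrightarrow{\partial} \pi_d(MT(d))\to \pi_d(MT(d+1))\to \pi_d(MT(d+1,1)).
\end{equation*}
First I would identify the outer terms. Proposition~\ref{proj} gives $\pi_q(MT(d+1,1))\cong \pi_q^s(S^{d+1})\oplus \pi_{q-d-1}^s(BSO(d+1))$, hence $\pi_d(MT(d+1,1))=0$ and $\pi_{d+1}(MT(d+1,1))\cong \Z$, with the summand detected by the collapse map $c_*$. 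Applying the same proposition to the cofibrations $MT(d+r)\to MT(d+r+1)\to MT(d+r+1,1)$ for $r\geq 1$, one checks that both $\pi_d$ and $\pi_{d+1}$ of these quotients vanish, giving a chain of isomorphisms $\pi_d(MT(d+1))\cong \pi_d(MT(d+2))\cong \cdots \cong \Omega_d$, as already used in sequence~\eqref{flg}.

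Next I would show the leftmost arrow is zero. Post-composing with $c_*$, the Remark following Theorem~\ref{beta=ind} identifies the composite $\pi_{d+1}(MT(d+1))\to \pi_{d+1}(MT(d+1,1))\xrightarrow{c_*}\Z$ with $\beta(M)\mapsto \chi(M)$ for closed oriented $(d+1)$-manifolds. Since $\pi_{d+1}(MT(d+1))$ is the Reinhart cobordism group by Proposition~\ref{betared}, every class is $\beta(M)$ for such $M$; and rational Poincar\'e duality forces $\chi(M)=0$ on any odd-dimensional closed oriented manifold. Since $c_*$ is an isomorphism in this range, the original map also vanishes. Combined with $\pi_d(MT(d+1,1))=0$, the exact sequence collapses to $0\to \Z\to \pi_d(MT(d))\to \Omega_d\to 0$.

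For the splitting I would note that $\chi$ and $\sigma$ both define homomorphisms $\pi_d(MT(d))\to \Z$: the Euler characteristic factors as $\pi_d(MT(d))\xrightarrow{\beta^1}\pi_d(MT(d,1))\xrightarrow{c_*}\Z$, while the signature factors through $\Omega_d$. Direct Poincar\'e-duality computations on $H^{d/2}(M;\R)$ show $\chi(M)$ is always even when $d\equiv 2 \bmod 4$ and $\chi(M)+\sigma(M)$ is always even when $d\equiv 0 \bmod 4$, so $\tfrac{1}{2}\chi$ and $\tfrac{1}{2}(\sigma+\chi)$ are integer-valued homomorphisms on $\pi_d(MT(d))$. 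To conclude that they split the sequence it suffices to evaluate them on a generator of the kernel: choosing $M=S^d$, which bounds $D^{d+1}$ so that $[S^d]=0\in \Omega_d$ and hence $\beta(S^d)\in \ker$, we have $\chi(S^d)=2$ and $\sigma(S^d)=0$, giving $\tfrac{1}{2}\chi(\beta(S^d))=\tfrac{1}{2}(\sigma+\chi)(\beta(S^d))=1$. Thus $\beta(S^d)$ generates the kernel and the proposed maps are splittings.

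The main subtlety is the vanishing of the map out of $\pi_{d+1}(MT(d+1))$, which crucially relies on the Reinhart cobordism interpretation of $\pi_*(MT(d+1))$ to reduce to the topological fact that odd-dimensional closed oriented manifolds have zero Euler characteristic; the remainder is formal manipulation of the long exact sequences provided by Proposition~\ref{proj} and Section~\ref{relind}.
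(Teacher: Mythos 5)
Your proof is correct and follows essentially the same route as the paper: the long exact sequence of the cofibration $MT(d)\to MT(d+1)\to MT(d+1,1)$, the identification $\pi_d(MT(d+1))\cong\Omega_d$ and $\pi_{d+1}(MT(d+1,1))\cong\Z$ via Proposition~\ref{proj}, and the parity of $\chi$ (resp.\ $\chi+\sigma$) to make $\tfrac{1}{2}\chi$ (resp.\ $\tfrac{1}{2}(\chi+\sigma)$) a well-defined retraction. The only cosmetic differences are that you obtain injectivity of $\Z\to\pi_d(MT(d))$ from the vanishing of $\chi$ on closed odd-dimensional manifolds and exactness, and verify the splitting by evaluating on $\beta(S^d)$, whereas the paper reads both off the commutative diagram \eqref{diag} comparing with the sequence for $MT(d,1)\to MT(d+1,2)$; both are sound.
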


\begin{proof}
If we fill in the known groups, \eqref{diag} becomes
\begin{equation*} 
\xymatrix{{\Z}\ar[d]^{\cong}\ar[r]&{\pi_d(MT(d))}\ar[r]\ar[d]^{\chi}&{\Omega_d}\ar[r]\ar[d]&{0}\\
{\Z}\ar[r]^{\cdot 2}&{\Z}\ar[r]&{\Z/2}\ar[r]&{0.}
}
\end{equation*}
For $d \equiv 2 \bmod 4$, all manifolds have even Euler characteristic, so by the diagram, $\frac{1}{2}\chi$ defines a splitting of the upper row in the diagram.

For $d \equiv 0 \bmod 4$, $\chi$ is surjective. For instance, $\chi(S^d)=2$ and $\chi(\C P^{2n})=n+1$. Now, 
\begin{equation*}
\begin{split}
\sigma &: \pi_d(MT(d)) \to \Omega_d \to \Z\\
\chi &: \pi_d(MT(d)) \to \pi_d(MT(d,1)) \cong \Z
\end{split}
\end{equation*}
are both well-defined homomorphisms and they always have the same parity. Therefore, $\frac{1}{2}(\sigma + \chi) : \pi_d(MT(d)) \to \Z$ is well-defined. This clearly defines a splitting.
\end{proof}

\begin{prop}\label{sesodd}
For $d \equiv 3 \bmod 4$, $\pi_d(MT(d)) \cong \Omega_d$. When $d \equiv 1 \bmod 4$, there is a short exact sequence
\begin{equation}\label{pi1odd}
0 \to \Z/2 \to \pi_d(MT(d)) \to \Omega_d \to 0
\end{equation}
which is split by the real semi-characteristic $\chi_{\R}$, defined in \eqref{chi2}.
\end{prop}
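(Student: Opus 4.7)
The plan is to apply the same cofibration sequence $MT(d)\to MT(d+1)\to MT(d+1,1)$ used in the proof of Proposition~\ref{seseven}, now with $d$ odd. Proposition~\ref{proj} identifies $\pi_{d+1}(MT(d+1,1))\cong \pi_{d+1}^s(S^{d+1})=\Z$ and $\pi_d(MT(d+1,1))\cong \pi_d^s(S^{d+1})=0$, so the long exact sequence collapses to
\begin{equation*}
\pi_{d+1}(MT(d+1))\xrightarrow{\chi}\Z\xrightarrow{\partial}\pi_d(MT(d))\to\Omega_d\to 0,
\end{equation*}
where by Theorem~\ref{beta=ind} and Proposition~\ref{ren} the first map sends $\beta(M)$ to $\chi(M)$.

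Everything then reduces to determining the image of $\chi$. By Proposition~\ref{seseven}, $\pi_{d+1}(MT(d+1))$ contains a $\Z$ summand on which $\chi$ equals twice a generator of the target, so $2\Z$ always lies in the image (already realised by $\chi(S^{d+1})=2$). For $d\equiv 3\bmod 4$, so $d+1\equiv 0\bmod 4$, the class $\beta(\C P^{(d+1)/2})$ maps under $\chi$ to the odd integer $(d+1)/2+1$, so $\chi$ is surjective, $\partial=0$, and $\pi_d(MT(d))\cong\Omega_d$. For $d\equiv 1\bmod 4$, so $d+1\equiv 2\bmod 4$, Poincar\'e duality on a closed oriented $(4k+2)$-manifold forces the middle Betti number to be even by skew-symmetry of the intersection form, and hence $\chi$ is always even. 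The image is then exactly $2\Z$, $\partial$ has image $\Z/2$, and we obtain the claimed short exact sequence $0\to\Z/2\to\pi_d(MT(d))\to\Omega_d\to 0$.

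For the splitting in the case $d\equiv 1\bmod 4$ I would verify three things about the real Kervaire semi-characteristic $\chi_\R$. First, it is additive under disjoint union, which is immediate from the definition in terms of Betti numbers. Second, it descends to a well-defined homomorphism on the Reinhart cobordism group $\pi_d(MT(d))$; this is the only nontrivial step and I would invoke Atiyah's theorem identifying $\chi_\R$ with the $\Z/2$-index of a real elliptic operator, or equivalently the Atiyah--Dupont identification of $\ind(s)$ for $r=2$ and $d\equiv 1\bmod 4$ with $\chi_\R$, together with the factorization in Theorem~\ref{KRfak}. Third, $\chi_\R$ is nonzero on the $\Z/2$ summand: taking $M=S^d$, it is oriented nullbordant so maps to $0$ in $\Omega_d$, but its Reinhart cobordism class is nontrivial because any compact $W$ with $\partial W=S^d$ would satisfy $\chi(W)+1=\chi(W\cup_{S^d}D^{d+1})\equiv 0\bmod 2$, making $\chi(W)$ odd and thereby obstructing a nowhere-zero tangent vector field with inward-normal boundary behaviour. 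Since $\chi_\R(S^d)=1$, the composite $\Z/2\hookrightarrow\pi_d(MT(d))\xrightarrow{\chi_\R}\Z/2$ is the identity and splits the sequence.

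The main obstacle is the Reinhart cobordism invariance of $\chi_\R$, which is not formal from the diagram chase and genuinely requires input from elliptic theory or from the Atiyah--Dupont machinery of Section~\ref{relKR}.
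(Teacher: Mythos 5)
Your argument for determining the image of $\chi\colon\pi_{d+1}(MT(d+1))\to\Z$ is the same as the paper's: the long exact sequence for $MT(d)\to MT(d+1)\to MT(d+1,1)$, with $\chi$ surjective when $d+1\equiv 0\bmod 4$ (e.g.\ $\chi(\C P^{(d+1)/2})$ is odd) and of image $2\Z$ when $d+1\equiv 2\bmod 4$ (skew-symmetry of the middle intersection form). That part of the proposal matches.

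For the splitting your route differs from the paper's in a way worth noting. The paper never needs to establish a priori that $\chi_\R$ descends to Reinhart cobordism: it observes that $\beta^2\colon\pi_d(MT(d))\to\pi_d(MT(d,2))\cong\Z/2$ is automatically a homomorphism (it is induced by a spectrum map $j_2$), and then uses a commutative ladder comparing the cofibration $MT(d)\to MT(d+1)\to MT(d+1,1)$ with $MT(d,2)\to MT(d+1,3)\to MT(d+1,1)$ (a case of Proposition \ref{egenskaberr}), together with $\pi_d(MT(d+1,3))=0$, to see that $\beta^2$ restricts to an isomorphism on the $\Z/2$ kernel. Only at the very end is Atiyah--Dupont invoked, to identify $\beta^2(M)=\chi_\R(M)$ for closed $M$. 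You instead take $\chi_\R$ as the primary object, argue that it is a Reinhart invariant by citing the index-theoretic / Atiyah--Dupont identification together with Theorem \ref{KRfak}, and then verify nontriviality on the kernel by exhibiting $[S^d]$ as an explicit nonzero element (with a Poincar\'e--Hopf argument ruling out a null Reinhart cobordism). Both are correct and rely on the same external Atiyah--Dupont input; the paper's version is purely diagrammatic and avoids having to separately assert Reinhart-invariance of $\chi_\R$, while yours produces an explicit geometric generator of the $\Z/2$, which is illuminating. One small caution in your $S^d$ computation: the precise form of Poincar\'e--Hopf for an inward-pointing field gives $\sum\operatorname{ind}=\chi(W)-\chi(\partial W)$, which here equals $\chi(W)$ since $\chi(S^d)=0$; that is consistent with your claim but should be stated so the parity obstruction is airtight.
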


\begin{proof}
The long exact sequence for the pair $(MT(d),MT(d+1))$ is
\begin{equation*}
\pi_{d+1}(MT(d+1)) \xrightarrow{\chi} \pi_{d+1}(MT(d+1,1))\to \pi_d(MT(d)) \to \Omega_d \to 0.
\end{equation*}
Here $\chi$ is surjective when $d+1 \equiv 0 \bmod 4$ and has image $2\Z$ when $d+1 \equiv 2 \bmod 4$. This yields the isomorphism and the short exact sequence. To see that the sequence splits, consider the diagram
\begin{equation*} 
\xymatrix{{\pi_{d+1}(MT(d+1,1))}\ar[d]\ar[r]&{\pi_d(MT(d))}\ar[r]\ar[d]&{\pi_d(MT(d+1))}\ar[r] \ar[d]&{0}\\
{\pi_{d+1}(MT(d+1,1))}\ar[r]&{\pi_d(MT(d,2))}\ar[r]&{\pi_d(MT(d+1,3))}\ar[r]&{0.}
}
\end{equation*}
With the known groups inserted, this becomes
\begin{equation*} 
\xymatrix{{\Z}\ar[d]^{\cong}\ar[r]&{\pi_d(MT(d))}\ar[r]\ar[d]^{\beta^2}&{\Omega_d}\ar[r] \ar[d]&{0}\\
{\Z}\ar[r]&{\Z/2}\ar[r]&{0}\ar[r]&{0.}
}
\end{equation*}
Thus $\beta^2$ defines a splitting. By \eqref{bpsi} and \cite{AD}, $\beta^2(M)=\chi_\R(M)$.
\end{proof}

We now proceed to the higher homotopy groups $\pi_{d+r}(MT(d))$.
\begin{thm}\label{pi1MTd4}
For $d \equiv 0 \bmod 4$, there is a split short exact sequence
\begin{equation*}
0 \to \Z/2 \oplus \Z/2 \to \pi_{d+1}(MT(d)) \to \Omega_{d+1} \to 0.
\end{equation*}
One of the $\Z/2$ summands is $\chi_\R$, while the other one is the generator of the $\Z$ in~\eqref{pi1even} composed with a Hopf map. 
\end{thm}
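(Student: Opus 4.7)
The plan is to apply the long exact sequence of the cofibration $MT(d)\to MT(d+1)\to MT(d+1,1)$, in parallel with Propositions~\ref{seseven} and~\ref{sesodd}. By Theorem~\ref{homotopyr1}, $\pi_{d+1}(MT(d+1,1))\cong\Z$ (generated by a class $\bar 1$), while $\pi_{d+2}(MT(d+1,1))\cong\Z/2$ is generated by $\eta\cdot\bar 1$ (the $BSO$-summand vanishes in degree $d+2$). Proposition~\ref{sesodd}, applied in dimension $d+1\equiv 1\bmod 4$, identifies $\pi_{d+1}(MT(d+1))\cong\Z/2\oplus\Omega_{d+1}$ split by $\chi_\R$. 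Since $\beta^1=\chi$ vanishes on closed odd-dimensional orientable manifolds, the long exact sequence collapses to
\begin{equation*}
0\to\mathrm{image}(\partial)\to\pi_{d+1}(MT(d))\xrightarrow{i_*}\Z/2\oplus\Omega_{d+1}\to 0.
\end{equation*}
By Proposition~\ref{seseven}, $\partial(\bar 1)=x$ is the generator of the $\Z$-summand of $\pi_d(MT(d))$, and by naturality of $\partial$ under the $\pi_*^s$-module structure, $\partial(\eta\cdot\bar 1)=\eta\cdot x$.

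To establish injectivity of $\partial$, I would compare with the long exact sequence of the cofibration $MT(d,2)\to MT(d+1,3)\to MT(d+1,1)$ supplied by Proposition~\ref{egenskaberr}. Substituting $\pi_{d+1}(MT(d,2))\cong(\Z/2)^3$ from Theorem~\ref{MTd2} and $\pi_{d+1}(MT(d+1,3))\cong\Z/2\oplus\Z/2$ from Theorem~\ref{MTd3}, the relevant segment reads
\begin{equation*}
\Z/2\xrightarrow{\partial'}(\Z/2)^3\to\Z/2\oplus\Z/2\to\Z,
\end{equation*}
in which the last (torsion-to-free) map vanishes; exactness forces $\partial'$ to be injective. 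By naturality $\beta^2\circ\partial=\partial'$, this injectivity propagates to $\partial$, and $\mathrm{image}(\partial)=\langle\eta\cdot x\rangle\cong\Z/2$.

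For the splitting, let $y\in P:=\pi_{d+1}(MT(d))$ be any lift of the $\chi_\R$-class $(1,0)\in\Z/2\oplus\Omega_{d+1}$. Then $2y\in\mathrm{image}(\partial)=\{0,\eta\cdot x\}$; since the target $(\Z/2)^3$ of $\beta^2$ is $2$-torsion, $\beta^2(2y)=2\beta^2(y)=0$, whereas $\beta^2(\eta\cdot x)=\partial'(\eta\cdot\bar 1)\neq 0$, forcing $2y=0$. Hence $\langle y,\eta\cdot x\rangle\cong\Z/2\oplus\Z/2$ sits inside $P$. The homomorphism $\chi_\R\circ i_*:P\to\Z/2$ detects $y$ and annihilates $\eta\cdot x$, while an independent $\Z/2$-valued homomorphism $\phi$ detecting $\eta\cdot x$ is obtained by composing $\beta^2$ with a projection of $(\Z/2)^3$ onto a line through $\beta^2(\eta\cdot x)$ that avoids $\beta^2(y)$; such a line exists because $\beta^3(i_*(y))=\theta^3(\chi_\R(y))=\theta^3(1)\neq 0$ by Atiyah--Dupont (Table~\ref{indentifi}, $r=3$, $d\equiv 1\bmod 4$) whereas $\beta^3(i_*(\eta\cdot x))=0$, certifying the linear independence of $\beta^2(y)$ and $\beta^2(\eta\cdot x)$ in the $\mathbb{F}_2$-vector space $(\Z/2)^3$. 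The pair $(\chi_\R\circ i_*,\phi)$ is then a retraction $P\to\Z/2\oplus\Z/2$ onto $\langle y,\eta\cdot x\rangle$, yielding the claimed splitting.

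The main obstacle is ruling out a $\Z/4$-extension inside the kernel of $P\to\Omega_{d+1}$; this is resolved by the $2$-torsion nature of $\pi_{d+1}(MT(d,2))$ combined with the injectivity of $\beta^2$ on $\mathrm{image}(\partial)$ coming from the comparison diagram above.
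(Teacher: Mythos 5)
Your proposal is correct, and it ultimately rests on the same key input as the paper, namely the comparison of the cofibration $MT(d)\to MT(d+1)\to MT(d+1,1)$ with $MT(d,2)\to MT(d+1,3)\to MT(d+1,1)$ together with the computed groups $\pi_{d+1}(MT(d,2))\cong(\Z/2)^3$ and $\pi_{d+1}(MT(d+1,3))\cong\Z/2\oplus\Z/2$; but the organization is genuinely different. The paper first runs the $r=2$ instance of \eqref{flg}, using the Atiyah--Dupont identification $\beta^2(M^{d+2})=(\chi(M),0)$ and the evenness of $\chi$ in dimension $d+2\equiv 2\bmod 4$ to read off the cokernel $\Z/2\oplus\Z/2$ at once, and only afterwards uses the $r=1$ sequence to name the summands and the comparison diagram to split. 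You instead extract everything from the $r=1$ sequence and Proposition \ref{sesodd}, which obliges you to prove the injectivity of $\partial$ on $\pi_{d+2}(MT(d+1,1))$ and to exclude a $\Z/4$ inside the kernel by the $2y=0$ argument; both steps are correct (your order count forcing $\partial'$ to be injective is in fact more explicit than the paper's ``it follows from the diagram''), and your route avoids needing the $(\chi,0)$ formula for $\beta^2$ from Atiyah--Dupont. Two small remarks. The closing detour through $\beta^3$ and Table \ref{indentifi} is loosely stated ($\theta^3(\chi_\R(y))$ does not typecheck; what is meant is that $i_*(y)=\beta(M)$ for a closed $(d+1)$-manifold with $\chi_\R(M)=1$, so $\Psi(\beta^3(i_*(y)))\neq 0$ by Theorem \ref{KRfak}) and is also unnecessary: for any functional $\phi$ with $\phi(\eta\cdot x)=1$ the matrix of $(\chi_\R\circ i_*,\phi)$ on $\langle y,\eta\cdot x\rangle$ is triangular with units on the diagonal, so the retraction exists without arranging $\phi(y)=0$ (alternatively, replace $y$ by $y+\eta\cdot x$). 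Finally, the cited computations of $\pi_{d+1}(MT(d,2))$ and $\pi_{d+1}(MT(d+1,3))$ require $d\geq 4$, so, as in the paper, the case $d=0$ needs a separate direct check.
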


\begin{proof}
By Theorem \ref{MTd2}, the sequence \eqref{flg} is as follows:
\begin{equation*}
\begin{tabular}{c@{\ }c@{\ }c@{\ }c@{\ }c@{\ }c}
 {} & $\dotsm $  & $\to $& $\pi_{d+2}(MT(d+2))$ & $\xrightarrow{\beta^2}$ & $\Z \oplus \Z/2$ \\
$\to$ & $\pi_{d+1}(MT(d)) $& $\to $& $\Omega_{d+1}$ & $\to$ & $\Z$ \\
$\to$ & $\pi_{d}(MT(d))$ & $\to$ & $\Omega_d $ & $\to $&  $0$\\
\end{tabular}
\end{equation*}
This only requires $d\geq 4$. It follows from Proposition \ref{seseven}  that
\begin{equation*}
\pi_{d+1}(MT(d)) \to \Omega_{d+1}
\end{equation*}
is surjective.

By \eqref{bpsi} and \cite{AD},  $\beta^2(M^{d+2}) = (\chi (M), 0)$. Since $d+2 \equiv 2 \bmod 4$, the Euler characteristic is even, so the cokernel of $\beta^2$ is $\Z/2 \oplus \Z/2$, i.e.\ we get the short exact sequence
\begin{equation*}
0\to \Z/2 \oplus \Z/2 \to \pi_{d+1}(MT(d)) \to \Omega_{d+1} \to 0.
\end{equation*}

This together with Proposition \ref{sesodd} means that the long exact sequence
\begin{equation*}
\dotsm \to \pi_{d+2}(MT(d+1,1)) \to \pi_{d+1}(MT(d)) \to \pi_{d+1}(MT(d+1)) \to \dotsm
\end{equation*}
becomes a short exact sequence
\begin{equation}\label{kef}
0 \to \Z/2 \to \pi_{d+1}(MT(d)) \to \Omega_{d+1} \oplus \Z/2 \to 0.
\end{equation}
Together with the fact that $\pi_{d+2}(MT(d+1,1))\cong \Z/2$ is generated by the composition of the generator in $\pi_{d+1}(MT(d+1,1))\cong \Z$ with a Hopf map, this yields the interpretation of the $\Z/2 \oplus \Z/2$.

To see that \eqref{kef} splits, consider the diagram with exact rows
\begin{equation*} 
\vcenter{\xymatrix{{}\ar[r]&{\pi_{d+2}(MT(d+1,1))}\ar[d]\ar[r]&{\pi_{d+1}(MT(d))}\ar[r]\ar[d]&{\pi_{d+1}(MT(d+1))}\ar[r] \ar[d]&{}\\
{}\ar[r]&{\pi_{d+2}(MT(d+1,1))}\ar[r]&{\pi_{d+1}(MT(d,2))}\ar[r]&{\pi_{d+1}(MT(d+1,3))}\ar[r]&{.}
}}
\end{equation*}
Inserting the known groups, we get 
\begin{equation*} 
\xymatrix{{}\ar[r]&{\Z/2} \ar[d]^\cong \ar[r]&{\pi_{d+1}(MT(d))}\ar[r]\ar[d]&{\pi_{d+1}(MT(d+1))}\ar[r]\ar[d]&{}\\
{}\ar[r]&{\Z/2}\ar[r]&{\Z/2 \oplus \Z/2 \oplus \Z/2}\ar[r]&{\Z/2 \oplus \Z/2}\ar[r]&{.}
}
\end{equation*}
The calculation of these homotopy groups only requires $d\geq 4$. It follows from the diagram that the upper sequence must split.

For $d=0$, the theorem follows by direct computations.
\end{proof}

\begin{thm}
For $d \equiv 1 \bmod 4$, there is a split short exact sequence
\begin{equation*}
0 \to \Z/2 \to \pi_{d+1}(MT(d)) \to \Omega_{d+1} \to 0.
\end{equation*}
The $\Z/2$ summand is the $\Z/2$ from \eqref{pi1odd} composed with a Hopf map.
\end{thm}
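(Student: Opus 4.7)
The plan is to follow the proof of Theorem \ref{pi1MTd4} step for step, now drawing on the odd-$d$ rows of Theorems \ref{MTd2} and \ref{MTd3}. Applying the exact sequence \eqref{flg} with $r=2$, and using $d+2\equiv 3\bmod 4$, Theorem \ref{MTd2} (odd case) gives $\pi_{d+2}(MT(d+2,2))=\pi_{d+1}(MT(d+2,2))=\Z/2$, while Proposition \ref{sesodd} identifies $\pi_{d+2}(MT(d+2))\cong \Omega_{d+2}$ and $\pi_d(MT(d))\cong \Z/2\oplus\Omega_d$. Substituting these values, the sequence reads
\begin{equation*}
\Omega_{d+2} \xrightarrow{\beta^2} \Z/2 \to \pi_{d+1}(MT(d)) \to \Omega_{d+1} \xrightarrow{g} \Z/2 \xrightarrow{\partial} \Z/2\oplus \Omega_d \to \Omega_d \to 0.
\end{equation*}

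The next step is to kill the two outer maps. For $\beta^2$, composing with the isomorphism $\Psi$ identifies $\beta^2(M)$ with the Atiyah--Dupont invariant $\tilde\theta^t_2(\ind(s))$ by \eqref{bpsi}, and \cite{AD} shows this vanishes for $r=2$ and $d+2\equiv 3\bmod 4$. For $g$, the image of $\partial$ equals the kernel of $\Z/2\oplus\Omega_d\to\Omega_d$, which is exactly the $\Z/2$ in \eqref{pi1odd}; hence $\partial$ is a surjection $\Z/2\twoheadrightarrow\Z/2$, therefore an isomorphism, and by exactness $g=0$. This produces the desired short exact sequence $0\to \Z/2 \to \pi_{d+1}(MT(d)) \to \Omega_{d+1}\to 0$.

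To identify the $\Z/2$-summand, I will compare the $r=1$ and $r=2$ cofibration sequences via the natural map
\begin{equation*}
\vcenter{\xymatrix{
MT(d) \ar[r] \ar[d]^{=} & MT(d+1) \ar[r] \ar[d] & MT(d+1,1) \ar[d]^{\iota} \\
MT(d) \ar[r] & MT(d+2) \ar[r] & MT(d+2,2).
}}
\end{equation*}
The cofibration $MT(d+1,1)\to MT(d+2,2)\to MT(d+2,1)$ from Proposition \ref{egenskaberr} with $k=1$ forces $\iota_*\colon \pi_{d+2}(MT(d+1,1))=\Z/2 \to \pi_{d+2}(MT(d+2,2))=\Z/2$ to be an isomorphism, since the next term $\pi_{d+2}(MT(d+2,1))=\Z$ is torsion-free. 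The upper $\Z/2$ is the stable Hopf map applied to the generator of $\pi_{d+1}(MT(d+1,1))=\Z$, whose boundary is, by the proof of Proposition \ref{sesodd}, the generator of $\Z/2\subset \pi_d(MT(d))$ from \eqref{pi1odd}; commutativity of the diagram then identifies our $\Z/2\subset \pi_{d+1}(MT(d))$ as exactly this Hopf composition.

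Finally, splitting is obtained by copying the diagram chase in the proof of Theorem \ref{pi1MTd4}: plug the values $\pi_{d+1}(MT(d,2))=\Z/4\oplus\Z/2$ (Theorem \ref{MTd2}, odd case) and $\pi_{d+1}(MT(d+1,3))=\Z\oplus\Z/2\oplus\Z/2$ (Theorem \ref{MTd3}, $d+1\equiv 2\bmod 4$) into
\begin{equation*}
\vcenter{\xymatrix{
\pi_{d+2}(MT(d+1,1)) \ar[d]^{\cong} \ar[r] & \pi_{d+1}(MT(d)) \ar[r] \ar[d] & \pi_{d+1}(MT(d+1)) \ar[d] \\
\pi_{d+2}(MT(d+1,1)) \ar[r] & \pi_{d+1}(MT(d,2)) \ar[r] & \pi_{d+1}(MT(d+1,3))
}}
\end{equation*}
and read off the retraction onto $\Z/2$ from exactness of the lower row, together with the iso on the left. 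The main obstacle is the identification $\beta^2=0$: unlike the $d+2\equiv 2\bmod 4$ case, it does not follow from the parity of the Euler characteristic, and the naive Kervaire semi-characteristic $\chi_\R$ is not an oriented cobordism invariant in dimension $4k+3$, so the vanishing really has to be extracted from \cite{AD} in this specific residue class.
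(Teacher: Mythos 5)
Your construction of the short exact sequence and your identification of the $\Z/2$ as a Hopf composition both go through, and they essentially reproduce the paper's argument: the paper kills $\beta^2$ by observing that every manifold of dimension $\equiv 3\bmod 4$ carries two independent vector fields, and reads the Hopf identification off the relation $\pi_{d+2}(MT(d+2,2))\cong \eta\cdot\pi_{d+1}(MT(d+2,2))$, but your detour through the $r=1$ column and the isomorphism $\iota_*$ is equivalent.

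The genuine gap is in the splitting step. In your comparison diagram the lower row is the exact sequence of the cofibration $MT(d,2)\to MT(d+1,3)\to MT(d+1,1)$, so exactness pins down the image of $\partial\colon \Z/2=\pi_{d+2}(MT(d+1,1))\to\pi_{d+1}(MT(d,2))\cong\Z/4\oplus\Z/2$ as the kernel of the map to $\pi_{d+1}(MT(d+1,3))\cong\Z\oplus\Z/2\oplus\Z/2$. Since that target has only exponent-two torsion, the element $(2,0)$, being twice a generator of the $\Z/4$ factor, automatically lies in the kernel; and since the kernel is the image of a map out of $\Z/2$, it is exactly $\{0,(2,0)\}=2(\Z/4)$. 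This subgroup is divisible by $2$ in $\Z/4\oplus\Z/2$, so every homomorphism $\Z/4\oplus\Z/2\to\Z/2$ annihilates it: no retraction exists, and your diagram is in fact perfectly consistent with the extension being non-split. The paper avoids this by comparing instead with $MT(d,3)\to MT(d+1,4)\to MT(d+1,1)$, where the relevant groups are $\Z/24\oplus\Z/2$ and $\Z\oplus\Z/24$; there exactness (a kernel of order two for a map hitting the torsion $\Z/24$) does force the boundary image to be a direct summand, and the retraction can be read off. You should also dispose of the low-dimensional cases $d=1,5$ separately, as the paper does via $\Omega_2=\Omega_6=0$, since the group computations used in the comparison require $d$ large.
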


\begin{proof}
In this case, the sequence \eqref{flg} becomes:
\begin{equation*}
\begin{tabular}{c@{\ }c@{\ }c@{\ }c@{\ }c@{\ }c}
 &$\dotsm $ & $\to$ & $\pi_{d+2}(MT(d+2))$&$\xrightarrow{\beta^2}$ &$\Z/2$ \\
$\to$& $\pi_{d+1}(MT(d))$& $\to$& $\Omega_{d+1}$& $\to$ &$\Z/2$ \\
$\to$& $\pi_{d}(MT(d))$& $\to$ & $\Omega_d$ & $\to$ &$0$
\end{tabular}
\end{equation*}
All manifolds of dimension $d+2 \equiv 3 \bmod 4$ have two independent vector fields, so $\beta^2(M)=0$ for all $M$. Together with \eqref{pi1odd},  this yields a short exact sequence
\begin{equation*}
0 \to \Z/2 \to \pi_{d+1}(MT(d)) \to \Omega_{d+1} \to 0 
\end{equation*}
for all $d>0$. Since $\pi_{d+2}(MT(d+2,2))\cong \eta \cdot \pi_{d+1}(MT(d+2,2))$ where $\eta$ is the Hopf map, the interpretation of the $\Z/2$ immediately follows.

To solve the extension problem, consider the diagram
\begin{equation*} 
\xymatrix{{}\ar[r]&{\pi_{d+2}(MT(d+1,1))}\ar[d]\ar[r]&{\pi_{d+1}(MT(d))}\ar[r]\ar[d]&{\pi_{d+1}(MT(d+1))}\ar[r] \ar[d]^{\cong}&{}\\
{}\ar[r]&{\pi_{d+2}(MT(d+1,1))}\ar[r]&{\pi_{d+1}(MT(d,3))}\ar[r]&{\pi_{d+1}(MT(d,4))}\ar[r]&{.}
}
\end{equation*}
For $d>6$, the groups are
\begin{equation*} 
\xymatrix{{\Z/2}\ar[d]^{\cong}\ar[r]&{\pi_{d+1}(MT(d))}\ar[r]\ar[d]&{\pi_{d+1}(MT(d+1))}\ar[r] \ar[d]&{\Z}\\
{\Z/2}\ar[r]&{\Z/24 \oplus \Z/2}\ar[r]&{\Z \oplus \Z/24}\ar[r]&{\Z.}
}
\end{equation*}
The first map in the lower row must be the inclusion of a direct summand, and thus the sequence splits.

For $d=1$ and $d=5$,
$\Omega_2=\Omega_6=0$
so the extension problem is trivial.
\end{proof}

\begin{thm}
For $d \equiv 2 \bmod 4$, $\pi_{d+1}(MT(d)) \cong \Omega_{d+1}$. 
\end{thm}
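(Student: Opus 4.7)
My plan is to use the long exact sequence~\eqref{flg} for the cofibration $MT(d)\to MT(d+2)\to MT(d+2,2)$ (i.e.\ the case $r=2$). Because $d\equiv 2\bmod 4$ we have $d+2\equiv 0\bmod 4$, so Theorem~\ref{MTd2} (applied with its parameter $d$ replaced by $d+2$) supplies $\pi_{d+2}(MT(d+2,2))\cong \Z\oplus \Z/2$ and $\pi_{d+1}(MT(d+2,2))\cong \Z$. The relevant segment of~\eqref{flg} is
\begin{equation*}
\pi_{d+2}(MT(d+2))\xrightarrow{\beta^2}\Z\oplus \Z/2 \to \pi_{d+1}(MT(d))\to \Omega_{d+1}\to \Z.
\end{equation*}

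The rightmost arrow is automatically zero, since $d+1$ is odd makes $\Omega_{d+1}$ a finite torsion group, which cannot map nontrivially into $\Z$. The theorem therefore reduces to showing that $\beta^2$ is surjective; granting this, exactness forces the connecting homomorphism into $\pi_{d+1}(MT(d))$ to vanish, and the sequence collapses to $\pi_{d+1}(MT(d))\cong \Omega_{d+1}$.

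For the surjectivity, I would invoke Theorem~\ref{KRfak} together with the observation (noted at the start of Section~\ref{calcabs}) that $\Psi$ is an isomorphism for $r=2$. This identifies $\beta^2$ with the Atiyah--Dupont invariant; by Table~\ref{indentifi}, for $d+2\equiv 0\bmod 4$ the latter evaluates on a closed $(d+2)$-manifold $M$ to $(\chi(M),\ \tfrac{1}{2}(\sigma(M)+\chi(M))\bmod 2)\in \Z\oplus \Z/2$. Taking $M=S^{d+2}$ gives the element $(2,1)$, and taking $M=\C P^{(d+2)/2}$ gives an element whose first coordinate $(d+4)/2$ is odd (because $(d+2)/2$ is even). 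A subgroup of $\Z\oplus \Z/2$ containing $(2,1)$ together with an element of odd first coordinate must be the whole group, so $\beta^2$ is surjective.

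I do not foresee any serious obstacle: the proof merely combines the long exact sequence, the computation of $\pi_*(MT(d+2,2))$ from Theorem~\ref{MTd2}, and the Atiyah--Dupont identification. The only content-bearing step is the explicit surjectivity check, which succeeds here precisely because for $d+2\equiv 0\bmod 4$ the Euler characteristic is no longer constrained to be even; this is in contrast to the situation in Theorem~\ref{pi1MTd4} (where $d\equiv 0\bmod 4$, $d+2\equiv 2\bmod 4$, $\chi$ is even, and $\beta^2$ has cokernel $\Z/2\oplus \Z/2$).
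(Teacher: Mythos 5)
Your argument is essentially the paper's own: the same segment of the long exact sequence \eqref{flg} for $r=2$, the same input $\pi_{d+2}(MT(d+2,2))\cong\Z\oplus\Z/2$ and $\pi_{d+1}(MT(d+2,2))\cong\Z$ from Theorem \ref{MTd2}, and the same identification of $\beta^2$ with the Atiyah--Dupont invariant $\bigl(\chi,\tfrac12(\chi+\sigma)\bigr)$ via \eqref{bpsi}, reducing everything to surjectivity of $\beta^2$. The only stylistic difference is that you certify surjectivity by evaluating on $S^{d+2}$ and $\C P^{(d+2)/2}$, whereas the paper cites Proposition \ref{seseven} together with the surjectivity of $\sigma:\Omega_{d+2}\to\Z$; these amount to the same check (your explicit version is arguably cleaner, since it lands directly in $\Z\oplus\Z/2$ rather than overshooting to $\Z\oplus\Z$). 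The one point you omit is $d=2$: the identification of $\beta^2$ with the Atiyah--Dupont invariant is only available for $d>2$ (the results of \cite{AD} behind Corollary \ref{KRinj} require $d+2\geq r+3$), so your argument covers $d\geq 6$ only. The paper handles $d=2$ --- where the assertion is just $\pi_3(MT(2))=0$, since $\Omega_3=0$ --- by a separate direct computation, and you would need to do the same.
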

\begin{proof}
In this case, the exact sequence is:
\begin{equation*}
\begin{tabular}{c@{\ }c@{\ }c@{\ }c@{\ }c@{\ }c}
&$\dots$& $\to $&$\pi_{d+2}(MT(d+2))$&$\xrightarrow{\beta^2}$&$ \Z \oplus \Z/2$ \\
 $\to$& $\pi_{d+1}(MT(d))$& $\to $&$\Omega_{d+1}$& $\to$& $\Z$ \\
$\to$&$\pi_{d}(MT(d))$ &$\to$& $\Omega_d$& $\to$& $0$
\end{tabular}
\end{equation*}
By \eqref{bpsi} and \cite{AD}, 
\begin{equation*}
\beta^2(M) = \left(\chi(M),\frac{1}{2}(\chi(M)+\sigma(M))\right)\in \Z \oplus \Z/2
\end{equation*}
when $d>2$.
This is surjective because
\begin{equation}\label{chi+sigma}
\left(\chi,\frac{1}{2}(\chi+\sigma)\right) : \pi_{d+2}(MT(d+2)) \to \Z \oplus \Z
\end{equation}
is surjective by Proposition \ref{seseven} and the fact that $\sigma: \Omega_{d+2} \to \Z$ is surjective. 

When $d=2$, the claim follows by a direct computation.
\end{proof}

\begin{thm}
For $d \equiv 3 \bmod 4$, there is a short exact sequence
\begin{equation*}
0 \to \pi_{d+1}(MT(d)) \to \Omega_{d+1} \to \Z/2 \to 0.
\end{equation*}
The map $\Omega_d \to \Z/2$ is $\chi$ (or equivalently $\sigma$) mod $2$.
\end{thm}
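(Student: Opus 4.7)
The plan is to apply the long exact sequence \eqref{flg} with $r=2$, which, using Theorem~\ref{MTd2} (since $d+2 \equiv 1 \bmod 4$ is odd, so $\pi_{d+2}(MT(d+2,2)) = \pi_{d+1}(MT(d+2,2)) = \Z/2$) and Proposition~\ref{sesodd} (which gives $\pi_d(MT(d)) \cong \Omega_d$), takes the shape
\begin{equation*}
\Omega_{d+2} \xrightarrow{\beta^2} \Z/2 \to \pi_{d+1}(MT(d)) \to \Omega_{d+1} \xrightarrow{f} \Z/2 \to \pi_d(MT(d)) \xrightarrow{\cong} \Omega_d \to 0.
\end{equation*}

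First I would verify that $\beta^2$ is surjective. Via the factorization $\Psi \circ \beta^2 = \tilde{\theta}^t_2$ of Theorem~\ref{KRfak} and the Atiyah--Dupont identification in dimension $d+2 \equiv 1 \bmod 4$, the invariant $\beta^2$ is the real Kervaire semi-characteristic, and $\chi_\R(S^{d+2}) = 1$ gives surjectivity. Exactness then yields an injection $\pi_{d+1}(MT(d)) \hookrightarrow \Omega_{d+1}$, and combined with the right-hand isomorphism it forces $f$ to be surjective, yielding the desired short exact sequence.

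Next I would identify $f$ with $\sigma \bmod 2$. By Theorem~\ref{KRfak}, $\Psi(f([M]))$ is the Atiyah--Dupont invariant of the $(d+2)$-dimensional bundle $TM \oplus \R$ over $M^{d+1}$. Since $TM \oplus \R$ is the stable tangent bundle of $M \times S^1$, the Atiyah--Dupont formula for $r=2$ in dimension $\equiv 1 \bmod 4$ gives the value $\chi_\R(M \times S^1)$, and a K\"unneth computation yields $\chi_\R(M \times S^1) \equiv \chi(M) \bmod 2$. For $d+1 \equiv 0 \bmod 4$, Poincar\'e duality on $H^{(d+1)/2}(M;\R)$ gives the classical identity $\chi(M) \equiv \sigma(M) \bmod 2$, completing the identification.

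The hard part will be this last identification: carefully justifying that the spectrum-level map, applied to $[M] \in \Omega_{d+1} \cong \pi_{d+1}(MT(d+2))$, computes the Atiyah--Dupont invariant of a closed $(d+2)$-manifold representative of the stable bundle $TM \oplus \R$ (such as $M \times S^1$), rather than some a priori different invariant of the pair $(TM \oplus \R, M)$ with base of lower dimension. An alternative route would be to evaluate $f$ directly on a convenient generating set for $\Omega_{d+1}$ (e.g., $\C P^{(d+1)/2}$ and products of lower-dimensional $\C P^{2i}$'s) and match values with $\sigma \bmod 2$ there.
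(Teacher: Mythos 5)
Your opening argument -- the exact sequence \eqref{flg} with $r=2$, the surjectivity of $\beta^2$ via the identification $\beta^2 = \chi_\R$ coming from \eqref{bpsi} and \cite{AD}, and the resulting short exact sequence -- matches the paper's proof exactly, and is correct (a small side note: the theorem statement contains a typo, $\Omega_d$ should read $\Omega_{d+1}$).

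Where you diverge from the paper is the identification of the quotient map $f\colon\Omega_{d+1}\to\Z/2$, and there your proposal has a genuine gap, one you correctly flag yourself. You want to apply the Atiyah--Dupont formula to the element $[M]\in\Omega_{d+1}\cong\pi_{d+1}(MT(d+2))$, viewing it as an invariant of the bundle $TM\oplus\R$ or of $M\times S^1$. But Theorem~\ref{KRfak} and the statement that $\Psi$ is injective for $r=2$ are only established for the ``diagonal'' situation $\pi_{d+r}(MT(d+r,r))$, not for the off-diagonal group $\pi_{d+1}(MT(d+2,2))$ that appears here; nothing in the paper guarantees that this particular $\Psi$ detects $f$, nor that passing to $M\times S^1$ correctly models what the spectrum-level quotient map does to a class in $\pi_{d+1}(MT(d+2))$. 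The paper sidesteps all of this with a short diagram chase: one maps the cofibration $MT(d+1)\to MT(d+1,1)$ into the cofibration $MT(d+2)\to MT(d+2,2)$ and uses that the top composite $\pi_{d+1}(MT(d+1))\to\pi_{d+1}(MT(d+1,1))\cong\Z$ is already identified as $\chi$. Since $\pi_{d+1}(MT(d+1))\to\Omega_{d+1}$ is surjective (Proposition~\ref{seseven}), commutativity of the diagram pins down $f$ as $\chi$ mod~$2$ on all of $\Omega_{d+1}$, with no appeal to $KR$-theory at all. Your observation that $\chi\equiv\sigma\pmod 2$ when $d+1\equiv 0\bmod 4$ is correct and is what the paper means by ``or equivalently $\sigma$.'' If you prefer your route, the ``alternative'' you sketch -- evaluating $f$ on explicit generators of $\Omega_{d+1}$ -- is the more realistic way to close the gap, but the paper's commutative-diagram argument is both shorter and avoids the delicate claim about $\Psi$ in an off-diagonal degree.
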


\begin{proof}
In this case, the exact sequence becomes:
\begin{equation*}
\begin{tabular}{c@{\ }c@{\ }c@{\ }c@{\ }c@{\ }c}
&$\dotsm$& $\to$ &$\pi_{d+2}(MT(d+2))$&$\xrightarrow{\beta^2}$&$ \Z/2$ \\
$\to$&$\pi_{d+1}(MT(d))$& $\to$ & $\Omega_{d+1}$& $\to$& $\Z/2$  \\
$\to$&$\pi_{d}(MT(d))$ &$\to$& $\Omega_d$& $\to$ & $0$
\end{tabular}
\end{equation*}
By \eqref{bpsi} and \cite{AD}, $\beta^2(M) = \chi_{\R}(M)$ so $\beta^2$ is surjective. Thus we get a short exact sequence
\begin{equation*}
0\to \pi_{d+1}(MT(d)) \to \Omega_{d+1} \to \Z/2 \to 0.
\end{equation*}
The last map is the Euler characteristic mod $2$ because the diagram
\begin{equation*} 
\xymatrix{{\pi_{d+1}(MT(d))}\ar[r]&{\pi_{d+1}(MT(d+1))}\ar[r]^-{\chi}\ar[d]&{\pi_{d+1}(MT(d+1,1))\cong \Z}\ar[d]\\
{}&{\pi_{d+1}(MT(d+2))}\ar[r]&{\pi_{d+1}(MT(d+2,2))\cong \Z/2}
}
\end{equation*}
commutes.
\end{proof}

Next we consider the groups $\pi_{d+2}(MT(d))$. Again the main problem is to determine the map $\beta^3$ in \eqref{flg}.


\begin{thm}
For $d\equiv 0 \bmod 4$ and $d \geq 4$, there is a short exact sequence
\begin{equation*}
0\to \Z/2\oplus \Z/2 \oplus \Z/2 \to \pi_{d+2}(MT(d)) \to \Omega_{d+2} \to 0.
\end{equation*}
\end{thm}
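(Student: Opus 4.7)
The plan is to follow the strategy of the proof of Theorem~\ref{pi1MTd4}, but with the cofibration $MT(d)\to MT(d+3)\to MT(d+3,3)$ in place of $MT(d)\to MT(d+2)\to MT(d+2,2)$. This gives the exact sequence
\begin{equation*}
\pi_{d+3}(MT(d+3))\xrightarrow{\beta^3}\pi_{d+3}(MT(d+3,3))\to \pi_{d+2}(MT(d))\to \Omega_{d+2}\xrightarrow{\beta^3}\pi_{d+2}(MT(d+3,3)),
\end{equation*}
after identifying $\pi_{d+3}(MT(d+3))\cong \Omega_{d+3}$ via Proposition~\ref{sesodd} (since $d+3\equiv 3\bmod 4$) and $\pi_{d+2}(MT(d+3))\cong \Omega_{d+2}$ by the stable range. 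Theorem~\ref{MTd3} supplies $\pi_{d+3}(MT(d+3,3))\cong (\Z/2)^3$ and $\pi_{d+2}(MT(d+3,3))\cong (\Z/2)^2$. The claimed short exact sequence will then follow once both instances of $\beta^3$ are shown to vanish.

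To see that the first $\beta^3$ is zero, I would represent each Reinhart cobordism class by a manifold $M$ for which the obstruction conditions of Section~\ref{obstruction} hold for $r=3$ (arranged, for example, by taking connected sums with spheres to obtain the required connectivity). Theorem~\ref{beta=ind} gives $\beta^3(M)=\theta^3(\ind(s))$, and the last row of Table~\ref{indentifi}---whose justification in dimension $d+3\equiv 3\bmod 4$ rests on the Atiyah--Dupont injectivity of $\tilde\theta^t_3$---shows $\ind(s)=0$. Proposition~\ref{betared} then propagates $\beta^3=0$ to all of $\Omega_{d+3}$.

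To see that the second $\beta^3$ is zero, I would compare the cofibrations $MT(d)\to MT(d+2)\to MT(d+2,2)$ and $MT(d)\to MT(d+3)\to MT(d+3,3)$ via the obvious map of cofiber sequences. Because $\pi_{d+2}(MT(d+3,1))\cong \pi_{d+2}^s(S^{d+3})=0$ by Proposition~\ref{proj}, the induced map $\pi_{d+2}(MT(d+2))\twoheadrightarrow \pi_{d+2}(MT(d+3))\cong \Omega_{d+2}$ is surjective. Hence the second $\beta^3$ factors through $\beta^2\colon \pi_{d+2}(MT(d+2))\to \pi_{d+2}(MT(d+2,2))\cong \Z\oplus\Z/2$. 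For a closed $M^{d+2}$ with $d+2\equiv 2\bmod 4$, \eqref{bpsi} together with \cite{AD} gives $\beta^2(M)=(\chi(M),0)$, and Poincar\'e duality forces $\chi(M)$ to be even in this dimension (the intersection form on middle cohomology is skew, while the remaining Betti numbers pair up under duality). The image therefore lies in $2\Z\oplus 0$, which is annihilated by any homomorphism into $(\Z/2)^2$.

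With both $\beta^3$ maps vanishing, the exact sequence collapses to $0\to (\Z/2)^3\to \pi_{d+2}(MT(d))\to \Omega_{d+2}\to 0$. The main obstacle I anticipate is the first vanishing: matching the spectrum-level invariant $\beta^3$ on $\pi_{d+3}(MT(d+3))$ with the manifold-level index requires being able to realize each Reinhart cobordism class by a representative on which the obstructions of Section~\ref{obstruction} hold, so that Theorem~\ref{beta=ind} applies. This reduction, although standard, is the step that must be spelled out carefully.
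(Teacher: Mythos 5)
Your overall strategy is the paper's: the long exact sequence of the cofibration $MT(d)\to MT(d+3)\to MT(d+3,3)$, the computation $\pi_{d+3}(MT(d+3,3))\cong(\Z/2)^3$ from Theorem~\ref{MTd3}, and the vanishing of $\beta^3$ on $\pi_{d+3}(MT(d+3))$. Two points of comparison. First, for that vanishing the paper simply invokes the Atiyah--Dupont theorem that \emph{every} closed oriented manifold of dimension $\equiv 3\bmod 4$ admits three independent vector fields, whence $\beta^3=0$ by Proposition~\ref{betared}. Your detour through Theorem~\ref{beta=ind} and Table~\ref{indentifi} only kills the top obstruction, and the reduction you flag --- representing each class in $\pi_{d+3}(MT(d+3))\cong\Omega_{d+3}$ by a manifold on which $s$ with finitely many singularities exists --- is not achieved by connected sum with spheres (that operation is a diffeomorphism, so it changes neither the connectivity nor the Stiefel--Whitney condition $\delta^*w_d(TM)=0$); filling this in amounts to reproving the Atiyah--Dupont existence theorem, so the clean move is to cite it directly. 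Second, for the surjectivity of $\pi_{d+2}(MT(d))\to\Omega_{d+2}$ your argument --- factoring $\Omega_{d+2}\to\pi_{d+2}(MT(d+3,3))$ through $\beta^2$ via the surjection $\pi_{d+2}(MT(d+2))\to\pi_{d+2}(MT(d+3))$, and using that $\beta^2(M)=(\chi(M),0)$ with $\chi$ even in dimensions $\equiv 2\bmod 4$ --- is correct and is a genuine alternative to the paper's route, which instead continues the exact sequence one step further and uses Theorem~\ref{pi1MTd4}: since $\pi_{d+2}(MT(d+3,3))\cong(\Z/2)^2$ must surject onto the kernel $(\Z/2)^2$ of $\pi_{d+1}(MT(d))\to\Omega_{d+1}$, it injects into $\pi_{d+1}(MT(d))$ and the preceding map from $\Omega_{d+2}$ is zero. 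Both arguments buy the same conclusion; yours is more self-contained at this step, the paper's reuses the already-established structure of $\pi_{d+1}(MT(d))$.
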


\begin{proof}
The sequence \eqref{flg} together with Theorem \ref{pi1MTd4} yields
\begin{equation*}
\pi_{d+3}(MT(d+3)) \xrightarrow{\beta^3} \pi_{d+3}(MT(d+3,3)) \to \pi_{d+2}(MT(d)) \to \Omega_{d+2} \to 0.
\end{equation*}
Since all manifolds of dimension $d+3$ allow three vector fields, $\beta^3$ vanishes on all of $\pi_{d+3}(MT(d+3))$. Thus the result follows from Theorem \ref{MTd3}.
\end{proof}

\begin{thm}
For $d \equiv 1 \bmod 4$ and $d\geq 5$, there is a short exact sequence
\begin{equation*}
0 \to \Z/2 \to \pi_{d+2}(MT(d)) \to \Omega_{d+2} \to 0.
\end{equation*} 
\end{thm}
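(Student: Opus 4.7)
The plan is to apply the long exact sequence \eqref{flg} with $r=3$. Since $d \equiv 1 \bmod 4$ forces $d+3 \equiv 0 \bmod 4$, Theorem~\ref{MTd3} yields $\pi_{d+3}(MT(d+3,3)) \cong \Z \oplus \Z/4 \oplus \Z/2$ and $\pi_{d+2}(MT(d+3,3)) \cong \Z/2$, so the relevant part of \eqref{flg} reads
\begin{equation*}
\pi_{d+3}(MT(d+3)) \xrightarrow{\beta^3} \Z \oplus \Z/4 \oplus \Z/2 \to \pi_{d+2}(MT(d)) \to \Omega_{d+2} \to \Z/2.
\end{equation*}
The claimed short exact sequence will follow once I show the rightmost map vanishes and the cokernel of $\beta^3$ is $\Z/2$.

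For the rightmost map, for any closed $M^{d+2}$ with $d+2 \equiv 3 \bmod 4$, the classical Atiyah--Dupont theorem for $r \leq 3$ ensures that $M$ admits three linearly independent tangent vector fields, so the characteristic class $\alpha^3(TM,s)$ vanishes and hence $\beta^3(M) = 0 \in \pi_{d+2}(MT(d+2,3))$. Naturality of the cofiber construction along the stabilization $MT(d+2) \to MT(d+3)$ then forces the map $\Omega_{d+2} = \pi_{d+2}(MT(d+3)) \to \pi_{d+2}(MT(d+3,3))$ to factor through this vanishing invariant, so it is zero.

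To identify the cokernel of $\beta^3$, I follow the pattern of the preceding theorems. Theorem~\ref{beta=ind} places the image of $\beta^3$ inside the image of $\theta^3 : \pi_{d+2}(V_{d+3,3}) \to \pi_{d+3}(MT(d+3,3))$, which is injective by Corollary~\ref{KRinj}. Theorem~\ref{KRfak} combined with the injectivity of $\tilde{\theta}^t_3$ from~\cite{AD} shows that $\Psi$ is injective on this image and identifies it with the subgroup $\Z \oplus \Z/4 \subseteq KR^{t-d-3}(tH_3)$. By Table~\ref{indentifi}, the composition $\Psi \circ \beta^3$ sends $[M] \in \pi_{d+3}(MT(d+3))$ to $(\chi(M), \tfrac{1}{2}(\chi(M)+\sigma(M)))$, and since $(\chi,\sigma)$ surjects onto $\Z \oplus \Z$ on closed oriented $(d+3)$-manifolds (the same argument as at \eqref{chi+sigma} in the $d \equiv 2 \bmod 4$ case), this composition surjects onto $\Z \oplus \Z/4$. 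Hence the image of $\beta^3$ equals the image of $\theta^3$, a subgroup of index~$2$, and the cokernel is $\Z/2$.

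The main subtle point is that a priori the image of $\theta^3$ could sit inside $\Z \oplus \Z/4 \oplus \Z/2$ in various twisted ways; however, any subgroup abstractly isomorphic to $\Z \oplus \Z/4$ must have index $2$ here by a rank and order count on the torsion subquotient, so the cokernel is $\Z/2$ regardless of the precise embedding.
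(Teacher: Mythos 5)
Your route diverges from the paper's at the decisive point. The paper proves $\operatorname{coker}\beta^3\neq 0$ by a comparison of Adams spectral sequences: it shows that $w_2w_{d+1}$ becomes decomposable over $\A_2$ in $H^*(MT(d+3);\Z/2)$ (via the relation $w_2w_{d+1}(+w_{d+3})=\Sq^4(w_{d-1})+\Sq^1(w_2w_d)$, depending on $d \bmod 8$) while it remains indecomposable in $H^*(MT(d+3,3);\Z/2)$, so that $E_\infty^{0,d+3}(MT(d+3))\to E_\infty^{0,d+3}(MT(d+3,3))$, and hence $j_{3*}$, cannot be surjective. You instead exploit $\Ima\beta^3\subseteq\Ima\theta^3$ together with the injectivity of $\tilde{\theta}^t_3=\Psi\circ\theta^3$; this is a legitimate and arguably cleaner idea, and the first part of your argument (vanishing of $\Omega_{d+2}\to\pi_{d+2}(MT(d+3,3))$, surjectivity of $\Psi\circ\beta^3$ onto $\Z\oplus\Z/4$, and the identification $\Ima\beta^3=\Ima\theta^3\cong\Z\oplus\Z/4$) is sound, modulo the typo $MT(d+2,3)$ versus $MT(d+3,3)$.

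The gap is in your final paragraph. It is simply false that every subgroup of $G=\Z\oplus\Z/4\oplus\Z/2$ abstractly isomorphic to $\Z\oplus\Z/4$ has index $2$: the subgroup generated by $(2,0,1)$ and $(0,1,0)$ is isomorphic to $\Z\oplus\Z/4$ but has index $4$, and $\langle(4,0,0),(0,1,0)\rangle$ has index $8$. The index genuinely depends on the embedding, contrary to your closing claim. Fortunately, the correct conclusion follows from facts you have already established: you showed that $\Psi$ restricts to an isomorphism of $S=\Ima\theta^3$ onto $KR\cong\Z\oplus\Z/4$, which means $S$ is a complement of $\Ker\Psi$, i.e.\ $G=S\oplus\Ker\Psi$. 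Since $\Psi$ is surjective and $G$ and $KR$ have the same free rank, $\Ker\Psi$ is finite of order $|\Z/4\oplus\Z/2|/|\Z/4|=2$, so $G/S\cong\Ker\Psi\cong\Z/2$. Replacing the ``index is determined regardless of the embedding'' assertion with this splitting argument completes your proof.
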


\begin{proof}
In this case, $\pi_{d+3}(MT(d+3),3)\cong \Z \oplus \Z/4 \oplus \Z/2$. The composition 
\begin{equation*}
\pi_{d+3}(MT({d+3}))\xrightarrow{\beta^3} \pi_{d+3}(MT(d+3,3)) \xrightarrow{\Psi} KR(P_2) \cong \Z \oplus \Z/4
\end{equation*}
is surjective since \eqref{chi+sigma} is surjective. Thus the cokernel of $\beta^3$ can be at most $\Z/2$. 

In the Adams spectral sequence, $E_2^{0,d+3}(MT(d+3,3))\cong \Z/2 \oplus \Z/2$. The generators are represented by the fact that $w_{d+3}$ and $w_2w_{d+1}$ are not decomposable over $\A_2$ in $H^*(MT(d+3,3);\Z/2)$. We want to compare this to the Adams spectral sequence $E_2^{*,*}(MT(d+3))$ for $MT(d+3)$.
%

For $d+1 \equiv 2 \bmod 8$, there is a relation
\begin{equation*}
w_2w_{d+1} + w_{d+3}= \Sq^4(w_{d-1}) + \Sq^1(w_2w_d),
\end{equation*}
while for $d+1 \equiv 6 \bmod 8$, 
\begin{equation*}
w_2w_{d+1} = \Sq^4(w_{d-1}) + \Sq^1(w_2w_d)
\end{equation*}
in $H^*(MT(d+3))$. Thus the map $E_2^{0,d+3}(MT(d+3)) \to E_2^{0,d+3}(MT(d+3,3))$ is not surjective. Since there are no differentials in either of the spectral sequences in the relevant range, the map on $E_\infty^{0,d+3}$ is also not surjective. The element not hit cannot come from an element of $\pi_{d+3}(MT(d+3))$ of higher filtration, so the map $\pi_{d+3}(MT(d+3))\to \pi_{d+3}(MT(d+3,3))$ cannot be surjective. Hence the cokernel of $\beta^3$ must be $\Z/2$. 
\end{proof}

\begin{thm}
For all $d\equiv 2 \bmod 4$, there is a short exact sequence
\begin{equation*}
0\to \pi_{d+2}(MT(d)) \to \Omega_{d+2} \xrightarrow{\sigma} \Z/4 \to 0.
\end{equation*}
\end{thm}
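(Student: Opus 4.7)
The plan is to apply the long exact sequence \eqref{flg} for $r = 3$. Since $d + 3 \equiv 1 \bmod 4$, Theorem~\ref{MTd3} supplies $\pi_{d+3}(MT(d+3, 3)) \cong \Z/2 \oplus \Z/2$ and $\pi_{d+2}(MT(d+3, 3)) \cong \Z/4$, so the relevant portion of \eqref{flg} reads
$$\pi_{d+3}(MT(d+3)) \xrightarrow{\beta^3} \Z/2 \oplus \Z/2 \to \pi_{d+2}(MT(d)) \to \Omega_{d+2} \xrightarrow{\phi} \Z/4.$$
The task reduces to verifying that $\beta^3$ is surjective and that $\phi$ coincides with $\sigma \bmod 4$ (which is manifestly surjective onto $\Z/4$).

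For surjectivity of $\beta^3$, I would compare the Adams spectral sequences for $MT(d+3)$ and $MT(d+3, 3)$ in the spirit of the preceding theorems of this section. The twisted Steenrod formulas in Section~\ref{cohomology af MT} show that for $d \equiv 2 \bmod 4$ (so that $\binom{d}{2}$ is odd) we have $Sq^2\phi(w_{d+1}) = \phi(w_{d+3})$ in $H^*(MT(d+3,3);\Z/2)$, making $\phi(w_{d+3})$ decomposable there, so the unique $E_2^{0,d+3}$-generator of $MT(d+3, 3)$ is represented by $\phi(w_2 w_{d+1})$. In $H^*(MT(d+3);\Z/2)$ this class admits further Steenrod reductions via the extra generators of lower degrees, but one can check (as in the $d = 6$ case, where $\phi(w_2 w_7)$ reduces modulo $\A_2^+$ to a class equivalent to $\phi(w_4 w_5)$) that it remains non-zero in the $\A_2$-indecomposable quotient. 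Hence the comparison of $E_2^{0,d+3}$-terms is surjective. The other summand of $\pi_{d+3}(MT(d+3, 3))$ is detected by $\Psi \circ \beta^3 = \chi_\R$ via \eqref{bpsi} and Table~\ref{indentifi}, realized already by $[S^{d+3}]$.

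To identify $\phi$ with $\sigma \bmod 4$: by \eqref{bpsi} and Theorem~\ref{KRfak}, $\Psi \circ \phi$ recovers the Atiyah--Dupont invariant, which by Table~\ref{indentifi} for $d+2 \equiv 0 \bmod 4$ lives in $\Z \oplus \Z/4$ as $\chi(M) \oplus \frac{1}{2}(\chi(M) + \sigma(M))$. The $\Z$-factor corresponds to the composite $\pi_{d+2}(MT(d+3, 3)) \to \pi_{d+2}(MT(d+3, 1)) = 0$ (Proposition~\ref{proj}), so only the torsion part reaches $\Z/4$. Composing with the further quotient $\pi_{d+2}(MT(d+3, 3)) \to \pi_{d+2}(MT(d+3, 2)) \cong \Z/2$ (where the induced map from $\Omega_{d+2}$ is $\sigma \bmod 2$, using $\chi \equiv \sigma \bmod 2$ on closed oriented $(d+2)$-manifolds) then pins this torsion invariant down as $\sigma \bmod 4$. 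Surjectivity is immediate since $\sigma(\C P^{(d+2)/2}) = 1$, and the kernel $\{[M] : \sigma(M) \equiv 0 \bmod 4\}$ produces the desired short exact sequence.

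The main difficulty is the precise identification $\phi = \sigma \bmod 4$ rather than $\frac{1}{2}(\chi + \sigma) \bmod 4$, which is the naive reading of Table~\ref{indentifi} but already disagrees with $\sigma \bmod 4$ on $\C P^{(d+2)/2}$; reconciling this requires careful bookkeeping of the Thom-isomorphism factors entering $\Psi$ and of the compatibilities between the various quotients $MT(d+3, k)$ for $k = 1, 2, 3$.
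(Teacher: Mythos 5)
Your overall strategy is the same as the paper's (apply the long exact sequence \eqref{flg} for $r=3$, show $\beta^3$ is onto so $\pi_{d+2}(MT(d))\to\Omega_{d+2}$ is injective, and identify the cokernel map with $\sigma$). However, both of the two key steps are left incomplete.

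For surjectivity of $\beta^3$ onto $\pi_{d+3}(MT(d+3,3))\cong\Z/2\oplus\Z/2$: you handle the filtration-$0$ summand by arguing that $w_2w_{d+1}$ stays $\A_2$-indecomposable in $H^*(MT(d+3);\Z/2)$, but you only justify this by examining $d=6$ and asserting ``one can check.'' The paper gives a uniform proof valid for all $d\equiv 2\bmod 4$ via the $\A_2$-functionals $\xi_4,\xi_5$: one computes $\xi_5\xi_4^{(d-2)/4}(w_2w_{d+1})=1$, which kills any possibility of decomposability. More seriously, for the second summand you invoke $\Psi\circ\beta^3=\chi_\R$ and the sphere $S^{d+3}$, claiming this ``detects the other summand.'' That claim is exactly what needs proof. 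The group $\pi_{d+3}(MT(d+3,3))\cong\Z/2\oplus\Z/2$ has three order-two subgroups; your argument gives that $\mathrm{im}(\beta^3)$ surjects onto $F^0/F^1$ and that $\mathrm{im}(\beta^3)\not\subseteq\ker\Psi$. If $\ker\Psi$ coincided with the filtration subgroup $F^1$, these two facts are equivalent and you have only shown one $\Z/2$ is hit. Without determining $\ker\Psi$ relative to the Adams filtration, the argument does not close. The paper avoids this entirely by also verifying $\xi_4^{(d+2)/4}(w_{d+2})=1$, so both the $s=0$ generator and the $h_1$-translated $s=1$ generator are hit on $E_2$, and both spectral sequences collapse in the range.

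For the identification of the cokernel map with $\sigma\bmod 4$: you acknowledge this is unresolved in your last paragraph, and it is the crux. Chasing the Atiyah--Dupont $\frac12(\chi+\sigma)\in\Z/4$ through $\Psi$ produces, as you note, an invariant that disagrees with $\sigma$ on $\C P^{(d+2)/2}$, and the ``careful bookkeeping'' needed to reconcile it is not supplied. The paper sidesteps this cleanly with a geometric argument: by \cite{AD}, a closed manifold admitting $2$ independent vector fields satisfies $\sigma\equiv 0\bmod 4$; by \cite{ab}, the image of $\pi_{d+2}(MT(d))\to\Omega_{d+2}$ consists precisely of cobordism classes containing such a manifold. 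Hence that image lies in $\ker(\sigma\bmod 4)$, and since both have index $4$ (using surjectivity of $\sigma:\Omega_{d+2}\to\Z$), the two subgroups coincide and the cokernel map is $\sigma\bmod 4$. I recommend you abandon the $KR$-theoretic identification in favor of this shorter route.
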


\begin{proof}
In this case, $\pi_{d+3}(MT(d+3,3))\cong \Z/2 \oplus \Z/2$. The generators come from the fact that $w_2w_{d+1}$ and $w_{d+2}$ are indecomposable in $H^*(MT(d+3,3);\Z/2)$ and $\Sq^2(w_{d+2})=0$. These are also indecomposable in $H^*(MT(d+3);\Z/2)$ as one can see by computing 
\begin{align*}
\xi_5\xi_4^{\frac{d-2}{4}}(w_2w_{d+1})&=1\\
\xi_4^{\frac{d+2}{4}}(w_{d+2})&=1. 
\end{align*}
Here $\xi_i$ is the unique $\A_2$-homomorphism $\xi_i : H^i(MT(d+3);\Z/2) \to \Z/2$ for $i=4,5$ and the products of these are induced by the direct sum map $ BSO \times BSO \to BSO$.

This means that we get a surjection on $E_2$-terms of Adams spectral sequences that both collapse in the relevant range. Hence $\beta^3$ must be surjective. The existence of the short exact sequence then follows from previous results.

By the results of \cite{AD}, a manifold must 
satisfy $\sigma \equiv 0 \bmod 4$ if it has two independent vector fields. Since the image of $\pi_{d+2}(MT(d)) \to \Omega_{d+2}$ is the set of cobordism classes containing manifolds with two independent vector fields, this lies in the kernel of $\sigma : \Omega_{d+2} \to \Z/4$. It follows that the last map must be $\sigma$.
\end{proof}

\begin{thm}
For $d\equiv 3 \bmod 4$ and $d\geq 7$, there is a short exact sequence
\begin{equation*}
0 \to \Z/2\oplus \Z/2  \to \pi_{d+2}(MT(d)) \to \Omega_{d+2} \to 0.
\end{equation*}
\end{thm}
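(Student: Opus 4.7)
Plan: I would mirror the three preceding $r=3$ cases. Setting $r=3$ in \eqref{flg} and reading $\pi_{d+3}(MT(d+3,3))\cong \Z\oplus \Z/2\oplus \Z/2$ and $\pi_{d+2}(MT(d+3,3))=0$ off Theorem~\ref{MTd3} (with $d+3\equiv 2\bmod 4$), the exact sequence collapses to
\begin{equation*}
\pi_{d+3}(MT(d+3))\xrightarrow{\beta^3}\Z\oplus \Z/2\oplus \Z/2\to \pi_{d+2}(MT(d))\to \Omega_{d+2}\to 0,
\end{equation*}
so the theorem reduces to showing $\operatorname{coker}(\beta^3)\cong \Z/2\oplus \Z/2$.

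For an initial bound on $\operatorname{Im}(\beta^3)$, I use the Atiyah--Dupont factorization: by \eqref{bpsi}, Theorem~\ref{KRfak}, and Table~\ref{indentifi} (case $r=3$, $d+3\equiv 2\bmod 4$), the composite $\Psi\circ \beta^3$ equals the Euler characteristic $\chi$. Because $d+3\equiv 2\bmod 4$ makes the middle dimension $(d+3)/2$ odd, Poincar\'e duality puts a symplectic pairing on $H^{(d+3)/2}(M;\R)$, forcing the middle Betti number to be even; hence $\chi(M)\in 2\Z$ for every closed oriented $(d+3)$-manifold $M$, and the projection of $\operatorname{Im}(\beta^3)$ onto the free summand is exactly $2\Z$.

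The remaining ingredient comes from comparing the Adams spectral sequences of $MT(d+3)$ and $MT(d+3,3)$ at stem $d+3$, exactly as in the $d\equiv 1\bmod 4$ argument above. Using $\Sq^2(w_{d+1})=w_2w_{d+1}+w_{d+3}$ (valid because $d\equiv 3\bmod 4$), together with the formulas for $\Sq^4(w_{d-1})$ and $\Sq^1(w_2w_d)$ -- split into the subcases $d\equiv 3\bmod 8$ and $d\equiv 7\bmod 8$, which mirror the $d+1\equiv 2,6\bmod 8$ dichotomy of the earlier proof -- one verifies that the filtration-$0$ generator of $H^{d+3}(MT(d+3,3);\Z/2)$, represented by $\phi(w_2w_{d+1})$, becomes $\A_2$-decomposable in $H^*(MT(d+3);\Z/2)$ via the lower-degree classes $\phi(w_{d-1})$ and $\phi(w_2w_d)$, neither of which lies in the submodule $H^*(MT(d+3,3);\Z/2)$. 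Naturality of the Adams filtration then forces the filtration-$0$ torsion of $\pi_{d+3}(MT(d+3,3))$ to be missed by $\beta^3$; combined with the $\chi\in 2\Z$ bound and an $h_0$-multiplication identity visible in Figure~\ref{r32} relating the filtration-$0$ and filtration-$1$ generators, the three potential $\Z/2$ contributions to the cokernel (one from the $\Z$-summand, two from torsion) collapse to exactly $\Z/2\oplus \Z/2$. The main obstacle is pinning down the extension that fuses these three obstructions: this requires a careful tracking of the $h_0$-action through the spectral-sequence map, and is strictly more delicate than the single-generator computation used in the $d\equiv 1\bmod 4$ case.
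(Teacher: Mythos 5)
Your approach differs from the paper's. The paper does not try to directly compare the Adams spectral sequences of $MT(d+3)$ and $MT(d+3,3)$; instead it factors $\beta^3$ through $\beta^5$ using the cofibration $MT(d,2)\to MT(d+3,5)\to MT(d+3,3)$, reads off $\pi_{d+3}(MTSO(d+3,5))\cong\Z/2\oplus\Z$ from Theorem~\ref{MTd4} (case $d+3\equiv 2\bmod 4$), and then argues that the $\Z/2$ summand dies (being in the image of $\pi_{d+3}(MT(d,2))$) while the $\Z$ summand maps onto the free summand of $\pi_{d+3}(MT(d+3,3))$. That reduces the cokernel computation to information already obtained in Section~\ref{calcSO}, which is exactly the extra leverage you are missing.

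Your proposal does not complete the proof, and you say so yourself. Knowing that the filtration-$0$ generator of $E_2^{0,d+3}(MT(d+3,3))$ is missed by the map from $E_2^{0,d+3}(MT(d+3))$, together with the $\chi\in 2\Z$ constraint on the free part, still leaves several possible isomorphism types for $\operatorname{coker}(\beta^3)$: $\Z/2\oplus\Z/2$, $\Z/4\oplus\Z/2$, and $\Z/2\oplus\Z/2\oplus\Z/2$ are all \emph{a priori} consistent with an image whose projection to the free summand is $2\Z$ and which misses one filtration-$0$ torsion class. The phrase ``the three potential $\Z/2$ contributions collapse to exactly $\Z/2\oplus\Z/2$'' is precisely the statement you need to prove, and the $h_0$-multiplication remark does not pin it down: $h_0$-towers in the Adams chart control multiplication by $2$ within a single stem, but they do not by themselves determine which torsion classes lie in $\operatorname{Im}(\beta^3)$ nor how the free contribution interacts with them. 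Without a second map to compare against (which is what the paper's $\beta^5$-factorization and Theorem~\ref{MTd4} supply), the extension problem remains open. You should also double-check the consistency of the $\chi\in 2\Z$ bound with the theorem's conclusion: if $\Psi$ were an honest surjection identified with projection onto the free summand, then $\operatorname{Im}(\beta^3)$ would project onto $2\Z$ and, since the image factors through $\pi_{d+3}(MT(d+3,5))\cong\Z/2\oplus\Z$ and hence contains no torsion, the cokernel would be $\Z/2\oplus\Z/2\oplus\Z/2$ rather than $\Z/2\oplus\Z/2$ --- so the relation between $\Psi$, $\chi$, and the free generator needs more care than the one-line assertion you give.
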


\begin{proof}
Now $\pi_{d+3}(MT(d+3,3))\cong \Z \oplus \Z/2 \oplus \Z/2$. Note that $\beta^3$ factors as
\begin{equation*}
\pi_{d+3}(MT(d+3)) \xrightarrow{\beta^5} \pi_{d+3}(MT(d+3,5)) \to \pi_{d+3}(MT(d+3,3)).
\end{equation*}
By Theorem \ref{MTd4}, $\pi_{d+3}(MT(d+3,5))\cong \Z/2\oplus \Z$. The Adams spectral sequences show that the $\Z/2$ summand is in the image of $\pi_{d+3}(MT(d,2))$, so it is mapped to 0 in $\pi_{d+3}(MT(d+3,3))$. From the spectral sequences, one also sees that the generator of the $\Z$ summand is mapped surjectively onto the generator of the $\Z$ summand in $\pi_*(MT(d+3,3))$.
This yields the short exact sequence claimed.
\end{proof}

Finally we consider the groups $\pi_{d+3}(MT(d))$. Using similar techniques, we obtain the following results:
\begin{thm}
For $d\equiv 0 \bmod 4$ and $d\geq 8$, there is a short exact sequence
\begin{equation*}
0\to \Z/24 \to \pi_{d+3}(MT(d)) \to \Omega_{d+3} \to 0.
\end{equation*}
\end{thm}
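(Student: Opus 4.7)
The plan is to analyze the long exact sequence \eqref{flg} with $r=4$:
\begin{equation*}
\pi_{d+4}(MT(d+4)) \xrightarrow{\beta^4} \pi_{d+4}(MT(d+4,4)) \to \pi_{d+3}(MT(d)) \to \Omega_{d+3} \to \pi_{d+3}(MT(d+4,4)).
\end{equation*}
Since $d+4 \equiv 0 \bmod 4$ and $d+4 \geq 12$, Theorem~\ref{MTd4} yields $\pi_{d+4}(MT(d+4,4)) \cong \Z \oplus \Z/48 \oplus \Z/4$ and $\pi_{d+3}(MT(d+4,4)) \cong (\Z/2)^{3}$. It therefore suffices to prove that $\mathrm{coker}(\beta^4) \cong \Z/24$ and that the subsequent map $\Omega_{d+3}\to(\Z/2)^{3}$ vanishes; the theorem then follows from the exactness.

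For the cokernel I would combine two ingredients. First, Theorem~\ref{KRfak} together with Theorem~\ref{results} identify the composition $\Psi\circ\beta^4$ with $M\mapsto \chi(M)\oplus\tfrac{1}{2}(\chi(M)+\sigma(M))\in\Z\oplus\Z/8$, which is surjective (e.g.\ $\chi(\C P^{(d+4)/2})$ realises the $\Z$-part, and $\sigma$ hits every residue mod $8$ on $\Omega_{d+4}$). Second, to locate $\Ima(\beta^4)$ inside the full group $\Z\oplus\Z/48\oplus\Z/4$, I would factor $\beta^4$ through $\beta^5$,
\begin{equation*}
\pi_{d+4}(MT(d+4)) \xrightarrow{\beta^5} \pi_{d+4}(MT(d+4,5)) \to \pi_{d+4}(MT(d+4,4)),
\end{equation*}
use Theorem~\ref{MTd4} to identify $\pi_{d+4}(MT(d+4,5))\cong \Z\oplus\Z/8\oplus(\Z/2)^{2}$, and perform an Adams spectral sequence comparison at the $E_{\infty}$-level along the lines of the proofs for $\pi_{d+2}(MT(d))$ in the $d\equiv 1,2\bmod 4$ cases. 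The $3$-primary part of $\Z/48$ is controlled via the $\mathcal{P}^{1}$-action on Pontryagin classes through \eqref{pont}; combined, these computations should pin down $\mathrm{coker}(\beta^4)\cong\Z/24$.

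For the vanishing of $\Omega_{d+3}\to(\Z/2)^{3}$, the plan is the dual comparison: the three $\Z/2$-generators are represented on the Adams $E_{2}$-page by indecomposable cohomology classes of $MT(d+4,4)$ which become decomposable under pullback to $H^{*}(MT(d+4);\Z/2)$, using Steenrod-square relations involving the low-dimensional Stiefel--Whitney classes $w_{1},\dots,w_{d}$ that are available in $MT(d+4)$ but killed in $MT(d+4,4)$ (exactly the phenomenon exploited for $\pi_{d+2}(MT(d))$ when $d\equiv 1\bmod 4$). Since no Adams differentials can intervene in the relevant range, naturality forces the induced map on $\pi_{d+3}$ to be zero, so that $\pi_{d+3}(MT(d))\to\Omega_{d+3}$ is surjective.

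The hard part will be Step 1: the Atiyah--Dupont invariants only detect the $\Z\oplus\Z/8$ quotient of $\pi_{d+4}(MT(d+4,4))$, so the $3$-primary factor and the ``extra'' $2$-primary contributions in $\Z/48$ must be extracted by an independent $E_{\infty}$-level analysis combining the $\beta^5$-factorisation with the $\mathcal{P}^{1}$-computation, and the resulting subgroup must be shown to have index precisely $24$.
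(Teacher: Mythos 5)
Your core computational device---factoring $\beta^{4}$ through $\beta^{5}$ and comparing Adams spectral sequences for the cofibration $MT(d,1)\to MT(d+4,5)\to MT(d+4,4)$---is exactly the cofibration the paper invokes, so that part of the plan is sound. However, the ``first ingredient'' you propose for the cokernel computation is circular: you invoke Theorem~\ref{results} (equivalently Theorem~\ref{beta4}) to identify $\Psi\circ\beta^{4}$, but Theorem~\ref{beta4} is itself deduced in Section~\ref{ident} from the very computation of $\Ima\beta^{4}\cong\Z\oplus\Z/8$ that the present theorem is establishing. Moreover, the claim that the Atiyah--Dupont invariants detect the $\Z\oplus\Z/8$ quotient is wrong: as recorded in the Remark closing Section~\ref{ident}, for $r=4$ and $d\equiv 0\bmod 4$ the Atiyah--Dupont invariant $\Psi\circ\beta^{4}$ takes values only in $\Z\oplus\Z/4$, so $\Psi$ collapses a factor of $2$; the whole point of that Remark is that $\beta^{4}$ carries strictly more information. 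Dropping that step and relying only on your ``second ingredient'' (the $E_{\infty}$-level comparison together with the $\mathcal{P}^{1}$-action via \eqref{pont} for the $3$-primary part) puts you on the paper's path.

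Your Step~2 is also more complicated than necessary, and I do not think the decomposability argument carries through as stated: the group $\pi_{d+3}(MT(d+4,4))\cong(\Z/2)^{3}$ has its three generators spread across Adams filtrations $0$, $1$, and $2$ in the $E_{\infty}$-page (Figure~\ref{r42}), so showing only that the map is trivial on $E_{2}^{0,*}$ by an indecomposability comparison would not suffice. The vanishing of $\Omega_{d+3}\to\pi_{d+3}(MT(d+4,4))$ has a much cheaper proof: precompose with the surjection $\pi_{d+3}(MT(d+3))\to\Omega_{d+3}$ (every cobordism class is $\beta(M)$ for some closed $M$). That composition factors as $\pi_{d+3}(MT(d+3))\xrightarrow{\beta^{3}}\pi_{d+3}(MT(d+3,3))\to\pi_{d+3}(MT(d+4,4))$, and $\beta^{3}$ vanishes identically because every closed manifold of dimension $d+3\equiv 3\bmod 4$ admits three independent vector fields---exactly the observation already used in the proof of $\pi_{d+2}(MT(d))$ for $d\equiv 0\bmod 4$. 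This is what the paper leaves implicit in the phrase that the maps to the right of $\pi_{d+r-1}(MT(d))$ in \eqref{flg} are ``easily described by induction on $r$.''
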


\begin{proof}
A very careful study of the the spectral sequences for the cofibration 
\begin{equation*}
MT(d,1)\to MT(d+4,5) \to MT(d+4,4)
\end{equation*}
shows that the image of $\beta^4$ is a subgroup of $\pi_{d+4}(MT(d+4,4))\cong \Z \oplus \Z/48 \oplus \Z/4$ isomorphic to $\Z \oplus \Z/8$ with cokernel $\Z/24$.
\end{proof}

\begin{thm}
For $d\equiv 1 \bmod 4$ and $d\geq 5$, there is a short exact sequence
\begin{equation*}
0\to \pi_{d+3}(MT(d)) \to \Omega_{d+3} \xrightarrow{\sigma} \Z/8 \to 0.
\end{equation*}
\end{thm}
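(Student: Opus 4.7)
The plan is to specialize the long exact sequence \eqref{flg} to $r=4$:
$$\pi_{d+4}(MT(d+4)) \xrightarrow{\beta^4} \pi_{d+4}(MT(d+4,4)) \to \pi_{d+3}(MT(d)) \to \Omega_{d+3} \xrightarrow{\beta^4} \pi_{d+3}(MT(d+4,4)).$$
Since $d \equiv 1 \bmod 4$ forces $d+4 \equiv 1 \bmod 4$, Theorem \ref{MTd4} identifies $\pi_{d+4}(MT(d+4,4)) \cong \Z/2 \oplus \Z/2$ and $\pi_{d+3}(MT(d+4,4)) \cong \Z/8 \oplus \Z/2$. The stated short exact sequence will follow from two assertions: (a) the first $\beta^4$ is surjective, so that $\pi_{d+3}(MT(d)) \hookrightarrow \Omega_{d+3}$ is injective; and (b) the second $\beta^4$ has image a $\Z/8$ subgroup on which the induced quotient $\Omega_{d+3} \twoheadrightarrow \Z/8$ agrees with $\sigma \bmod 8$.

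For (a), I would follow the template of the proof for $\pi_{d+2}(MT(d))$ in the $d \equiv 1 \bmod 4$ case. The two $\Z/2$-summands of $\pi_{d+4}(MT(d+4,4))$ arise from an $E_2^{0,d+4}$-generator and an $E_2^{1,d+5}$-generator in the Adams spectral sequence. The $E_2^{0,d+4}$-generator is represented by an $\mathcal{A}_2$-indecomposable class in $H^{d+4}(MT(d+4,4);\Z/2)$ (for $d \equiv 1 \bmod 4$, by a combination of $\phi(w_2 w_{d+2})$ and $\phi(w_3 w_{d+1})$ modulo decomposables); comparing $\mathcal{A}_2$-module structures via the twisted Cartan formula $Sq^k \phi(x) = \phi(\sum_{i+j=k} Sq^i(x)\bar{w}_j)$ and the Wu formula \eqref{binom}, one checks that this class remains indecomposable when pulled back along $MT(d+4) \to MT(d+4,4)$. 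The $E_2^{1,d+5}$-generator is handled analogously. Since both spectral sequences collapse in the relevant range for degree/filtration reasons, surjectivity of these classes lifts to surjectivity of $\beta^4$ on homotopy.

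For (b), I would use the factorization $\beta^4 = \Psi \circ \theta^4$ of Theorem \ref{KRfak} together with the Atiyah--Dupont-type identification recalled in Theorem \ref{results}: for any closed oriented $(d+3)$-manifold $M$ with $d+3 \equiv 0 \bmod 4$ satisfying the hypothesis for finitely-singular sections, the index equals $\chi(M) \oplus \tfrac{1}{2}(\chi(M) + \sigma(M)) \in \Z \oplus \Z/8$. Under the stabilization $\pi_{d+3}(MT(d+3,4)) \to \pi_{d+3}(MT(d+4,4))$ the non-cobordism-invariant $\chi$-component is killed, while on cobordism classes the residue $\tfrac{1}{2}(\chi + \sigma) \bmod 8$ reduces (after absorbing the freedom in $\chi$) to $\sigma \bmod 8 \colon \Omega_{d+3} \twoheadrightarrow \Z/8$. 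Since $\sigma \colon \Omega_{d+3} \to \Z$ is surjective for $d+3 \equiv 0 \bmod 4$, the image is exactly a $\Z/8$ summand with quotient map $\sigma \bmod 8$.

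The main obstacle is (a): the modular arithmetic of the binomial coefficients $\binom{d+j-k-1}{j}$ occurring in the Wu formula, combined with the twist by $\bar{w}$, depends delicately on $d \bmod 8$, and in principle could produce a hidden $\mathcal{A}_2$-relation in $H^*(MT(d+4);\Z/2)$ that obstructs the lift of one of the generators. A case-split on $d \bmod 8$, closely paralleling the split into $d+1 \equiv 2$ versus $d+1 \equiv 6 \bmod 8$ in the proof for $\pi_{d+2}(MT(d))$, is the technical heart of the argument.
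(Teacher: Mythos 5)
Your overall architecture is the paper's: specialize \eqref{flg} to $r=4$, read off $\pi_{d+4}(MT(d+4,4))\cong\Z/2\oplus\Z/2$ and $\pi_{d+3}(MT(d+4,4))\cong\Z/8\oplus\Z/2$ from Theorem \ref{MTd4}, prove surjectivity of the first $\beta^4$, compute the image of the second, and identify the resulting quotient of $\Omega_{d+3}$ with $\sigma\bmod 8$. Part (a) is exactly the Adams spectral sequence comparison the paper itself invokes (and leaves at a comparable level of detail), so the only comment there is that the binomial-coefficient case analysis you flag really does have to be carried out --- the analogous computation one dimension lower shows that the answer genuinely depends on it.

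Part (b), however, contains a genuine error. Let $g$ denote the restriction of the stabilization map to $\Ima\beta^4\cong\Z\oplus\Z/8\subseteq\pi_{d+3}(MT(d+3,4))$, with coordinates $(\chi,\tfrac12(\chi+\sigma))$ as in Theorem \ref{beta4}. The composite into $\pi_{d+3}(MT(d+4,4))$ factors through $\Omega_{d+3}$, and $S^{d+3}$ bounds, so $g(2,1)=g(\beta^4(S^{d+3}))=0$. If, as you assert, ``the $\chi$-component is killed,'' i.e.\ $g(1,0)=0$, then $g(0,1)=g(2,1)-2g(1,0)=0$ and the entire image vanishes --- contradicting the conclusion you want. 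The correct bookkeeping gives $g(m,n)=(m-2n)\,g(1,0)$, and for $(m,n)=(\chi,\tfrac12(\chi+\sigma))$ one has $m-2n=-\sigma$; so the composite is $\sigma\bmod 8$ \emph{provided} $g(1,0)$ has order exactly $8$ in $\Z/8\oplus\Z/2$. Your appeal to the surjectivity of $\sigma:\Omega_{d+3}\to\Z$ does not supply this: a priori the image could be $\Z/4$, $\Z/2$ or $0$, which would change the theorem. This order-$8$ statement is precisely the content of the paper's assertion that ``the image of $\Omega_{d+3}\to\Z/8\oplus\Z/2$ is $\Z/8$,'' obtained by comparing Adams spectral sequences; the identification of the quotient map as $\sigma$ then follows, as in the paper, from the Atiyah--Dupont theorem that $8\mid\sigma(M)$ for a manifold with three independent vector fields, combined with the counting argument used for the analogous $\pi_{d+2}$ statements.
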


\begin{proof}
Comparing the Adams spectral sequences shows that $\beta^4$ is surjective and the image of $\Omega_{d+3} \to \pi_{d+3}(MT(d+4,4))\cong \Z/8 \oplus \Z/2$ is $\Z/8$. The identification of the map $\Omega_{d+3} \xrightarrow{} \Z/8 $ again follows from \cite{AD} where it is shown that a manifold with three independent vector fields must satisfy $\sigma \equiv 0 \bmod 8$.
\end{proof}

For an oriented $d$-dimensional manifold $M$ and $w\in H^d(M;\Z/2)$ a product of Stiefel--Whitney classes for $TM$, the Stiefel--Whitney number $w[M]$ is given by evaluating $w$ on the fundamental class. 

\begin{thm}
For $d\equiv 2 \bmod 4$ and $d\geq 6$, there is a short exact sequence
\begin{equation*}
0 \to \Z/24 \to \pi_{d+3}(MT(d)) \to \Omega_{d+3} \to \Z/2 \to 0.
\end{equation*}
The last map is the Stiefel--Whitney number $w_2w_{d+1}[M]$.
\end{thm}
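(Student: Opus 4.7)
The plan is to insert $r=4$ into the long exact sequence \eqref{flg}:
\begin{equation*}
\pi_{d+4}(MT(d+4))\xrightarrow{\beta^4}\pi_{d+4}(MT(d+4,4))\to\pi_{d+3}(MT(d))\to\Omega_{d+3}\to\pi_{d+3}(MT(d+4,4)).
\end{equation*}
Theorem \ref{MTd4} applied with parameter $d+4$ (which still satisfies $\equiv 2\bmod 4$) gives $\pi_{d+4}(MT(d+4,4))\cong\Z\oplus\Z/24$ and $\pi_{d+3}(MT(d+4,4))\cong\Z/2$, so the proof reduces to (a) computing the image of $\beta^4$ and (b) identifying the final connecting map $\Omega_{d+3}\to\Z/2$ as $[M]\mapsto w_2w_{d+1}[M]$.

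For (a), I would factor $\beta^4$ through the refinement $\pi_{d+4}(MT(d+4))\to\pi_{d+4}(MT(d+4,5))\cong\Z\oplus\Z/2$. By Theorem \ref{results} combined with Theorem \ref{beta=ind}, for $r=5$ and $d+4\equiv 2\bmod 4$ the index is the invariant $\chi(M)\in\Z$, so this intermediate map is controlled by $\chi$. Following the strategy used in the analogous $d\equiv 0\bmod 4$ theorem, I would then exploit the cofibration $MT(d,1)\to MT(d+4,5)\to MT(d+4,4)$, feed $\pi_{d+4}(MT(d,1))\cong\Z\oplus\Z/2$ and $\pi_{d+3}(MT(d,1))\cong\Z/24$ into its long exact sequence, and perform a careful comparison of Adams spectral sequences to conclude that the image of $\beta^4$ has cokernel $\Z/24$ inside $\pi_{d+4}(MT(d+4,4))$.

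For (b), I would analyze $H^{d+3}(MT(d+4,4);\Z/2)$, which by Theorem \ref{fiw} is spanned in this degree by $\phi(w_{d+3})$ and $\phi(w_2w_{d+1})$. Using formula \eqref{binom} together with the twist by $w(U_d^\perp)$ discussed in Section \ref{cohomology af MT}, for $d+4\equiv 2\bmod 4$ one obtains $\Sq^1\phi(w_{d+2})=\phi(w_{d+3})$ and that $\Sq^2\phi(w_{d+1})$ also produces $\phi(w_{d+3})$, so $\phi(w_{d+3})$ is decomposable in the minimal Adams resolution, while $\phi(w_2w_{d+1})$ lies outside the $\A_2$-submodule generated by $\phi(w_{d+1})$ and $\phi(w_{d+2})$. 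Hence $\phi(w_2w_{d+1})$ is the new Adams generator in degree $d+3$; the surviving filtration-zero class of $\pi_{d+3}(MT(d+4,4))\cong\Z/2$ is detected by pairing with it, and the map $\Omega_{d+3}\to\Z/2$ therefore sends $[M]$ to the Stiefel--Whitney number $w_2w_{d+1}[M]$, consistent with the fact that $w_{d+3}[M]=\chi(M)\bmod 2=0$ for odd-dimensional $M$. The main obstacle will be part (a): as in the $d\equiv 0\bmod 4$ case the spectral sequence comparison is delicate, and one must verify in particular that the $\Z$-summand of $\pi_{d+4}(MT(d+4,4))$ is parametrized by a half-Euler-characteristic splitting analogous to Proposition \ref{seseven}, so that the image of $\beta^4$ covers all of this $\Z$ and not merely the subgroup $2\Z$ naively provided by $\chi$ alone (since $\chi$ is always even for oriented $(d+4)$-manifolds with $d+4\equiv 2\bmod 4$).
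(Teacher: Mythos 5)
Your proposal follows essentially the same route as the paper: it likewise feeds $r=4$ into \eqref{flg}, factors $\beta^4$ through $\pi_{d+4}(MT(d+4,5))$, and uses the cofibration $MT(d,1)\to MT(d+4,5)\to MT(d+4,4)$ together with a comparison of Adams spectral sequences to show that the $\Z/24$ summand is missed while the $\Z$ summand is fully hit --- the latter resolved exactly by your closing observation, since that $\Z$ summand maps onto $2\Z\subseteq\pi_{d+4}(MT(d+4,1))\cong\Z$, which is precisely the image of $\chi$ in dimension $d+4\equiv 2\bmod 4$. One small caution: avoid invoking Theorem \ref{results} here (it is deduced from these very computations, so that shortcut would be circular); your concrete argument via the cofibration and the indecomposability of $\phi(w_2w_{d+1})$ is what carries the proof, and your sketch identifying the last map as $w_2w_{d+1}[M]$ supplies a detail the paper's own proof leaves implicit.
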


\begin{proof}
The exact sequence 
\begin{equation*}
\pi_{d+4}(MT(d,1)) \to \pi_{d+4}(MT(d+4,5)) \to \pi_{d+4}(MT(d+4,4))
\end{equation*}
is 
\begin{equation*}
\dotsm \to \Z\oplus \Z/2 \to \Z \oplus \Z/2 \to \Z \oplus \Z/24 \to \dotsm .
\end{equation*}
The Adams spectral sequences show that the first two $\Z/2$ and the last two $\Z$ summands  map isomorphically to each other, so the $\Z/24$ summand is not hit, thus it is not hit by $\beta^4$ either. The $\Z$ summand is hit by $\beta^4$ because it maps onto $2\Z \in \pi_{d+4}(MT(d+4,1)) \cong \Z$. This yields the short exact sequence.
\end{proof}

\begin{thm}
For $d\equiv 3 \bmod 4$ and $d \geq 7$, there is a short exact sequence
\begin{equation*}
0\to \Z/2 \oplus \Z/2 \to \pi_{d+3}(MT(d)) \to \Omega_{d+3} \to 0.
\end{equation*}
\end{thm}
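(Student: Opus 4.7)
The plan is to apply the long exact sequence \eqref{flg} with $r=4$. For $d \equiv 3 \bmod 4$ we have $d+4 \equiv 3 \bmod 4$, and Theorem~\ref{MTd4} gives $\pi_{d+4}(MT(d+4,4)) \cong \Z/2 \oplus \Z/2$ as well as $\pi_{d+3}(MT(d+4,4)) \cong \Z/2 \oplus \Z/2$. Since $d+3 < d+4$, the stabilization isomorphism gives $\pi_{d+3}(MT(d+4)) \cong \Omega_{d+3}$, so the relevant portion of \eqref{flg} is
\begin{equation*}
\pi_{d+4}(MT(d+4)) \xrightarrow{\beta^4} \Z/2 \oplus \Z/2 \to \pi_{d+3}(MT(d)) \to \Omega_{d+3} \xrightarrow{\gamma} \Z/2 \oplus \Z/2,
\end{equation*}
and the asserted short exact sequence is equivalent to the vanishing of both $\beta^4$ and $\gamma$.

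To handle both maps uniformly, I would compare the Adams spectral sequences for $MT(d+4)$ and $MT(d+4,4)$, connected by the submodule injection $H^*(MT(d+4,4);\Z/2) \hookrightarrow H^*(MT(d+4);\Z/2)$ of Theorem~\ref{fiw}. A basis description as in Theorem~\ref{cohom-cofiber} identifies two indecomposable generators of $E_2^{0,d+3}(MT(d+4,4))$, represented by $w_2 w_{d+1}$ and $w_{d+3}$, and two more indecomposable generators of $E_2^{0,d+4}(MT(d+4,4))$. To prove $\gamma = 0$, I would establish two independent $\mathcal{A}_2$-relations in $H^{d+3}(MT(d+4);\Z/2)$ expressing each of $w_2 w_{d+1}$ and $w_{d+3}$ as a combination of Steenrod operations applied to classes of dimension at most $d$. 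Such classes lie outside the submodule $H^*(MT(d+4,4);\Z/2)$, so these relations do not descend to the quotient, which forces both generators of $E_2^{0,d+3}(MT(d+4,4))$ to lie outside the image of the edge homomorphism from $MT(d+4)$. Using Wu's formula \eqref{binom} and the identity $w(U_{d+4})\,w(U_{d+4}^\perp)=1$, candidate relations take the shape $w_{d+3} = \Sq^2 w_{d+1} + (\mathrm{decomposables})$ and $w_2 w_{d+1} = \Sq^4 w_{d-1} + \Sq^1(w_2 w_d) + (\mathrm{decomposables})$; here the second follows the template used by the author in the $d \equiv 1 \bmod 4$ analysis of $\pi_{d+2}(MT(d))$ earlier in this section. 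The parallel analysis in dimension $d+4$ then kills the remaining two generators of $E_2^{0,d+4}(MT(d+4,4))$, giving $\beta^4 = 0$.

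The main obstacle is producing two genuinely independent Adem-type relations in each of dimensions $d+3$ and $d+4$: a single relation kills only a one-dimensional summand of the cokernel, as in the earlier $r=3$ analysis, so the argument needs a second relation supplied by a different Steenrod operation on a different low-dimensional Stiefel--Whitney class. Verifying the linear independence of the relations requires careful tracking of mod $2$ binomial coefficients from Wu's formula, which depend on $d$ modulo $8$; by Corollary~\ref{peri} this can be reduced to a finite check at the base case $d=7$ and propagated by periodicity to all larger $d \equiv 3 \bmod 4$.
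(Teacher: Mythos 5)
Your reduction of the theorem to the long exact sequence \eqref{flg} with $r=4$, the identifications $\pi_{d+4}(MT(d+4,4))\cong\pi_{d+3}(MT(d+4,4))\cong\Z/2\oplus\Z/2$ from Theorem \ref{MTd4}, and the observation that the claim amounts to the vanishing of $\beta^4$ and of $\gamma$ are all correct. The proposed method for proving these vanishing statements, however, has two genuine gaps. First, the generator count is wrong: for $D=d+4\equiv 3\bmod 4$ the groups $E_2^{0,D-1}(MT(D,4))$ and $E_2^{0,D}(MT(D,4))$ are each one-dimensional, not two-dimensional. Indeed $w_{d+3}=\Sq^2(w_{d+1})$ already inside the submodule $H^*(MT(d+4,4);\Z/2)$ (the binomial coefficient $\binom{d}{2}$ is odd for $d\equiv 3\bmod 4$; see Figure \ref{resolution}), so $w_2w_{d+1}$ is the only filtration-zero generator in degree $d+3$, and similarly $w_3w_{d+1}$ in degree $d+4$. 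The second $\Z/2$ in each of $\pi_{d+4}(MT(d+4,4))$ and $\pi_{d+3}(MT(d+4,4))$ lives in positive Adams filtration ($s=2$ and $s=1$ respectively, as one reads off Figure \ref{r42}). Second, and more fundamentally, exhibiting $\mathcal{A}_2$-relations that make the filtration-zero generators decomposable in the larger module $H^*(MT(d+4);\Z/2)$ only shows that the maps $\beta^4$ and $\gamma$ fail to hit the filtration-zero quotient; it bounds each cokernel from below by a single $\Z/2$ and cannot show the maps are zero, since the positive-filtration summands are invisible to an indecomposability argument at $E_2^{0,*}$. This technique is exactly what the paper uses in the $d\equiv 1\bmod 4$ case of $\pi_{d+2}(MT(d))$, where only a $\Z/2$ cokernel in filtration zero was required; here the full $\Z/2\oplus\Z/2$, spread over two filtrations, must be shown to survive to the cokernel, and your proposal does not engage with the positive-filtration part at all. (Note also that decomposability of $w_2w_{d+1}$ genuinely depends on the congruence class of $d$: in the $d\equiv 2\bmod 4$ analysis the paper proves the opposite, that $w_2w_{d+1}$ is indecomposable in $H^*(MT(d+3);\Z/2)$, so the candidate relations you write down would need actual verification for $d\equiv 3\bmod 4$.)

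The paper's proof avoids spectral sequences entirely at this step. It uses the factorization of $\beta^3$ as $\beta^4$ followed by $\pi_{d+4}(MT(d+4,4))\to\pi_{d+4}(MT(d+4,3))$, notes that $\beta^3=0$ on all of $\pi_{d+4}(MT(d+4))$ because every oriented manifold of dimension $d+4\equiv 3\bmod 4$ admits three independent vector fields, and checks from the cofibration $MT(d+1,1)\to MT(d+4,4)\to MT(d+4,3)$ and the previously computed groups that $\pi_{d+4}(MT(d+4,4))\to\pi_{d+4}(MT(d+4,3))$ is injective; together these force $\beta^4=0$, so the entire $\Z/2\oplus\Z/2$ is the cokernel. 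I would recommend replacing your Adams-spectral-sequence argument with this factorization, or at minimum supplementing it with an argument that handles the filtration-$1$ and filtration-$2$ summands.
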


\begin{proof}
Look at the exact sequence for the cofibration 
\begin{equation*}
MT(d+1,1) \to MT(d+4,4) \to MT(d+4,3).
\end{equation*}
Filling in the known groups, we see that $\pi_{d+4}(MT(d+4,4)) \to \pi_{d+4}(MT(d+4,3))$ is injective, so since $\beta^3 : \pi_{d+4}(MT(d+4)) \to \pi_{d+4}(MT(d+4,3))$ is zero, $\beta^4$ must also be zero. 
\end{proof}

\subsection{Identification of the global invariant for $r=4$}\label{ident}
In the above computations, we found descriptions of the invariants $\beta^4(M)$ which we summarize in the next theorem. The invariant $\chi_2$ is the mod 2 semi-characteristic, defined by the formula \eqref{chi2} except the cohomology groups should have $\Z/2$ coefficients.
\begin{thm}\label{beta4}
The following table displays the image $\Ima \beta^4$ of 
\begin{equation*}
\beta^4: \pi_d(MT(d))\to \pi_d(MT(d,4))
\end{equation*}
and the interpretation of $\beta^4$:
\begin{equation*}
\begin{tabular}{ccc}
\hline
 {$d \bmod 4$}&{$\Ima \beta^4$}&{$\beta^4$} \\
\hline
 {$ 0 $}&{$\Z\oplus \Z/8$}&{$\chi\oplus \frac{1}{2}(\chi+\sigma)$}\\

 {$1$}&{$\Z/2\oplus \Z/2 $}&{$\chi_2\oplus \chi_\R$}\\

 {$2 $}&{$\Z$}&{$\chi$}  \\

 {$ 3 $}&{$0$}&{$0$}\\
\hline
\end{tabular}
\end{equation*}
In particular, $\beta^4(M)$ is the top obstruction to $4$, $5$ and $6$ vector fields when $d$ is even.
\end{thm}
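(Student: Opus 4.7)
The plan is to combine the computations of $\pi_d(MT(d,4))$ from Theorem \ref{MTd4} with the Atiyah--Dupont identifications (Table \ref{indentifi}) via the factorization $\tilde{\theta}^t_r = \Psi \circ \theta^r$ of Theorem \ref{KRfak}, together with the naturality of the cofibrations in Proposition \ref{egenskaberr}. The essential input is that the projections $MT(d,r) \to MT(d,r-1)$ fit $\beta^r$ into a compatible tower, so the low-$r$ identifications of Atiyah--Dupont pin down most of the structure of $\beta^4$.

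First, computing $\Ima \beta^4$ case by case is essentially a restatement of the spectral sequence analyses from Section \ref{VFCG}, where the closely related image of $\beta^4 : \pi_{d+4}(MT(d+4)) \to \pi_{d+4}(MT(d+4,4))$ was determined in each residue class of $d \bmod 4$. The same arguments, using the Adams spectral sequence for $MT(d,4)$ from Theorem \ref{MTd4} together with naturality in the cofibration $MT(d-3,1) \to MT(d,4) \to MT(d,3)$, give the stated images: $\Z \oplus \Z/8$ for $d \equiv 0$, $\Z/2 \oplus \Z/2$ for $d \equiv 1$, $\Z$ for $d \equiv 2$, and $0$ for $d \equiv 3$.

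For the identification of $\beta^4$ with classical invariants, apply the natural projection $MT(d,4) \to MT(d,3)$: by Theorem \ref{KRfak} and Table \ref{indentifi}, the composition $\pi_d(MT(d)) \xrightarrow{\beta^4} \pi_d(MT(d,4)) \to \pi_d(MT(d,3))$ equals $\beta^3$, already given by the claimed classical invariants in each residue class, modulo the $\Z/4 \to \Z/8$ refinement. In the cases $d \equiv 2, 3 \bmod 4$ the projection $\Ima \beta^4 \to \Ima \beta^3$ is an isomorphism, so the identification is complete. For $d \equiv 0, 1 \bmod 4$ the kernel of the projection is controlled by $\pi_d(MT(d-3,1)) \cong \pi_3^s \cong \Z/24$, and for $d \equiv 0$ this accounts for the doubling of the $\Z/4$ to a $\Z/8$ summand. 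To verify that the refined invariant is still $\tfrac{1}{2}(\chi+\sigma)$, now read modulo $16$, I would use that this expression is already well-defined as a map $\pi_d(MT(d)) \to \Z$ (Proposition \ref{seseven}) and check its value on generating manifolds such as $\C P^{d/2}$. The main obstacle is precisely this extension problem: promoting Atiyah--Dupont's $\Z/4$-valued invariant to the $\Z/8$-valued refinement requires careful tracking of $h_0$-multiplications and possible hidden extensions in the Adams spectral sequence, of the type carried out in Section \ref{VFCG}.

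The final clause follows immediately: Theorem \ref{beta=ind} gives $\beta^r(M) = \theta^r(\ind(s))$ in the closed case, and Theorem \ref{injtr} provides injectivity of $\theta^r$ for $r = 4, 5, 6$ and $d$ even. Consequently, $\ind(s)$ is determined by $\beta^r(M)$, which is an invariant of $M$, and the explicit formulas for $\beta^4$ above transfer to the formulas for $\ind(s)$ stated in Theorem \ref{results}.
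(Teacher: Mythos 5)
Your outline of the overall strategy and the final clause (using Theorem \ref{beta=ind} and Theorem \ref{injtr}) is correct, and the cases $d \equiv 2, 3 \bmod 4$ go through as you describe. But two concrete gaps remain, and in both the paper uses a different ingredient that you do not supply.

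For $d \equiv 1 \bmod 4$, the table asserts $\beta^4 = \chi_2 \oplus \chi_\R \in \Z/2 \oplus \Z/2$, which is a genuine refinement of the Atiyah--Dupont $\beta^3 = \chi_\R \in \Z/2$. You note the kernel of $MT(d,4) \to MT(d,3)$ is controlled by $\pi_d(MT(d-3,1)) \cong \Z/24$, but you never identify the new $\Z/2$. The paper reads off from the Adams spectral sequence that the extra coordinate of $\beta^4(M)$ is the Stiefel--Whitney number $w_2 w_{d-2}[M]$, and then invokes the Lusztig--Milnor--Peterson relation $w_2 w_{d-2}[M] = \chi_2(M) + \chi_\R(M)$ from \cite{peterson} to obtain the stated form. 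Without this identity (or an equivalent route to $\chi_2$), the $d \equiv 1$ row is unexplained.

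For $d \equiv 0 \bmod 4$, you correctly isolate the extension problem -- whether the $\Z/4$ of Atiyah--Dupont is refined to the $\Z/8$ detected by $\frac{1}{2}(\chi + \sigma)$ -- but you explicitly leave it unresolved, proposing a check on generators that you do not carry out. (Incidentally, the refinement is read mod $8$, not mod $16$.) The paper avoids any direct extension analysis: it cites the Mayer--Frank theorem (\cite{AD}, Corollary 6.6), which says $\chi \oplus \frac{1}{2}(\chi+\sigma) \in \Z \oplus \Z/8$ vanishes for any manifold admitting four vector fields. Combined with Corollary \ref{betared} (that $\beta^4(M)=0$ precisely when $M$ is Reinhart cobordant to such a manifold), this shows $\Ker \beta^4 \subseteq \Ker(\chi \oplus \frac{1}{2}(\chi+\sigma))$, giving a factorization through $\Ima \beta^4 \cong \Z \oplus \Z/8$; surjectivity of the composite then forces the induced map $\Ima \beta^4 \to \Z \oplus \Z/8$ to be an isomorphism. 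This sidesteps the hidden-extension bookkeeping you flag as the main obstacle. Your approach could in principle be made to work by direct evaluation on generators, but as written it does not close the gap.
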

\begin{proof}
The groups $\Ima \beta^4$ were determined in the proofs above. For $d \equiv 0 \bmod 4$, we still need to identify $\beta^4(M)$. We saw that the image of $\beta^4$ was  $\Z \oplus \Z/8$. On the other hand,
 $\chi \oplus \frac{1}{2}(\chi+\sigma) : \pi_d(MT(d))\to \Z \oplus \Z/8$ vanishes for a manifold that allows four vector fields 
because of a theorem due to Mayer and Frank, see also~\cite{AD}, Corollary~6.6.
 Hence the kernel of $\beta^4$ is contained in the kernel of $\frac{1}{2}(\chi+\sigma)$ by Corollary~\ref{betared}. Thus there is a factorization
\begin{equation*}
\chi \oplus \frac{1}{2}(\chi+\sigma) : \pi_d(MT(d))\to \Ima (\beta^4) \to \Z \oplus \Z/8.
\end{equation*}
The composition is surjective, so the last map is forced to be an isomorphism.

For $d \equiv 1 \bmod 4$, it follows from the Adams spectral sequences that 
\begin{equation*}
\beta^4(M) = w_2w_{d-2}[M]\oplus \chi_\R(M).
\end{equation*}
By \cite{peterson}
\begin{equation*}
w_2w_{d-2}[M] = \chi_2(M) + \chi_\R(M).
\end{equation*}

For the last statement, it follows from the proof of Theorem \ref{injtr} that
 \begin{equation*}
\theta^r : \pi_{d-1}(V_{d,r}) \to \pi_d(MT(d,4))
 \end{equation*}
  is injective for $r=4,5,6$ and $d$ even. It maps the index $\ind(s) $ to $\beta^4(M)$ by construction.
\end{proof}

\begin{rem} 
For $d \equiv 0 \bmod 4$, the Atiyah--Dupont invariant is 
\begin{equation*}
\chi\oplus \frac{1}{2}(\chi \pm \sigma) \in \Z \oplus \Z/4.
\end{equation*}
Hence our invariant carries strictly more information in this case.
\end{rem}

\end{document}